\documentclass[11pt,reqno]{amsart}

\usepackage{hyphenat}
\usepackage[a4paper,margin=31mm]{geometry}
\usepackage[foot]{amsaddr}
\usepackage{amsmath,amssymb,amsthm,mathtools}
\usepackage{microtype}
\usepackage{enumitem}
\usepackage{booktabs,tabularx}
\usepackage{xcolor}
\usepackage{hyperref}
\usepackage{bookmark}
\usepackage{aliascnt}
\usepackage[nameinlink,capitalize,noabbrev]{cleveref}
\usepackage{silence}
\numberwithin{equation}{section}
\allowdisplaybreaks
\usepackage{mathrsfs}
\usepackage{longtable}
\usepackage{array}
\usepackage[section]{placeins}
\usepackage{etoolbox}
\usepackage{tikz}
\usetikzlibrary{arrows.meta,positioning,calc}
\AtBeginEnvironment{thebibliography}{\raggedright}

\hypersetup{
  hypertexnames=false,
  colorlinks=true,
  linkcolor=blue!55!black,
  citecolor=green!45!black,
  urlcolor=blue!65!black,
  pdftitle={Recursive Paintboxes and the Martin Boundary of the Hoffman Rooted-Tree Graph},
  pdfauthor={Shengjun Zhang},
  pdfsubject={Doob--Martin boundary, recursive paintboxes, central measures, and Poisson--Dirichlet mixtures},
  pdfkeywords={Doob--Martin boundary, Bratteli graph, rooted tree,
  recursive paintbox, central measure, Poisson--Dirichlet distribution}
}

\newtheorem{theorem}{Theorem}[section]
\newaliascnt{proposition}{theorem}
\newtheorem{proposition}[proposition]{Proposition}
\aliascntresetthe{proposition}
\newaliascnt{lemma}{theorem}
\newtheorem{lemma}[lemma]{Lemma}
\aliascntresetthe{lemma}
\newaliascnt{corollary}{theorem}
\newtheorem{corollary}[corollary]{Corollary}
\aliascntresetthe{corollary}
\newaliascnt{definition}{theorem}
\newtheorem{definition}[definition]{Definition}
\aliascntresetthe{definition}
\newaliascnt{remark}{theorem}
\newtheorem{remark}[remark]{Remark}
\aliascntresetthe{remark}
\newaliascnt{example}{theorem}
\newtheorem{example}[example]{Example}
\aliascntresetthe{example}

\crefname{theorem}{Theorem}{Theorems}
\Crefname{theorem}{Theorem}{Theorems}
\crefname{proposition}{Proposition}{Propositions}
\Crefname{proposition}{Proposition}{Propositions}
\crefname{lemma}{Lemma}{Lemmas}
\Crefname{lemma}{Lemma}{Lemmas}
\crefname{corollary}{Corollary}{Corollaries}
\Crefname{corollary}{Corollary}{Corollaries}
\crefname{definition}{Definition}{Definitions}
\Crefname{definition}{Definition}{Definitions}
\crefname{remark}{Remark}{Remarks}
\Crefname{remark}{Remark}{Remarks}
\crefname{example}{Example}{Examples}
\Crefname{example}{Example}{Examples}

\newcommand{\T}{\mathcal T}
\newcommand{\Asamp}{\mathcal A_{\mathrm{samp}}}
\newcommand{\Csampbd}{\mathcal C_{\mathrm{samp}}^{\partial}}
\newcommand{\Aut}{\operatorname{Aut}}
\newcommand{\Leaves}{\operatorname{Leaves}}
\newcommand{\rootv}{\operatorname{root}}
\newcommand{\dimf}{\operatorname{dim}}
\newcommand{\Res}{\operatorname{Res}}
\newcommand{\Prob}{\mathbb P}
\newcommand{\E}{\mathbb E}
\newcommand{\N}{\mathbb N}
\newcommand{\Nzero}{\mathbb N_0}
\newcommand{\R}{\mathbb R}
\newcommand{\Q}{\mathbb Q}
\newcommand{\one}{{\mathord{\bullet}}}
\newcommand{\ind}{\mathbf 1}
\newcommand{\rising}[2]{#1^{\overline{#2}}}
\newcommand{\Tcen}{\mathcal T_{\mathrm{cen}}}
\newcommand{\PD}{\operatorname{PD}}
\newcommand{\Ulam}{\mathbb U}
\newcommand{\QSL}{\operatorname{QSL}}
\newcommand{\SL}{\operatorname{SL}}
\newcommand{\SG}{\operatorname{SG}}

\title[The Martin boundary of the Hoffman rooted-tree graph]
{Recursive Paintboxes and the Martin Boundary of the Hoffman Rooted-Tree Graph}
\address{Université Paris-Saclay, Faculté des Sciences d’Orsay, Institut de mathématiques d’Orsay, Bâtiment 307, F-91405 Orsay, France}
\email{zhang.shengjun@universite-paris-saclay.fr}
\author{Shengjun Zhang}
\subjclass[2020]{Primary 60J50; Secondary 60G09, 60C05, 05C05}
\keywords{Doob--Martin boundary, Bratteli graph, rooted tree, recursive paintbox, central measure, Poisson--Dirichlet distribution}

\usepackage{autonum}

\begin{document}
\raggedbottom
\setcounter{tocdepth}{2}

\begin{abstract}
We determine the Doob--Martin boundary of Hoffman's leaf-grafting graph on
finite unlabelled non-plane rooted trees. Its full and minimal boundaries
coincide and are parametrized by deterministic recursive paintboxes,
identified when their finite sampling laws agree. Every central measure
is a unique mixture of the corresponding extremal laws, and its limiting
boundary point generates the completed tail field. We also show that the
boundary is homeomorphic to the space of unordered root masses marked by
child boundary classes.
For the recursive Ewens family, we obtain the unique extremal decomposition
from independent Poisson--Dirichlet splits, including the uniform
recursive-tree and rooted-tree Plancherel cases.
\end{abstract}

\maketitle
\tableofcontents

\section{Introduction}\label{sec:introduction}

\subsection{The boundary problem}
Hoffman's rooted-tree up operator \cite{hoffman2003combinatorics} defines a
weighted graded graph on finite unlabelled non-plane rooted trees: an edge
adds one leaf, and its multiplicity counts the vertices at which that
attachment produces the specified successor. Write \(|t|\) for the number
of vertices of a rooted tree \(t\), let \(\T_n\) be the set of shapes
with \(n\) vertices, and put \(\T=\bigsqcup_{n\ge1}\T_n\).
Write \(u(t)\) for the weighted number of paths from the one-vertex tree to
\(t\), and \(n(s;t)\) for the weighted number of paths from \(s\) to \(t\).
We use the sampling normalization of the Martin kernel,
\[
    K_s(t)=\frac{u(s)n(s;t)}{u(t)},\qquad |s|\le |t|.
\]
Weighted paths to \(t\) correspond, compatibly with restriction, to increasing
labellings modulo rooted automorphisms. Thus \(K_s(t)\) is the probability
that the first \(|s|\) labels of a uniform quotient increasing labelling
of \(t\) induce the rooted shape \(s\). The boundary problem asks for all joint
limits of these sampling probabilities as \(|t|\to\infty\).

The graph also supports the uniform recursive-tree process after forgetting
arrival labels and sibling birth order: from \(s\), choose one of its vertices
uniformly and attach a leaf. Its shape transition is \(n(s;t)/|s|\), and its
Doob--Martin kernel differs from \(K_s(t)\) only by a positive factor depending
on \(s\). The boundary studied here is therefore also the boundary of this
unordered shape process.

The limiting objects are recursive paintboxes. The least label in each
occupied block becomes a vertex; the others choose positive-mass child boxes
or dust. Each positive box carries its own paintbox, and each dust label
becomes a singleton child. Environments are identified when all their finite
rooted-tree sampling laws agree.

\subsection{Main results}
Let \(\Omega_M\) be the Martin compactification and let \(\partial\Omega_M\)
and \(\partial_{\min}\Omega_M\) be its full and minimal boundaries. Write
\(\partial_{\mathrm{RP}}\) for deterministic recursive paintboxes modulo
sampling equivalence, equipped with the topology of finite sampling laws.
For \(\xi\in\partial_{\mathrm{RP}}\), denote these laws by \(M_n^\xi\).

\noindent\textbf{Theorem A (Boundary classification).}
Every Martin-convergent sequence of finite trees whose sizes tend to infinity
has a deterministic recursive-paintbox limit. Conversely, for every
\(\xi\in\partial_{\mathrm{RP}}\), there are trees \(t_N\in\T_N\) forming a growing path such that
\(K_s(t_N)\to M_{|s|}^\xi(s)\) for all \(s\in\T\). Every deterministic
recursive paintbox directs an extremal central measure, and
\begin{equation}\label{eq:intro-main-boundary}
    \partial\Omega_M=\partial_{\min}\Omega_M
    \cong\partial_{\mathrm{RP}}.
\end{equation}
The identification is a homeomorphism of compact metrizable spaces and
satisfies \(K_s(\xi)=M_{|s|}^\xi(s)\).

\medskip

A path law is central when, conditional on its endpoint, its weighted history
is uniform. Let \(\mathscr C\) be the compact convex set of central laws,
\(\mu^\xi\) the extremal law associated with \(\xi\), and \(X_N\in\T_N\)
the endpoint of a path at size \(N\). For a compact space \(K\), write
\(\mathcal P(K)\) for its Borel probability measures, with the weak topology.

\noindent\textbf{Corollary B (Canonical decomposition).}
Under every \(\mu\in\mathscr C\), the endpoint vectors
\((K_s(X_N))_{s\in\T}\) converge almost surely to a boundary variable
\(\Xi_\mu\). This variable generates the completed central tail field, and
the conditional law of the full weighted path given \(\Xi_\mu\) is
\(\mu^{\Xi_\mu}\). The representation
\[
    \mu=\int_{\partial_{\mathrm{RP}}}\mu^\xi\,\Theta_\mu(d\xi),
    \qquad \Theta_\mu=\mathcal L(\Xi_\mu),
\]
is unique. The barycentre map
\(\mathcal P(\partial_{\mathrm{RP}})\to\mathscr C\) is an affine
homeomorphism, so \(\mathscr C\) is a Bauer simplex.

\medskip

For a nonempty compact metrizable space \(K\), let \(\mathfrak M(K)\)
consist of unordered marked mass collections \(\{(p_i,z_i)\}\), with
\(p_i>0\), \(z_i\in K\), and \(\sum_i p_i\le1\). Encode each collection,
including multiplicities, by \(\sum_i p_i^2\delta_{(p_i,z_i)}\), and give
\(\mathfrak M(K)\) the weak topology on these finite measures.

\noindent\textbf{Theorem C (Recursive root structure).}
The root masses of a boundary point, marked by their child boundary classes,
are determined by its finite sampling laws. They define a homeomorphism
\begin{equation}\label{eq:intro-fixed-point-paper1}
    \mathscr R:\partial_{\mathrm{RP}}
       \xrightarrow{\ \cong\ }\mathfrak M(\partial_{\mathrm{RP}}).
\end{equation}
Its inverse assembles the marked child environments below a new root, with
dust mass \(1-\sum_i p_i\). The analogous root data for finite trees, with
child marks in \(\Omega_M\), give a criterion for Martin convergence as
tree sizes tend to infinity; see \cref{cor:finite-tree-root-convergence}.

\medskip
\noindent\textbf{Theorem D (Poisson--Dirichlet decomposition).}
For every \(\theta,a>0\), the central law \(\mu^{(\theta,a)}\) of
the recursive Ewens family in \cref{sec:ewens-decomposition} is directed
by a random recursive environment whose root split has law
\(\PD(0,\theta a)\) and whose non-root splits are independent with law
\(\PD(0,\theta)\). The environment's sampling class is almost surely
\(\Xi_{\mu^{(\theta,a)}}\), so this mixture is the unique extremal
decomposition. The choices \((\theta,a)=(1,1)\) and \((2,1)\) give the
uniform recursive-tree shape law and Fulman's rooted-tree Plancherel law,
respectively.

The Ewens family \(a=1\) and its Plancherel specialization \((2,1)\)
were studied in \cite{zhang2026asymptotic}; Fulman introduced the
Plancherel measures and their down--up chain \cite{fulman2009mixing}.
The family \(\theta=1\) is the \((0,a)\)-recursive-tree model of
\cite[Lemma~6.2]{dong2006coagulation}. At \(a=1\), see also Janson's
unordered split-tree representation \cite[Corollary~1.6]{janson2019split}.
Theorem~D identifies the sampling classes of these environments with the
canonical tail variables and determines their unique extremal mixing laws.

\subsection{Methods and related work}
The classification starts from the root factorization of uniform quotient
histories: branch counts are multivariate hypergeometric, and the conditional
branch histories are independent and uniform. A diagonal subsequence argument
on the Ulam tree preserves ancestor--descendant matchings. A bound on the sum
of squared branch masses allows small branches to be replaced by dust. A
nested sample is almost surely globally regular with its prescribed
environment; the proof accounts for the finitely many labels used as roots
along each fixed address. One such sample gives a deterministic growing
realization. Reverse martingales of endpoint kernels then yield extremality
and the canonical decomposition.

The recursive root structure follows from a further observation. Samples with
one root child recover \(\sum_i p_i^kK_s(\xi_i)\) for all sufficiently large
\(k\). A one-dimensional moment problem separates the distinct masses, and
the uniqueness in Corollary~B recovers the child marks at each repeated mass.

Related boundary classifications include oriented paintboxes for the zigzag
graph \cite{gnedin2006zigzag,tarrago2018zigzag}, trickle-down processes
\cite{evans2012trickle}, random recursive trees in their birth-ordered Harris
representation \cite{grubel2015recursive}, and R\'emy's chain
\cite{evans2017remy}. In the Hoffman graph, sibling birth order is forgotten.
This changes both the conditional sampling kernel and the boundary topology:
the map that forgets birth order has no continuous extension
between the two Martin compactifications
(\cref{ex:ordered-unordered-history,ex:ordered-boundary-nonextension}).
The present boundary retains each positive branch mass together with its
unordered child boundary class.

Crane and Xu's shape exchangeability also leads to uniform conditional
histories \cite[Definition~1 and Proposition~1]{crane2021history}. Their
shape forgets the root, whereas the terminal shapes here retain it. Thus their
condition compares histories across possible root positions; centrality here
compares histories within a fixed rooted shape. Exchangeable hierarchy and
weighted-tree representations
\cite{forman2018representation,forman2020mass} provide a complementary
sampling framework, with nested blocks in place of the chronological vertices
and unary chains retained by the present construction.

Geldbach \cite{geldbach2026continuum} gives sampling representations for
tree-growth chains whose backward step deletes a uniformly chosen leaf and
suppresses degree-two vertices. Here the backward step deletes the maximal
label of a uniform quotient increasing history and retains unary chains.

\Cref{sec:rooted,sec:sampling-algebra} establish the finite combinatorics and
sampling kernels. \Cref{sec:recursive-paintbox-boundary} proves the boundary
classification and canonical decomposition. The root structure and the
application to recursive Ewens trees are developed in
\cref{sec:intrinsic-root-structure,sec:ewens-decomposition}.

Throughout, \(\N=\{1,2,\ldots\}\), \(\Nzero=\{0,1,2,\ldots\}\), and
\([n]=\{1,\ldots,n\}\), with \([0]=\varnothing\).

\section{Rooted trees, labellings, and Hoffman multiplicities}\label{sec:rooted}

\subsection{Rooted trees}

Let $\T_n$ be the set of unlabelled finite rooted trees with $n$ vertices,
considered up to rooted graph isomorphism, and let
\(\T:=\bigsqcup_{n\ge1}\T_n\).
For vertexwise notation, choose a rooted representative of \(t\in\T\);
the shapes and counts below do not depend on this choice. Let \(V(t)\)
be its vertex set, write \(|t|=|V(t)|\), and let \(\rootv(t)\) be its root.  For
\(x,v\in V(t)\), write \(x\preceq_t v\) when \(x\) lies on the
unique path from \(\rootv(t)\) to \(v\), allowing \(x=v\).  If \(v\in V(t)\),
let \(t_v\) denote the descendant subtree rooted at \(v\), including \(v\)
itself, and write \(h_v(t):=|t_v|\) for its hook length.  Let
\(\deg_t^+(v)\) be the number of children of \(v\) in the rooted
representative.  Write \(\Leaves(t)=\{v\in V(t):\deg_t^+(v)=0\}\) for its leaves,
and let \(\Aut(t)\) be its rooted-tree automorphism group. The one-vertex
tree is \(\one\). We write \(\langle t_1,\ldots,t_r\rangle\) for the
tree obtained by attaching \(t_1,\ldots,t_r\) below a new root, with
\(\langle\,\rangle=\one\). For \(m\ge1\), let \(C_m\) be the
\(m\)-vertex chain rooted at an endpoint and \(S_m\) the \(m\)-vertex
star rooted at its centre; in particular, \(C_1=S_1=\one\).

For a finite totally ordered set \(B\) with \(|B|=n\), let
\(\SL_B(t)\) be the set of bijections \(L:V(t)\to B\) that strictly increase along every root-to-leaf path.  We call these
\emph{increasing labellings of \(t\) by \(B\)}.  For the canonical label set
\(B_n:=\{0,1,\ldots,n-1\}\), write
\(\SL(t):=\SL_{B_n}(t)\) and \(d(t):=|\SL(t)|\).
The rooted automorphism group acts on \(\SL_B(t)\) by precomposition,
\((a\cdot L)(v):=L(a^{-1}v)\) for \(a\in\Aut(t)\) and \(v\in V(t)\).
This action is free, since only the identity fixes every labelled vertex.  Define
\(\QSL_B(t):=\SL_B(t)/\Aut(t)\), \(\QSL(t):=\QSL_{B_n}(t)\), and call their elements \emph{quotient increasing labellings}.  For the
canonical set \(B_n\), we use the shorter terms \emph{standard labelling} and
\emph{quotient standard labelling}.  In particular, \(u(t):=|\QSL(t)|=d(t)/|\Aut(t)|\).

If \(B\) is any finite totally ordered set of size \(n\), its unique
order-preserving bijection with \(B_n\) induces canonical bijections
\[
    \SL_B(t)\cong\SL(t),
    \qquad
    \QSL_B(t)\cong\QSL(t).
\]
Thus \(|\SL_B(t)|=d(t)\) and \(|\QSL_B(t)|=u(t)\).  We suppress the
subscript \(B\) only when the ordered label set is unambiguous.

The rooted-tree hook formula is
\begin{equation}\label{eq:rooted-hook}
    d(t)=\frac{|t|!}{\prod_{v\in V(t)}h_v(t)}.
\end{equation}
To see this, distinguish the actual root subtrees \(t_1,\ldots,t_r\)
of a representative, with sizes \(n_1,\ldots,n_r\). The root receives
label \(0\); distributing the other labels gives
\[
    d(t)=\binom{|t|-1}{n_1,\ldots,n_r}\prod_{i=1}^r d(t_i).
\]
Induction gives \eqref{eq:rooted-hook}; see
\cite{bergeron1992varieties} and
\cite[Section~2, equation~(4)]{sagan2008probabilistic}.

\subsection{Two cover multiplicities}

Let \(s\in\T_n\) and \(t\in\T_{n+1}\).  Write \(s\nearrow t\)
when \(t\) can be obtained from \(s\) by attaching one new leaf to a vertex
of \(s\).  The following two multiplicities, introduced in Hoffman's
rooted-tree framework \cite[Section~2]{hoffman2003combinatorics}, depend only on the
rooted isomorphism classes of \(s\) and \(t\).

\begin{definition}[Leaf-grafting multiplicity]
The leaf-grafting multiplicity is
\(n(s;t):=\#\{v\in V(s):s^{+v}\cong t\}\), where $s^{+v}$ is the tree obtained from $s$ by attaching a new leaf at $v$.
\end{definition}

\begin{definition}[Leaf-removal multiplicity]
The leaf-removal multiplicity is
\(m(s;t):=\#\{\ell\in\Leaves(t):t-\ell\cong s\}\).
\end{definition}

They are not generally equal.  For example, if $s$ is the two-vertex chain and $t$ is the root with two leaves, then $n(s;t)=1$, while $m(s;t)=2$.

\begin{proposition}[Hoffman symmetry relation]\label{prop:hoffman-relation}
For every cover relation $s\nearrow t$,
\begin{equation}\label{eq:aut-relation}
    |\Aut(s)|\,m(s;t)=n(s;t)\,|\Aut(t)|.
\end{equation}
\end{proposition}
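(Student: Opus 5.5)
We prove \eqref{eq:aut-relation} by double counting, using orbit–stabilizer twice. Fix rooted representatives of \(s\) and \(t\), and let
\[
    P:=\{(v,\varphi): v\in V(s),\ \varphi:s^{+v}\xrightarrow{\ \cong\ }t \text{ a rooted isomorphism}\}.
\]
We count \(|P|\) in two ways and compare.

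First, count by the grafting vertex. For each \(v\) with \(s^{+v}\cong t\), the rooted isomorphisms \(s^{+v}\to t\) form a torsor under \(\Aut(t)\) acting by postcomposition. There are therefore exactly \(|\Aut(t)|\) of them. Vertices \(v\) with \(s^{+v}\not\cong t\) contribute nothing. Hence
\[
    |P|=n(s;t)\,|\Aut(t)|.
\]

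Second, count by the image of the new leaf. Each \((v,\varphi)\in P\) determines the leaf \(\ell=\varphi(\text{new leaf})\in\Leaves(t)\). Since \(\varphi\) maps \(s^{+v}\) minus its new leaf onto \(t-\ell\), we have \(t-\ell\cong s\). Conversely, fix a leaf \(\ell\) of \(t\) with \(t-\ell\cong s\). We claim the pairs \((v,\varphi)\) with \(\varphi(\text{new leaf})=\ell\) are in bijection with the rooted isomorphisms \(\psi:s\to t-\ell\). Given \(\psi\), let \(v:=\psi^{-1}(\text{parent of }\ell)\), and extend \(\psi\) to \(\varphi\) by sending the new leaf to \(\ell\). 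Given \(\varphi\), restrict it to \(V(s)\) to recover \(\psi\); here \(v\) is forced to be the preimage of the parent of \(\ell\), because \(\varphi\) preserves the parent relation.

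There are \(|\Aut(s)|\) such \(\psi\) for each admissible \(\ell\). Hence
\[
    |P|=m(s;t)\,|\Aut(s)|.
\]
Equating the two counts gives \eqref{eq:aut-relation}.

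The only step needing care is the bijection in the second count. One must check that the two constructions are mutually inverse, and that the extension of \(\psi\) is a rooted isomorphism. The latter holds because \(|t|\ge2\), so \(\ell\) is never the root of \(t\); hence the roots correspond and \(\ell\) is a non-root leaf whose parent lies in \(t-\ell\).
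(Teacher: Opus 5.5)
Your proposal is correct and is the paper's proof: the paper double counts exactly the pairs \((v,\phi)\) with \(\phi:s^{+v}\to t\), first by grafting vertex to get \(n(s;t)|\Aut(t)|\), then by the image \(\ell\) of the new leaf, using \(v=\psi^{-1}(p)\) for the parent \(p\) of \(\ell\), to get \(m(s;t)|\Aut(s)|\). Your added checks that the two constructions are mutually inverse and that \(\ell\) is never the root are details the paper leaves implicit.
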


\begin{proof}
Let $\mathcal I(s,t)$ be the set of pairs $(v,\phi)$ where $v\in V(s)$ and $\phi:s^{+v}\to t$ is a rooted-tree isomorphism.  Counting by grafting vertices, for each $v$ such that $s^{+v}\cong t$ there are $|\Aut(t)|$ isomorphisms from $s^{+v}$ to $t$.  Hence \(|\mathcal I(s,t)|=n(s;t)|\Aut(t)|\).
Alternatively, classify a pair \((v,\phi)\) by the image \(\ell\) of
the newly attached leaf.  Then \(\ell\in\Leaves(t)\) and
\(t-\ell\cong s\).  Conversely, fix such an \(\ell\) and let \(p\) be its parent in \(t\).
Every rooted isomorphism \(\psi:s\to t-\ell\) determines the unique
attachment vertex \(v=\psi^{-1}(p)\) and extends uniquely to an isomorphism
\(s^{+v}\to t\) sending the new leaf to \(\ell\).  There are
\(|\Aut(s)|\) choices of \(\psi\), and therefore \(|\mathcal I(s,t)|=m(s;t)|\Aut(s)|\).
Equating the two counts proves \eqref{eq:aut-relation}.
\end{proof}

For arbitrary \(s,t\in\T\) with \(|s|\le |t|\), extend the two
multiplicities from covers to intervals by summing the products of edge
multiplicities over saturated chains.  If \(k=|t|-|s|\), set
\begin{align}
    n(s;t)
    &:=\sum_{s=s_0\nearrow\cdots\nearrow s_k=t}
       \prod_{i=0}^{k-1}n(s_i;s_{i+1}),
       \label{eq:interval-grafting-path-count}\\
    m(s;t)
    &:=\sum_{s=s_0\nearrow\cdots\nearrow s_k=t}
       \prod_{i=0}^{k-1}m(s_i;s_{i+1}),
       \label{eq:interval-removal-path-count}
\end{align}
with the empty product equal to one when \(s=t\).

\begin{lemma}[Hoffman symmetry for intervals]
\label{lem:hoffman-interval-relation}
Let \(s,t\in\T\) with \(|s|\le |t|\), and use the interval
path counts in \eqref{eq:interval-grafting-path-count}--
\eqref{eq:interval-removal-path-count}.  Then
\[
    |\Aut(s)|\,m(s;t)=n(s;t)\,|\Aut(t)|.
\]
\end{lemma}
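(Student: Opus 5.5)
The plan is to apply \cref{prop:hoffman-relation} edge by edge along each saturated chain and let the automorphism factors telescope. Set \(k=|t|-|s|\). If \(k=0\), then by convention both interval counts are \(\ind_{s=t}\), since the only possible saturated chain is the empty one with \(s_0=s=t\). The identity then reads \(|\Aut(s)|\ind_{s=t}=\ind_{s=t}|\Aut(t)|\), which is trivial.

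For \(k\ge1\), fix a saturated chain \(s=s_0\nearrow s_1\nearrow\cdots\nearrow s_k=t\) in \(\T\). For each \(i\), \cref{prop:hoffman-relation} gives \(m(s_i;s_{i+1})=n(s_i;s_{i+1})\,|\Aut(s_{i+1})|/|\Aut(s_i)|\); division is allowed because automorphism groups are nonempty. Multiplying over \(i=0,\ldots,k-1\), the intermediate automorphism orders cancel, leaving
\[
    \prod_{i=0}^{k-1}m(s_i;s_{i+1})
    =\frac{|\Aut(t)|}{|\Aut(s)|}\prod_{i=0}^{k-1}n(s_i;s_{i+1}).
\]
The prefactor depends only on the endpoints, not on the chain.

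Next, sum this chainwise identity over all saturated chains from \(s\) to \(t\). By \eqref{eq:interval-grafting-path-count} and \eqref{eq:interval-removal-path-count}, the left side sums to \(m(s;t)\) and the right side to \(\frac{|\Aut(t)|}{|\Aut(s)|}n(s;t)\). Multiplying through by \(|\Aut(s)|\) gives the claimed relation.

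The two sums must run over the same index set. This holds because both definitions sum over identical sequences of isomorphism classes with \(s_i\nearrow s_{i+1}\); only the edge weights differ. For the same reason there is no genuine obstacle here, and the only care needed is the bookkeeping of the \(k=0\) convention and the case where no chain exists, in which both sides vanish.
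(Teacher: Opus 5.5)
Your proposal is correct and follows the same route as the paper's proof. Both apply the cover-level Hoffman relation along each saturated chain, telescope the automorphism factors to \(|\Aut(t)|/|\Aut(s)|\), and sum over chains, with the case \(|s|=|t|\) handled by the empty-product convention.
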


\begin{proof}
For each saturated chain \(s=s_0\nearrow\cdots\nearrow s_k=t\),
\cref{prop:hoffman-relation} gives
\[
    \prod_{i=0}^{k-1}m(s_i;s_{i+1})
      =\prod_{i=0}^{k-1}
         \frac{n(s_i;s_{i+1})|\Aut(s_{i+1})|}{|\Aut(s_i)|}
      =\frac{|\Aut(t)|}{|\Aut(s)|}
         \prod_{i=0}^{k-1}n(s_i;s_{i+1}).
\]
Summing over chains proves the result, including \(s=t\) by the
empty-product convention.
\end{proof}

\subsection{Dimensions and weighted paths}

For a weighted graded graph with one minimal vertex, the dimension of a
vertex is the weighted number of paths from the minimum to that vertex.  The
leaf-grafting and leaf-removal multiplicities define weighted graded graphs on
the common vertex set \(\T\), with upward edge weights \(n(s;t)\) and
\(m(s;t)\), respectively.  Both graphs have the one-vertex tree \(\one\) as
their minimum.  Write \(\dim_n(t)\) and \(\dim_m(t)\) for the corresponding
path-counting dimensions.

\begin{proposition}[Dimensions of the two Hoffman graphs]\label{prop:dimensions}
For every finite rooted tree \(t\), the leaf-grafting dimension is \(u(t)\)
and the leaf-removal dimension is \(d(t)\):
\[
    \dim_n(t)=u(t),\qquad \dim_m(t)=d(t).
\]
\end{proposition}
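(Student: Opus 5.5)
The plan is to prove the leaf-removal identity \(\dim_m(t)=d(t)\) first, by a direct bijection on increasing labellings. The leaf-grafting identity \(\dim_n(t)=u(t)\) then follows from \cref{lem:hoffman-interval-relation}. By definition \(\dim_m(t)=m(\one;t)\) and \(\dim_n(t)=n(\one;t)\). Applying \cref{lem:hoffman-interval-relation} with \(s=\one\), and using \(|\Aut(\one)|=1\), gives
\[
    m(\one;t)=n(\one;t)\,|\Aut(t)|.
\]
Hence \(\dim_m(t)=d(t)\) implies \(\dim_n(t)=d(t)/|\Aut(t)|=u(t)\).

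For \(\dim_m(t)=d(t)\) I will argue by induction on \(|t|\), using the recursion
\[
    \dim_m(t)=\sum_{s\nearrow t} m(s;t)\dim_m(s),
\]
which comes from splitting saturated chains by their last step. The base case is \(d(\one)=1=\dim_m(\one)\). For the inductive step it suffices to show
\[
    d(t)=\sum_{s\nearrow t} m(s;t)\,d(s).
\]
Fix a rooted representative of \(t\) and classify \(L\in\SL(t)\) by the vertex \(\ell=L^{-1}(|t|-1)\) carrying the maximal label. This vertex is necessarily a leaf. Deleting \(\ell\) restricts \(L\) to an increasing labelling of \(t-\ell\) by \(B_{|t|-1}\). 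Conversely, every such labelling extends uniquely by assigning \(|t|-1\) to \(\ell\).

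It follows that \(d(t)=\sum_{\ell\in\Leaves(t)} d(t-\ell)\). Grouping the leaves according to the isomorphism class \(s\) of \(t-\ell\) gives exactly \(m(s;t)\) leaves for each \(s\nearrow t\). This uses the facts that \(d\) depends only on the rooted isomorphism class and that every \(s\) with \(s\nearrow t\) arises as some \(t-\ell\). This proves the recursion.

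The only delicate point is bookkeeping. The multiplicity \(m(s;t)\) counts actual leaves of the fixed representative rather than isomorphism classes, and this is precisely why no automorphism factors appear in the \(d\)-recursion. The automorphism factors enter only through the symmetry lemma. As an alternative, one could prove \(\dim_n(t)=u(t)\) directly by showing that quotient labellings of \(t\) whose maximal label is removed correspond to pairs consisting of a grafting vertex of \(s\) and an element of \(\QSL(s)\). However, going through \(d\) avoids analysing orbits of \(\Aut(t)\), so I would use the route above.
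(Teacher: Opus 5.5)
Your proof is correct and follows essentially the paper's route: the paper also gets \(d(t)=\sum_{s\nearrow t}m(s;t)d(s)\) by deleting the maximal label (which sits at a leaf), and then transfers the result to \(u\) via Hoffman symmetry. The only difference is cosmetic: the paper substitutes the cover relation \eqref{eq:aut-relation} into the grafting recursion, whereas you apply \cref{lem:hoffman-interval-relation} once at \(s=\one\), which spares you a second induction.
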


\begin{proof}
The largest label lies at a leaf. Deleting it from a standard labelling
of \(t\) gives
\[
    d(t)=\sum_{s:s\nearrow t}m(s;t)d(s),\qquad d(\one)=1.
\]
Thus \(d\) is the removal dimension. By \cref{prop:hoffman-relation},
\[
    \sum_{s:s\nearrow t}n(s;t)u(s)
      =\frac1{|\Aut(t)|}\sum_{s:s\nearrow t}m(s;t)d(s)
      =\frac{d(t)}{|\Aut(t)|}=u(t).
\]
Together with \(u(\one)=1\), this is the grafting dimension recursion.
\end{proof}

Restricting an increasing labelling to an initial label set gives an
ancestor-closed subtree. Its quotient labelling is independent of the
chosen representative, since rooted automorphisms restrict to
isomorphisms of the corresponding labelled subtrees.

\begin{proposition}[Weighted paths and quotient standard labellings]
\label{prop:path-quotient-labelling-bijection}
For every \(n\ge1\) and \(t\in\T_n\), there is a bijection between
\(\QSL(t)\) and the weighted leaf-grafting paths from \(\one\) to \(t\).
These bijections can be chosen simultaneously so that restriction to
\(\{0,\ldots,k-1\}\) corresponds to truncation to the first \(k-1\)
edges, for every \(1\le k\le n\).

For each cover \(s\nearrow t\), every element of \(\QSL(s)\) has
\(n(s;t)\) extensions in \(\QSL(t)\). Consequently, \(u(s)n(s;t)\)
elements of \(\QSL(t)\) have predecessor shape \(s\) after
maximal-label deletion.
\end{proposition}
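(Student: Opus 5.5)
The plan is to prove the extension-count statement first, and then build the bijection with weighted paths by induction on \(n\). Fix a cover \(s\nearrow t\) with \(|s|=k\) and \(|t|=k+1\). Fix a representative of \(t\) and, for each leaf \(\ell\) of \(t\) with \(t-\ell\cong s\), consider the standard labellings of \(t\) whose maximal label \(k\) sits at \(\ell\).

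\textbf{Step 1: counting quotient extensions.} Deleting the maximal label gives a map
\[
   \{L\in\SL(t):\ t-L^{-1}(k)\cong s\}\ \longrightarrow\ \SL(s)\ \longrightarrow\ \QSL(s),
\]
where the second map uses an identification \(t-\ell\cong s\). I will count the fibres of the composite. By the proof of \cref{prop:dimensions}, the domain has \(m(s;t)\,d(s)\) elements, and each element of \(\QSL(s)\) has exactly \(m(s;t)|\Aut(s)|\) preimages in it. Since \(\Aut(t)\) acts freely on the domain, compatibly with the map to \(\QSL(s)\), the number of \(\Aut(t)\)-orbits above a fixed class is
\[
   \frac{m(s;t)\,|\Aut(s)|}{|\Aut(t)|}=n(s;t),
\]
by \cref{prop:hoffman-relation}. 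Two points need checking. First, that the map to \(\QSL(s)\) is well defined, which is the remark preceding the statement. Second, that the fibre count is uniform. To see this, I will write the fibre as a set of pairs \((\ell,\psi)\), with \(\psi\) an isomorphism \(s\to t-\ell\) transporting a fixed representative labelling. There are exactly \(m(s;t)|\Aut(s)|\) such pairs, and distinct pairs give distinct labellings because \(\Aut(s)\) acts freely on \(\SL(s)\).

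\textbf{Step 2: the "consequently" clause.} Summing the constant fibre size \(n(s;t)\) over the \(u(s)\) classes in \(\QSL(s)\) gives \(u(s)n(s;t)\) quotient labellings of \(t\) with predecessor shape \(s\).

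\textbf{Step 3: the bijection with paths.} I will build the bijections by induction on \(n\), starting from the single labelling of \(\one\). Suppose bijections \(\QSL(s)\leftrightarrow\{\text{paths to }s\}\) are already fixed for all \(|s|=n-1\). For \(t\in\T_n\), partition \(\QSL(t)\) according to the restriction to \(\{0,\ldots,n-2\}\). By Step 1, the part lying over a given class of \(\QSL(s)\) has exactly \(n(s;t)\) elements. On the path side, the weighted paths extending a given path to \(s\) by the edge \(s\nearrow t\) are exactly the \(n(s;t)\) copies of that edge. Choosing any bijection between these two \(n(s;t)\)-element sets, separately for each fibre, extends the correspondence to size \(n\). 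Restriction then matches truncation by construction, and iterating the restriction gives compatibility for every \(k\le n\).

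The main obstacle is the careful orbit count in Step 1, namely verifying that the free \(\Aut(t)\)-action yields exactly \(n(s;t)\) orbits in each fibre. The rest of the argument is bookkeeping.
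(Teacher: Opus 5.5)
Your proposal is correct. Step 3 is exactly the paper's fibrewise choice of bijections with the parallel edge copies; the difference lies in how Step 1 counts extensions.

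You work on the removal side. The \(\Aut(t)\)-invariant set of standard labellings of \(t\) lying over a fixed class of \(\QSL(s)\) is parametrized by pairs \((\ell,\psi)\), so it has \(m(s;t)|\Aut(s)|\) elements. The free \(\Aut(t)\)-action then leaves \(m(s;t)|\Aut(s)|/|\Aut(t)|\) orbits, and \cref{prop:hoffman-relation} converts this number into \(n(s;t)\).

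The paper works on the grafting side. It lets \(\Aut(s)\) act freely and diagonally on \(\SL(s)\times E(s,t)\), where \(E(s,t)=\{v\in V(s):s^{+v}\cong t\}\). It sends the orbit of \((L,v)\) to the quotient labelling obtained by grafting, at \(v\), a leaf carrying the maximal label. It then checks that this map is a bijection onto the quotient labellings of \(t\) with predecessor shape \(s\). Freeness identifies the fibre over \([L]\) with \(E(s,t)\) itself.

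The paper's route therefore gives a concrete identification of the extensions with the attachment vertices. It never uses the symmetry relation and recovers the count \(u(s)n(s;t)\) independently. Your argument is in effect the pair-counting proof of \cref{prop:hoffman-relation} run inside a single fibre. Given that proposition it is shorter, but it yields only cardinalities rather than a canonical bijection. That suffices here, because the path bijections are chosen arbitrarily on each fibre anyway.
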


\begin{proof}
Fix a cover \(s\nearrow t\) with \(|s|=n\), choose rooted representatives,
and put \(E(s,t):=\{v\in V(s):s^{+v}\cong t\}\).
Thus \(|E(s,t)|=n(s;t)\). Consider the diagonal action of
\(\Aut(s)\) on \(\SL(s)\times E(s,t)\) by
\(a\cdot(L,v):=(L\circ a^{-1},a(v))\).
The action is free because its action on the first coordinate is free.  Hence
\[
    \left|(\SL(s)\times E(s,t))/\Aut(s)\right|
    =\frac{d(s)n(s;t)}{|\Aut(s)|}
    =u(s)n(s;t).
\]

We map a diagonal orbit represented by \((L,v)\) to a quotient standard
labelling of \(t\) as follows.  Attach a new leaf at \(v\), give it the new
maximal label \(n\), retain the labels \(0,\ldots,n-1\) on the old vertices,
and transport the labelled tree to the fixed representative of \(t\) along any
rooted isomorphism.  A different isomorphism differs by an automorphism of
\(t\), so the resulting element of \(\QSL(t)\) is unchanged.  Replacing
\((L,v)\) by a diagonally equivalent pair also leaves the quotient class
unchanged.  The map is therefore well defined.

Its image consists of the quotient labellings whose deletion of the
maximal label has shape \(s\).  Surjectivity follows by deleting the maximal
label of such a labelling.  For injectivity, suppose two pairs produce the same
quotient-labelled successor. Any isomorphism between the labelled successors
maps their maximal leaves to each other. Deleting these leaves gives an
isomorphism of the labelled predecessors that preserves the attachment
vertices. Thus the two pairs belong to the same diagonal orbit.

The projection of the diagonal-orbit set to \(\QSL(s)\) has fibres of size
\(n(s;t)\). Indeed, fix a representative \(L\) of a quotient class.
Every orbit in its fibre has a representative \((L,v)\), and this
representative is unique because an automorphism fixing \(L\) is the
identity. Thus the fibre is in bijection with \(E(s,t)\). Choose a
bijection between each such fibre and the \(n(s;t)\) parallel edge
copies. Iterating these choices gives the required path bijections, compatible with every
initial-label restriction.
\end{proof}

Throughout the paper, we fix one such compatible family of bijections.

\subsection{Central measures and harmonic functions}
\label{subsec:central-measures-harmonic}
We use the path-space framework of Bratteli diagrams
\cite{bratteli1972inductive,effros1981dimensions}. The central-measure and
tail-decomposition facts below belong to the general theory of sufficient
statistics and graded graphs
\cite{dynkin1978sufficient,vershik2014central}.
Let \(\Gamma=\bigsqcup_{n\ge0}\Gamma_n\) be a graded graph with finite
levels, one minimal vertex \(\one\), finite nonnegative integer edge
multiplicities \(\kappa(x,y)\), and no terminal vertices. Assume every vertex
is reachable from \(\one\). Distinguish the parallel edge copies. 
For \(n\ge0\), let \(\mathcal E_n\) be the finite set of edge copies
from \(\Gamma_n\) to \(\Gamma_{n+1}\), with source and range maps
\(\mathrm s,\mathrm r\). The weighted path space is
\[
 \mathcal X_\Gamma=
 \{(e_n)_{n\ge0}:\mathrm s(e_0)=\one,\ 
                 \mathrm r(e_n)=\mathrm s(e_{n+1})\}.
\]
It is a closed subspace of
\(\prod_{n\ge0}\mathcal E_n\), hence compact
metrizable. Write
\(\mathsf E_n\) for the edge copy from level \(n\) to level \(n+1\), and
\(X_n\) for the level-\(n\) vertex. Let \(\mathsf H_n(x)\) be the finite
paths to \(x\in\Gamma_n\), and put \(\dimf(x)=|\mathsf H_n(x)|\).
For vertices on arbitrary levels, \(\kappa(x,y)\) denotes the number of
weighted continuations from \(x\) to \(y\), with
\(\kappa(x,y)=\ind_{\{x=y\}}\) on a common level and zero when no
continuation exists. Thus
\(\dimf(y)=\sum_x\kappa(x,y)\dimf(x)\) for adjacent levels.

For a finite path \(\gamma\), let \(C_\gamma\) be the cylinder of paths
extending \(\gamma\). The raw prefix, future, and tail fields are
\begin{equation}\label{eq:raw-future-field-definition}
\begin{split}
    \mathscr F_n^0&=\sigma(\mathsf E_0,\ldots,\mathsf E_{n-1}),\\
    \mathscr G_N^0&=\sigma(X_N,\mathsf E_N,\mathsf E_{N+1},\ldots),
    \qquad \mathscr G_\infty^0=\bigcap_{N\ge1}\mathscr G_N^0.
\end{split}
\end{equation}
The future includes the edge copies. We suppress the superscript \(0\)
when no completion is intended. For a law \(\mu\), write
\(\overline{\mathscr H}^{\,\mu}\) for a field \(\mathscr H\) augmented by
all subsets of Borel \(\mu\)-null sets. All completed tail fields below are
completions of the raw intersection \(\mathscr G_\infty^0\).

\begin{lemma}[Cylinder laws]\label{lem:path-cylinder-algebra}
The path cylinders are a countable clopen base and generate the Borel field.
Their finite unions form the algebra
\(\mathcal A_{\mathrm{cyl}}=\bigcup_n\mathscr F_n^0\); every element is a
finite disjoint union of cylinders at a common level. Probability laws are
determined by their cylinder probabilities, and weak convergence is equivalent
to convergence of all such probabilities.
\end{lemma}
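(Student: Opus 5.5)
We use that \(\mathcal X_\Gamma\) is a closed subset of the product \(\prod_{n\ge0}\mathcal E_n\) of finite discrete sets, and that cylinders are exactly the traces on \(\mathcal X_\Gamma\) of the basic product cylinders. For a finite path \(\gamma=(e_0,\ldots,e_{n-1})\), \(C_\gamma\) is the intersection of \(\mathcal X_\Gamma\) with \(\{\mathsf E_i=e_i,\ i<n\}\). This set is open in the product topology. It is also closed, since its complement in \(\mathcal X_\Gamma\) is the finite union of the cylinders \(C_{\gamma'}\) over the other level-\(n\) paths \(\gamma'\). Each \(\mathcal E_n\) is finite, so there are countably many cylinders. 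They form a base because the product topology has a base of sets fixing finitely many coordinates, and any such set is refined by fixing all coordinates up to the largest index involved. A countable base generates the Borel field of a second countable space, since every open set is a countable union of basic sets.

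For the algebra statement, note that \(\mathscr F_n^0\) is generated by the finite partition \(\{C_\gamma:\gamma\in\mathsf H_n\}\), where \(\mathsf H_n\) denotes all level-\(n\) paths from \(\one\). Hence its elements are exactly the finite disjoint unions of level-\(n\) cylinders. The fields increase in \(n\), because a level-\(n\) cylinder is the disjoint union of its level-\((n+1)\) extensions. This uses that there are no terminal vertices, so every finite path extends; a path with no extension would give an empty cylinder, which is harmless anyway. Therefore \(\bigcup_n\mathscr F_n^0\) is an increasing union of finite algebras, hence an algebra. Any element, and any finite collection of elements, can be written at a common level.

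Uniqueness of a law given its cylinder probabilities follows from the \(\pi\)–\(\lambda\) theorem. Cylinders together with \(\varnothing\) form a \(\pi\)-system, because two cylinders are either nested or disjoint. This \(\pi\)-system generates the Borel field by the first step.

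For weak convergence, first suppose \(\mu_k\Rightarrow\mu\). Each cylinder is clopen, so its indicator is continuous, and hence \(\mu_k(C_\gamma)\to\mu(C_\gamma)\). Conversely, suppose all cylinder probabilities converge, and let \(f\) be continuous on \(\mathcal X_\Gamma\). The space is compact, so \(f\) is uniformly continuous for the metric \(\rho(x,y)=2^{-\min\{n:x_n\ne y_n\}}\). Given \(\varepsilon>0\), choose \(n\) so that \(f\) varies by less than \(\varepsilon\) on each level-\(n\) cylinder. Pick a point \(x_\gamma\in C_\gamma\) for each nonempty cylinder, and set \(f_n=\sum_\gamma f(x_\gamma)\ind_{C_\gamma}\). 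This is a finite sum with \(\|f-f_n\|_\infty\le\varepsilon\). Then \(\int f_n\,d\mu_k\to\int f_n\,d\mu\), and an \(\varepsilon/3\) argument gives \(\int f\,d\mu_k\to\int f\,d\mu\).

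The only mildly delicate point is verifying that \(\rho\) metrizes the subspace topology and that cylinders are clopen, which is standard for closed subsets of countable products of finite sets. No step is a genuine obstacle.
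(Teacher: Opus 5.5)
Your proposal is correct and follows the paper's own route. The set-theoretic claims come from refining prefixes to a common level, uniqueness comes from a $\pi$--$\lambda$ (monotone-class) argument on the cylinder $\pi$-system, and the weak-convergence criterion comes from uniformly approximating continuous functions by functions constant on level-$n$ cylinders; you simply write out the routine details that the paper's three-line proof leaves implicit.
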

\begin{proof}
The assertions about sets follow by refining prefixes to a common level.
Uniqueness is the monotone-class theorem. Functions constant on cylinders at
a fixed level are uniformly dense in \(C(\mathcal X_\Gamma)\), which gives
the weak-convergence assertion.
\end{proof}

A probability law \(\mu\) on \(\mathcal X_\Gamma\) is \emph{central} if
\begin{equation}\label{eq:central-cylinder-definition}
    \mu(C_\gamma)=\frac{M_n^\mu(x)}{\dimf(x)},
    \qquad M_n^\mu(x)=\mu(X_n=x),\quad\gamma\in\mathsf H_n(x).
\end{equation}
Equivalently, its history conditional on the endpoint is uniform. Its
marginals are coherent for the cotransitions
\(p^\downarrow(y,x)=\kappa(x,y)\dimf(x)/\dimf(y)\):
\begin{equation}\label{eq:coherent-family-definition}
    M_n(x)=\sum_{y:x\nearrow y}
      M_{n+1}(y)\frac{\kappa(x,y)\dimf(x)}{\dimf(y)}.
\end{equation}
Conversely, a coherent family has consistent finite central lifts
\(\widehat M_n(\gamma)=M_n(x)/\dimf(x)\), since
\begin{equation}\label{eq:finite-central-lift-consistency}
    \sum_{y:x\nearrow y}\kappa(x,y)
        \frac{M_{n+1}(y)}{\dimf(y)}
    =\frac{M_n(x)}{\dimf(x)}.
\end{equation}
The extension theorem gives the unique central law with cylinder probabilities
\eqref{eq:central-cylinder-definition}.
Dividing coherence by \(\dimf(x)\) identifies these families with normalized
nonnegative harmonic functions:
\begin{equation}\label{eq:harmonic-general}
    \varphi(x)=\frac{M_n(x)}{\dimf(x)},\qquad
    \varphi(x)=\sum_{y:x\nearrow y}\kappa(x,y)\varphi(y),
    \qquad \varphi(\one)=1.
\end{equation}
Such a function spans a \emph{minimal ray} if every nonnegative harmonic
function \(\psi\le\varphi\) is a scalar multiple of \(\varphi\).
The set \(\mathscr C(\Gamma)\) of central laws is compact and convex:
\eqref{eq:central-cylinder-definition} imposes closed affine conditions on
\(\mathcal P(\mathcal X_\Gamma)\). It is nonempty by taking a weak limit of
uniform histories to successively higher vertices, each extended by any
infinite continuation.

For probability measures on a finite or countable set, we use
\(d_{\mathrm{TV}}(P,Q)=\frac12\sum_x|P(x)-Q(x)|\).

\begin{lemma}[Central lifting preserves total variation]
\label{lem:central-lift-total-variation}
If \(M,M'\) are laws on one level and \(\widehat M,\widehat M'\) are their
central lifts, then
\(d_{\mathrm{TV}}(\widehat M,\widehat M')=d_{\mathrm{TV}}(M,M')\).
\end{lemma}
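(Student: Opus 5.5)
The plan is to compute the total variation directly on the level of paths, grouping the sum by endpoint. Let the common level be \(n\). The central lift \(\widehat M\) is the law on the finite path set \(\bigsqcup_{x\in\Gamma_n}\mathsf H_n(x)\) given by \(\widehat M(\gamma)=M(x)/\dimf(x)\) for \(\gamma\in\mathsf H_n(x)\); the same formula defines \(\widehat M'\). If one prefers to view the lifts as central laws on \(\mathcal X_\Gamma\), I would first observe that both laws share the same conditional distribution of the future given the level-\(n\) prefix. This common distribution is uniform on continuations weighted through higher levels, by \eqref{eq:finite-central-lift-consistency}. Hence the total variation between the infinite-path laws equals that between their level-\(n\) prefix marginals, by the standard fact that a common Markov kernel applied to a discrete coordinate does not change total variation. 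So it suffices to treat the finite lifts.

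For the finite lifts, I would partition the path set by endpoint and sum:
\[
    d_{\mathrm{TV}}(\widehat M,\widehat M')
    =\frac12\sum_{x\in\Gamma_n}\sum_{\gamma\in\mathsf H_n(x)}
      \frac{|M(x)-M'(x)|}{\dimf(x)}
    =\frac12\sum_{x\in\Gamma_n}|M(x)-M'(x)|
    =d_{\mathrm{TV}}(M,M').
\]
This uses \(|\mathsf H_n(x)|=\dimf(x)\), which is finite and positive because every vertex is reachable from \(\one\).

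The only point needing care is the reduction from infinite-path laws to prefixes, if that interpretation is intended. Here I would argue with the two inequalities separately. The inequality \(\ge\) holds because projection to the prefix \((\mathsf E_0,\ldots,\mathsf E_{n-1})\) cannot increase total variation, and the projections are exactly the finite lifts. The inequality \(\le\) holds because, by the cylinder formula \eqref{eq:central-cylinder-definition}, each cylinder at a level \(N\ge n\) receives mass \(\widehat M(\gamma)\,c\) for a constant \(c\) depending only on the extension and not on the law. Therefore the cylinder-algebra total variation at level \(N\) equals that at level \(n\) for every \(N\ge n\). Since cylinders generate and total variation is the supremum over the algebra \(\mathcal A_{\mathrm{cyl}}\), by \cref{lem:path-cylinder-algebra}, equality follows.
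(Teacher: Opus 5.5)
Your endpoint-grouping computation is exactly the paper's proof, and it is complete by itself. The computation uses \(|\mathsf H_n(x)|=\dimf(x)\). The central lift of a single-level law is the finite law \(\widehat M(\gamma)=M(x)/\dimf(x)\) on \(\bigsqcup_x\mathsf H_n(x)\), which is also how the lemma is used in \cref{lem:root-sampling-with-child-limits}.

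You should, however, delete the proposed reduction from laws on \(\mathcal X_\Gamma\). It is not merely unnecessary; its claims are false.

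\emph{No canonical infinite lift.} A law on one level does not determine a central law on \(\mathcal X_\Gamma\). Two central laws also do not share a conditional future given the level-\(n\) prefix. That future is governed by the law's own normalized harmonic function \(\varphi\) from \eqref{eq:harmonic-general}: each edge copy from \(x\) to \(y\) has conditional probability \(\varphi(y)/\varphi(x)\).

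\emph{The cylinder claim fails.} A level-\(N\) cylinder ending at \(z\) has mass \(M_N(z)/\dimf(z)\). This depends on the whole coherent family, not on \(\widehat M(\gamma)\) times a law-independent constant.

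\emph{The infinite-path statement is itself false.} Your own computation, applied at each level \(N\), shows that the total variation of two central laws on \(\mathcal X_\Gamma\) equals \(\lim_N d_{\mathrm{TV}}(M_N,M'_N)\). This can strictly exceed the level-\(n\) value. At the bottom level every law is \(\delta_{\one}\), yet distinct extremal laws \(\mu^\xi,\mu^{\xi'}\) are mutually singular by \eqref{eq:canonical-kernel-fibre-support}. So the claimed \(\le\) direction cannot hold.
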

\begin{proof}
By the central-lift formula,
\[
    2d_{\mathrm{TV}}(\widehat M,\widehat M')
    =\sum_x\sum_{\gamma\in\mathsf H_n(x)}
       \frac{|M(x)-M'(x)|}{\dimf(x)}
    =2d_{\mathrm{TV}}(M,M').
\]
\end{proof}

\subsection{Tail fields and endpoint kernels}
\begin{proposition}[Endpoint reverse martingales]
\label{prop:endpoint-martin-martingales}
Let \(\mu\) be central, \(x\in\Gamma_n\), and
\(K_x(y)=\dimf(x)\kappa(x,y)/\dimf(y)\) for \(y\) at level at least
\(n\). For \(N\ge n\) and \(\gamma\in\mathsf H_n(x)\),
\begin{align}
    \Prob_\mu(C_\gamma\mid\mathscr G_N)
      &=\frac{K_x(X_N)}{\dimf(x)},\label{eq:path-martin-general}\\
    \Prob_\mu(X_n=x\mid\mathscr G_N)
      &=K_x(X_N).\label{eq:endpoint-martin-general}
\end{align}
Consequently \(K_x(X_N)\) converges almost surely and in \(L^1\) to
\(\Prob_\mu(X_n=x\mid\mathscr G_\infty)\). Moreover, \(\mu\) is
extremal if and only if, simultaneously for every \(n\ge0\) and every
\(x\in\Gamma_n\),
\begin{equation}\label{eq:martin-kernel-extreme-general}
    K_x(X_N)\longrightarrow M_n^\mu(x)\quad\mu\text{-almost surely}.
\end{equation}
\end{proposition}
\begin{proof}
Fix a future cylinder \(F\) specifying a segment from \(y\in\Gamma_N\)
to \(z\in\Gamma_M\). Centrality and path counting give
\[
    \mu(C_\gamma\cap F)=\kappa(x,y)\frac{M_M^\mu(z)}{\dimf(z)},
    \qquad
    \mu(F)=\dimf(y)\frac{M_M^\mu(z)}{\dimf(z)}.
\]
These identities extend from future cylinders to \(\mathscr G_N\) by the
monotone-class theorem, proving \eqref{eq:path-martin-general}. Summing over
\(\gamma\in\mathsf H_n(x)\) gives \eqref{eq:endpoint-martin-general}; the
convergence follows from the reverse-martingale theorem.

By Proposition~\ref{prop:central-ergodic-criterion}, proved below, extremality makes the limiting conditional
probabilities constant. Conversely, under
\eqref{eq:martin-kernel-extreme-general}, the path identity gives
\(\Prob_\mu(C_\gamma\mid\mathscr G_\infty)=\mu(C_\gamma)\) for every
cylinder. A monotone-class argument extends this to every Borel path event,
so the tail is independent of the full path field and hence trivial.
\end{proof}

\begin{proposition}[Extremality and the central tail]
\label{prop:central-ergodic-criterion}
A central law is extremal if and only if \(\mathscr G_\infty\) is trivial.
Equivalently, its normalized harmonic function spans a minimal ray of the
nonnegative harmonic cone.
\end{proposition}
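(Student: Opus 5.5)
The proof has two equivalences: (i) extremal \(\iff\) trivial tail, and (ii) extremal \(\iff\) the normalized harmonic function \(\varphi_\mu\) spans a minimal ray. The dictionary behind both is the bijection between central laws and normalized harmonic functions in \eqref{eq:harmonic-general}. This bijection is affine in the right sense: if \(\mu=\lambda\mu_1+(1-\lambda)\mu_2\) with \(\mu_i\) central, then \(\varphi_\mu=\lambda\varphi_{\mu_1}+(1-\lambda)\varphi_{\mu_2}\). Conversely, every nonnegative harmonic \(\psi\le\varphi_\mu\) with \(\psi(\one)>0\) normalizes to a central law via \eqref{eq:finite-central-lift-consistency} and the extension theorem.

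For (ii), I will use this to identify nontrivial convex decompositions of \(\mu\) with decompositions \(\varphi_\mu=\psi+(\varphi_\mu-\psi)\) into nonnegative harmonic functions. Here \(\psi\le\varphi_\mu\) forces \(\psi(\one)\in[0,1]\). If \(\psi(\one)\in\{0,1\}\), then \(\psi\) or \(\varphi_\mu-\psi\) vanishes at \(\one\). Harmonicity with nonnegative terms then forces it to vanish at every vertex reachable from \(\one\), which is all of \(\Gamma\). So \(\psi\) is a multiple of \(\varphi_\mu\) in these cases. If \(0<\psi(\one)<1\), minimality of the ray is exactly the statement that \(\psi/\psi(\one)=\varphi_\mu\), i.e.\ that \(\mu_1=\mu_2=\mu\) in the induced decomposition. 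This gives (ii) directly.

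For (i), first suppose the tail is nontrivial. Pick \(A\in\mathscr G_\infty\) with \(0<\mu(A)<1\), and set \(\mu_1=\mu(\cdot\mid A)\) and \(\mu_2=\mu(\cdot\mid A^c)\). The task is to show each \(\mu_i\) is central. I will compute \(\mu(C_\gamma\cap A)=\E_\mu[\ind_A\Prob_\mu(C_\gamma\mid\mathscr G_N)]\) for \(N\ge n\), using \(A\in\mathscr G_N\). By \eqref{eq:path-martin-general}, this equals \(\E_\mu[\ind_A K_x(X_N)]/\dimf(x)\), which depends on \(\gamma\) only through its endpoint \(x\). Hence \(\mu_1\) is uniform on histories given the endpoint, and likewise \(\mu_2\). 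Moreover \(\mu_1\ne\mu\), because \(\mu_1(A)=1\ne\mu(A)\) and \(A\) is Borel. So \(\mu=\mu(A)\mu_1+\mu(A^c)\mu_2\) is a nontrivial decomposition, and \(\mu\) is not extremal.

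Conversely, suppose \(\mathscr G_\infty\) is trivial and \(\mu=\lambda\mu_1+(1-\lambda)\mu_2\) with \(\mu_i\) central and \(0<\lambda<1\). Then \(\mu_1\ll\mu\), and I claim the density \(f=d\mu_1/d\mu\) can be taken \(\mathscr G_\infty\)-measurable; this is the main obstacle. The plan is to show \(f\) is \(\mathscr G_N\)-measurable for every \(N\), modulo null sets. Restricted to \(\mathscr F_N^0\), the density is \(\mu_1(C_\gamma)/\mu(C_\gamma)=\varphi_{\mu_1}(X_N)/\varphi_\mu(X_N)\), which depends only on \(X_N\). For a cylinder fixing the path up to level \(M\ge N\), the density of \(\mu_1\) relative to \(\mu\) is \(\varphi_{\mu_1}(X_M)/\varphi_\mu(X_M)\), which is \(\mathscr G_N\)-measurable. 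The martingale convergence theorem along \(M\to\infty\) then identifies \(f\) as the almost sure limit of these ratios. The limit is \(\mathscr G_N\)-measurable for each \(N\), hence measurable for the completed \(\mathscr G_\infty\). Triviality of the tail makes \(f\) almost surely constant, equal to \(1\), so \(\mu_1=\mu\). A small technical point is that \(\varphi_\mu(X_M)>0\) holds \(\mu\)-almost surely, which is immediate from the cylinder formula.

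Finally, the proof of \cref{prop:endpoint-martin-martingales} invoked this proposition, so there must be no circularity. The argument above uses only \eqref{eq:path-martin-general} and the martingale theorem, and neither depends on the extremality clause.
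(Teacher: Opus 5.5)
Your proof is correct and follows the paper's argument essentially step for step. You condition on a tail event using \eqref{eq:path-martin-general} for one direction, and for the converse you use the prefix martingale \(M_M^{\mu_1}(X_M)/M_M^{\mu}(X_M)\), whose limit is a tail-measurable version of \(d\mu_1/d\mu\); for the harmonic formulation you use the splitting \(\varphi=c(\psi/c)+(1-c)(\varphi-\psi)/(1-c)\). One point to phrase carefully: take the version of the density to be \(\limsup_M\) of the ratios, as the paper does. That function is \(\mathscr G_N\)-measurable everywhere for every \(N\), hence raw-tail measurable, whereas an almost-sure limit is only \(\mathscr G_N\)-measurable modulo null sets for each \(N\).
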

\begin{proof}
The conditional identities above give
for \(B\in\mathscr G_\infty\) and \(\gamma\in\mathsf H_n(x)\)
\begin{equation}\label{eq:central-tail-prefix-uniformity}
    \mu(C_\gamma\cap B)
       =\frac{\mu(\{X_n=x\}\cap B)}{\dimf(x)}.
\end{equation}
Thus conditioning on a tail event of probability strictly between zero and
one gives a nontrivial decomposition into central laws.

Conversely, suppose \(\mu=b\nu_1+(1-b)\nu_2\), where \(0<b<1\) and
\(\nu_1,\nu_2\) are central. Put \(H=d\nu_1/d\mu\). On prefix cylinders,
centrality gives
\begin{equation}\label{eq:rn-prefix-martingale-central}
    \E_\mu[H\mid\mathscr F_n]
       =\frac{M_n^{\nu_1}(X_n)}{M_n^\mu(X_n)}=:H_n,
\end{equation}
with zero assigned where the denominator vanishes. The martingale converges
to \(H\). For each fixed \(N\), all \(H_n\), \(n\ge N\), are
\(\mathscr G_N\)-measurable; therefore \(\limsup_n H_n\) is a raw-tail
version of \(H\). Tail triviality makes \(H\) constant, and
\(\E_\mu H=1\) gives \(\nu_1=\mu\), hence \(\nu_2=\mu\).

The harmonic formulation follows from the affine correspondence. Explicitly,
if \(0\le\psi\le\varphi\) are harmonic and \(c=\psi(\one)\in(0,1)\),
then \(\varphi=c(\psi/c)+(1-c)(\varphi-\psi)/(1-c)\). The cases \(c=0,1\)
follow from nonnegativity and reachability of all vertices.
\end{proof}

For the Hoffman graph, \(\Gamma_{n-1}=\T_n\) and \(\dimf(t)=u(t)\).
From now on all path coordinates are indexed by \emph{tree size}:
\(X_n\in\T_n\), and \(\mathsf E_n\) is the edge copy from \(X_n\) to
\(X_{n+1}\). Likewise, \(\mathsf H_n(t)\), for \(t\in\T_n\), now denotes
the weighted histories with \(n-1\) edges ending at \(t\). Accordingly,
\[
    \mathscr F_n^0=\sigma(\mathsf E_1,\ldots,\mathsf E_{n-1}),\qquad
    \mathscr G_n^0=\sigma(X_n,\mathsf E_n,\mathsf E_{n+1},\ldots).
\]
We write \(\mathscr C=\mathscr C(\Gamma)\) for this graph. The kernel
identities above retain their form, and
\(\varphi_\mu(s)=M_{|s|}^\mu(s)/u(s)\).

\section{Sampling kernels and the Martin compactification}
\label{sec:sampling-algebra}
\label{sec:martin-compactification}

\subsection{Finite sampling kernels}
For \(|s|\le |t|\), the sampling-normalized kernels of the grafting and
removal graphs are
\begin{equation}\label{eq:Kn}
    K^\uparrow(s,t)=\frac{u(s)n(s;t)}{u(t)},\qquad
    K^\downarrow(s,t)=\frac{d(s)m(s;t)}{d(t)}.
\end{equation}
\begin{proposition}[Agreement of the Hoffman kernels]\label{prop:kernels-equal}
The two kernels agree for all \(s,t\) with \(|s|\le |t|\).
\end{proposition}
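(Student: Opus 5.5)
The plan is a direct substitution using \cref{lem:hoffman-interval-relation} and the relation \(u=d/|\Aut|\). Since \(u(t)=d(t)/|\Aut(t)|\) for every \(t\in\T\), I will write
\[
    K^\uparrow(s,t)=\frac{u(s)n(s;t)}{u(t)}
    =\frac{d(s)\,n(s;t)\,|\Aut(t)|}{|\Aut(s)|\,d(t)}.
\]
By \cref{lem:hoffman-interval-relation}, \(n(s;t)|\Aut(t)|=|\Aut(s)|m(s;t)\) for all \(|s|\le|t|\), with the interval counts from \eqref{eq:interval-grafting-path-count}--\eqref{eq:interval-removal-path-count}. Substituting this and cancelling \(|\Aut(s)|\) gives \(d(s)m(s;t)/d(t)=K^\downarrow(s,t)\).

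I will check that the edge cases are covered by the same identity. If \(|s|=|t|\), the empty-product convention gives \(n(s;t)=m(s;t)=\ind_{\{s=t\}}\), and both kernels equal \(\ind_{\{s=t\}}\). If no saturated chain from \(s\) to \(t\) exists, both interval counts vanish, and both kernels are zero. No division by zero occurs, since \(u(t),d(t)\ge1\).

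No step is a real obstacle, since the substance is already in \cref{lem:hoffman-interval-relation}. The one point needing care is to use the interval-extended multiplicities in \eqref{eq:Kn}, not the single-cover ones, so that the lemma applies verbatim.

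Optionally, I may add a one-line probabilistic reading. \(K^\downarrow(s,t)\) is the probability that the labels \(0,\ldots,|s|-1\) of a uniform standard labelling of \(t\) induce shape \(s\): iterating maximal-label deletion counts exactly \(d(s)m(s;t)\) such labellings. By \cref{prop:path-quotient-labelling-bijection}, the corresponding count for quotient labellings is \(u(s)n(s;t)\). The projection \(\SL(t)\to\QSL(t)\) has fibres of constant size \(|\Aut(t)|\) and commutes with restriction of the shape. Hence the two sampling probabilities coincide, which gives a second proof and the sampling interpretation stated in the introduction.
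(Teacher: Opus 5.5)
Your proposal is correct and is the paper's proof: substituting \(u(r)=d(r)/|\Aut(r)|\) and the interval relation of \cref{lem:hoffman-interval-relation} is exactly the paper's one-line computation, which it runs from \(K^\downarrow\) to \(K^\uparrow\) rather than in your direction. Your optional probabilistic argument is also valid. It compares restriction counts of \(d(s)m(s;t)\) ordinary labellings and \(u(s)n(s;t)\) quotient labellings through the fibres of \(\SL(t)\to\QSL(t)\), each of size \(|\Aut(t)|\). This gives an independent proof that in fact re-derives the interval Hoffman relation.
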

\begin{proof}
By \cref{lem:hoffman-interval-relation} and \(d(r)=|\Aut(r)|u(r)\),
\[
    \frac{d(s)m(s;t)}{d(t)}
      =\frac{|\Aut(s)|u(s)m(s;t)}{|\Aut(t)|u(t)}
      =\frac{u(s)n(s;t)}{u(t)}.
\]
\end{proof}
We write \(K_s(t)\) for this common kernel, and put
\begin{equation}\label{eq:normalized-sampling-observable}
    A_s(t)=\frac{K_s(t)}{u(s)}=\frac{n(s;t)}{u(t)}.
\end{equation}
Both coordinates are set to zero when \(|t|<|s|\).

\begin{proposition}[Sampling interpretation]\label{prop:sampling-kernel}
Let \(t\in\T_N\) and choose a uniform quotient standard labelling \(L\)
of \(t\). For \(n\le N\), write \(\Res_n(t,L)\) for the rooted shape
induced by labels \(0,\ldots,n-1\). Then
\[
    \Prob\{\Res_n(t,L)=s\}=K_s(t),\qquad s\in\T_n.
\]
\end{proposition}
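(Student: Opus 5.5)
The plan is to reduce the statement to the compatible path bijections of \cref{prop:path-quotient-labelling-bijection} together with the path-count definition \eqref{eq:interval-grafting-path-count} of \(n(s;t)\). Fix \(t\in\T_N\) and \(s\in\T_n\) with \(n\le N\). By the bijection, a uniform element \(L\in\QSL(t)\) corresponds to a uniform weighted path \(\one=x_1\nearrow\cdots\nearrow x_N=t\), counted with edge copies. The simultaneous compatibility clause says that restricting \(L\) to the labels \(\{0,\ldots,n-1\}\) corresponds to truncating the path after \(n-1\) edges. Hence \(\Res_n(t,L)=x_n\), the level-\(n\) vertex of the path. I will note that \(\Res_n(t,L)\) is well defined on quotient classes, which is the remark before \cref{prop:path-quotient-labelling-bijection}.

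Next I count the weighted paths from \(\one\) to \(t\) that pass through \(s\) at size \(n\). Such a path is the concatenation of a weighted path from \(\one\) to \(s\) with a weighted path from \(s\) to \(t\). Conversely, any such pair concatenates to a valid path, since parallel edge copies are distinguished independently on each edge. The first factor has \(u(s)\) choices, by \cref{prop:dimensions}. The second factor has exactly \(\sum\prod n(s_i;s_{i+1})=n(s;t)\) choices, by \eqref{eq:interval-grafting-path-count}. Dividing by the total number \(u(t)\) of weighted paths to \(t\) gives
\[
\Prob\{\Res_n(t,L)=s\}=\frac{u(s)n(s;t)}{u(t)}=K_s(t).
\]

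As a consistency check, for \(n=N-1\) the result matches the last sentence of \cref{prop:path-quotient-labelling-bijection}: \(u(s)n(s;t)\) labellings have predecessor shape \(s\). For a more self-contained route, I could instead induct on \(N-n\) using that fibre count. Write \(\Res_n=\Res_n\circ\Res_{n+1}\), and observe that conditional on \(\Res_{n+1}(t,L)=r\), the restricted quotient labelling is uniform on \(\QSL(r)\). This holds because each element of \(\QSL(r)\) has the same number \(n(r;t)\) of extensions, again by path counting. Summing \(K_s(r)K_r(t)\) over \(r\) reproduces the chain sum defining \(n(s;t)\).

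The main obstacle is the conditional uniformity used in the inductive route. Each restricted class in \(\QSL(r)\) must have the same number of extensions to \(\QSL(t)\) for every intermediate shape \(r\), not merely along a single cover. Via the path picture this is immediate, because extensions of a fixed prefix path are weighted paths from \(r\) to \(t\). I will therefore rely on the fixed compatible family of bijections rather than on a direct orbit count. The edge cases \(n=N\) (where the probability is \(\ind_{\{s=t\}}\), matching the empty-product convention) and \(n=1\) are trivial.
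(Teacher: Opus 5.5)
Your proposal is correct and is essentially the paper's own proof. The compatible path bijections of \cref{prop:path-quotient-labelling-bijection} identify \(\Res_n(t,L)\) with the size-\(n\) vertex of a uniform weighted history to \(t\), and exactly \(u(s)n(s;t)\) of the \(u(t)\) histories pass through \(s\) at that size. The inductive alternative you sketch is also valid but not needed.
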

\begin{proof}
The initial labels form an ancestor-closed subtree, whose shape is unchanged
by changing the representative of \(L\). By
\cref{prop:path-quotient-labelling-bijection}, exactly \(u(s)n(s;t)\) of
the \(u(t)\) weighted histories pass through \(s\) at size \(n\).
\end{proof}

\begin{lemma}[Finite-level identities]\label{lem:finite-level-sampling-identities}
For \(t\in\T_N\),
\begin{equation}\label{eq:finite-kernel-identities}
    \sum_{s\in\T_n}K_s(t)=\ind_{\{n\le N\}},\qquad
    A_s(t)=\sum_{T:s\nearrow T}n(s;T)A_T(t)\quad(|s|<N).
\end{equation}
More generally, for \(|s|\le k\le N\),
\begin{equation}\label{eq:sampling-kernel-composition}
    K_s(t)=\sum_{r\in\T_k}K_s(r)K_r(t).
\end{equation}
\end{lemma}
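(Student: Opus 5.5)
The three identities follow from the sampling interpretation (\cref{prop:sampling-kernel}) and the path-counting definitions. The plan is to prove them in order, using only the interval path counts \eqref{eq:interval-grafting-path-count} and the dimension identity \(u=\dim_n\) from \cref{prop:dimensions}.

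\emph{Normalization.} For \(n\le N\), \cref{prop:sampling-kernel} says \(K_s(t)\) is the probability that the restriction \(\Res_n(t,L)\) of a uniform quotient standard labelling equals \(s\). Summing over \(s\in\T_n\) sums a probability law and gives \(1\). For \(n>N\), every \(s\in\T_n\) has \(|s|>|t|\), so every term is zero by the stated convention. The same identity also follows purely combinatorially: \(\sum_{s\in\T_n}u(s)n(s;t)\) counts weighted paths from \(\one\) to \(t\) split at level \(n\), which is \(u(t)\).

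\emph{Harmonic recursion for \(A_s\).} Fix \(|s|<N\). Every saturated chain from \(s\) to \(t\) has a unique first step \(s\nearrow T\), so \eqref{eq:interval-grafting-path-count} factors as
\[
n(s;t)=\sum_{T:s\nearrow T}n(s;T)\,n(T;t).
\]
Here \(|T|\le N\), and \(n(T;t)=0\) whenever no chain exists. Dividing by \(u(t)\) gives the identity, with the convention \(A_T(t)=0\) when \(|T|>|t|\) covering the degenerate terms.

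\emph{Composition.} Splitting chains from \(s\) to \(t\) at level \(k\), with \(|s|\le k\le N\), gives
\[
n(s;t)=\sum_{r\in\T_k}n(s;r)\,n(r;t).
\]
Multiplying by \(u(s)/u(t)\) and inserting \(u(r)/u(r)\) yields
\[
\frac{u(s)n(s;t)}{u(t)}=\sum_{r\in\T_k}\frac{u(s)n(s;r)}{u(r)}\cdot\frac{u(r)n(r;t)}{u(t)},
\]
which is \eqref{eq:sampling-kernel-composition}. Probabilistically, this says that restricting to labels \(<|s|\) factors through restricting to labels \(<k\). The restriction to \(k\) labels is then a uniform element of \(\QSL(r)\) given its shape \(r\), by the fibre counts in \cref{prop:path-quotient-labelling-bijection}.

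There is no real obstacle. The only points needing care are the conventions at the boundary cases, namely \(s=t\), \(|T|>N\), and \(n>N\). These are handled by the empty-product convention and by setting the kernels to zero when \(|t|<|s|\).
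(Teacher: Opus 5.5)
Your proof is correct and follows the paper's argument: the level sum comes from the sampling interpretation, and the composition identity comes from splitting weighted chains at level \(k\) and multiplying by \(u(s)/u(t)\). The only cosmetic difference is that you obtain the recursion for \(A_s\) by a direct first-step decomposition, whereas the paper reads it off as the case \(k=|s|+1\) of the composition identity divided by \(u(s)\).
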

\begin{proof}
The level sum follows from \cref{prop:sampling-kernel}. Decomposing each
weighted path at level \(k\) gives
\(n(s;t)=\sum_{r\in\T_k}n(s;r)n(r;t)\). Multiplication by
\(u(s)/u(t)\) proves \eqref{eq:sampling-kernel-composition}; the case
\(k=|s|+1\), divided by \(u(s)\), gives the identity for \(A_s\).
\end{proof}

\begin{example}[Uniform recursive trees]\label{ex:uniform-recursive-chain}
Attach each new vertex to a uniformly chosen existing vertex and forget the
arrival labels and sibling birth order. The resulting shape process has
transition \(P(s,t)=n(s;t)/n\), \(s\in\T_n\). Every specified increasing
labelled tree on \(\{0,\ldots,N-1\}\) has probability \(1/(N-1)!\),
so its rooted-shape marginal is
\begin{equation}\label{eq:uniform-recursive-marginal}
    M_N^{\mathrm{UR}}(t)=\frac{u(t)}{(N-1)!}.
\end{equation}
For \(s\in\T_n\) and \(t\in\T_N\),
\[
    P^{N-n}(s,t)=\frac{(n-1)!}{(N-1)!}\,n(s;t),\qquad
    \frac{P^{N-n}(s,t)}{M_N^{\mathrm{UR}}(t)}
       =\frac{K_s(t)}{M_n^{\mathrm{UR}}(s)}.
\]
Thus its usual Doob--Martin kernel and the sampling kernel give the same
compactification. This also identifies the graph's cotransitions with the
conditional past of the uniform recursive-tree shape process.
\end{example}

\begin{example}[Forgetting sibling birth order]
\label{ex:ordered-unordered-history}
Let \(t\) have four vertices, with a two-vertex chain and a singleton as
its root branches. There are three quotient increasing histories: the
singleton has label \(1\), \(2\), or \(3\), and the two remaining labels
increase along the chain. The first three labels form a chain only in the
last case, so \(K_{C_3}(t)=1/3\).
If sibling birth order is retained, there are two terminal states. Conditional
on the long branch having appeared first, two histories remain and the
probability is \(1/2\); if the singleton appeared first, it is zero.
Consequently forgetting sibling birth order changes the conditional-history
kernel. The birth-ordered Harris representation in
\cite{grubel2015recursive} retains precisely this additional information.
\end{example}

\subsection{The compactification and minimal boundary}
Finite sampling laws also describe Martin boundaries for erased-word
processes \cite[Section~2.2]{gerstenberg2020words} and erased-interval
processes \cite{gerstenberg2020interval}.
Here the sampling coordinates are the initial-history probabilities of
\cref{prop:sampling-kernel}. Define
\begin{equation}\label{eq:martin-coordinate-map}
    \iota_M(t)=(K_s(t))_{s\in\T},\qquad
    \Omega_M=\overline{\iota_M(\T)}\subseteq[0,1]^\T.
\end{equation}
The map is injective: equal-sized shapes are separated by their own
coordinates, and different sizes by the level sums in
\eqref{eq:finite-kernel-identities}. The space \(\Omega_M\) is compact
metrizable, and we identify \(\T\) with its image.

\begin{lemma}[Finite vertices and boundary level sums]
\label{lem:finite-vertices-isolated}
Every finite vertex is isolated. The boundary
\(\partial\Omega_M=\Omega_M\setminus\T\) is compact and equals
\begin{equation}\label{eq:martin-boundary-level-sums}
    \left\{x\in\Omega_M:
       \sum_{s\in\T_n}K_s(x)=1\text{ for every }n\ge1\right\}.
\end{equation}
Every boundary point is approached by finite trees whose sizes tend to
infinity.
\end{lemma}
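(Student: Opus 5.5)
The plan is to work directly with the coordinates \(\iota_M(t)=(K_s(t))_{s\in\T}\) and the level sums in \eqref{eq:finite-kernel-identities}. For \(t\in\T_N\), the level sum \(\sum_{s\in\T_n}K_s(t)\) equals \(1\) for \(n\le N\) and \(0\) for \(n>N\). Each \(\T_n\) is finite, so every level sum \(x\mapsto\sum_{s\in\T_n}K_s(x)\) is a finite sum of coordinate projections. It is therefore continuous on \([0,1]^\T\), and by density of \(\iota_M(\T)\) in \(\Omega_M\) it takes values only in \(\{0,1\}\) on \(\Omega_M\).

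\emph{Isolation.} Fix \(t\in\T_N\) and consider the open set
\[
U_t=\{x\in\Omega_M:K_t(x)>1/2,\ \textstyle\sum_{s\in\T_{N+1}}K_s(x)<1/2\}.
\]
Note that \(K_t(t)=1\), since \(n(t;t)=1\), and that \(K_t\) vanishes on trees of size different from \(N\): on trees of size \(<N\) by convention, and on trees of size \(>N\) for the following reason. For \(|r|>N\), the coordinate \(K_t(r)\) with \(t\in\T_N\) is a sampling probability, but \(t\) has size \(N\) while \(r\) has size \(>N\). So \(K_t(r)\) need not vanish. Instead I use both level sums: a finite tree \(r\) lies in \(U_t\) only if \(|r|\ge N\) (from \(K_t(r)>0\)) and \(|r|\le N\) (from the \(\T_{N+1}\) level sum being \(0\)). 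Hence \(|r|=N\), and then \(K_t(r)=\ind_{\{r=t\}}\) because equal-sized shapes are distinct vertices of one level, where \(n(t;r)=\ind_{\{t=r\}}\). Thus \(U_t\cap\iota_M(\T)=\{t\}\). Since \(U_t\) is open and \(\iota_M(\T)\) is dense in \(\Omega_M\), any point of \(U_t\) is a limit of points of \(U_t\cap\iota_M(\T)=\{t\}\), so \(U_t=\{t\}\) and \(t\) is isolated.

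\emph{Boundary.} Since \(\T\) is a union of isolated points, it is open in \(\Omega_M\), so \(\partial\Omega_M\) is closed in a compact space and hence compact. For the level-sum description, take \(x\in\partial\Omega_M\). By metrizability, \(x=\lim\iota_M(t_j)\). If infinitely many \(t_j\) had bounded size, there would be only finitely many such shapes, so some fixed shape \(t\) would recur infinitely often, forcing \(x=t\), which is a contradiction. Hence \(|t_j|\to\infty\); this also proves the last assertion of the lemma. For each fixed \(n\), the level sum equals \(1\) at \(t_j\) once \(|t_j|\ge n\), and passing to the limit gives \(\sum_{s\in\T_n}K_s(x)=1\).

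Conversely, a finite \(t\in\T_N\) has level sum \(0\) at \(n=N+1\), so it does not belong to the set in \eqref{eq:martin-boundary-level-sums}. This proves equality. The only delicate point is the isolation step: I avoid any claim that \(K_t\) vanishes on larger trees and rely solely on the two-sided level-sum constraint. This is where I expect a naive argument to go wrong.
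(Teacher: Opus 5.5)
Your proof is correct and is essentially the paper's argument: the same open set \(\{K_t>1/2,\ \sum_{r\in\T_{N+1}}K_r<1/2\}\) meeting \(\T\) only at \(t\), density to make it the singleton \(\{t\}\), and passage of the finite level sums to the limit along sequences whose sizes must tend to infinity. The only blemish is the opening clause of your isolation paragraph, which claims \(K_t\) vanishes on trees of size other than \(N\); this is false for larger trees and contradicts the correct argument you then give, so delete it.
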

\begin{proof}
For \(t\in\T_N\), the open set
\(\{K_t>1/2,\ \sum_{r\in\T_{N+1}}K_r<1/2\}\) meets \(\T\) only at
\(t\); density makes it the singleton \(\{t\}\) in \(\Omega_M\).
Hence the finite part is open. A sequence approaching a nonfinite point
must leave every finite union of levels. Passing the finite level sums to
the limit gives \eqref{eq:martin-boundary-level-sums}; conversely, a finite
vertex fails this condition at the next level.
\end{proof}

For \(\zeta\in\partial\Omega_M\), put
\(\varphi_\zeta(s)=A_s(\zeta)=K_s(\zeta)/u(s)\).
The identities in \eqref{eq:finite-kernel-identities} pass to the limit and
give
\begin{equation}\label{eq:boundary-harmonicity}
    \varphi_\zeta(s)=\sum_{T:s\nearrow T}n(s;T)\varphi_\zeta(T),
    \qquad \varphi_\zeta(\one)=1.
\end{equation}
Thus \(M_n^\zeta(s)=K_s(\zeta)\), \(s\in\T_n\), is a coherent
probability family and determines a central law. The \emph{minimal boundary}
\(\partial_{\min}\Omega_M\) consists of points for which this law is
extremal, equivalently for which \(\varphi_\zeta\) spans a minimal harmonic
ray. This agrees with the usual Martin-boundary terminology
\cite[Chapter~7]{woess2009denumerable}.

\begin{proposition}[Extremal laws and the minimal boundary]
\label{thm:abstract-rooted-tree-kv}
Extremal central laws are in bijection with \(\partial_{\min}\Omega_M\).
For an extremal law \(\mu\), its point \(\zeta_\mu\) is characterized by
\(K_s(\zeta_\mu)=M_{|s|}^\mu(s)\).
\end{proposition}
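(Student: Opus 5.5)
We have to produce two maps. The first sends each extremal central law \(\mu\) to a point \(\zeta_\mu\in\partial_{\min}\Omega_M\) with \(K_s(\zeta_\mu)=M^\mu_{|s|}(s)\). The second sends each \(\zeta\in\partial_{\min}\Omega_M\) to its associated central law, which is extremal by the definition of the minimal boundary. We then check that the two maps are mutually inverse.

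\emph{Existence of \(\zeta_\mu\).} Fix an extremal \(\mu\in\mathscr C\). By \cref{prop:endpoint-martin-martingales}, extremality gives \(K_s(X_N)\to M^\mu_{|s|}(s)\) \(\mu\)-almost surely, simultaneously for all \(s\in\T\), since \(\T\) is countable. Choose one path in this full-measure set. Along it, the vectors \(\iota_M(X_N)\in\Omega_M\) converge coordinatewise to the vector
\[
    z:=\bigl(M^\mu_{|s|}(s)\bigr)_{s\in\T}.
\]
Hence \(z\in\Omega_M\), because \(\Omega_M\) is closed in \([0,1]^\T\). The coordinates at each level sum to one, since \(M^\mu_n\) is a probability law, so \cref{lem:finite-vertices-isolated} places \(z\) in \(\partial\Omega_M\). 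Set \(\zeta_\mu:=z\). Its coherent family \(K_s(\zeta_\mu)\) is exactly \((M^\mu_n)_n\). A central law is determined by its marginals through \eqref{eq:central-cylinder-definition} and \cref{lem:path-cylinder-algebra}, so the central law of \(\zeta_\mu\) is \(\mu\) itself. As \(\mu\) is extremal, \(\zeta_\mu\in\partial_{\min}\Omega_M\).

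\emph{Inverse map and bijectivity.} For \(\zeta\in\partial_{\min}\Omega_M\), let \(\mu_\zeta\) be the central law with marginals \(M_n^\zeta(s)=K_s(\zeta)\); it is extremal by definition. By the characterization above, \(\zeta_{\mu_\zeta}\) has coordinates \(M^{\mu_\zeta}_{|s|}(s)=K_s(\zeta)\). A point of \(\Omega_M\subseteq[0,1]^\T\) is its coordinate vector, so \(\zeta_{\mu_\zeta}=\zeta\). In the other direction, \(\mu_{\zeta_\mu}=\mu\) was shown in the previous paragraph. Injectivity of \(\zeta\mapsto\mu_\zeta\) is also immediate, because distinct boundary points differ in some coordinate \(K_s\), hence in some marginal.

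\emph{Main point to check.} The only substantive step is that the almost-sure limit \(z\) is a genuine point of the compactification and lies in the boundary, not at a finite vertex. This uses that the convergence holds along an actual sequence of finite trees in \(\iota_M(\T)\). It also uses the level-sum characterization of \(\partial\Omega_M\) in \cref{lem:finite-vertices-isolated}. Everything else follows from \cref{prop:endpoint-martin-martingales}, \cref{prop:central-ergodic-criterion}, and uniqueness of central laws with given marginals.
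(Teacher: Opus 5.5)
Your proposal is correct and follows essentially the same route as the paper. In both, extremality gives simultaneous almost-sure convergence of the endpoint kernels (\cref{prop:endpoint-martin-martingales}), and closedness of \(\Omega_M\) together with the unit level sums places the limit in \(\partial\Omega_M\); uniqueness of central laws with given marginals shows its law is \(\mu\), and coordinates give the inverse map and uniqueness.
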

\begin{proof}
By \cref{prop:endpoint-martin-martingales}, under an extremal law all
\(K_s(X_N)\) converge to these deterministic values on one event of
probability one. This yields a point of \(\Omega_M\), and the level sums
place it on the boundary. Its central law is \(\mu\), so it is minimal.
The converse follows from the definition, and the coordinates give uniqueness.
\end{proof}

\begin{example}[Birth order and boundary limits]
\label{ex:ordered-boundary-nonextension}
Let \(\mathcal H\) be the finite rooted trees whose siblings are ordered
by birth, and let \(\overline{\mathcal H}_M\) be the Martin
compactification of uniform attachment with this order retained. Its
boundary is described in \cite[Theorem~6.1]{evans2012trickle}, with
parameters \((\alpha,\theta)=(0,1)\); see also
\cite[Section~2.3]{grubel2015recursive}. Write
\(\pi_{\mathrm{ord}}:\mathcal H\to\T\) for the map that forgets
sibling order. If \((Y_m)_{m\ge1}\) is the ordered growth chain, its
sampling-normalized Martin coordinates are
\[
    \mathcal K_b(g)
       =\Prob\{Y_m=b\mid Y_{|g|}=g\},
       \qquad |b|=m\le |g|.
\]
Set \(\mathcal K_b(g)=0\) when \(|b|>|g|\). Dividing each coordinate
by \(\Prob\{Y_{|b|}=b\}>0\) gives the usual Martin coordinates,
so this normalization leaves the compactification unchanged.

For \(n\ge2\), let \(h_n\in\mathcal H\) have \(n\) root branches:
the first \(n-1\) are singletons and the last is the chain \(C_n\).
Let \(s_n\) be the birth-ordered star with \(2n\) vertices, so
\(|h_n|=|s_n|=2n\). For each fixed \(m\le n\), every history ending
at \(h_n\) has an \(m\)-vertex star as its initial restriction.
Indeed, the first \(n-1\) children must appear before the last branch,
and none of them has descendants. The same initial restriction occurs in
all histories ending at \(s_n\). Thus, for each \(b\in\mathcal H\)
with \(|b|=m\), both \(\mathcal K_b(h_n)\) and
\(\mathcal K_b(s_n)\) equal \(1\) when \(b\) is the \(m\)-vertex
star and \(0\) otherwise, once \(n\ge m\). The coordinates stabilize,
so both sequences have the same limit in \(\overline{\mathcal H}_M\).

After forgetting order, uniform root-label allocation gives
\begin{equation}\label{eq:ordered-unordered-boundary-separation}
    K_{C_3}(\pi_{\mathrm{ord}}(h_n))
       =\frac{n(n-1)}{(2n-1)(2n-2)}\longrightarrow\frac14,
    \qquad
    K_{C_3}(\pi_{\mathrm{ord}}(s_n))=0.
\end{equation}
The first event occurs precisely when the two non-root sample labels both
lie in the long branch; its probability follows from the root-label
allocation in the hook-formula proof. Since \(K_{C_3}\) is continuous
on \(\Omega_M\), the map \(\pi_{\mathrm{ord}}\) has no continuous
extension from \(\overline{\mathcal H}_M\) to \(\Omega_M\).
\end{example}

\section{The recursive-paintbox boundary}
\label{sec:recursive-paintbox-boundary}
We identify the full Martin boundary with recursive paintboxes modulo
equality of finite sampling laws. Every boundary point is minimal and is
the limit of a deterministic growing path. Under any central law, the
limiting boundary point generates the completed tail field and determines
the unique mixture of extremal central laws.

Our sampling rule extends the one-label-per-vertex split-tree construction
\cite{devroye1998universal,janson2019split} to deterministic splits with
dust. It is also related to nested occupancy schemes
\cite{gnedin2020nested} and recursive partition structures
\cite{gnedin2006recursive}.

\subsection{Recursive paintboxes}
\label{subsec:recursive-paintboxes}

Let
\[
    \Ulam=\{\varnothing\}\cup\bigcup_{d\ge1}\N^d
\]
be the Ulam tree. For words \(u,v\in\Ulam\), write \(uv\) for their
concatenation, with \(u\varnothing=\varnothing u=u\); an integer
\(i\in\N\) is also viewed as a one-letter word. Thus \(wi\) appends a
child index and \(iw\) prepends one. Write \(|w|=d\) for the length of a word \(w\in\N^d\), with
\(|\varnothing|=0\). For \(u,v\in\Ulam\), write \(u\preceq v\) when
\(u\) is a prefix of \(v\), and \(u\prec v\) for a proper prefix.
\begin{definition}[Ulam-indexed recursive paintbox representative]
\label{def:ulam-recursive-paintbox-representative}
A deterministic recursive
paintbox representative is an array
\[
    \mathbf p=(p_{w,i}:w\in\Ulam,\ i\in\Nzero)
\]
such that, for every word \(w\),
\[
    p_{w,i}\ge0\quad(i\ge1),
    \qquad
    \sum_{i\ge1}p_{w,i}\le1,
    \qquad
    p_{w,0}=1-\sum_{i\ge1}p_{w,i}.
\]
We choose representatives in which the positive coordinates are listed first
in nonincreasing order.  If only \(r<\infty\) coordinates are positive, set
\(p_{w,i}=0\) for \(i>r\). The split \(p_{w,0}=1\) is called
\emph{pure dust}. Equal masses may be ordered arbitrarily, with their
descendant subarrays carried along. Write \(\omega=\mathbf p\). For every \(v\in\Ulam\), the
descendant representative \(\omega_v\) is the shifted array
\((\omega_v)_{w,j}=p_{vw,j}\), for \(w\in\Ulam\) and
\(j\in\Nzero\). In particular, \(\omega_\varnothing=\omega\), and
\(\omega_i\) is the representative rooted at child \(i\). We use the shorthand
\[
    \omega=\{(p_i,\omega_i):p_i>0\},
    \qquad p_i=p_{\varnothing,i},
\]
and call \(p_0=p_{\varnothing,0}\) the root dust mass.
\end{definition}
Permuting child masses together with their descendant subarrays preserves
every finite non-plane sampling law: a finite sample visits only finitely
many nodes, where these permutations relabel the child indices without
changing the shape. Subtrees below zero-mass children are never sampled.

\begin{definition}[Sampling from a recursive paintbox]\label{def:recursive-paintbox-sampling}
Fix a deterministic recursive paintbox \(\omega\), write its root
decomposition as \(\omega=\{(p_i,\omega_i):p_i>0\}\), and let
\(p_0=1-\sum_{i\ge1}p_i\) be its root dust mass.  Let \(B\) be a finite
nonempty totally ordered set of labels.  We construct a random labelled rooted
tree \(\widetilde T_B(\omega)\) recursively.
\begin{enumerate}[label=(\roman*)]
    \item If $|B|=1$, take the one-vertex tree with the unique label in $B$.
    \item If $|B|\ge2$, let $b_*:=\min B$.  This label is the root of the current
    subtree.
    \item Each label $x\in B\setminus\{b_*\}$ independently chooses child box
    $i\ge1$ with probability $p_i$, or dust with probability $p_0$.
    \item Dust labels become leaf children of $b_*$.
    \item For each nonempty box \(i\), let \(B_i\) be the labels assigned to
    \(i\).  Independently across occupied boxes, attach to \(b_*\) a recursive
    sample \(\widetilde T_{B_i}(\omega_i)\) constructed with fresh
    independent randomness at the child vertices.
\end{enumerate}
The labels are retained in \(\widetilde T_B(\omega)\). Since the least
label becomes the root at each recursive step, they increase strictly along
every root-to-leaf path. Write \(\widehat T_B(\omega)\) for its class
up to label-preserving rooted isomorphism and \(T_B(\omega)\) for its
unlabelled rooted-tree shape.  For \(B_n=\{0,1,\ldots,n-1\}\), write
\[
    \widetilde T_n(\omega):=\widetilde T_{B_n}(\omega),
    \qquad
    \widehat T_n(\omega):=\widehat T_{B_n}(\omega),
    \qquad
    T_n(\omega):=T_{B_n}(\omega),
\]
and
\[
    M_n^\omega(s):=\Prob(T_n(\omega)=s),
    \qquad s\in\T_n.
\]
\end{definition}

Each recursive step uses fewer labels, so the construction terminates within
\(|B|\) levels, including along branches of mass one. At each vertex, the
remaining labels are allocated according to Kingman's paintbox
\cite{kingman1978representation}.

\begin{lemma}[Consistency under maximal-label deletion]
\label{lem:paintbox-max-label-consistency}
Let \(B\) be a finite totally ordered label set with \(|B|\ge2\), and
let \(b^*=\max B\).  Fix a deterministic recursive paintbox \(\omega\).
The labelled samples \(\widetilde T_B(\omega)\) and \(\widetilde T_{B\setminus\{b^*\}}(\omega)\)
can be coupled, using the same box choices for all surviving labels, so
that \(b^*\) is a leaf and
\[
    \widetilde T_B(\omega)\setminus\{b^*\}
    =
    \widetilde T_{B\setminus\{b^*\}}(\omega)
\]
almost surely.

In particular, for \(B=\{0,1,\ldots,n\}\),
\[
    \widetilde T_{n+1}(\omega)\setminus\{n\}
    =
    \widetilde T_n(\omega)
    \qquad\text{almost surely}.
\]
\end{lemma}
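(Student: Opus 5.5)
We prove the lemma by induction on \(|B|\), building the coupling from one family of independent box choices shared by both samples. Let \(B'=B\setminus\{b^*\}\). Since \(|B|\ge2\), the maximum \(b^*\) is not \(\min B\), so both constructions use the same root label \(b_*=\min B=\min B'\) at the top level, unless \(|B'|=1\). In that case \(\widetilde T_{B'}(\omega)\) is the single vertex \(b_*\). Then \(\widetilde T_B(\omega)\) consists of \(b_*\) with one child \(b^*\), which is either a dust leaf or the root of a one-label subsample \(\widetilde T_{\{b^*\}}(\omega_i)\). Either way \(b^*\) is a leaf, and deleting it leaves \(b_*\), as required.

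For \(|B'|\ge2\), we let each label \(x\in B'\setminus\{b_*\}\) make a single box choice \(c(x)\in\Nzero\) with law \((p_i)_{i\ge0}\), used in both samples, and let \(b^*\) make an extra independent choice \(c(b^*)\). The two samples then agree on the dust leaves other than \(b^*\), and on every occupied box \(i\ne c(b^*)\) they have the same label set \(B_i=B'_i\). For those boxes we reuse the same recursive randomness, so the attached subtrees coincide. Two cases remain. If \(c(b^*)=0\), then \(b^*\) is a dust leaf of \(b_*\), and removing it recovers \(\widetilde T_{B'}(\omega)\). If \(c(b^*)=i\ge1\), then \(B_i=B'_i\cup\{b^*\}\) and \(b^*=\max B_i\). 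When \(B'_i\) is empty, box \(i\) is occupied only in the larger sample, its subtree is the single leaf \(b^*\), and deleting it gives the smaller sample. When \(B'_i\) is nonempty, \(|B_i|<|B|\), so the induction hypothesis applied to \(\omega_i\) gives a coupling of \(\widetilde T_{B_i}(\omega_i)\) and \(\widetilde T_{B'_i}(\omega_i)\) in which \(b^*\) is a leaf and its deletion gives the smaller subtree. We use this coupling for box \(i\).

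The main obstacle is to check that each coordinate of this joint construction has the correct marginal law. For the first coordinate this is immediate: the box choices of all labels in \(B'\setminus\{b_*\}\) together with \(c(b^*)\) are i.i.d. with law \((p_i)\), and the subsamples are built independently given the allocation, with the subsample in box \(c(b^*)\) taken from the inductive coupling, whose first marginal is correct. For the second coordinate, the choices \(c(x)\), \(x\in B'\setminus\{b_*\}\), are i.i.d. with the right law. The subsample in each box, including box \(c(b^*)\) through the second marginal of the inductive coupling, has the correct conditional law given the allocation. Independence across boxes holds because the recursive randomness is fresh for each box. To keep this clean we formalize the coupling as measurable functions of an independent family of uniforms indexed by labels and by Ulam addresses.

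The case \(B=\{0,\ldots,n\}\) is the special case with \(b^*=n\).
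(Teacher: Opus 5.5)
Your proposal is correct and follows essentially the same route as the paper: induction on \(|B|\) with shared box choices for the surviving labels and an unchanged local root, where \(b^*\) choosing dust or landing alone in a positive box makes it an isolated leaf, and \(b^*\) landing in a box with smaller labels is handled by the induction hypothesis for the child environment. Your explicit check that both marginals have the correct sampling law goes beyond the paper's proof, which leaves that point implicit. Realizing the coupling through uniforms indexed by labels and Ulam addresses matches the array the paper later uses in \cref{prop:natural-paintbox-path-law}.
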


\begin{proof}
Use the same box choices for every surviving label and argue by induction on
\(|B|\). The local root \(\min B\) is unchanged. If \(b^*\) chooses dust
or is alone in a positive box, it is a leaf whose deletion affects no other
label. Otherwise it lies in a positive box with at least one smaller label.
Apply the induction hypothesis to that strict child block, leaving all other
blocks unchanged. In every case the parent--child relations among surviving
labels agree, proving the deletion identity.
\end{proof}

\begin{corollary}[Consistency under restriction to an initial segment]
\label{cor:paintbox-initial-segment-consistency}
Let \(B\) be a finite totally ordered label set, and let
\(A\subseteq B\) be a nonempty initial segment: whenever \(a\in A\) and
\(b<a\) in \(B\), one has \(b\in A\).  Fix a deterministic recursive
paintbox \(\omega\).

The samples \(\widetilde T_B(\omega)\) and
\(\widetilde T_A(\omega)\) can be coupled using the same box choices for
the labels in \(A\) so that deleting the labels of \(B\setminus A\) in
decreasing order from \(\widetilde T_B(\omega)\) yields
\(\widetilde T_A(\omega)\).  Every deleted label is a leaf at the moment
of its deletion.
\end{corollary}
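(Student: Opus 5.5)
We argue by induction on \(k=|B\setminus A|\), iterating \cref{lem:paintbox-max-label-consistency}. Write the labels of \(B\setminus A\) in decreasing order as \(b_1>b_2>\cdots>b_k\). Since \(A\) is a nonempty initial segment, every \(b_j\) exceeds every element of \(A\). Hence \(b_j=\max\bigl(B\setminus\{b_1,\ldots,b_{j-1}\}\bigr)\), and each intermediate set \(B^{(j)}:=B\setminus\{b_1,\ldots,b_j\}\) is again an initial segment containing \(A\), so in particular it is nonempty. If \(k=0\) there is nothing to prove; the identity coupling works.

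For the coupling, realize all samples on one probability space. At every local root, each label independently draws its box choice; we use the same recursive, fresh-per-vertex randomness for every label set. Concretely, we fix the randomness driving \(\widetilde T_B(\omega)\) and construct each \(\widetilde T_{B^{(j)}}(\omega)\) by discarding the choices of the removed labels. Under this coupling, \cref{lem:paintbox-max-label-consistency} applies to the pair \(B^{(j-1)}\supseteq B^{(j)}\) with \(b^*=b_j\), since \(|B^{(j-1)}|\ge2\) whenever \(j\le k\) and \(A\neq\varnothing\). It states that \(b_j\) is a leaf of \(\widetilde T_{B^{(j-1)}}(\omega)\) and that
\[
\widetilde T_{B^{(j-1)}}(\omega)\setminus\{b_j\}=\widetilde T_{B^{(j)}}(\omega)
\]
almost surely. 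Chaining these \(k\) almost-sure identities gives the result: starting from \(\widetilde T_B(\omega)\) and successively deleting \(b_1,\ldots,b_k\), each deletion removes a leaf, and the final tree is \(\widetilde T_{B^{(k)}}(\omega)=\widetilde T_A(\omega)\).

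The only delicate point is that the couplings in the lemma must be compatible with one another, so that they compose into a single coupling. The lemma's coupling uses the same box choices for surviving labels. The recursive structure is determined by those choices, together with the fact that the local roots (minimal labels of blocks) are unchanged by deleting a maximal label. So one global assignment of box choices to labels in \(B\), indexed along the Ulam address reached so far, simultaneously induces all the restricted samples. Each restricted sample also has the correct marginal law, because the choices of the surviving labels are still independent with the prescribed probabilities. Verifying this consistency, rather than any computation, is the main step; the remaining argument is the finite induction above.
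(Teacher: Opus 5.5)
Your proposal is correct and matches the paper's proof, which likewise deletes the labels of \(B\setminus A\) in decreasing order and applies \cref{lem:paintbox-max-label-consistency} at each step with the same surviving box choices. Your extra checks---that each \(b_j\) is the maximum of the current label set because \(A\) is an initial segment, and that one Ulam-indexed array of box choices makes the successive couplings compatible---are details the paper leaves implicit.
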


\begin{proof}
Delete the labels of \(B\setminus A\) in decreasing order, applying
\cref{lem:paintbox-max-label-consistency} with the same surviving box choices
at each step.
\end{proof}

\begin{lemma}[Order equivariance of finite recursive samples]
\label{lem:paintbox-order-equivariance}
Let \(B,B'\) be nonempty finite totally ordered label sets of the same
cardinality, and let \(\phi:B\to B'\) be the unique increasing bijection.  For every deterministic
recursive paintbox \(\omega\), relabelling every vertex of
\(\widetilde T_B(\omega)\) by \(\phi\) gives a random labelled tree with the
same law as \(\widetilde T_{B'}(\omega)\).
\end{lemma}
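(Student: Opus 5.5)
We argue by strong induction on \(n=|B|\), proving the statement simultaneously for every deterministic recursive paintbox \(\omega\). This is needed because the recursion in \cref{def:recursive-paintbox-sampling} passes from \(\omega\) to the child paintboxes \(\omega_i\). For \(n=1\) both samples are deterministic one-vertex trees, with labels \(\min B\) and \(\phi(\min B)=\min B'\), so relabelling by \(\phi\) gives exactly the second sample.

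For \(n\ge2\), we couple the two constructions at the top step. Since \(\phi\) is increasing, \(\phi(\min B)=\min B'\), so the local roots correspond. The labels \(x\in B\setminus\{\min B\}\) make independent box choices \(\beta(x)\in\Nzero\) with law \((p_i)_{i\ge0}\). We define the choices in \(B'\) by \(\beta'(\phi(x)):=\beta(x)\). These are again independent with the same law, so this is a valid realization of step (iii) for \(B'\). With this coupling, the dust labels of \(B'\) are exactly the \(\phi\)-images of the dust labels of \(B\), and for each \(i\ge1\) the block is \(B'_i=\phi(B_i)\). The restriction \(\phi|_{B_i}:B_i\to B'_i\) is the unique increasing bijection between these two totally ordered sets, which have the same size, strictly smaller than \(n\).

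Now we condition on the top-level allocation \(\beta\). By construction, the subtrees \(\widetilde T_{B_i}(\omega_i)\) over occupied boxes are independent. The induction hypothesis, applied to the paintbox \(\omega_i\) and the map \(\phi|_{B_i}\), says that relabelling \(\widetilde T_{B_i}(\omega_i)\) by \(\phi|_{B_i}\) has the law of \(\widetilde T_{B'_i}(\omega_i)\). Independence across boxes then shows that the family of relabelled subtrees has the same conditional joint law as the family \((\widetilde T_{B'_i}(\omega_i))_i\).

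Assembling the pieces, relabelling all of \(\widetilde T_B(\omega)\) by \(\phi\) gives the root \(\min B'\), dust leaves \(\phi(\text{dust})\), and the relabelled subtrees attached below the root. Its conditional law given \(\beta\) is therefore that of \(\widetilde T_{B'}(\omega)\) given \(\beta'\). Since \(\beta\) and \(\beta'\) have the same law, integrating over the allocation gives equality of the unconditional laws. There is no real obstacle. The only point needing care is to state the induction for all paintboxes at once, and to note that \(\phi\) restricted to each block is still the increasing bijection onto the corresponding block, so the induction hypothesis applies.
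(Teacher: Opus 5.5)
Your proof is correct and follows the paper's argument. You induct on \(|B|\) over all environments at once, and you couple the box choices through \(\beta'(\phi(x))=\beta(x)\), so the blocks correspond under increasing bijections. You then apply the induction hypothesis inside each positive block while dust labels remain leaves. The only cosmetic difference is that the paper states the conclusion as a pathwise coupling of the whole labelled trees, whereas you combine a top-level coupling with conditional equality in law and then integrate over the allocation.
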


\begin{proof}
Induct on \(|B|\), the singleton case being immediate. The increasing
bijection \(\phi\) preserves the local root. Give each surviving label
and its image the same box symbol; the resulting blocks are carried onto
one another by increasing bijections. Each positive block is smaller than
\(B\), so induction applies inside it, while dust labels are leaves in
both samples. The coupled labelled trees are therefore related by \(\phi\).
\end{proof}

\begin{proposition}[Central lifts and coherence for recursive paintboxes]
\label{prop:paintbox-central}
Let \(\omega\) be a deterministic recursive paintbox. For every
\(n\ge1\), \(s\in\T_n\), and \(L\in\QSL(s)\),
\[
    \Prob_\omega\{\widehat T_n(\omega)=(s,L)\}
    =
    \frac{M_n^\omega(s)}{u(s)}.
\]
The shape laws \((M_n^\omega)_{n\ge1}\) form a coherent family for the
Hoffman leaf-grafting Bratteli graph:
\[
    M_n^\omega(s)
    =
    \sum_{T:s\nearrow T}
    M_{n+1}^\omega(T)
    \frac{u(s)n(s;T)}{u(T)},
    \qquad s\in\T_n.
\]
\end{proposition}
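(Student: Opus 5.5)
The plan is to prove that the law of the quotient-labelled sample \(\widehat T_n(\omega)\) is invariant under relabelling within a fixed shape, i.e.\ uniform over \(\QSL(s)\) given \(T_n(\omega)=s\). Coherence then follows from \cref{lem:paintbox-max-label-consistency} together with the counting in \cref{prop:path-quotient-labelling-bijection}.

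\emph{Uniformity.} Argue by induction on \(n\); the case \(n=1\) is trivial. Fix \(s=\langle t_1,\ldots,t_r\rangle\) and a quotient labelling \((s,L)\). Its root branches carry label sets \(B^{(1)},\ldots,B^{(r)}\subseteq\{1,\ldots,n-1\}\), with quotient labellings \(L_j\) of the \(t_j\). The event \(\{\widehat T_n(\omega)=(s,L)\}\) is a finite disjoint union over \emph{assignments}: each non-singleton branch is sent to a distinct positive box \(i_j\). Each singleton branch goes either to dust or to a distinct positive box whose subsample is \(\one\). Conditional on an assignment, the probability factorizes as
\[
    \prod_{j}p_{i_j}^{|B^{(j)}|}\cdot p_0^{\#\text{dust}}\cdot
    \prod_j\Prob\{\widehat T_{B^{(j)}}(\omega_{i_j})=(t_j,L_j)\}.
\]
By \cref{lem:paintbox-order-equivariance} and the induction hypothesis, each last factor equals \(M^{\omega_{i_j}}_{|t_j|}(t_j)/u(t_j)\). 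So the whole expression depends on \(L\) only through the multiset of branch sizes and shapes. We must also account for isomorphic branches: summing over assignments modulo the labelled isomorphisms gives a sum over maps \(\{\text{branches}\}\to\{\text{boxes}\}\cup\{\text{dust}\}\). The description of this sum is intrinsic to the unlabelled branch multiset, because relabelling only changes which label set sits in which branch, not the factors. Hence \(\Prob\{\widehat T_n(\omega)=(s,L)\}\) is the same for all \(L\in\QSL(s)\). Summing over the \(u(s)\) labellings gives the value \(M_n^\omega(s)/u(s)\).

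\emph{Coherence.} By \cref{lem:paintbox-max-label-consistency}, \(\widehat T_n(\omega)\) is obtained from \(\widehat T_{n+1}(\omega)\) by deleting the maximal label. Hence
\[
    \frac{M_n^\omega(s)}{u(s)}=\Prob\{\widehat T_n=(s,L)\}
    =\sum_{(T,L')}\Prob\{\widehat T_{n+1}=(T,L')\},
\]
where the sum runs over the extensions of \((s,L)\). By \cref{prop:path-quotient-labelling-bijection}, there are \(n(s;T)\) such extensions in each \(\QSL(T)\) with \(s\nearrow T\). The uniformity step gives each of them probability \(M_{n+1}^\omega(T)/u(T)\). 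Multiplying by \(u(s)\) yields the displayed coherence identity.

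\emph{Main obstacle.} The delicate step is the bookkeeping of automorphisms in the uniformity argument: making precise that the assignment sum does not depend on \(L\) when several root branches are isomorphic, or when singleton branches may be either dust or singleton boxes. If that becomes messy, I would instead work with labelled samples \(\widetilde T_n\). The claim to prove there is that \(\Prob\{\widetilde T_n\cong(s,\lambda)\}\) is constant over \(\lambda\in\SL(s)\) for a fixed representative. This follows by a cleaner induction, since the labelled root branches are then distinguishable; it descends to \(\QSL(s)\) because the \(\Aut(s)\)-action is free. An alternative route is to show directly that the labelled sample law is invariant under any label permutation preserving the shape-increasing structure.
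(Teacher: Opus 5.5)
Your proposal is correct and follows essentially the paper's proof. Both decompose $\{\widehat T_n(\omega)=(s,L)\}$ over root assignments, with singleton branches allowed to go to dust. Both use \cref{lem:paintbox-order-equivariance} together with induction over all environments to turn each child factor into the constant $M^{\omega_{i_j}}_{|t_j|}(t_j)/u(t_j)$. Both conclude uniformity because permuting the branches permutes the admissible assignments. Coherence then comes, in both, from maximal-label deletion and the $n(s;T)$-extension count of \cref{prop:path-quotient-labelling-bijection}.

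Two small corrections. The union over assignments is countable rather than finite when there are infinitely many positive boxes; this is harmless because the nonnegative series is bounded by one and can be rearranged. Also, no quotient by labelled isomorphisms is needed, since the label sets $B^{(j)}$ already distinguish isomorphic branches, so your labelled-sample fallback is unnecessary.
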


\begin{proof}
For the central-lift identity, fix \(s\in\T_n\) and \(L\in\QSL(s)\).
The root branches are distinguished by their nonempty label sets, even
when their shapes are isomorphic. Write
\[
    (s_1,L_1;B_1),\ldots,(s_r,L_r;B_r),
    \qquad
    B_1\sqcup\cdots\sqcup B_r=\{1,\ldots,n-1\}.
\]
An admissible root assignment is a map
\(\alpha:\{1,\ldots,r\}\to\Nzero\) whose positive values are pairwise
distinct and for which \(\alpha(j)=0\) is allowed only when \(|B_j|=1\).
The value \(0\) represents a dust singleton.
Put \(J(\alpha)=\{j:\alpha(j)>0\}\) and
\(D(\alpha)=\{j:\alpha(j)=0\}\). Every realization of \((s,L)\)
determines one admissible assignment through its root-box symbols.
Conditioning on this disjoint decomposition and using independence of the
child samples gives
\begin{equation}\label{eq:paintbox-labelled-prob-recursion}
\begin{aligned}
    \Prob_\omega\{\widehat T_n=(s,L)\}
    =\sum_{\alpha\ \mathrm{admissible}}
       \prod_{j\in D(\alpha)}p_0
       \prod_{j\in J(\alpha)}p_{\alpha(j)}^{|B_j|}
       \prod_{j\in J(\alpha)}
       \Prob_{\omega_{\alpha(j)}}
       \{\widehat T_{B_j}=(s_j,L_j)\}.
\end{aligned}
\end{equation}
The nonnegative series is bounded by one and hence converges absolutely.

We show by induction on \(n\), simultaneously for all deterministic
environments, that the probability in
\eqref{eq:paintbox-labelled-prob-recursion} depends only on \(s\).
For \(n=1\), it equals one. By
\cref{lem:paintbox-order-equivariance} and the induction hypothesis,
for each child environment \(\eta\) and shape \(r\) with \(|r|<n\),
there is a number
\(c_\eta(r)\) such that
\[
    \Prob_\eta\{\widehat T_B=(r,L_B)\}=c_\eta(r)
\]
for every quotient increasing labelling \(L_B\) of \(r\) on every
ordered label set \(B\) of the appropriate size.  Substituting these
constants into
\eqref{eq:paintbox-labelled-prob-recursion}, the resulting expression depends
on \(L\) only through the multiset of root-branch shapes \(s_j\), since
\(|B_j|=|s_j|\). Permuting the branches permutes the admissible assignments
in the sum.  Thus
\[
    \Prob_\omega\{\widehat T_n=(s,L)\}=c_\omega(s)
\]
for every quotient standard labelling \(L\) of \(s\).  Since there are
\(u(s)\) quotient standard labellings of \(s\),
\[
    M_n^\omega(s)=u(s)c_\omega(s),\qquad
    \Prob_\omega\{\widehat T_n=(s,L)\}
       =\frac{M_n^\omega(s)}{u(s)}.
\]
Couple successive samples by maximal-label deletion. Conditional on a shape
\(T\) with \(M_{n+1}^\omega(T)>0\), its quotient labelling is uniform.
By \cref{prop:path-quotient-labelling-bijection}, the number of these
labellings whose predecessor has shape \(s\) is \(u(s)n(s;T)\). Hence
\[
    \Prob_\omega\{T_n=s\mid T_{n+1}=T\}
      =\frac{u(s)n(s;T)}{u(T)}.
\]
Averaging over \(T\) gives the stated coherence relation.
\end{proof}

\begin{definition}[Central measure associated with a recursive paintbox]
\label{def:central-measure-associated-with-recursive-paintbox}
For a deterministic recursive paintbox \(\omega\), let \(\mu^\omega\)
be the unique central path law with marginals \((M_n^\omega)_{n\ge1}\),
whose coherence was proved in \cref{prop:paintbox-central}. For a finite
weighted path \(\gamma\) ending at \(s\in\T_n\),
\[
    \mu^\omega(X_n=s)=M_n^\omega(s),\qquad
    \mu^\omega(C_\gamma)=\frac{M_n^\omega(s)}{u(s)}.
\]
Here and below endpoints are indexed by tree size, so \(\gamma\) has
\(n-1\) edges.
\end{definition}

\begin{proposition}[The nested paintbox path]
\label{prop:natural-paintbox-path-law}
All samples \(\widetilde T_n(\omega)\) can be constructed from one array
of independent box choices so that maximal-label deletion is pathwise
consistent. Under the restriction-compatible bijections of
\cref{prop:path-quotient-labelling-bijection}, their quotient-labelled classes
define an infinite weighted path \(\mathbf P^\omega\) with endpoint
\(T_n(\omega)\) at size \(n\) and law \(\mu^\omega\).
\end{proposition}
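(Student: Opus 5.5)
We construct all samples from one array of independent box choices, which is an infinite version of the coupling in \cref{lem:paintbox-max-label-consistency}. Concretely, for each address \(w\in\Ulam\) and each label \(x\in\Nzero\), let \(\xi_{w,x}\) be an independent \(\Nzero\)-valued variable with \(\Prob(\xi_{w,x}=i)=p_{w,i}\). The sample \(\widetilde T_B(\omega)\) is then run as in \cref{def:recursive-paintbox-sampling}. A label \(x\) that currently sits in the block at address \(w\), and is not the block's minimum, moves to the child block \(wi\) when \(\xi_{w,x}=i\ge1\). When \(\xi_{w,x}=0\) it becomes a dust leaf below that block's minimum.

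With this rule, label \(x\) uses the same box choices in every sample \(\widetilde T_n(\omega)\) with \(n>x\). The label sets occupying each address are nested in \(n\), because they are obtained by intersecting with \(\{0,\dots,n-1\}\). The induction in the proof of \cref{lem:paintbox-max-label-consistency} then shows, pathwise rather than only in law, that
\[
\widetilde T_{n+1}(\omega)\setminus\{n\}=\widetilde T_n(\omega)
\]
for every \(n\), with \(n\) a leaf. By independence and the recursive definition, each \(\widetilde T_n(\omega)\) individually has the law required in \cref{def:recursive-paintbox-sampling}. This uses the fact that distinct addresses and distinct labels use disjoint coordinates of the array.

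Next we pass to quotient-labelled classes. The class \(\widehat T_n(\omega)\) is an element of \(\QSL(T_n(\omega))\), and restricting it to labels \(\{0,\dots,n-1\}\) gives \(\widehat T_n(\omega)\) again. Under the fixed restriction-compatible family of bijections from \cref{prop:path-quotient-labelling-bijection}, each \(\widehat T_{n+1}(\omega)\) therefore corresponds to a weighted path whose truncation is the path of \(\widehat T_n(\omega)\). These finite paths are consistent, so they define a single infinite path \(\mathbf P^\omega\in\mathcal X_\Gamma\). Its endpoint at size \(n\) is \(T_n(\omega)\). Measurability follows because each edge coordinate is a function of finitely many of the \(\xi_{w,x}\).

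For the law, \cref{lem:path-cylinder-algebra} reduces the question to cylinders. Take \(\gamma\in\mathsf H_n(s)\), and let \(L\in\QSL(s)\) be the labelling corresponding to \(\gamma\). Then
\[
\Prob(\mathbf P^\omega\in C_\gamma)=\Prob\{\widehat T_n(\omega)=(s,L)\}=\frac{M_n^\omega(s)}{u(s)}
\]
by \cref{prop:paintbox-central}. This equals \(\mu^\omega(C_\gamma)\) by \cref{def:central-measure-associated-with-recursive-paintbox}, so the two laws agree.

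The only delicate step is making sure the global array reproduces the correct law for each \(n\). Two points need checking. First, within a single sample each label at each visited address uses a fresh coordinate. Second, roots of blocks ignore their own choice variables. Since that choice is never used, the root assignments do not disturb independence.
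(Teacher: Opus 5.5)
Your proposal is correct and follows the paper's proof essentially step for step: one independent address-and-label array of box choices, pathwise maximal-label consistency from \cref{lem:paintbox-max-label-consistency}, nested finite histories under the fixed restriction-compatible bijections, and cylinder probabilities from \cref{prop:paintbox-central} followed by cylinder uniqueness. The differences are cosmetic. The paper generates its symbols \(J_{m,w}\) from uniform variables \(U_{m,w}\) by an inverse-distribution rule, which it reuses later when the environment is random. Your restriction sentence should say that restricting \(\widehat T_{n+1}(\omega)\), not \(\widehat T_n(\omega)\), to \(\{0,\dots,n-1\}\) gives \(\widehat T_n(\omega)\).
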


\begin{proof}
Let \((U_{m,w})_{m\ge1,\,w\in\Ulam}\) be independent uniform variables
on \([0,1)\), and set
\begin{equation}\label{eq:uniform-routing-realization}
    J_{m,w}=\min\left\{i\in\Nzero:
                    U_{m,w}<\sum_{j=0}^{i}p_{w,j}\right\}.
\end{equation}
The minimum exists because \(\sum_{j\ge0}p_{w,j}=1\). These symbols are
independent and satisfy \(\Prob(J_{m,w}=i)=p_{w,i}\).
When label \(m\) reaches address \(w\) without becoming its local root,
use \(J_{m,w}\) for its next choice. Restriction to labels below \(n\)
has the sampling law of \cref{def:recursive-paintbox-sampling}, and
\cref{lem:paintbox-max-label-consistency} gives simultaneously
\[
    \widetilde T_{n+1}(\omega)\setminus\{n\}
       =\widetilde T_n(\omega),\qquad n\ge1.
\]
The corresponding finite weighted histories are nested prefixes and therefore
define a unique infinite path \(\mathbf P^\omega\). If \(L_\gamma\)
is the quotient labelling corresponding to a path \(\gamma\) ending at
\(s\in\T_n\), then \cref{prop:paintbox-central} yields
\[
    \Prob_\omega\{\mathbf P^\omega\in C_\gamma\}
      =\Prob_\omega\{\widehat T_n(\omega)=(s,L_\gamma)\}
      =\frac{M_n^\omega(s)}{u(s)}=\mu^\omega(C_\gamma).
\]
Cylinder uniqueness identifies the full path law.
\end{proof}

\begin{definition}[Recursive-paintbox boundary]
\label{def:recursive-paintbox-boundary}
Let \(\mathscr{RP}\) be the class of deterministic recursive paintboxes.
Two representatives are \emph{sampling equivalent}, written
\(\omega\sim_{\mathrm{samp}}\omega'\), when
\(M_n^\omega=M_n^{\omega'}\) for every \(n\). By
\cref{prop:paintbox-central}, this is also equality of their finite
quotient-labelled laws, or of their central path laws.

Set \(\partial_{\mathrm{RP}}=\mathscr{RP}/\!\sim_{\mathrm{samp}}\) and
\(M_n^\xi=M_n^\omega\) for \(\xi=[\omega]_{\mathrm{samp}}\).
Give this quotient the topology induced by the injective map
\[
    \Psi:\partial_{\mathrm{RP}}\longrightarrow
         \prod_{n\ge1}\Delta(\T_n),\qquad
    \Psi(\xi)=(M_n^\xi)_{n\ge1},
\]
where \(\Delta(\T_n)\) is the finite-dimensional probability simplex on
\(\T_n\). We identify this product with its coordinate image in
\([0,1]^{\T}\), so that \(\Psi(\xi)_s=M_{|s|}^\xi(s)\).
Thus convergence means convergence of each finite sampling law,
equivalently in total variation at each fixed size.
\end{definition}

\begin{remark}[Sampling equivalence and relabelling]
\label{rem:paintbox-equivalence-relabelling}
Write
\(\omega=\{(p_i,\omega_i):i\in I\}\) and
\(\omega'=\{(p'_j,\omega'_j):j\in I'\}\),
where \(I\) and \(I'\) index the positive root masses.
Then \(\omega\sim_{\mathrm{samp}}\omega'\) if and only if
there is a bijection \(\sigma:I\to I'\) such that
\[
    p_i=p'_{\sigma(i)}
    \quad\text{and}\quad
    \omega_i\sim_{\mathrm{samp}}\omega'_{\sigma(i)}
    \qquad (i\in I).
\]
The root dust masses then agree as well.
Thus the matching preserves each mass together with its child sampling
class, including multiplicities. Applying this characterisation recursively
shows that,
after discarding subarrays below zero-mass edges, equivalent
representatives differ only by relabelling children, with their
entire descendant subarrays carried along.

This characterisation will be proved in
Section~\ref{sec:intrinsic-root-structure}; see
\cref{cor:root-decomposition-convergence}.
\end{remark}

Let
\[
    \Delta_\downarrow=
       \{(p_i)_{i\ge1}:p_1\ge p_2\ge\cdots\ge0,\ \sum_i p_i\le1\},
    \qquad \mathscr E=\Delta_\downarrow^{\Ulam},
\]
and equip both spaces with their product topologies. Identify
\(\mathscr E\) with the representative arrays by setting
\(p_{w,0}=1-\sum_{i\ge1}p_{w,i}\).
Write \(q:\mathscr E\to\partial_{\mathrm{RP}}\) for the quotient map
\(q(\omega)=[\omega]_{\mathrm{samp}}\).

\begin{proposition}[Continuity of finite sampling]
\label{prop:environment-sampling-continuity}
For each \(n\ge1\), the map
\(\mathscr E\to\Delta(\T_n)\), \(\omega\mapsto M_n^\omega\), is
continuous in total variation. Consequently, \(q\) is continuous and
\(\partial_{\mathrm{RP}}\) is compact metrizable.
\end{proposition}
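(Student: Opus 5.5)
We prove continuity of \(\omega\mapsto M_n^\omega\) by induction on \(n\), simultaneously for all environments, using the coupling of \cref{prop:natural-paintbox-path-law}. Fix \(n\) and a sequence \(\omega^{(k)}\to\omega\) in \(\mathscr E\); since \(\mathscr E\) is a countable product of compact metrizable spaces, it is compact metrizable and sequences suffice. We couple the samples \(\widetilde T_n(\omega^{(k)})\) and \(\widetilde T_n(\omega)\) through one common array of uniforms \((U_{m,w})\), with the routing symbols \(J^{(k)}_{m,w}\) and \(J_{m,w}\) defined by \eqref{eq:uniform-routing-realization}. A sample with labels \(0,\ldots,n-1\) only consults \(J_{m,w}\) for \(m<n\) and addresses \(w\) of length less than \(n\) with letters that are actually chosen. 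Hence, off an event where some consulted symbol differs, the two labelled trees coincide. Then \(d_{\mathrm{TV}}(M_n^{\omega^{(k)}},M_n^\omega)\) is at most the probability of that event.

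The key estimate is the following. For a fixed address \(w\) and fixed \(m\), we claim that \(\Prob(J^{(k)}_{m,w}\ne J_{m,w})\to0\) whenever \(p^{(k)}_{w,\cdot}\to p_{w,\cdot}\) coordinatewise. The partial sums \(\sum_{j\le i}p^{(k)}_{w,j}\) converge for each fixed \(i\ge1\). The only subtlety is \(p^{(k)}_{w,0}=1-\sum_{i\ge1}p^{(k)}_{w,i}\), which need not converge to \(p_{w,0}\), since mass can escape to high indices and become dust only in the limit.

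To handle this, we change the routing order: we place the dust interval last rather than first. That is, label \(m\) chooses box \(i\ge1\) when \(U\) lies in \([\sum_{j=1}^{i-1}p_{w,j},\sum_{j=1}^{i}p_{w,j})\), and chooses dust otherwise. This still gives the law of \cref{def:recursive-paintbox-sampling}, so it is a legitimate coupling for computing \(M_n\). Now fix \(\varepsilon>0\) and choose \(I\) with \(\sum_{i>I}p_{w,i}<\varepsilon\). The endpoints \(\sum_{j=1}^{i}p^{(k)}_{w,j}\), for \(i\le I\), converge to their limits, so disagreement on boxes \(1,\ldots,I\) has probability \(o(1)\).

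The remaining disagreements occur when \(U\) falls in \([\sum_{j\le I}p^{(k)}_{w,j},1)\) and the \(\omega^{(k)}\) sample picks some box \(i>I\) while the limit picks dust or a box \(>I\). In the limit environment this region has mass less than \(\varepsilon+o(1)\) beyond the dust. The difficulty is that \(\omega^{(k)}\) may send the label to a box \(i>I\) of tiny mass rather than to dust, and this does change the labelled tree.

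This is the main obstacle. The fix is that with at most \(n\) labels, a label sent to a box of mass at most \(p^{(k)}_{w,I+1}\le\delta\) is alone in that box with probability at least \(1-n\delta\). A solitary label in a positive box becomes a leaf child, exactly like a dust label, so the shape is the same even though the box symbol differs. We therefore compare at the level of shapes, or of quotient labelled classes, rather than raw box symbols. Concretely, we bound the probability that two consulted labels at the same address land in a common box with index \(>I\) by \(\binom n2\sum_{i>I}(p^{(k)}_{w,i})^2\le\binom n2\max_{i>I}p^{(k)}_{w,i}\). Eventually \(p^{(k)}_{w,I+1}\) is close to \(p_{w,I+1}\le\varepsilon\), so this is at most \(\binom n2(\varepsilon+o(1))\).

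Off these bad events, at each of the finitely many relevant addresses, the two samples produce the same labelled tree up to renaming tiny singleton boxes as dust. Descendant environments below boxes \(i\le I\) converge coordinatewise, so we recurse on the finitely many visited addresses of depth less than \(n\). Summing over at most \(n\cdot n^{\,n}\)-many relevant (label, address) pairs gives \(\limsup_k d_{\mathrm{TV}}\le C_n\varepsilon\), and letting \(\varepsilon\to0\) proves continuity.

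Continuity of \(q\) then follows from the definition of the topology on \(\partial_{\mathrm{RP}}\) through \(\Psi\). The space \(\partial_{\mathrm{RP}}\), identified via \(\Psi\) with \(\Psi\circ q(\mathscr E)\), is the continuous image of the compact space \(\mathscr E\) inside the metrizable space \(\prod_n\Delta(\T_n)\). It is therefore compact and metrizable.
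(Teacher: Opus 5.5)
Your argument is correct in substance but takes a different route from the paper. The paper inducts on \(n\) and truncates only at the root. Keeping boxes \(1,\ldots,L\) and treating later boxes as dust singletons changes the sample with probability at most \(\binom{n-1}{2}\sum_{i>L}p_i^2\le\binom{n-1}{2}/(L+1)\), uniformly in \(\omega\) because the masses are ranked. The truncated law is then the explicit finite sum \eqref{eq:truncated-root-finite-sum}: occupancy weights in \((p_0^{[L]},p_1,\ldots,p_L)\) times child laws \(M_{m_i}^{\omega_i}\) with \(m_i<n\). These are continuous by the induction hypothesis, and a uniform limit of continuous maps is continuous.

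You instead couple \(\omega^{(k)}\) and \(\omega\) globally through common uniforms and truncate at every visited address. Both proofs handle the discontinuity of \(p_0\) in the same way. You route dust last; the paper lumps dust with the discarded tail into the continuous quantity \(p_0^{[L]}\). Both also rest on the same two facts: a collision bound for discarded boxes, and the observation that a lone label in a box becomes a leaf, just like dust. The paper's version never couples different environments and gives an error uniform in \(\omega\). Its occupancy formula and its use of \cref{lem:paintbox-order-equivariance} also recur in \cref{lem:root-sampling-with-child-limits}. Yours avoids that formula and order equivariance, at the price of more bookkeeping.

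One step of yours needs tightening. The count ``\(n\cdot n^{\,n}\) relevant pairs'' does not justify a constant \(C_n\) independent of \(\varepsilon\). Visited letters range up to the truncation levels, so the set of relevant addresses grows as \(\varepsilon\to0\), and a union bound over that deterministic set would give an \(\varepsilon\)-dependent constant. What works is the following. The uniforms at \(w\) are independent of the routing at proper prefixes of \(w\). Hence, in either sample, a tail collision at \(w\) has probability at most \(\E\bigl[\binom{N_w}{2}\bigr]\,p_{w,I+1}\), where \(N_w\) is the number of labels routed at \(w\). Pathwise, \(\sum_w\binom{N_w}{2}\le(n-1)\binom{n}{2}\), since each pair of labels is routed together along a single prefix chain. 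Head disagreements are then handled by a union over the finite deterministic set of pairs (label below \(n\), relevant address), which costs only \(o(1)\) as \(k\to\infty\) for fixed \(\varepsilon\).

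Two smaller points. You may take the same \(I\) at every address, with \(1/(I+1)<\varepsilon\): ranking gives \(p_{w,I+1}\le1/(I+1)\) for every environment, which removes the \(o(1)\) in your tail estimate. You should also note that \(\Delta_\downarrow\) is closed in \([0,1]^{\N}\), since the ordering and finite partial-sum inequalities are closed; this is what justifies compactness of \(\mathscr E\).
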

\begin{proof}
We induct on \(n\), the case \(n=1\) being constant. At the root, retain
only the first \(L\) boxes and treat every choice of a later box as a dust
singleton. Couple this sample with the original one by using the same choices
for the \(n-1\) non-root labels. They agree unless two labels choose the
same discarded box. Writing \(M_{n,L}^\omega\) for the shape law under this
root truncation gives
\begin{equation}\label{eq:uniform-root-truncation-continuity}
    d_{\mathrm{TV}}(M_n^\omega,M_{n,L}^\omega)
       \le\binom{n-1}{2}\sum_{i>L}p_i^2
       \le\frac{\binom{n-1}{2}}{L+1}.
\end{equation}
The last inequality uses
\(p_{L+1}\le1/(L+1)\) and
\(\sum_{i>L}p_i^2\le p_{L+1}\sum_{i>L}p_i\).

For fixed \(L\), put \(p_0^{[L]}=1-\sum_{i=1}^L p_i\). The vector
\(\mathbf m=(m_0,\ldots,m_L)\) of root occupancy counts has probability
\[
    w_{\mathbf m}^{[L]}(\omega)
       =\frac{(n-1)!}{m_0!\cdots m_L!}
          (p_0^{[L]})^{m_0}\prod_{i=1}^L p_i^{m_i},
    \qquad \sum_{i=0}^L m_i=n-1.
\]
For such a vector, let \(I(\mathbf m)=\{i\in[L]:m_i>0\}\), and let
\(G(\mathbf m,\mathbf s)\) be the tree with a new root, \(m_0\) singleton
children, and one child of shape \(s_i\in\T_{m_i}\) for each
\(i\in I(\mathbf m)\). Conditional on the label allocation, these child
shapes are independent. By \cref{lem:paintbox-order-equivariance}, their
laws depend only on their cardinalities, so the same product law holds
conditional on \(\mathbf m\). Thus, for \(t\in\T_n\),
\begin{equation}\label{eq:truncated-root-finite-sum}
\begin{split}
    M_{n,L}^\omega(t)
       ={}&\sum_{\substack{m_0,\ldots,m_L\ge0\\m_0+\cdots+m_L=n-1}}
               w_{\mathbf m}^{[L]}(\omega)\,
          \sum_{\mathbf s\in\prod_{i\in I(\mathbf m)}\T_{m_i}}
               \ind_{\{G(\mathbf m,\mathbf s)=t\}}
               \prod_{i\in I(\mathbf m)}M_{m_i}^{\omega_i}(s_i).
\end{split}
\end{equation}
Empty products are one, the product indexed by \(I(\mathbf m)=\varnothing\)
contains one empty tuple, and \(0^0=1\) in the occupancy weight.
Every occupied child has \(m_i<n\). By the induction hypothesis and
continuity of \(\omega\mapsto\omega_i\), the finite sum
\eqref{eq:truncated-root-finite-sum} is continuous in \(\omega\).
The bound
\eqref{eq:uniform-root-truncation-continuity} is uniform in \(\omega\);
letting \(L\to\infty\) proves continuity of \(M_n^\omega\). The sampling
topology gives continuity of \(q\). The order and finite partial-sum
inequalities defining \(\Delta_\downarrow\) are closed, so \(\mathscr E\)
is compact metrizable. Since \(q\) is onto and
\(\partial_{\mathrm{RP}}\) is a subspace of a countable product of finite
simplices, the sampling boundary is compact metrizable as well.
\end{proof}

\subsection{Root decomposition of a uniform quotient labelling}

The root factorization below distinguishes isomorphic branches by lifting
a uniform quotient labelling to a uniform ordinary labelling. This leaves
quotient and shape sampling laws unchanged.

\begin{lemma}[Root decomposition of a uniform quotient labelling]
\label{lem:root-decomposition-uniform}
Let \(N\ge2\), let \(t\in\T_N\), choose a rooted representative of
\(t\), and temporarily index its actual root-child vertex sets by
\(1,\ldots,r\).  Let their shapes be \(t_1,\ldots,t_r\), with
\(|t_i|=N_i\) and \(\sum_{i=1}^r N_i=N-1\).

Let \([L]\) be uniform on \(\QSL(t)\).  Conditional on \([L]\), choose
\(\widetilde L\) uniformly among the
\(|\Aut(t)|\) ordinary representatives in the orbit \([L]\subseteq
\SL(t)\).  Then \(\widetilde L\) is uniform on \(\SL(t)\).

For \(1\le i\le r\), let \(\mathcal A_i\) be the set of labels assigned
by \(\widetilde L\) to the \(i\)th root subtree, and let
\(\mathsf L_i\in\QSL_{\mathcal A_i}(t_i)\) be the quotient class of the
restriction of \(\widetilde L\) to that subtree.  Then:
\begin{enumerate}[label=(\roman*)]
    \item the ordered label-set partition
    \((\mathcal A_1,\ldots,\mathcal A_r)\) is uniform among all ordered
    partitions of \(\{1,\ldots,N-1\}\) satisfying
    \(|\mathcal A_i|=N_i\).  In particular, for every fixed
    \(R\subseteq\{1,\ldots,N-1\}\), the vector
    \[
        \bigl(|\mathcal A_1\cap R|,\ldots,
              |\mathcal A_r\cap R|\bigr)
    \]
    has the corresponding multivariate hypergeometric law;

    \item conditional on
    \((\mathcal A_1,\ldots,\mathcal A_r)\), the random variables
    \(\mathsf L_1,\ldots,\mathsf L_r\) are independent, and
    \(\mathsf L_i\) is uniform on
    \(\QSL_{\mathcal A_i}(t_i)\).
\end{enumerate}

More explicitly, for every ordered set partition
\((A_1,\ldots,A_r)\) with \(|A_i|=N_i\) and every choice
\(L_i\in\QSL_{A_i}(t_i)\),
\begin{equation}\label{eq:root-decomposition-exact-law}
\Prob\left\{
  \mathcal A_i=A_i,\ \mathsf L_i=L_i
  \text{ for }1\le i\le r
\right\}
=
\binom{N-1}{N_1,\ldots,N_r}^{-1}
\prod_{i=1}^r\frac1{u(t_i)}.
\end{equation}

Finally, \([\widetilde L]=[L]\) almost surely.  Hence every event
determined by the quotient labelling, including the unlabelled shape of
any initial-label restriction, may be computed using the auxiliary
ordinary labelling \(\widetilde L\).
\end{lemma}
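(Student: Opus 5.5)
The argument is a direct count in \(\SL(t)\), followed by a transfer back to quotient classes. I would begin with the claim that \(\widetilde L\) is uniform. The orbits of \(\Aut(t)\) on \(\SL(t)\) all have size \(|\Aut(t)|\), because the action is free, as observed before \eqref{eq:rooted-hook}. Hence for each \(L'\in\SL(t)\),
\[
    \Prob\{\widetilde L=L'\}=\frac1{u(t)}\cdot\frac1{|\Aut(t)|}=\frac1{d(t)}.
\]
The identity \([\widetilde L]=[L]\) holds by construction. Every event determined by the quotient labelling is \(\Aut(t)\)-invariant, and the shape of an initial-label restriction is such an event. Such events can therefore be computed from \(\widetilde L\); this proves the final assertion.

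Next I would prove \eqref{eq:root-decomposition-exact-law}. Uniformity of \(\widetilde L\) makes this a counting problem. Fix an ordered partition \((A_1,\ldots,A_r)\) with \(|A_i|=N_i\) and quotient classes \(L_i\in\QSL_{A_i}(t_i)\). An ordinary labelling of the fixed representative is determined by three pieces of data: the root receives label \(0\), the \(i\)th root subtree receives the set \(A_i\), and the restriction to that subtree is an element of \(\SL_{A_i}(t_i)\). This gives a bijection between \(\SL(t)\) and the data of a partition together with these \(r\) restrictions, which is the decomposition used in the proof of \eqref{eq:rooted-hook}. The restriction to subtree \(i\) must lie in the orbit \(L_i\), and this orbit has \(|\Aut(t_i)|\) elements by freeness. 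The number of favourable labellings is therefore \(\prod_i|\Aut(t_i)|\). Dividing by
\[
    d(t)=\binom{N-1}{N_1,\ldots,N_r}\prod_i d(t_i)
\]
and using \(d(t_i)=|\Aut(t_i)|\,u(t_i)\) gives exactly the right-hand side of \eqref{eq:root-decomposition-exact-law}.

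Both (i) and (ii) then follow by summation. Summing \eqref{eq:root-decomposition-exact-law} over all \(L_i\in\QSL_{A_i}(t_i)\) uses \(|\QSL_{A_i}(t_i)|=u(t_i)\) and gives \(\binom{N-1}{N_1,\ldots,N_r}^{-1}\) for every admissible partition. This is the uniform law in (i). Since the joint law factorizes into this uniform weight times \(\prod_i 1/u(t_i)\), the conditional law in (ii) is a product of uniform laws. For the hypergeometric claim, a uniform ordered partition with block sizes \(N_i\) is the same as assigning the colours of a uniformly permuted multiset of \(N-1\) colours, with \(N_i\) of colour \(i\). The colour counts on a fixed subset \(R\) are then multivariate hypergeometric.

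I expect the main obstacle to be bookkeeping rather than any conceptual difficulty. The root-child subtrees must be indexed by actual vertex sets of a fixed representative rather than by shapes. This ensures that the partition is ordered and that isomorphic branches are distinguished, so that no automorphism factors of \(t\) permuting isomorphic branches enter the count. The freeness of each \(\Aut(t_i)\)-action is what makes the orbit count \(|\Aut(t_i)|\) exact.
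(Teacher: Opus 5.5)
Your proposal is correct and follows essentially the same counting argument as the paper. Both rest on the freeness of the \(\Aut(t)\)- and \(\Aut(t_i)\)-actions, the bijection of \(\SL(t)\) with ordered label partitions times \(\prod_i\SL_{A_i}(t_i)\), and the identity \(d(t_i)=|\Aut(t_i)|u(t_i)\). The only difference is the order of steps: you compute the joint law \eqref{eq:root-decomposition-exact-law} first and then marginalize, whereas the paper first shows the partition is uniform, then computes the conditional law, and multiplies.
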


\begin{proof}
The action of \(\Aut(t)\) on \(\SL(t)\) is free, so every quotient orbit
has cardinality \(|\Aut(t)|\).  Sampling a uniform orbit and then a
uniform element of that orbit therefore gives a uniform element
\(\widetilde L\) of \(\SL(t)\).

Fix an ordered set partition
\((A_1,\ldots,A_r)\) of \(\{1,\ldots,N-1\}\) with
\(|A_i|=N_i\).  An ordinary standard labelling of \(t\) assigning the
set \(A_i\) to the \(i\)th root subtree is uniquely specified by a tuple
\[
    (\ell_1,\ldots,\ell_r)
    \in
    \prod_{i=1}^r\SL_{A_i}(t_i).
\]
Order standardization gives \(|\SL_{A_i}(t_i)|=d(t_i)\), so each
prescribed partition admits \(\prod_{i=1}^r d(t_i)\) labellings.
The number of such partitions is
\[
    \binom{N-1}{N_1,\ldots,N_r},
\]
and the induced partition \((\mathcal A_1,\ldots,\mathcal A_r)\) is uniform.

For a fixed set \(R\subseteq\{1,\ldots,N-1\}\) and integers
\(0\le a_i\le N_i\) satisfying \(\sum_i a_i=|R|\), standard counting
therefore gives
\[
    \Prob\{
      |\mathcal A_i\cap R|=a_i
      \text{ for every }i
    \}
    =
    \frac{\prod_{i=1}^r\binom{N_i}{a_i}}
         {\binom{N-1}{|R|}}.
\]

Conditional on
\((\mathcal A_1,\ldots,\mathcal A_r)=(A_1,\ldots,A_r)\), every tuple in
\(\prod_i\SL_{A_i}(t_i)\) is equally likely.  Hence the ordinary
restrictions are conditionally independent and uniform.  Moreover,
the action of \(\Aut(t_i)\) on \(\SL_{A_i}(t_i)\) is free, so every
class in \(\QSL_{A_i}(t_i)\) has
\(|\Aut(t_i)|\) ordinary representatives.  It follows that
\[
    \Prob\{
      \mathsf L_i=L_i
      \text{ for }1\le i\le r
      \mid
      \mathcal A_i=A_i
      \text{ for }1\le i\le r
    \}
    =
    \prod_{i=1}^r
    \frac{|\Aut(t_i)|}{d(t_i)}
    =
    \prod_{i=1}^r\frac1{u(t_i)}.
\]
Multiplying by the probability of the prescribed ordered partition
gives \eqref{eq:root-decomposition-exact-law}.

Since \(\widetilde L\in[L]\), all quotient-labelled events and the
unlabelled shapes of initial-label restrictions are unchanged.
\end{proof}

\subsection{Finite paintbox convergence and dust truncation}

The following estimates compare finite-population root allocations with
paintboxes by coupling sampling with and without replacement
\cite{diaconis1980finite}. Replacing small cells by dust is controlled
by their squared masses rather than their total mass.

\begin{lemma}[Tail-collision bound under truncation to dust]
\label{lem:tail-cells-are-dust}
Fix integers \(r,L\ge0\).  Let
\((q_i)_{i\ge1}\) be a sequence of nonnegative numbers satisfying
\(\sum_{i\ge1}q_i\le1\).  Independently assign the labels
\(1,\ldots,r\) to the positive atoms
\(i\ge1\), with probabilities \(q_i\), or to dust, with probability
\(1-\sum_i q_i\).  Let \(\Pi_r\) be the resulting paintbox partition.

On the same probability space, define the truncated partition
\(\Pi_{r,L}\) by retaining the atoms \(1,\ldots,L\) and treating every
label assigned either to dust or to an atom \(i>L\) as a separate
singleton.  Then
\[
    \Prob\{\Pi_r\ne\Pi_{r,L}\}
    \le
    \binom r2\sum_{i>L}q_i^2.
\]
If the masses are nonincreasing, then
\(\sum_{i>L}q_i^2\le q_{L+1}\), and the right-hand side is at most
\(\binom r2q_{L+1}\).

There is an analogous finite-population bound.  Let a population of
size \(M\ge1\) be partitioned into cells of sizes
\((M_i)_{i\ge1}\), with only finitely many nonzero terms, and put \(q_i^{(M)}:=M_i/M\).
Assume \(r\le M\), and sample \(r\) distinct population elements uniformly without
replacement.  Let \(\Pi_r^{(M)}\) be their partition according to their
cells, and let \(\Pi_{r,L}^{(M)}\) be obtained by retaining cells
\(1,\ldots,L\) and treating every sampled element in a cell \(i>L\) as
a separate singleton.  Then
\[
    \Prob\{\Pi_r^{(M)}\ne\Pi_{r,L}^{(M)}\}
    \le
    \binom r2
    \sum_{i>L}\bigl(q_i^{(M)}\bigr)^2.
\]
If the finite-population cell sizes are also arranged in nonincreasing
order, the right-hand side is at most \(\binom r2 q_{L+1}^{(M)}\).
\end{lemma}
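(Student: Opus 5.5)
The plan is to observe that the two partitions can differ only when two sampled labels fall into a common discarded cell, and then to bound this by a union bound over pairs. In the paintbox case, \(\Pi_r\) and \(\Pi_{r,L}\) are built from the same assignment of labels. Labels in atoms \(1,\ldots,L\) form the same blocks in both partitions, and dust labels are singletons in both. So the partitions agree unless some block of \(\Pi_r\) meets two distinct labels assigned to a common atom \(i>L\). Hence
\[
    \{\Pi_r\ne\Pi_{r,L}\}\subseteq
    \bigcup_{1\le a<b\le r}\ \bigcup_{i>L}\{a\text{ and }b\text{ both choose atom }i\}.
\]
By independence, each such event has probability \(q_i^2\). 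A union bound over the \(\binom r2\) pairs and over \(i>L\) gives the bound \(\binom r2\sum_{i>L}q_i^2\).

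For the monotone refinement, I will use the same two inequalities as in the proof of \cref{prop:environment-sampling-continuity}. If the masses are nonincreasing, then \(q_i\le q_{L+1}\) for every \(i>L\), so
\[
    \sum_{i>L}q_i^2\le q_{L+1}\sum_{i>L}q_i\le q_{L+1},
\]
since the total mass is at most one.

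The finite-population case follows the same containment argument, with the sample drawn without replacement. The only change is the pair probability. For distinct draws \(a<b\), the chance that both lie in cell \(i\) is
\[
    \frac{M_i(M_i-1)}{M(M-1)}\le\Bigl(\frac{M_i}{M}\Bigr)^2,
\]
because \((M_i-1)/(M-1)\le M_i/M\) whenever \(M_i\le M\). When \(M=1\) we have \(r\le1\), so there are no pairs and the bound is trivial; this is the only case to flag. The union bound then gives \(\binom r2\sum_{i>L}(q_i^{(M)})^2\). Under the nonincreasing ordering, the same refinement as before yields at most \(\binom r2 q_{L+1}^{(M)}\), using \(\sum_i q_i^{(M)}\le1\).

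I expect no step to be a serious obstacle. The only point needing care is the precise claim that the partitions coincide off the collision event. This holds because truncation changes only the blocks of atoms (cells) \(i>L\), and such a block is altered only if it contains at least two sampled labels.
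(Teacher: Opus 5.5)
Your proposal is correct and follows essentially the same route as the paper. In both, a discrepancy forces two sampled labels into a common discarded cell, the pair probability is \(\sum_{i>L}q_i^2\) (respectively at most \(\sum_{i>L}M_i(M_i-1)/(M(M-1))\le\sum_{i>L}(M_i/M)^2\)), a union bound over the \(\binom r2\) pairs gives both estimates, and the ranked refinement uses \(\sum_{i>L}q_i^2\le q_{L+1}\sum_{i>L}q_i\le q_{L+1}\).
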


\begin{proof}
There is nothing to prove for \(r\le1\). Otherwise, a discrepancy requires
a pair of labels in the same discarded cell. In the paintbox model the
probability for a fixed pair is \(\sum_{i>L}q_i^2\). In the
finite-population model it is bounded by
\[
    \sum_{i>L}\frac{M_i(M_i-1)}{M(M-1)}
       \le\sum_{i>L}\left(\frac{M_i}{M}\right)^2,
\]
because \(M\ge r\ge2\) and \(M_i\le M\). The union bound over the
\(\binom r2\) pairs gives both estimates. For a ranked sequence,
\(\sum_{i>L}q_i^2\le q_{L+1}\sum_{i>L}q_i\le q_{L+1}\), and the same
argument applies to the finite-population masses.
\end{proof}

\begin{lemma}[Finite-population allocation with dust residual]
\label{lem:finite-population-allocation-dust-residual}
Fix an integer \(r\ge0\).  For each \(N\), let a finite population of size
\(M_N\to\infty\) be split into cells \(A_{N,1},A_{N,2},\ldots\), with zero-size
cells appended if needed, and write
\(q_{N,i}:=|A_{N,i}|/M_N\), so that \(\sum_{i\ge1}q_{N,i}=1\).
After discarding finitely many indices, assume \(M_N\ge r\).
Suppose that \(q_{N,i}\to p_i\) for every fixed \(i\).  Then \(p_i\ge0\),
and Fatou's lemma gives \(\sum_{i\ge1}p_i\le1\); put
\(p_0:=1-\sum_{i\ge1}p_i\).

Sample \(r\) distinct population elements in order, uniformly without
replacement, and let \(Y_{N,a}\) be the cell index of the element carrying
sample label \(a\).  Denote by \(\Pi_N^{(r)}\) the induced partition of \([r]\),
in which two labels are equivalent exactly when their sampled elements lie in
the same cell.  Let \(\Pi^{(r)}\) be the Kingman paintbox partition with atom
masses \((p_i)_{i\ge1}\), with zero entries ignored, and dust mass \(p_0\).
Then:
\begin{enumerate}[label=(\roman*)]
    \item for every fixed \(L\ge1\), let \(\star\) be an auxiliary symbol and
    define
    \[
        Y_{N,a}^{(L)}
        =
        \begin{cases}
          Y_{N,a},&Y_{N,a}\le L,\\
          \star,&Y_{N,a}>L.
        \end{cases}
    \]
    Write \(\mathcal A_{N,L}\) for the law of this vector and
    \(\mathcal A_L\) for the law of \(r\) independent limiting choices.
    Then
    \begin{equation}\label{eq:finite-population-head-tv}
    \begin{split}
        d_{\mathrm{TV}}(\mathcal A_{N,L},\mathcal A_L)
        &\le \frac{\binom r2}{M_N}
        +\frac r2\left(
           \sum_{i=1}^L|q_{N,i}-p_i|
           +\left|\sum_{i=1}^L(q_{N,i}-p_i)\right|
          \right)
        \longrightarrow0,
    \end{split}
    \end{equation}
    where the limiting variables are i.i.d.\ on
    \(\{1,\ldots,L,\star\}\), with
    \(\Prob\{Y_a^{(L)}=i\}=p_i\) for \(1\le i\le L\) and
    \(\Prob\{Y_a^{(L)}=\star\}=1-\sum_{i=1}^L p_i\);

    \item if, in addition, the residual collision mass vanishes,
    \begin{equation}\label{eq:finite-population-allocation-residual-collision}
        \lim_{L\to\infty}\limsup_{N\to\infty}
        \sum_{i>L}q_{N,i}^2=0,
    \end{equation}
    then the full induced partitions converge in total variation:
    \[
        d_{\mathrm{TV}}\!\left(
          \mathcal L(\Pi_N^{(r)}),\mathcal L(\Pi^{(r)})
        \right)
        \longrightarrow0.
    \]
\end{enumerate}
\end{lemma}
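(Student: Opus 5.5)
For part (i), I will couple the sampling without replacement to i.i.d.\ sampling with replacement from the same finite population, following \cite{diaconis1980finite}. Draw \(r\) population elements independently and uniformly with replacement. Conditional on all \(r\) being distinct, they form a uniform ordered sample without replacement. The with-replacement draws are distinct except on an event of probability at most \(\binom r2/M_N\), by a union bound over pairs. Hence the law of the cell-index vector without replacement is within total variation \(\binom r2/M_N\) of the law of \(r\) i.i.d.\ cell indices with law \((q_{N,i})_i\). The same bound holds after applying the coarsening map \(Y\mapsto Y^{(L)}\), since total variation does not increase under measurable maps.

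It then remains to compare \(r\) i.i.d.\ draws from \(\nu_N\) and from \(\nu\) on \(\{1,\dots,L,\star\}\). Here \(\nu_N(i)=q_{N,i}\) for \(i\le L\) and \(\nu_N(\star)=1-\sum_{i\le L}q_{N,i}\), and \(\nu\) is the corresponding limiting law. Subadditivity of total variation for product measures gives a bound of \(r\,d_{\mathrm{TV}}(\nu_N,\nu)\). This equals
\[
\frac r2\Bigl(\sum_{i\le L}|q_{N,i}-p_i|+\Bigl|\sum_{i\le L}(q_{N,i}-p_i)\Bigr|\Bigr).
\]
The \(\star\)-coordinate difference is exactly the negative of the partial-sum difference, because \(\sum_i q_{N,i}=1\). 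The triangle inequality then yields \eqref{eq:finite-population-head-tv}. Both terms tend to zero, since \(M_N\to\infty\) and \(q_{N,i}\to p_i\) for the finitely many \(i\le L\). The preliminary claims \(p_i\ge0\) and \(\sum p_i\le1\) are immediate from pointwise limits and Fatou's lemma.

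For part (ii), I will use a three-term triangle inequality at a fixed \(L\). Let \(\Pi_{N,L}^{(r)}\) and \(\Pi_L^{(r)}\) denote the truncated partitions, in which labels in cells \(i>L\) (or in dust) become singletons. Each truncated partition is a deterministic function of the corresponding vector \(Y^{(L)}\): two labels are equivalent exactly when they share a value in \(\{1,\dots,L\}\). So by (i),
\[
d_{\mathrm{TV}}\bigl(\mathcal L(\Pi_{N,L}^{(r)}),\mathcal L(\Pi_L^{(r)})\bigr)\to0.
\]
The finite-population part of \cref{lem:tail-cells-are-dust} bounds the probability that \(\Pi_N^{(r)}\ne\Pi_{N,L}^{(r)}\) by \(\binom r2\sum_{i>L}q_{N,i}^2\). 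The paintbox part bounds the probability that \(\Pi^{(r)}\ne\Pi_L^{(r)}\) by \(\binom r2\sum_{i>L}p_i^2\), which tends to zero as \(L\to\infty\) because \(\sum_i p_i^2\le1\). Taking \(\limsup_N\) and then \(L\to\infty\), and invoking \eqref{eq:finite-population-allocation-residual-collision}, finishes the proof.

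The one point needing care is that the paintbox in (ii) ignores zero entries and treats dust as singletons. This is consistent with the truncation, where \(\star\) labels become singletons and zero-mass atoms are almost surely never chosen. No step looks like a real obstacle. The mildest subtlety is the role of hypothesis \eqref{eq:finite-population-allocation-residual-collision}: mass that escapes to small cells in the limit must not produce collisions, and that hypothesis is exactly what controls this.
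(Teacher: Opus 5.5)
Your proposal is correct and follows the paper's proof essentially step for step. For (i), the paper also compares without-replacement and with-replacement draws to get the bound \(\binom r2/M_N\), uses that total variation contracts under the coarsening \(Y\mapsto Y^{(L)}\), and couples the \(r\) i.i.d.\ grouped draws coordinatewise. For (ii), it likewise combines (i) with the two truncation bounds of \cref{lem:tail-cells-are-dust} through a triangle inequality, letting \(N\to\infty\) before \(L\to\infty\).
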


\begin{proof}
For \(r=0\), both allocations are empty. Assume \(r\ge1\) and fix
\(L\ge1\). For an
assignment \(c=(c_1,\ldots,c_r)\in\{1,\ldots,L,\star\}^r\), let
\(n_i(c)=\#\{a:c_a=i\}\) and
\(n_\star(c)=\#\{a:c_a=\star\}\).  Its hypergeometric probability is
\[
\frac{
    \prod_{i=1}^{L}(|A_{N,i}|)_{n_i(c)}
    \left(M_N-\sum_{i=1}^L |A_{N,i}|\right)_{n_\star(c)}
}{(M_N)_r},
\]
where \((x)_0:=1\) and
\((x)_m:=x(x-1)\cdots(x-m+1)\) for \(m\ge1\).
For the quantitative bound, let \(Z_N\) be the ordered vector of \(r\)
independent uniform population elements, and let \(D_N\) be the event that
they are distinct. Then \(\Prob(D_N)=(M_N)_r/M_N^r\), and
\(\mathcal L(Z_N\mid D_N)\) is the without-replacement law. Splitting
\(\mathcal L(Z_N)\) over \(D_N\) and its complement gives
\begin{equation}\label{eq:finite-population-distinct-sampling}
    d_{\mathrm{TV}}\bigl(\mathcal L(Z_N),
                         \mathcal L(Z_N\mid D_N)\bigr)
      =1-\frac{(M_N)_r}{M_N^r}
      \le\frac{\binom r2}{M_N}.
\end{equation}
The last inequality is the union bound over pairs of draws. Passing to
grouped cell indices cannot increase total variation. The one-draw grouped
laws have total-variation distance
\[
    \frac12\left(
      \sum_{i=1}^L|q_{N,i}-p_i|
      +\left|\sum_{i=1}^L(q_{N,i}-p_i)\right|
    \right).
\]
Coupling the \(r\) independent draws coordinatewise and taking a union
bound proves \eqref{eq:finite-population-head-tv}. For fixed \(L\), its
right-hand side tends to zero, proving (i).

Let \(\Pi_{N,L}^{(r)}\) and \(\Pi_L^{(r)}\) be the partitions obtained
from the finite and limiting grouped vectors by turning each \(\star\)
into a separate singleton. By (i),
\(d_{\mathrm{TV}}(\mathcal A_{N,L},\mathcal A_L)\to0\).
The truncation couplings in \cref{lem:tail-cells-are-dust}, followed by
this deterministic partition map, give
\[
 d_{\mathrm{TV}}\bigl(\mathcal L(\Pi_N^{(r)}),
                        \mathcal L(\Pi^{(r)})\bigr)
 \le d_{\mathrm{TV}}(\mathcal A_{N,L},\mathcal A_L)
 +\binom r2\left(\sum_{i>L}q_{N,i}^2+\sum_{i>L}p_i^2\right).
\]
First let \(N\to\infty\) for fixed \(L\), then let \(L\to\infty\).
The finite-population tail vanishes by
\eqref{eq:finite-population-allocation-residual-collision}, and the limiting
tail vanishes because \(\sum_i p_i^2\le1\). This proves (ii).
\end{proof}

Ranking the cell sizes gives the following special case.

\begin{corollary}[Ranked masses and finite paintbox sampling]
\label{cor:paintbox-convergence}
Let \(M_N\to\infty\).  For each \(N\), let
\(M_{N,1}\ge M_{N,2}\ge\cdots\ge0\) be cell sizes satisfying
\(\sum_{i\ge1}M_{N,i}=M_N\), with zero-size cells appended if necessary, and
put \(q_{N,i}:=M_{N,i}/M_N\).  Assume that \(q_{N,i}\to p_i\) for every fixed
\(i\).  Then \(p_1\ge p_2\ge\cdots\ge0\), and Fatou's lemma gives
\(\sum_{i\ge1}p_i\le1\); put \(p_0:=1-\sum_{i\ge1}p_i\).

Fix \(r\ge0\), discard finitely many indices so that \(M_N\ge r\), and sample
\(r\) labelled balls without replacement.  Let \(\Pi_N^{(r)}\) be the random
partition of \([r]\) induced by cell membership.  Then
\[
    d_{\mathrm{TV}}\!\left(
      \mathcal L(\Pi_N^{(r)}),\mathcal L(\Pi^{(r)})
    \right)
    \longrightarrow0,
\]
where \(\Pi^{(r)}\) is the Kingman paintbox partition with atom masses
\((p_i)_{i\ge1}\) and dust mass \(p_0\).
\end{corollary}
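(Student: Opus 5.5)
The plan is to derive the corollary directly from \cref{lem:finite-population-allocation-dust-residual}, part (ii). The hypotheses there are convergence \(q_{N,i}\to p_i\) for each fixed \(i\), together with the residual collision condition \eqref{eq:finite-population-allocation-residual-collision}. The first is assumed. So the work consists of two things: checking the elementary properties of the limit \(p\), and verifying the residual condition using the ranking.

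First, the limit properties. Each inequality \(q_{N,i}\ge q_{N,i+1}\ge 0\) passes to the limit, giving \(p_1\ge p_2\ge\cdots\ge0\). For every finite \(L\) we have \(\sum_{i\le L}p_i=\lim_N\sum_{i\le L}q_{N,i}\le1\), and letting \(L\to\infty\) gives \(\sum_i p_i\le1\). This is the Fatou statement, so \(p_0\in[0,1]\) is well defined.

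Second, the residual condition, which is where the ranking enters. For nonincreasing masses summing to one, \cref{lem:tail-cells-are-dust} gives
\[
\sum_{i>L}q_{N,i}^2\le q_{N,L+1}\sum_{i>L}q_{N,i}\le q_{N,L+1}.
\]
Hence \(\limsup_N\sum_{i>L}q_{N,i}^2\le p_{L+1}\). Since \(p\) is nonincreasing and summable, \((L+1)p_{L+1}\le\sum_{i\le L+1}p_i\le1\), so \(p_{L+1}\le1/(L+1)\to0\). This establishes \eqref{eq:finite-population-allocation-residual-collision}.

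With both hypotheses in place, part (ii) of \cref{lem:finite-population-allocation-dust-residual} gives the total-variation convergence of \(\mathcal L(\Pi_N^{(r)})\) to the Kingman paintbox law. The only point needing care is the second step: without ranking, mass can escape to ever-later cells while staying concentrated in a single cell, and then the residual condition fails. Monotonicity rules this out through the bound \(q_{N,L+1}\to p_{L+1}\). I expect no further obstacle, since the cases \(r\le1\) and the discarding of small \(N\) are already handled in the lemma.
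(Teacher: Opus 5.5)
Your proposal is correct and follows the paper's own proof. Both use the ranking bound \(\sum_{i>L}q_{N,i}^2\le q_{N,L+1}\) to get \(\limsup_N\sum_{i>L}q_{N,i}^2\le p_{L+1}\to0\), which is the residual collision condition. Both then apply part (ii) of the finite-population allocation lemma.
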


\begin{proof}
Ranking gives
\[
    \limsup_{N\to\infty}\sum_{i>L}q_{N,i}^2
       \le\limsup_{N\to\infty}q_{N,L+1}\sum_{i>L}q_{N,i}
       \le p_{L+1}\longrightarrow0\qquad(L\to\infty).
\]
Thus the residual collision condition in
\cref{lem:finite-population-allocation-dust-residual} holds, and that lemma
applies.
\end{proof}

\medskip \noindent\textbf{Finite restriction laws and their central lifts.}
For a finite rooted tree \(a\) and \(1\le m\le|a|\), put
\[
    M_m^a(r):=K_r(a),\qquad r\in\T_m.
\]
This is the shape law obtained by restricting a uniform quotient standard
labelling of \(a\) to its first \(m\) labels.  Let \(\widehat M_m^a\) be the
central lift of this shape law to quotient-standard-labelled \(m\)-vertex
trees.  Likewise, \(\widehat M_m^\omega\) denotes the central lift of the
paintbox shape law \(M_m^\omega\).

\begin{lemma}[Root sampling with dust tails and child limits]
\label{lem:root-sampling-with-child-limits}
Fix an integer \(n\ge2\).  Let \((t_N)\) be finite rooted trees with
\(|t_N|=M_N\to\infty\).  After discarding finitely many terms, assume
\(M_N\ge n\).  Temporarily order the actual root children of \(t_N\) and append formal
zero-size placeholders to obtain a sequence
\[
    a_{N,1},a_{N,2},\ldots.
\]
Here \(N\) indexes the sequence, whereas \(M_N\) is the corresponding tree
size.  For every formal placeholder, set \(|a_{N,i}|:=0\).  Put
\[
    q_{N,i}=\frac{|a_{N,i}|}{M_N-1}.
\]
Assume that \(q_{N,i}\to p_i\) for every fixed \(i\).  Then
\(p_i\ge0\), and Fatou's lemma gives \(\sum_{i\ge1}p_i\le1\).  Assume
also that the residual root children have vanishing collision mass:
\begin{equation}\label{eq:root-sampling-residual-collision}
    \lim_{L\to\infty}\limsup_{N\to\infty}
    \sum_{i>L}q_{N,i}^2=0.
\end{equation}
For every \(i\) with \(p_i>0\), coordinatewise convergence implies
\(q_{N,i}>0\) for all sufficiently large \(N\), so \(a_{N,i}\) is then an
actual root child.  Let \(\omega_i\) be a deterministic recursive paintbox,
and assume that the child samples converge up to size \(n-1\): for every
\(1\le m\le n-1\) and every \(s\in\T_m\),
\begin{equation}\label{eq:root-sampling-child-limit}
    K_s(a_{N,i})\longrightarrow M^{\omega_i}_m(s).
\end{equation}
Let \(\omega\) be the recursive paintbox whose unordered collection of
positive root children is
\[
    \{(p_i,\omega_i):p_i>0\},
\]
with dust mass \(p_0=1-\sum_i p_i\).  Reindexing the positive pairs in
nonincreasing order of their masses produces the representative used in
Definition~\ref{def:ulam-recursive-paintbox-representative} and leaves the
sampling law unchanged.  If \(L_N\) is a uniformly chosen quotient
standard labelling of \(t_N\), then
\[
    d_{\mathrm{TV}}\!\left(
      \mathcal L(\Res_n(t_N,L_N)),M_n^\omega
    \right)
    \longrightarrow0.
\]
In particular, \(\Res_n(t_N,L_N)\Rightarrow T_n(\omega)\).  Equivalently,
for every \(s\in\T_n\),
\[
    K_s(t_N)\longrightarrow M_n^\omega(s).
\]
\end{lemma}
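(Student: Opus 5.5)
We compare the restriction of a uniform quotient labelling of \(t_N\) with the paintbox sample by coupling their root decompositions. By \cref{lem:root-decomposition-uniform}, we may replace \(L_N\) by a uniform ordinary labelling \(\widetilde L_N\). Then the root receives label \(0\), and the label sets \((\mathcal A_{N,1},\mathcal A_{N,2},\ldots)\) of the root subtrees form a uniform ordered partition of \(\{1,\ldots,M_N-1\}\) with cell sizes \(|a_{N,i}|\). Conditional on this partition, the restricted quotient labellings of the root subtrees are independent and uniform.

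\textbf{Step 1: root allocation.} Restricting to the labels \(R=\{1,\ldots,n-1\}\), the cells \(Y_{N,a}\) containing the sample labels \(a\in R\) have exactly the law of \(r=n-1\) draws without replacement from a population of size \(M_N-1\) with cell fractions \(q_{N,i}\). The residual collision hypothesis \eqref{eq:root-sampling-residual-collision} lets us apply \cref{lem:finite-population-allocation-dust-residual}. I will use the stronger head statement (i) together with the truncation bound of \cref{lem:tail-cells-are-dust}, rather than only the partition statement (ii), because the shape depends on which head cell is occupied through its child environment.

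\textbf{Step 2: fixed truncation level \(L\).} Fix \(L\). Let \(S_N^{(L)}\) be the shape obtained from the finite restriction by keeping root cells \(1,\ldots,L\) and replacing every sampled label in a cell \(i>L\) by a singleton child of the root. Similarly, let \(S^{(L)}\) be the truncated paintbox shape with law \(M_{n,L}^\omega\) from the proof of \cref{prop:environment-sampling-continuity}. By \cref{lem:tail-cells-are-dust}, in both models the truncated shape differs from the true one with probability at most \(\binom{n-1}{2}\sum_{i>L}q_{N,i}^2\) and \(\binom{n-1}{2}\sum_{i>L}p_i^2\), respectively. A discrepancy needs two sampled labels in one discarded cell; otherwise every discarded label is alone in its root subtree and is therefore a singleton child.

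Conditional on the grouped vector \((Y^{(L)}_{N,a})_{a}\), the restricted shape in a head cell \(i\) with \(m_i\ge1\) sampled labels has law \(K_\cdot(a_{N,i})\) at size \(m_i\). This holds because restricting a uniform quotient labelling of \(a_{N,i}\), on its label set \(\mathcal A_{N,i}\), to its \(m_i\) smallest labels is the sampling kernel; the sampled labels in that cell are exactly the initial segment of \(\mathcal A_{N,i}\) lying in \(R\). By order equivariance, these shapes are conditionally independent across cells, since they are functions of the independent \(\mathsf L_i\). So \(\mathcal L(S_N^{(L)})\) is given by the same finite formula as \eqref{eq:truncated-root-finite-sum}, with the weights \(w^{[L]}_{\mathbf m}\) replaced by the hypergeometric grouped weights and \(M^{\omega_i}_{m_i}\) replaced by \(K_\cdot(a_{N,i})\).

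A few cells need care. If \(p_i=0\) but a head cell \(i\le L\) is occupied, its limiting weight tends to zero, because the grouped law converges to one in which cell \(i\) has probability \(p_i=0\). So these cells contribute negligibly, and the child limit \eqref{eq:root-sampling-child-limit} is only needed for \(p_i>0\), where it is assumed. Since every \(m_i\le n-1\), total-variation convergence of the grouped law (Lemma part (i)) and the child convergence give \(d_{\mathrm{TV}}(\mathcal L(S_N^{(L)}),M^\omega_{n,L})\to0\) as \(N\to\infty\), because the sum is finite.

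\textbf{Step 3: removing the truncation.} Combining the three estimates gives
\[
\limsup_N d_{\mathrm{TV}}\bigl(\mathcal L(\Res_n(t_N,L_N)),M^\omega_n\bigr)\le\binom{n-1}2\Bigl(\limsup_N\sum_{i>L}q_{N,i}^2+\sum_{i>L}p_i^2\Bigr),
\]
and letting \(L\to\infty\) gives the result. Reindexing the positive masses into nonincreasing order is harmless, because permuting children with their subarrays preserves sampling laws.

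I expect the main obstacle to be the conditional description in Step 2. One must verify carefully that, given the grouped allocation, the head-cell shapes are independent with laws \(K_\cdot(a_{N,i})\), and that the tail labels become singleton children. This conditional independence is given only conditionally on the full ordered partition of \(\mathcal A\)'s, so it must be averaged correctly down to the coarser grouped vector. That averaging is where \cref{lem:root-decomposition-uniform}(ii) and order standardization are used.
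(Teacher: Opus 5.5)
Your proof is correct and follows the paper's argument. It uses the same root factorization from \cref{lem:root-decomposition-uniform}, the same grouped-allocation convergence from \cref{lem:finite-population-allocation-dust-residual}(i), the same truncation errors from \cref{lem:tail-cells-are-dust}, and the same order of limits, $N\to\infty$ and then $L\to\infty$. The one difference is in the middle step: the paper maximally couples quotient-labelled samples and transfers child convergence through \cref{lem:central-lift-total-variation}, whereas you pass to the limit termwise in the shape-level finite sum \eqref{eq:truncated-root-finite-sum}, with boxes indexed in the given order of the $a_{N,i}$ rather than ranked. Your version works just as well, because $\T_n$ is finite and terms with an occupied zero-mass head cell have vanishing weight.
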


\begin{proof}
Write \(r=n-1\). By \cref{lem:root-decomposition-uniform}, the \(r\)
non-root sample labels are allocated without replacement among cells of
relative sizes \(q_{N,i}\). We first identify the conditional laws of their
child samples.

Let \(A_i\) be the complete label set in child \(a_{N,i}\), and put
\(B_i=A_i\cap[r]\). Conditional on the full allocation \((A_i)\), the
child quotient labellings are independent and uniform. If \(B_i\ne\varnothing\),
it consists of the \(m_i=|B_i|\) least elements of \(A_i\). For a shape
\(s\in\T_{m_i}\) and a specified quotient labelling
\(\lambda\in\QSL(s)\), the path bijection shows that there are
\(n(s;a_{N,i})\) extensions of this prefix. Thus, after order-standardizing
\(B_i\), its restriction has probability
\begin{equation}\label{eq:child-restriction-product-kernel}
    \frac{n(s;a_{N,i})}{u(a_{N,i})}
       =\frac{K_s(a_{N,i})}{u(s)}
       =\widehat M_{m_i}^{a_{N,i}}(s,\lambda).
\end{equation}
Since these probabilities depend on the full allocation only through
\((m_i)\), averaging over completions of \((B_i)\) preserves the
product law conditional on the sampled label sets:
\[
    \mathcal L\bigl((\widehat U_i)_{B_i\ne\varnothing}\mid(B_i)_i\bigr)
       =\bigotimes_{i:B_i\ne\varnothing}\widehat M_{|B_i|}^{a_{N,i}},
\]
where \(\widehat U_i\) is the order-standardized quotient-labelled child
restriction. The limiting paintbox has the analogous product law by its
construction and \cref{prop:paintbox-central}.

Fix \(L\). Group the root choices into
\(\{1,\ldots,L,\star\}\), where \(\star\) records all later cells.
Let \(\mathcal A_{N,L}\) be this finite-tree allocation law on \(r\)
labels and \(\mathcal A_L\) the i.i.d.\ law with probabilities
\(p_1,\ldots,p_L,1-\sum_{i\le L}p_i\). Then
\cref{lem:finite-population-allocation-dust-residual}(i) gives
\[
    d_{\mathrm{TV}}(\mathcal A_{N,L},\mathcal A_L)\longrightarrow0.
\]
For \(p_i>0\), we have \(|a_{N,i}|\to\infty\). Hence, for all
sufficiently large \(N\), define
\[
    \varepsilon_{N,i}
      =\max_{1\le m\le n-1}
         d_{\mathrm{TV}}\bigl(\widehat M_m^{a_{N,i}},
                              \widehat M_m^{\omega_i}\bigr).
\]
The child-coordinate hypothesis, finiteness of each level, and
\cref{lem:central-lift-total-variation} imply
\(\varepsilon_{N,i}\to0\).

Let \(\mathcal L_N\) and \(\mathcal L_\omega\) be the full
quotient-labelled \(n\)-sample laws. Truncate both samples by making each
occurrence of \(\star\) a singleton child. The two truncation errors are
at most \(\binom r2\sum_{i>L}q_{N,i}^2\) and
\(\binom r2\sum_{i>L}p_i^2\), respectively. Couple the grouped
allocations maximally. Conditional on their agreement, couple the samples
in occupied retained boxes independently and maximally, using the product
kernels above. In a common allocation, every occupied retained box has
positive limiting mass, since a box with \(p_i=0\) is never occupied by
the limiting allocation. If \(I\subseteq\{1\le i\le L:p_i>0\}\) is the set of
occupied retained boxes, with occupancies \((m_i)_{i\in I}\), then the
conditional product laws satisfy
\[
    d_{\mathrm{TV}}\!\left(
       \bigotimes_{i\in I}\widehat M_{m_i}^{a_{N,i}},
       \bigotimes_{i\in I}\widehat M_{m_i}^{\omega_i}
    \right)
    \le \sum_{i\in I}\varepsilon_{N,i}.
\]
This follows from independent couplings and a union bound. Combining the
allocation, child, and truncation errors gives
\begin{equation}\label{eq:root-sampling-coupling-bound}
 d_{\mathrm{TV}}(\mathcal L_N,\mathcal L_\omega)
 \le d_{\mathrm{TV}}(\mathcal A_{N,L},\mathcal A_L)
       +\sum_{\substack{1\le i\le L\\p_i>0}}\varepsilon_{N,i}
 +\binom{n-1}{2}
       \left(\sum_{i>L}q_{N,i}^2+\sum_{i>L}p_i^2\right).
\end{equation}
For fixed \(L\), the first two terms tend to zero. Taking
\(\limsup_{N\to\infty}\) and then \(L\to\infty\), the remaining
terms vanish by \eqref{eq:root-sampling-residual-collision} and
\(\sum_i p_i^2<\infty\). Thus the quotient-labelled laws converge in
total variation. Forgetting labels gives the claimed shape-law convergence;
\cref{prop:sampling-kernel} gives the kernel limit.
\end{proof}

\subsection{Regular deterministic sequences}

Global regularity matches each positive-mass address with an actual
descendant subtree, preserving ancestry and local mass limits. Locally
finite thresholds ensure that each address is eventually matched, with
only finitely many matches required at each index. Squared relative sizes
control the unmatched branches.

\begin{definition}[Globally regular sequence]
\label{def:regular-sequence}
Let \((t_N)\) be finite rooted trees with \(|t_N|\to\infty\), and let
\(\omega\) be a deterministic recursive paintbox.  Fix a Ulam-indexed
representative
\[
    \omega_w=\{(p_{w,i},\omega_{wi}):p_{w,i}>0\},
    \qquad
    p_{w,0}=1-\sum_{i\ge1}p_{w,i},
\]
with \(\omega_\varnothing=\omega\).  Put
\(q_\varnothing=1\), \(q_{wi}=q_w p_{w,i}\), \(\Ulam_\omega=\{w\in\Ulam:q_w>0\}\).

The sequence is \emph{globally regular with limit \(\omega\)} if there
exist a threshold map
\[
    N(\,\cdot\,):\Ulam_\omega\longrightarrow\N
\]
and, for every \(w\in\Ulam_\omega\) and every \(N\ge N(w)\), an actual
descendant subtree \(t_{N,w}\subseteq t_N\). We require
\[
    N(\varnothing)=1,
    \qquad
    N(wi)\ge N(w)
    \quad\text{whenever }wi\in\Ulam_\omega,
\]
and the threshold map is locally finite:
\(\{w\in\Ulam_\omega:N(w)\le N\} \) is finite for every \(N\).
The subtrees are required to form one compatible family and to satisfy
the following properties.
\begin{enumerate}[label=(\roman*)]
    \item \(t_{N,\varnothing}=t_N\) for every \(N\).  Whenever
    \(wi\in\Ulam_\omega\) and \(N\ge N(wi)\), the subtree \(t_{N,wi}\) is a
    root child of \(t_{N,w}\).  More generally, if \(u\preceq v\) and
    \(N\ge N(v)\), then \(t_{N,v}\) is the corresponding descendant subtree
    of \(t_{N,u}\). For fixed \(w\) and \(N\), the matched children that are
    defined at index \(N\) are pairwise distinct.
    \item For every \(wi\in\Ulam_\omega\), as \(N\to\infty\) through \(N\ge N(wi)\),
    \begin{equation}\label{eq:regular-positive-child-masses}
        \frac{|t_{N,wi}|}{|t_{N,w}|-1}
        \longrightarrow p_{w,i}.
    \end{equation}
    \item Fix \(w\in\Ulam_\omega\) and \(L\ge1\), and set
    \(I_{w,L}:=\{1\le i\le L:p_{w,i}>0\}\).
    For all sufficiently large \(N\), let \(\mathcal R_{N,w,L}\) be the
    root children of \(t_{N,w}\) other than the globally matched children
    \(t_{N,wi}\), \(i\in I_{w,L}\), and put
    \(r_N(a)=\frac{|a|}{|t_{N,w}|-1}\).
    Then
    \begin{equation}\label{eq:regular-residual-mass}
        \sum_{a\in\mathcal R_{N,w,L}}r_N(a)
        \longrightarrow 1-\sum_{i=1}^L p_{w,i},
    \end{equation}
    and
    \begin{equation}\label{eq:regular-residual-collisions}
        \lim_{L\to\infty}\limsup_{N\to\infty}
        \sum_{a\in\mathcal R_{N,w,L}}r_N(a)^2=0.
    \end{equation}
\end{enumerate}
\end{definition}

\begin{lemma}[Propagation of global address masses]
\label{lem:global-address-mass-propagation}
If \((t_N)\) is globally regular with limit \(\omega\), then for every
\(w\in\Ulam_\omega\),
\[
    \frac{|t_{N,w}|}{|t_N|}\longrightarrow q_w.
\]
In particular, \(|t_{N,w}|\to\infty\) for every positive-mass address
\(w\in\Ulam_\omega\).
\end{lemma}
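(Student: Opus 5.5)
We argue by induction on the length \(|w|\) of the address, using a telescoping product of the local mass ratios. For \(w=\varnothing\) the claim is trivial, since \(t_{N,\varnothing}=t_N\) and \(q_\varnothing=1\). Suppose the statement holds for \(w\in\Ulam_\omega\), and let \(wi\in\Ulam_\omega\), so that \(p_{w,i}>0\) and \(q_{wi}=q_wp_{w,i}>0\). For \(N\ge N(wi)\ge N(w)\), both \(t_{N,w}\) and its root child \(t_{N,wi}\) are defined by condition (i) of \cref{def:regular-sequence}. We write
\[
    \frac{|t_{N,wi}|}{|t_N|}
    =\frac{|t_{N,wi}|}{|t_{N,w}|-1}\cdot
     \frac{|t_{N,w}|-1}{|t_{N,w}|}\cdot
     \frac{|t_{N,w}|}{|t_N|}.
\]

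Consider the three factors in turn. The first converges to \(p_{w,i}\) by \eqref{eq:regular-positive-child-masses}. The third converges to \(q_w\) by the induction hypothesis. For the middle factor, note that \(q_w>0\) and \(|t_N|\to\infty\) give \(|t_{N,w}|\ge\tfrac12q_w|t_N|\to\infty\) for large \(N\). Hence the middle factor tends to one, and we never divide by zero, since \(t_{N,wi}\) being a root child forces \(|t_{N,w}|\ge2\). The product therefore converges to \(p_{w,i}q_w=q_{wi}\). This proves the claim for \(wi\), and induction along the prefix chain of any \(w\in\Ulam_\omega\) covers all positive-mass addresses. Every prefix of \(w\) lies in \(\Ulam_\omega\) because \(q\) is nonincreasing along prefixes.

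The last assertion follows directly: \(|t_{N,w}|=(|t_{N,w}|/|t_N|)\,|t_N|\), where the first factor tends to \(q_w>0\) and the second tends to infinity.

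The only point requiring care is this divergence \(|t_{N,w}|\to\infty\). It must be obtained at each inductive step before passing from the normalization \(|t_{N,w}|-1\) in (ii) to \(|t_{N,w}|\). Conditions (iii) and local finiteness are not needed for this lemma.
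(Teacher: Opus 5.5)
Your proposal is correct and matches the paper's proof: induction on \(|w|\) with the same three-factor decomposition, condition (ii) for the first factor, the induction hypothesis for the third, and \(q_w>0\) giving \(|t_{N,w}|\to\infty\) so that the middle factor tends to one. Your added remarks on nonvanishing denominators and on prefixes of positive-mass addresses being positive-mass are correct and only make explicit what the paper leaves implicit.
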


\begin{proof}
We argue by induction on \(|w|\).  At the root the assertion is
\(|t_{N,\varnothing}|/|t_N|=1=q_\varnothing\).  Assume it holds at
\(w\), and let \(wi\in\Ulam_\omega\).  Since \(q_w>0\), the induction
hypothesis implies \(|t_{N,w}|\to\infty\).  Using
\eqref{eq:regular-positive-child-masses},
\[
    \frac{|t_{N,wi}|}{|t_N|}
    =\frac{|t_{N,wi}|}{|t_{N,w}|-1}
      \frac{|t_{N,w}|-1}{|t_{N,w}|}
      \frac{|t_{N,w}|}{|t_N|}
    \longrightarrow p_{w,i}\cdot1\cdot q_w
     =q_{wi}.
\]
Because \(q_{wi}>0\) and \(|t_N|\to\infty\), this also proves
\(|t_{N,wi}|\to\infty\).  The induction is complete.
\end{proof}

Global regularity is inherited at every \(w\in\Ulam_\omega\).
Indeed, set \(s_j=t_{N(w)+j-1,w}\). Its matched descendants are
\(s_{j,v}=t_{N(w)+j-1,wv}\), defined for
\(j\ge N_w(v):=N(wv)-N(w)+1\), where
\(v\in\Ulam_{\omega_w}\). The thresholds \(N_w\) are locally finite,
prefix-monotone, and satisfy \(N_w(\varnothing)=1\). The lemma gives
\(|s_j|\to\infty\), and the child-mass and residual conditions are
those at \(wv\) in the original sequence. Hence \((s_j)\) is globally
regular with limit \(\omega_w\).

\begin{lemma}[Globally regular sequences converge in Martin coordinates]
\label{lem:regular-convergence}
If \((t_N)\) is globally regular with limit \(\omega\), then, for every fixed
rooted tree \(s\),
\[
    K_s(t_N)\longrightarrow M_{|s|}^\omega(s).
\]
\end{lemma}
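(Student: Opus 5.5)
We argue by strong induction on \(n=|s|\), simultaneously for all globally regular sequences and all limits. The induction hypothesis is: for every globally regular sequence \((t'_N)\) with any limit \(\omega'\), every \(m<n\), and every \(s'\in\T_m\), we have \(K_{s'}(t'_N)\to M_m^{\omega'}(s')\). The case \(n=1\) is trivial, since \(K_{\one}(t)=1=M_1^\omega(\one)\) once \(|t_N|\ge1\). For \(n\ge2\) the goal is to reduce to \cref{lem:root-sampling-with-child-limits}, applied at the root of \(t_N\).

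Fix \(n\ge2\) and choose the temporary ordering of root children required by that lemma as follows. For each \(N\), list first the globally matched children \(t_{N,i}\), placing each at position \(i\) for those \(i\) with \(p_{\varnothing,i}>0\) and \(N\ge N(i)\). Then fill the remaining positions with the unmatched root children, and append zero placeholders. Because the threshold map is locally finite and prefix-monotone, each fixed \(i\) with \(p_{\varnothing,i}>0\) is matched for all large \(N\).

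A small bookkeeping issue is that positions \(i\) with \(p_{\varnothing,i}=0\), or not yet matched, must receive children whose relative mass tends to \(0\). To arrange this, put the residual children into positions beyond every index up to \(L_N\), where \(L_N\to\infty\) slowly, and fill the intermediate indices with placeholders. Then \(q_{N,i}\to p_i\) holds coordinatewise: for matched indices this is \eqref{eq:regular-positive-child-masses}, and for the other fixed indices \(q_{N,i}=0\) eventually. A cleaner alternative is to apply \cref{lem:finite-population-allocation-dust-residual} directly with "matched cells plus residual cells", which avoids the positional shuffle.

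The residual collision hypothesis \eqref{eq:root-sampling-residual-collision} follows from \eqref{eq:regular-residual-collisions} at \(w=\varnothing\). The key fact is that \(\sum_{i>L}q_{N,i}^2\) equals the residual sum over \(\mathcal R_{N,\varnothing,L}\) plus matched terms with index greater than \(L\). Those matched terms are controlled, after \(\limsup_N\), by \(\sum_{i>L}p_{\varnothing,i}^2\), using \eqref{eq:regular-residual-mass} together with \(\mathcal R_{N,\varnothing,L}\supseteq\) the later matched children. More precisely, the matched terms with index in \((L,L']\) are bounded by the residual at level \(L\).

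Next, the child limit hypothesis \eqref{eq:root-sampling-child-limit} comes from the induction hypothesis. By the remark following \cref{lem:global-address-mass-propagation}, the sequence \((t_{N,i})_{N\ge N(i)}\) is globally regular with limit \(\omega_i\). Hence \(K_{s'}(t_{N,i})\to M_m^{\omega_i}(s')\) for all \(m\le n-1\). The lemma then yields \(K_s(t_N)\to M_n^\omega(s)\). Since the induction is over all regular sequences at once, the inherited regularity at the children is exactly what is needed, and no further depth bookkeeping is required.

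The main obstacle is the verification of the root-collision condition from the residual conditions of \cref{def:regular-sequence}. The masses of matched children beyond index \(L\) must be absorbed into the residual \(\mathcal R_{N,\varnothing,L}\), whose squared mass vanishes in the iterated limit. One must also check that zero-mass indices never carry mass asymptotically.
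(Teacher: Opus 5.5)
Your proof is correct and follows the paper's route: induction on \(n\) over all globally regular sequences at once, inherited regularity of \((t_{N,i})\) for the child limits, and \cref{lem:root-sampling-with-child-limits} at the root. The only difference is bookkeeping: pushing unmatched children beyond a cutoff \(L_N\to\infty\) makes \(q_{N,i}\to0\) at zero-mass positions automatic (the proof of that lemma does not care where zero-size cells sit), whereas the paper fills those positions with residual children by decreasing size and, when only finitely many root masses are positive, uses the stabilization of \(\mathcal R_{N,\varnothing,L}\) for large \(L\) to show that the largest residual child vanishes.

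One correction on the collision hypothesis. With your ordering, the actual children beyond position \(L\) are, for large \(N\), exactly \(\mathcal R_{N,\varnothing,L}\). That set already contains the matched children of index \(>L\), so \(\sum_{i>L}q_{N,i}^2\) equals the residual square sum and the hypothesis is literally \eqref{eq:regular-residual-collisions}. Your separate ``matched terms'' are therefore a harmless double count. Moreover, bounding them by \(\sum_{i>L}p_{\varnothing,i}^2\) would need \eqref{eq:regular-residual-collisions}, not the first-moment condition \eqref{eq:regular-residual-mass}.
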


\begin{proof}
We induct on the sample size \(n\), simultaneously for all globally
regular sequences and all test shapes. For \(n=1\), both
\(K_{\one}(t_N)\) and \(M_1^\omega(\one)\) equal one.

Assume the assertion for every sample size below \(n\ge2\), and make
all subsequent constructions for sufficiently large \(N\) with
\(|t_N|\ge n\). Enumerate the positive root atoms of \(\omega\) consecutively,
\[
    \omega=\{(p_i,\omega_i):1\le i\le r\},
    \qquad r\in\Nzero\cup\{\infty\},
\]
where every displayed \(p_i\) is strictly positive, and set \(p_i=0\) for
\(i>r\) when \(r<\infty\).  Also put
\[
    p_0=1-\sum_{i\ge1}p_i.
\]
Global regularity supplies a matched root child \(t_{N,i}\) for every
positive index \(i\), for all sufficiently large \(N\). To apply
\cref{lem:root-sampling-with-child-limits}, extend these matches to an ordered
list of all root children.
For each \(N\), place every currently defined matched positive child
\(t_{N,i}\) in position \(i\).  Fill the unused positions with the remaining
root children of \(t_N\), in nonincreasing order of size with an arbitrary
deterministic tie-breaker, and finally append formal zero-size placeholders.
Denote the resulting list by \((a_{N,i})_{i\ge1}\) and put
\[
    q_{N,i}=\frac{|a_{N,i}|}{|t_N|-1}.
\]
For each fixed positive index \(i\le r\), global regularity gives, once
\(N\ge N(i)\),
\[
    a_{N,i}=t_{N,i},
    \qquad
    q_{N,i}\longrightarrow p_i.
\]
If \(r<\infty\), the residual set in
Definition~\ref{def:regular-sequence} is unchanged for truncation levels
at least \(L_*=\max\{1,r\}\): no further positive child is removed.
Thus \eqref{eq:regular-residual-collisions} gives
\[
    \limsup_{N\to\infty}
    \sum_{a\in\mathcal R_{N,\varnothing,L_*}}
       \left(\frac{|a|}{|t_N|-1}\right)^2=0.
\]
In particular, the largest residual relative size tends to zero.  Because the
zero-mass positions \(i>r\) in the auxiliary list are filled from these
residual children, it follows that \(q_{N,i}\to0=p_i\) for every fixed
\(i>r\).  Thus
\[
    q_{N,i}\longrightarrow p_i
    \qquad\text{for every fixed }i\ge1.
\]
Moreover, for every fixed \(L\) and all sufficiently large \(N\), all matched
positive children with index at most \(L\) are defined.  The actual children
occurring after position \(L\) in the auxiliary list form the residual
collection \(\mathcal R_{N,\varnothing,L}\) after removing any residual
children placed in earlier zero-limit positions \(r<i\le L\) when \(r<L\).
Removing such children can only decrease the squared tail mass.  Therefore
\[
    \lim_{L\to\infty}\limsup_{N\to\infty}
    \sum_{i>L}q_{N,i}^2=0
\]
by \eqref{eq:regular-residual-collisions}. This verifies the mass and
collision hypotheses of \cref{lem:root-sampling-with-child-limits}.

For every positive index \(i\le r\),
Lemma~\ref{lem:global-address-mass-propagation} gives
\[
    \frac{|a_{N,i}|}{|t_N|}\longrightarrow p_i>0,
\]
so \(|a_{N,i}|\to\infty\).  The globally compatible descendants
\(t_{N,iv}\) then show that \((a_{N,i})=(t_{N,i})\) is itself globally regular
with limit \(\omega_i\).  Hence the induction hypothesis applies and gives, for every \(1\le m<n\) and every \(r_0\in\T_m\),
\[
    K_{r_0}(a_{N,i})\longrightarrow M_m^{\omega_i}(r_0).
\]
Lemma~\ref{lem:root-sampling-with-child-limits} now applies at size \(n\).
For a uniform quotient standard labelling \(L_N\) of \(t_N\), it gives
\[
    d_{\mathrm{TV}}\!\left(
      \mathcal L\bigl(\Res_n(t_N,L_N)\bigr),M_n^\omega
    \right)\longrightarrow0.
\]
By \cref{prop:sampling-kernel},
\(K_s(t_N)=\Prob\{\Res_n(t_N,L_N)\cong s\}\) for \(s\in\T_n\),
which completes the induction.
\end{proof}

\subsection{Subsequential paintbox limits}

We prove compactness for root decompositions marked by Martin coordinates.
A subtree \(a\) carries the mark \(\iota_M(a)\in\Omega_M\), with
\(K_s(a)=0\) when \(|a|<|s|\). Fix the padding mark
\(\xi_{\mathrm{pad}}:=\iota_M(\one)\).

\begin{lemma}[Compactness of marked root mass partitions]
\label{lem:marked-mass-compactness}
Let \((t_j)\) be finite rooted trees with \(N_j:=|t_j|\to\infty\).
After discarding finitely many terms, assume \(N_j\ge2\). Mark a root
child \(a\) by \(\iota_M(a)\) and give it mass
\(q(a)=|a|/(N_j-1)\). Choose once and for all an enumeration
\((s_\ell)_{\ell\ge1}\) of \(\T\).  At each \(j\), order the root children
first by decreasing mass and, among equal masses, by decreasing lexicographic
order of their coordinate sequences
\[
    \bigl(K_{s_1}(a),K_{s_2}(a),\ldots\bigr).
\]
Use an arbitrary deterministic order when all ordering coordinates tie, and
pad the list by
copies of \((0,\xi_{\mathrm{pad}})\).  Then a subsequence and limits \((q_i,\xi_i)_{i\ge1}\)
exist such that, for every fixed \(i\),
\[
    q_{j,i}\to q_i,
    \qquad
    \xi_{j,i}\to\xi_i\quad\text{in }\Omega_M.
\]
The masses satisfy
\[
    q_1\ge q_2\ge\cdots\ge0,
    \qquad
    \sum_i q_i\le1.
\]
If \(q_i>0\), then the corresponding child sizes tend to infinity and
\(\xi_i\in\partial\Omega_M\).  Finally, fix a sample size \(r\).  For all sufficiently large \(j\), sample
\(r\) labels uniformly without replacement from the \(N_j-1\) non-root
vertices.  By Lemma~\ref{lem:root-decomposition-uniform}, this experiment has
the same law as the root-child allocation of the first \(r\) non-root labels
under a uniform quotient standard labelling.  Let
\(A_{j,L}^{(r)}\) be the event that two sampled labels fall in the same root
child at an ordered position \(i>L\).  Then
\begin{equation}\label{eq:marked-mass-residual-collision}
    \lim_{L\to\infty}\limsup_{j\to\infty}
    \Prob(A_{j,L}^{(r)})=0.
\end{equation}
The limiting datum is the unordered marked collection
\(\{(q_i,\xi_i):q_i>0\}\) together with dust mass
\(1-\sum_i q_i\).
\end{lemma}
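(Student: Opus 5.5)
The argument is a diagonal extraction in a compact product space, followed by separate checks of each listed property. Each coordinate lives in a compact metrizable space: \(q_{j,i}\in[0,1]\) and \(\xi_{j,i}\in\Omega_M\). Hence \(([0,1]\times\Omega_M)^{\N}\), with the product topology, is compact metrizable, and the sequence of padded ordered lists \(((q_{j,i},\xi_{j,i}))_{i\ge1}\) has a convergent subsequence. Coordinatewise convergence gives the limits \((q_i,\xi_i)\). The inequalities defining the ordering are closed, so they pass to the limit. Since \(q_{j,1}\ge q_{j,2}\ge\cdots\) holds at every \(j\), we get \(q_1\ge q_2\ge\cdots\ge0\). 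Fatou's lemma, or simply passing finite partial sums \(\sum_{i\le L}q_{j,i}\le1\) to the limit, gives \(\sum_iq_i\le1\). The secondary lexicographic tie-breaking is not needed for compactness; it only fixes a deterministic enumeration.

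For a positive limit mass \(q_i>0\), we have \(|a_{j,i}|=q_{j,i}(N_j-1)\to\infty\), because \(q_{j,i}\to q_i>0\) and \(N_j\to\infty\). In particular \(a_{j,i}\) is eventually an actual child and not a padding placeholder. By \cref{lem:finite-vertices-isolated}, finite vertices are isolated in \(\Omega_M\). A sequence of finite trees whose sizes tend to infinity eventually leaves every finite set of vertices, so its limit cannot be a finite vertex. Concretely, for each \(n\) the level sum \(\sum_{s\in\T_n}K_s(a_{j,i})\) eventually equals \(1\). Each level is finite, so this identity passes to the limit, and \eqref{eq:martin-boundary-level-sums} places \(\xi_i\) in \(\partial\Omega_M\).

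The collision estimate \eqref{eq:marked-mass-residual-collision} is the step that uses the ranking, and I expect it to be the main point. By \cref{lem:root-decomposition-uniform}, the allocation of the \(r\) sampled labels is uniform sampling without replacement from the cells of sizes \(|a_{j,i}|\) in a population of size \(N_j-1\). The finite-population part of \cref{lem:tail-cells-are-dust} then bounds the probability of a collision in a cell beyond position \(L\):
\[
    \Prob(A_{j,L}^{(r)})\le\binom r2\sum_{i>L}q_{j,i}^2\le\binom r2\,q_{j,L+1},
\]
where the last inequality uses that the masses are arranged in nonincreasing order, which holds because mass is the primary sort key. Taking \(\limsup_j\) gives the bound \(\binom r2 q_{L+1}\). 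Since \(\sum_iq_i\le1\), the limit masses satisfy \(q_{L+1}\to0\) as \(L\to\infty\), which proves \eqref{eq:marked-mass-residual-collision}.

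Finally, the limiting datum is the unordered collection \(\{(q_i,\xi_i):q_i>0\}\) with dust mass \(1-\sum_iq_i\). This is a definition: it forgets the auxiliary order and the zero-mass entries, so it needs no further argument.
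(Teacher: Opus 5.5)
Your proposal is correct and follows the paper's proof essentially step for step. You extract a coordinatewise convergent subsequence in the compact product \(\prod_{i\ge1}([0,1]\times\Omega_M)\), pass the mass ordering and the finite partial sums to the limit, and use the isolated finite vertices of \cref{lem:finite-vertices-isolated} (your level-sum check via \eqref{eq:martin-boundary-level-sums} is an equivalent variant) to place the positive-mass marks on the boundary; for the residual collisions you apply the finite-population bound of \cref{lem:tail-cells-are-dust} together with the ranking and \(q_{L+1}\to0\).
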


\begin{proof}
By compact metrizability of \(\Omega_M\), the product
\[
    \prod_{i\ge1}([0,1]\times\Omega_M)
\]
is compact metrizable. The ordered and padded marked-child lists therefore
have a coordinatewise convergent subsequence.  Decreasing order of the first
coordinate passes to the limit, and for every fixed \(L\),
\[
    \sum_{i=1}^L q_i
    =\lim_j\sum_{i=1}^L q_{j,i}\le1;
\]
letting \(L\to\infty\) gives \(\sum_i q_i\le1\).

If \(q_i>0\), then
\(|a_{j,i}|=q_{j,i}(N_j-1)\to\infty\).  A convergent sequence of such finite
vertices cannot converge to an isolated finite vertex of \(\Omega_M\), by
Lemma~\ref{lem:finite-vertices-isolated}; hence \(\xi_i\) lies in the Martin
boundary.

For the residual collision estimate, the finite-population part of
Lemma~\ref{lem:tail-cells-are-dust} gives
\[
    \Prob\{\text{a residual collision}\}
    \le
    \binom r2\sum_{i>L}q_{j,i}^2
    \le
    \binom r2 q_{j,L+1}.
\]
Since \(q_{j,L+1}\to q_{L+1}\) for fixed \(L\) and
\((q_i)\) is summable,
\[
    \lim_{L\to\infty}\limsup_{j\to\infty}
    \Prob\{\text{a residual collision}\}
    =0.
\]
This proves \eqref{eq:marked-mass-residual-collision}.  Changing the order among
complete ties changes neither the unordered collection nor any finite sampling
law.
\end{proof}

\begin{proposition}[Globally regular subsequences]
\label{prop:recursive-compactness-extraction}
Every sequence \((t_j)\) of finite rooted trees with \(|t_j|\to\infty\)
has a subsequence \((t_{j_\ell})\) that is globally regular with limit
\(\omega\) for some deterministic recursive paintbox \(\omega\).
\end{proposition}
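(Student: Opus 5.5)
We extract the subsequence by a diagonal procedure over the Ulam tree, applying \cref{lem:marked-mass-compactness} at each matched address. The paintbox \(\omega\) is built at the same time, address by address. Enumerate \(\Ulam\) so that every word appears after its prefixes, for instance by ordering by \(|w|+\max\) letter and then lexicographically.

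At the root, apply \cref{lem:marked-mass-compactness} to \((t_j)\). This gives a subsequence along which the ranked root-child masses converge to \((p_{\varnothing,i})_i\), and the ranked children \(a_{j,i}\) serve as matched subtrees \(t_{j,i}\). For each \(i\) with \(p_{\varnothing,i}>0\), \cref{lem:global-address-mass-propagation}'s mechanism (here the lemma's conclusion \(|a_{j,i}|\to\infty\)) lets us apply \cref{lem:marked-mass-compactness} again to the sequence \((t_{j,i})_j\). Working in the enumeration order, at stage \(k\) we pass to a further subsequence that makes the ranked child masses of the \(k\)th positive-mass address \(w\) converge. These limits are \(p_{w,\cdot}\), and the corresponding ranked children define \(t_{j,wi}\).

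Zero-mass addresses are never processed. Their subarrays are set to pure dust, which is harmless because they are never sampled. Matching the \(i\)th ranked child at each stage automatically gives a compatible family: each child is an actual root child of its parent subtree, and distinct ranks give distinct children.

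The diagonal subsequence \(j_\ell\) is the \(\ell\)th term of the \(\ell\)th-stage subsequence. Along it, every processed address has its limits \eqref{eq:regular-positive-child-masses}, since from index \(\ell\ge k\) onward we are in a further subsequence of stage \(k\). Define \(N(w)\) as the stage index at which \(w\) was processed, increased if necessary so that the matched child exists and is an actual child. Because \(q_{wi}>0\), this holds for all large indices. The map is prefix-monotone because prefixes are processed first, and it is locally finite because only finitely many addresses are processed by any stage. Putting \(N(\varnothing)=1\) is consistent.

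The residual conditions of \cref{def:regular-sequence}(iii) come from ranking. The collision condition \eqref{eq:regular-residual-collisions} follows as in the proof of \cref{lem:marked-mass-compactness}: the residual children at level \(L\) are exactly the ranked children at positions \(i>L\), together with any children at positions \(i\le L\) with \(p_{w,i}=0\). Their squared sum is at most \(q_{j,L+1}+\sum_{i\le L,\,p_{w,i}=0}q_{j,i}^2\), and this tends to \(p_{w,L+1}\) and then to \(0\).

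The residual-mass condition \eqref{eq:regular-residual-mass} is a short computation. The total mass of the root children is exactly \(1\), so the residual mass equals \(1-\sum_{i\in I_{w,L}}q_{j,i}\to1-\sum_{i\le L}p_{w,i}\).

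The main obstacle is bookkeeping rather than analysis. We must check two things. First, the diagonal subsequence simultaneously keeps all matchings fixed along one compatible family. Second, the thresholds remain finite despite infinitely many stages. Both are handled by defining the matched subtree at \(w\) intrinsically, as the ranked child of the already-matched parent. With this definition, later subsequence extractions only thin the index set and never change which child is matched. The tie-breaking in \cref{lem:marked-mass-compactness} also makes the ranking deterministic, so this definition is well posed.
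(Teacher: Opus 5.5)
Your proposal is correct and is essentially the paper's argument: nested extractions by \cref{lem:marked-mass-compactness} along a prefix-respecting enumeration, each matched subtree fixed once as a ranked child of the already-matched parent, a diagonal subsequence, locally finite thresholds obtained from an injective index (your processing stage plays the role of the paper's injection \(\rho\)), and the residual-mass and collision conditions read off from the ranking. The only differences are cosmetic. The paper runs trivial stages at zero-mass words and takes \(\max_{v\preceq w}\) explicitly to keep the enlarged thresholds prefix-monotone, which is what your ``increased if necessary'' should mean. It also identifies the extracted marks with the child environments, which this statement does not need.
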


\begin{proof}
Enumerate the nonempty Ulam words as \(w_1,w_2,\ldots\), with parents
preceding children. For example, order by increasing
\(|w|+\sum_{j=1}^{|w|}w_j\), breaking ties arbitrarily; each sublevel set
is finite. Treat the empty word separately. We construct nested infinite index sets
\[
    J_0\supseteq J_1\supseteq J_2\supseteq\cdots
\]
and compatible actual subtrees at positive-mass addresses. Start with
\(q_\varnothing=1\); whenever the split at \(w\) is chosen, define
\(q_{wi}=q_wp_{w,i}\).

At the root, apply Lemma~\ref{lem:marked-mass-compactness} to \((t_j)\) and
call the resulting infinite index set \(J_0\). On \(J_0\), set
\(t_{j,\varnothing}=t_j\), and let \(p_{\varnothing,i}\) be the limiting
root masses, with \(p_{\varnothing,0}=1-\sum_{i\ge1}p_{\varnothing,i}\).
If \(p_{\varnothing,i}>0\), choose an original-index
threshold \(j_0(i)\) beyond which position \(i\) is occupied by an actual
child. For every \(j\in J_0\) with \(j\ge j_0(i)\), define
\(t_{j,i}\) to be that child subtree. These matches are retained on all
subsequent index sets.

Suppose the construction has reached the nonempty word \(w_m\).  If the already determined
address mass \(q_{w_m}\) is zero, set \(J_m=J_{m-1}\) and assign the
default pure-dust split
\[
    p_{w_m,0}=1,
    \qquad
    p_{w_m,i}=0\quad(i\ge1).
\]
If \(q_{w_m}>0\), first restrict \(J_{m-1}\) to its infinite tail
\(j\ge j_0(w_m)\), where the previously fixed subtree \(t_{j,w_m}\)
is defined. When this match was created, its relative mass inside the
matched parent subtree was found to converge to a positive limit. Induction
along the finite ancestor chain, starting from
\(|t_j|\to\infty\), therefore gives \(|t_{j,w_m}|\to\infty\).
Discard a further finite prefix so that \(|t_{j,w_m}|\ge2\). Order and
pad the actual root children of \(t_{j,w_m}\) as
\(a_{j,w_m,1},a_{j,w_m,2},\ldots\) according to
Lemma~\ref{lem:marked-mass-compactness}, and set
\[
    q_{j,w_m,i}
    :=
    \frac{|a_{j,w_m,i}|}{|t_{j,w_m}|-1},
    \qquad
    \xi_{j,w_m,i}
    :=
    \begin{cases}
       \iota_M(a_{j,w_m,i}),&a_{j,w_m,i}\text{ is an actual child},\\
       \xi_{\mathrm{pad}},&a_{j,w_m,i}\text{ is a zero-size placeholder}.
    \end{cases}
\]
Apply Lemma~\ref{lem:marked-mass-compactness} to the sequence
\((t_{j,w_m})_{j \in J_{m-1}}\).  After passing to an infinite subset
\(J_m\subseteq J_{m-1}\), we may assume that, for every fixed \(i\),
\[
    q_{j,w_m,i}\longrightarrow p_{w_m,i},
    \qquad
    \xi_{j,w_m,i}\longrightarrow\xi_{w_m,i}.
\]
Take \((p_{w_m,i})_{i\ge1}\) as the local positive-box masses and set
\(p_{w_m,0}=1-\sum_{i\ge1}p_{w_m,i}\). For every \(i\) with
\(p_{w_m,i}>0\), choose \(j_0(w_mi)\ge j_0(w_m)\) so that the
\(i\)th position is occupied whenever \(j\in J_m\) and
\(j\ge j_0(w_mi)\), and define
\[
    t_{j,w_mi}:=a_{j,w_m,i}
    \quad\text{for these }j.
\]
Later extractions retain these matched subtrees on smaller index sets,
preserving all ancestor--descendant incidences.

Choose a diagonal sequence of indices recursively so that
\(j_\ell\in J_\ell\) and \(j_\ell>j_{\ell-1}\).  For every fixed stage \(m\),
all indices \(j_\ell\) with \(\ell\ge m\) belong to \(J_m\).  Thus every mass
limit and every matched descendant chosen at stage \(m\) survives on the same
final subsequence.

For each nonempty positive-mass word \(w\), let \(m(w)\) be the stage
at which its matched subtree was defined. Choose a final-subsequence
threshold \(N_0(w)\ge m(w)\) such that
\(j_\ell\ge j_0(w)\) whenever \(\ell\ge N_0(w)\). Then
\(j_\ell\in J_{m(w)}\) and \(t_{j_\ell,w}\) is defined for every
\(\ell\ge N_0(w)\). Enlarge these thresholds to make them
prefix-monotone and locally finite, as required by
\cref{def:regular-sequence}.  For example, after fixing an injection
\(\rho:\Ulam_\omega\to\N\) with \(\rho(\varnothing)=1\), set \(N_0(\varnothing)=1\) and
\[
    \widetilde N(w)
    :=
    \max_{v\preceq w}\max\{N_0(v),\rho(v)\}.
\]
This yields one compatible matching on the final subsequence.

The extracted splits, together with the default splits at zero-mass
words, define a deterministic recursive paintbox on the whole Ulam tree.  The
positive-child mass limits in Definition~\ref{def:regular-sequence} hold by
construction from Lemma~\ref{lem:marked-mass-compactness}. For fixed
\(w\in\Ulam_\omega\) and \(L\ge1\), and all sufficiently large \(\ell\),
let \(\mathcal R_{\ell,w,L}\) be the root children of \(t_{j_\ell,w}\)
other than its matched children with indices at most \(L\).
Coordinatewise convergence gives
\[
    \sum_{a\in\mathcal R_{\ell,w,L}}
      \frac{|a|}{|t_{j_\ell,w}|-1}
    \longrightarrow1-\sum_{i=1}^L p_{w,i}.
\]
For the squared residual mass, write \(q_{\ell,w,i}\) for the normalized mass
of the \(i\)th ranked child of \(t_{j_\ell,w}\), and put
\[
    r_w:=\#\{i\ge1:p_{w,i}>0\}
    \in\Nzero\cup\{\infty\}.
\]
If \(L\le r_w\), the residual children occupy the ranked positions after
\(L\). Hence, for all sufficiently large \(\ell\),
\[
    \sum_{a\in\mathcal R_{\ell,w,L}}
      \left(\frac{|a|}{|t_{j_\ell,w}|-1}\right)^2
    \le
    \sum_{i>L}q_{\ell,w,i}^2
    \le
    q_{\ell,w,L+1}\sum_{i>L}q_{\ell,w,i}
    \le q_{\ell,w,L+1}.
\]
Taking \(\limsup_{\ell\to\infty}\) gives the bound
\[
    \limsup_{\ell\to\infty}
      \sum_{a\in\mathcal R_{\ell,w,L}}
      \left(\frac{|a|}{|t_{j_\ell,w}|-1}\right)^2
    \le p_{w,L+1}.
\]
If \(r_w< L\), only the first \(r_w\) positions are matched, so the residual
children occupy the positions after \(r_w\).  Since their normalized masses
sum to at most \(1\),
\[
    \sum_{a\in\mathcal R_{\ell,w,L}}
      \left(\frac{|a|}{|t_{j_\ell,w}|-1}\right)^2
    \le q_{\ell,w,r_w+1}
    \longrightarrow p_{w,r_w+1}=0=p_{w,L+1}.
\]
Thus in both cases
\[
    \limsup_{\ell\to\infty}
    \sum_{a\in\mathcal R_{\ell,w,L}}
      \left(\frac{|a|}{|t_{j_\ell,w}|-1}\right)^2
    \le p_{w,L+1}.
\]
The right-hand side tends to zero as \(L\to\infty\).  Since the Ulam tree is
countable, all these assertions hold on the same diagonal subsequence.  Hence
the extracted sequence is globally regular.

It remains to identify the marked limits from the extraction with the
recursively constructed child environments.  At a positive-mass word \(w\),
denote by \(\xi_{w,i}\) the mark attached to its \(i\)th positive child by
Lemma~\ref{lem:marked-mass-compactness}.  After discarding its finite undefined
prefix, \((t_{j_\ell,wi})_\ell\) is globally regular with limit
\(\omega_{wi}\).  Lemma~\ref{lem:regular-convergence} therefore gives
\[
    K_s(t_{j_\ell,wi})\longrightarrow
    M_{|s|}^{\omega_{wi}}(s)
    \qquad(s\in\T).
\]
The coordinates on the left also define \(\xi_{w,i}\), so uniqueness of
coordinatewise limits yields
\[
    \xi_{w,i}
    =
    \bigl(M_{|s|}^{\omega_{wi}}(s)\bigr)_{s\in\T}.
\]
\end{proof}

\begin{theorem}[Subsequential Martin limits are recursive paintboxes]\label{thm:subseq-to-paintbox}
Let \((t_j)\) be any sequence of finite rooted trees with \(|t_j|\to\infty\).  There exists a subsequence \(t_{j_\ell}\) and a deterministic recursive paintbox \(\omega\) such that, for every fixed rooted tree \(s\),
\[
    K_s(t_{j_\ell})\longrightarrow M_{|s|}^\omega(s).
\]
\end{theorem}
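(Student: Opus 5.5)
This follows by combining \cref{prop:recursive-compactness-extraction} with \cref{lem:regular-convergence}, so the plan has two steps. First, apply \cref{prop:recursive-compactness-extraction} to the given sequence \((t_j)\), which satisfies \(|t_j|\to\infty\). It yields a subsequence \((t_{j_\ell})\) and a deterministic recursive paintbox \(\omega\), assembled from the extracted local splits at positive-mass words and the default pure-dust splits elsewhere, such that \((t_{j_\ell})_\ell\) is globally regular with limit \(\omega\) in the sense of \cref{def:regular-sequence}. This includes the locally finite, prefix-monotone thresholds, the compatible matched descendants, the positive-child mass limits, and the residual mass and collision conditions.

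Second, apply \cref{lem:regular-convergence} to this globally regular sequence. It gives \(K_s(t_{j_\ell})\to M_{|s|}^\omega(s)\) for every fixed \(s\in\T\), which is exactly the assertion. The induction on sample size, the root-sampling lemma with dust tails, and the inheritance of global regularity by matched children are all already contained in that lemma.

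The one point I would check is that the subsequence has sizes tending to infinity and that the threshold indexing of \cref{def:regular-sequence} is met after reindexing by \(\ell\). Both were arranged inside the proof of \cref{prop:recursive-compactness-extraction}: \(\widetilde N\) was built to be prefix-monotone and locally finite, with \(\widetilde N(\varnothing)=1\). If the paper prefers, one can also note that the representative \(\omega\) may be reordered into the nonincreasing convention of \cref{def:ulam-recursive-paintbox-representative} without changing \(M_n^\omega\). With those checks, no genuine obstacle remains; the substantive work sits in the two cited results.
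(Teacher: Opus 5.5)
Your proposal is correct and is exactly the paper's proof: apply \cref{prop:recursive-compactness-extraction} to obtain a globally regular subsequence with limit \(\omega\), then apply \cref{lem:regular-convergence} to it. The bookkeeping points you raise (prefix-monotone, locally finite thresholds with value \(1\) at \(\varnothing\), and the ranked ordering of the extracted masses) are already handled inside the proof of that proposition.
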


\begin{proof}
Apply \cref{prop:recursive-compactness-extraction} and then
\cref{lem:regular-convergence} to its globally regular subsequence.
\end{proof}

\subsection{Nested realization and extremality}
A nested sample from a deterministic paintbox is almost surely globally
regular, yielding a deterministic growing realization and extremality.
We prove regularity by comparing raw address cylinders with recursive
blocks and controlling the number of labels used as roots along each address.

\medskip \noindent\textbf{Raw address cylinders and actual recursive blocks.}
Fix a deterministic recursive paintbox and realize all finite samples from
one independent Ulam-indexed box-choice array
\[
    (J_{m,v}:m\ge1,\ v\in\Ulam),
    \qquad
    \Prob\{J_{m,v}=i\}=p_{v,i}
    \quad(i\in\Nzero),
\]
with independence over all pairs \((m,v)\). Fix \(N\ge1\), and write
\(\mathsf I_N:=\{0,1,\ldots,N-1\}\), \(\mathsf I_N^+:=\{1,\ldots,N-1\}\).
For a word \(w=(i_1,\ldots,i_d)\in\Ulam\), let
\[
    w|_0:=\varnothing,
    \qquad
    w|_k:=(i_1,\ldots,i_k),
    \quad 1\le k\le d.
\]

The \emph{raw address cylinder} at \(w\) is
\[
    C_w(N)
    :=
    \left\{
      m\in\mathsf I_N^+:
      J_{m,w|_{k-1}}=i_k
      \text{ for every }1\le k\le d
    \right\}.
\]
For \(w=\varnothing\), the condition is vacuous, so
\(C_\varnothing(N)=\mathsf I_N^+\).

The family of \emph{actual recursive blocks}
\((B_w(N))_{w\in\Ulam}\) is defined recursively.  At the root, set
\(B_\varnothing(N):=\mathsf I_N\).
Whenever \(B_w(N)\ne\varnothing\), define its local root by
\(\rho_w(N):=\min B_w(N)\).
For every \(i\ge1\), set
\[
    B_{wi}(N)
    :=
    \left\{
      m\in B_w(N)\setminus\{\rho_w(N)\}:
      J_{m,w}=i
    \right\}.
\]
The labels
\[
    D_w(N)
    :=
    \left\{
      m\in B_w(N)\setminus\{\rho_w(N)\}:
      J_{m,w}=0
    \right\}
\]
are the dust labels at \(w\); each of them becomes a singleton child of
\(\rho_w(N)\), and there is no recursive block indexed by \(w0\).
If \(B_w(N)=\varnothing\), set \(D_w(N)=\varnothing\) and
\(B_{wi}(N)=\varnothing\) for \(i\ge1\), and leave
\(\rho_w(N)\) undefined.

\begin{lemma}[Record-root perturbation]
\label{lem:record-root-perturbation}
For the common routing array and blocks defined above, let
\(C_w(N)\), \(B_w(N)\), and \(\rho_w(N)\) be the raw cylinders,
actual blocks, and local roots, the latter defined when \(B_w(N)\ne\varnothing\).

For \(w\ne\varnothing\), put
\[
    R_w(N)
    :=
    \left\{
      \rho_v(N):
      \varnothing\ne v\prec w,\
      B_v(N)\ne\varnothing,\
      \rho_v(N)\in C_w(N)
    \right\}.
\]
Then
\[
    B_\varnothing(N)
    =
    C_\varnothing(N)\cup\{0\},
\]
and, for every nonempty \(w\),
\begin{equation}\label{eq:record-root-exact-set-relation}
    B_w(N)=C_w(N)\setminus R_w(N).
\end{equation}
More precisely,
\begin{equation}\label{eq:record-root-set-bound}
\begin{aligned}
    B_\varnothing(N)\triangle C_\varnothing(N)&=\{0\},\\
    B_w(N)\triangle C_w(N)&=R_w(N),\qquad w\ne\varnothing.
\end{aligned}
\end{equation}
Consequently,
\[
    \bigl||B_w(N)|-|C_w(N)|\bigr|\le |w|+1
    \qquad(w\in\Ulam).
\]
At a fixed parent word \(w\), raw child counts change only when roots
at proper ancestors or at \(w\) itself are removed. Therefore
\begin{equation}\label{eq:record-root-family-perturbation}
    \sum_{i\ge1}
      \bigl||B_{wi}(N)|-|C_{wi}(N)|\bigr|
    \le |w|+1.
\end{equation}
If \(|C_w(N)|/(N-1)\to q_w>0\), then \(|B_w(N)|/(N-1)\to q_w\).
All denominators below are positive for sufficiently large \(N\), and
\begin{equation}\label{eq:record-root-l1-frequency}
    \sum_{i\ge1}
    \left|
      \frac{|B_{wi}(N)|}{|B_w(N)|-1}
      -\frac{|C_{wi}(N)|}{|C_w(N)|}
    \right|
    \longrightarrow0.
\end{equation}
Consequently, for every child index \(i\), raw child-frequency convergence
implies the corresponding actual child-frequency convergence.  Moreover, for
every index set \(I\subseteq\N\), uniformly in \(I\),
\begin{equation}\label{eq:record-root-square-perturbation}
\left|
\sum_{i\in I}
\left(\frac{|B_{wi}(N)|}{|B_w(N)|-1}\right)^2
-
\sum_{i\in I}
\left(\frac{|C_{wi}(N)|}{|C_w(N)|}\right)^2
\right|
\le \frac{4(|w|+1)}{|B_w(N)|-1}\longrightarrow0.
\end{equation}
\end{lemma}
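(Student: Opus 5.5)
The plan is to prove the exact set relation \eqref{eq:record-root-exact-set-relation} first and read off all the quantitative statements from it. The root case is immediate: \(B_\varnothing(N)=\mathsf I_N=\mathsf I_N^+\cup\{0\}=C_\varnothing(N)\cup\{0\}\), and \(0\notin C_\varnothing(N)\).

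For nonempty \(w=(i_1,\ldots,i_d)\), I would induct on \(d\) using the recursive definitions. Unwinding the recursion, a label \(m\) lies in \(B_w(N)\) exactly when three things hold:
\begin{itemize}
\item \(m\in B_{w|_{k-1}}(N)\) for every \(1\le k\le d\);
\item \(m\ne\rho_{w|_{k-1}}(N)\) for every such \(k\);
\item \(J_{m,w|_{k-1}}=i_k\) for every such \(k\).
\end{itemize}
In other words, \(m\) lies in \(C_w(N)\) and was never chosen as a local root at a proper prefix of \(w\).

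The root \(\rho_\varnothing=0\) is excluded automatically, because \(C_w(N)\subseteq\mathsf I_N^+\). A local root \(\rho_v(N)\) with \(\varnothing\ne v\prec w\) lies in \(B_v(N)\) and hence in \(C_v(N)\). It is removed from \(C_w(N)\) precisely when it would otherwise satisfy the remaining routing conditions along \(w\), that is, when \(\rho_v(N)\in C_w(N)\). This gives \(B_w(N)=C_w(N)\setminus R_w(N)\).

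Since \(B_w(N)\subseteq C_w(N)\) for \(w\ne\varnothing\), we get \(B_w(N)\triangle C_w(N)=R_w(N)\), and this proves \eqref{eq:record-root-set-bound}. There are at most \(|w|-1\) proper nonempty prefixes, so \(|R_w(N)|\le|w|\le|w|+1\).

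For \eqref{eq:record-root-family-perturbation}, fix \(w\) and note that \(R_{wi}(N)\) is contained in the set of local roots at nonempty prefixes \(v\preceq w\), which has at most \(|w|\) elements. Each such root \(\rho_v\) lies in at most one child cylinder \(C_{wi}(N)\), because the child cylinders are disjoint in \(i\), being separated by the value of \(J_{m,w}\). Summing over \(i\) therefore gives a total of at most \(|w|\le|w|+1\).

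Mass convergence then follows from \(||B_w|-|C_w||\le|w|+1=O(1)\).

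For \eqref{eq:record-root-l1-frequency}, I would write \(b_i=|B_{wi}|\), \(c_i=|C_{wi}|\), \(b=|B_w|-1\) and \(c=|C_w|\), so that \(|b-c|\le|w|+2\). Then
\[
\Bigl|\tfrac{b_i}{b}-\tfrac{c_i}{c}\Bigr|\le\tfrac{|b_i-c_i|}{b}+c_i\Bigl|\tfrac1b-\tfrac1c\Bigr|.
\]
Summing over \(i\), and using \(\sum_i c_i\le c\), bounds the total by \((|w|+1)/b+|b-c|/b\). This tends to zero because \(b\to\infty\) when \(q_w>0\). Coordinatewise convergence of the child frequencies then follows immediately.

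For \eqref{eq:record-root-square-perturbation}, I would use \(|x^2-y^2|\le|x-y|(x+y)\) with \(x,y\le1\). Summing over \(I\) gives the bound \(2\sum_i|x_i-y_i|\). This is where I expect the main obstacle: the constant \(4(|w|+1)/b\) has to be obtained exactly rather than up to \(O(1/b)\).

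To get it, I would bound \(\sum_i|x_i-y_i|\) more carefully. I would use \(\sum_ib_i\le b\), since the dust labels and child labels together make up \(B_w\setminus\{\rho_w\}\), and \(|b-c|\le|w|+1\) when \(w\ne\varnothing\), because \(c-b=|R_w|+1\). The root case needs a separate check, since there \(c-b=0\). With these I expect \(\sum_i|x_i-y_i|\le 2(|w|+1)/b\), which gives the stated constant.

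Both sides of the set relation are deterministic functions of the fixed routing array, so no probability enters; the whole argument is bookkeeping of roots along the prefix chain.
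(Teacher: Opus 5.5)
Your proposal is correct and follows essentially the paper's route. You derive \(B_w(N)=C_w(N)\setminus R_w(N)\) from the recursive definitions by unwinding the recursion directly, rather than by the paper's induction with a case split on whether \(B_w(N)=\varnothing\). You then obtain the \(\ell^1\) bound \(2(|w|+1)/(|B_w(N)|-1)\) from the family perturbation bound together with \(\bigl||C_w(N)|-(|B_w(N)|-1)\bigr|\le|w|+1\), exactly as the paper does before doubling via \(|x^2-y^2|\le 2|x-y|\). The step you flagged as the main obstacle is resolved just as you describe, and your sharper counts (\(|R_w(N)|\le|w|-1\) and a family sum of at most \(|w|\)) only strengthen the stated bounds.
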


\begin{proof}
We first prove the relation \eqref{eq:record-root-exact-set-relation} by
induction on \(|w|\).  If \(w=i\) has length one, then the global root \(0\) is
removed from \(B_\varnothing(N)\) before the remaining labels are routed, and
hence
\[
    B_i(N)=\{m\in\mathsf I_N^+:J_{m,\varnothing}=i\}=C_i(N).
\]
Since \(i\) has no proper nonempty prefix, \(R_i(N)=\varnothing\), proving the
base case.

Assume the identity holds at a nonempty word \(w\), and fix \(i\ge1\).
If \(B_w(N)=\varnothing\), then the induction hypothesis gives
\(C_w(N)=R_w(N)\).  Hence every label in \(C_{wi}(N)\subseteq C_w(N)\) is a
local root removed at a proper prefix of \(wi\), so
\(C_{wi}(N)=R_{wi}(N)\) and
\(B_{wi}(N)=\varnothing=C_{wi}(N)\setminus R_{wi}(N)\).

Suppose now that \(B_w(N)\ne\varnothing\).  By the recursive definition,
\[
\begin{aligned}
    B_{wi}(N)
    &=\{m\in B_w(N)\setminus\{\rho_w(N)\}:J_{m,w}=i\}\\
    &=\{m\in C_w(N)\setminus(R_w(N)\cup\{\rho_w(N)\}):J_{m,w}=i\}.
\end{aligned}
\]
The condition \(m\in C_w(N)\) together with \(J_{m,w}=i\) is equivalent to
\(m\in C_{wi}(N)\).  Among the removed labels, precisely those lying in
\(C_{wi}(N)\) are the local roots attached to proper nonempty prefixes of
\(wi\).  Thus
\[
    B_{wi}(N)=C_{wi}(N)\setminus R_{wi}(N),
\]
which completes the induction.  The root identity
\(B_\varnothing(N)=C_\varnothing(N)\cup\{0\}\) follows directly from the
definitions. For a nonempty word, \(R_w(N)\) contains at most one local
root for each proper nonempty prefix.  This proves
\eqref{eq:record-root-set-bound} and the cardinality estimate.

Raw and actual child memberships differ only through removal of the
current and ancestor roots. Each removed root changes at most one child
count, proving
\eqref{eq:record-root-family-perturbation}.

Put \(A_N=|C_w(N)|\) and \(D_N=|B_w(N)|-1\). Under
\(A_N/(N-1)\to q_w>0\), both denominators are eventually positive
and of order \(N\). At the root, \(A_N=D_N\); for
\(w\ne\varnothing\), \eqref{eq:record-root-exact-set-relation} gives
\(A_N-D_N=|R_w(N)|+1\le |w|\). Thus
\(|A_N-D_N|\le |w|+1\). With \(a_i=|C_{wi}(N)|\) and
\(b_i=|B_{wi}(N)|\), the preceding bound gives
\(\sum_i|a_i-b_i|\le |w|+1\), while \(\sum_i a_i\le A_N\).  Hence
\[
\begin{aligned}
 \sum_i\left|\frac{b_i}{D_N}-\frac{a_i}{A_N}\right|
 &\le \frac1{D_N}\sum_i|b_i-a_i|
     +\left|\frac1{D_N}-\frac1{A_N}\right|\sum_i a_i\\
 &\le \frac{2(|w|+1)}{D_N}\longrightarrow0,
\end{aligned}
\]
which is \eqref{eq:record-root-l1-frequency}.  Finally, for nonnegative
frequency vectors with coordinates at most one,
\[
    \sum_{i\in I}|x_i^2-y_i^2|
    \le 2\sum_i|x_i-y_i|.
\]
Together with the preceding \(\ell^1\) bound, this proves
\eqref{eq:record-root-square-perturbation}, with a bound independent of \(I\).
\end{proof}

\begin{lemma}[Empirical square tails for countable paintboxes]
\label{lem:empirical-square-tails-countable-paintbox}
Let \((Z_m)_{m\ge1}\) be i.i.d.\ with values in \(\{0,1,2,\ldots\}\), where
\(\Prob\{Z_m=i\}=p_i\) for \(i\ge1\), \(\Prob\{Z_m=0\}=p_0\),
\(p_i\ge0\), and \(\sum_{i\ge0}p_i=1\).  Put
\[
    \widehat p_i(m)=\frac1m\#\{1\le a\le m:Z_a=i\},
    \qquad i\in\Nzero.
\]
Then, almost surely,
\[
    \sum_{i\ge1}\left|\widehat p_i(m)-p_i\right|\longrightarrow0
\]
and consequently
\[
    \sum_{i\ge1}\widehat p_i(m)^2\longrightarrow \sum_{i\ge1}p_i^2.
\]
In particular, for every integer \(L\ge0\),
\[
    \sum_{i>L}\widehat p_i(m)^2
    \longrightarrow
    \sum_{i>L}p_i^2
    \qquad\text{a.s.}
\]
\end{lemma}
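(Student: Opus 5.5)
The argument is a Scheffé-type $\ell^1$ convergence for empirical frequencies on a countable alphabet. By the strong law of large numbers, applied to each of the countably many indicator sequences $\ind_{\{Z_a=i\}}$, there is one almost-sure event on which $\widehat p_i(m)\to p_i$ for every $i\in\Nzero$ simultaneously. We work on this event for the rest of the argument.

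Fix $\varepsilon>0$ and choose $L$ with $\sum_{i>L}p_i<\varepsilon$. We split
\[
    \sum_{i\ge1}|\widehat p_i(m)-p_i|
    \le\sum_{i=1}^L|\widehat p_i(m)-p_i|
      +\sum_{i>L}\widehat p_i(m)+\sum_{i>L}p_i .
\]
The first sum tends to zero. The last term is below $\varepsilon$. For the middle term we use that the empirical frequencies sum to one: $\sum_{i>L}\widehat p_i(m)=1-\widehat p_0(m)-\sum_{i=1}^L\widehat p_i(m)$. This converges to $1-p_0-\sum_{i\le L}p_i=\sum_{i>L}p_i<\varepsilon$. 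Hence $\limsup_m\sum_{i\ge1}|\widehat p_i(m)-p_i|\le2\varepsilon$, and letting $\varepsilon\downarrow0$ gives the $\ell^1$ statement. Including index $0$ is harmless, since it converges separately.

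For the squares we use $|x^2-y^2|=|x-y|(x+y)\le2|x-y|$ for $x,y\in[0,1]$, exactly as in the proof of \cref{lem:record-root-perturbation}. Then, for every index set $I\subseteq\N$,
\[
    \Bigl|\sum_{i\in I}\widehat p_i(m)^2-\sum_{i\in I}p_i^2\Bigr|\le2\sum_{i\ge1}|\widehat p_i(m)-p_i|\longrightarrow0 .
\]
Taking $I=\N$ and $I=\{i>L\}$ gives both remaining assertions, and the convergence is uniform in $I$.

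The only point needing care is that the almost-sure event must not depend on $\varepsilon$ or $L$. This holds because it is the countable intersection of the strong-law events, and no further obstacle arises.
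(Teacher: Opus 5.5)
Your proof is correct and follows essentially the same route as the paper: a single strong-law event for all coordinates, control of the empirical tail through \(\sum_{i\ge0}\widehat p_i(m)=1\), and then the bound \(|x^2-y^2|\le 2|x-y|\) on \([0,1]\) for the square sums. The only cosmetic difference is that the paper writes the head--tail split as one inequality, \(\sum_{i\ge1}|\widehat p_i(m)-p_i|\le|\widehat p_0(m)-p_0|+2\sum_{i=1}^{L}|\widehat p_i(m)-p_i|+2\sum_{i>L}p_i\), and lets \(m\to\infty\) and then \(L\to\infty\), whereas you fix \(\varepsilon\) first and choose \(L\) accordingly.
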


\begin{proof}
By the strong law and countability, work on a single probability-one event
where \(\widehat p_i(m)\to p_i\) for every \(i\in\Nzero\). For each
integer \(L\ge0\), separating the first \(L\) positive indices and using
\(\sum_{i>L}\widehat p_i(m)=1-\sum_{i=0}^L\widehat p_i(m)\) gives
\[
    \sum_{i\ge1}|\widehat p_i(m)-p_i|
    \le |\widehat p_0(m)-p_0|
       +2\sum_{i=1}^L|\widehat p_i(m)-p_i|
       +2\sum_{i>L}p_i.
\]
First let \(m\to\infty\), then \(L\to\infty\), to obtain
\(\ell^1\) convergence. Since all coordinates lie in \([0,1]\),
\[
    \left|\sum_{i>L}\widehat p_i(m)^2-\sum_{i>L}p_i^2\right|
       \le 2\sum_{i\ge1}|\widehat p_i(m)-p_i|\longrightarrow0
       \qquad(L\ge0).
\]
This gives both the full square-sum limit and every tail limit on the same
event.
\end{proof}

The perturbation lemma transfers strong-law limits for raw address
cylinders to the actual descendant blocks of the nested sample.

\begin{lemma}[Paintbox samples are almost surely globally regular]\label{lem:paintbox-samples-regular}
Let \(\omega\) be a deterministic recursive paintbox. Use the nested
construction of Proposition~\ref{prop:natural-paintbox-path-law}; write
\(\widetilde X_N\) for its labelled sample on
\(\{0,1,\ldots,N-1\}\), \(\widehat X_N\) for its quotient-standard-labelled
class, and \(X_N\) for the unlabelled endpoint shape.  Then, with probability
one, the labelled descendant blocks in
\((\widetilde X_N)_{N\ge1}\) give a globally compatible matching for
the realized shape sequence \((X_N)_{N\ge1}\), which is
globally regular with limit \(\omega\).
\end{lemma}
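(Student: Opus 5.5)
The plan is to define the matching directly from the nested labelled samples. For each positive-mass address \(w\in\Ulam_\omega\) and each \(N\) with \(B_w(N)\ne\varnothing\), set \(t_{N,w}\) to be the descendant subtree of \(X_N\) rooted at the vertex labelled \(\rho_w(N)\). By construction its vertex set is exactly \(B_w(N)\). By \cref{lem:paintbox-max-label-consistency}, maximal-label deletion preserves all parent--child relations among surviving labels. Hence \(\rho_w(N)\) does not depend on \(N\) once \(B_w(N)\ne\varnothing\). The local root of \(B_{wi}(N)\) is a root child of \(\rho_w(N)\), and distinct \(i\) give distinct children. So the compatibility requirements in (i) of \cref{def:regular-sequence} hold on the whole sample space, provided thresholds are chosen so that \(B_w(N)\ne\varnothing\) for \(N\ge N(w)\). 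Once \(B_w(N)\) is nonempty it stays nonempty as \(N\) grows, so this is a question of eventual occupancy.

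The probabilistic input comes from the raw cylinders. For fixed \(w=(i_1,\ldots,i_d)\), the indicators \(\ind\{m\in C_w(N)\}\), \(m\ge1\), are i.i.d.\ Bernoulli\((q_w)\), because the variables \(J_{m,w|_{k-1}}\) are independent across \(m\) and along the address. The strong law, applied over the countably many words, gives one probability-one event on which \(|C_w(N)|/(N-1)\to q_w\) for every \(w\). In particular \(C_w(N)\) is eventually nonempty when \(q_w>0\), and then \(|B_w(N)|\to\infty\) by \cref{lem:record-root-perturbation}.

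Next, the labels \(m\in C_w(N)\) carry the i.i.d.\ symbols \(J_{m,w}\) with law \((p_{w,i})_{i\ge0}\), independently of the event \(m\in C_w\). I would therefore apply \cref{lem:empirical-square-tails-countable-paintbox} along the random but almost surely infinite subsequence of members of \(C_w\). The cleanest way is to enumerate the members of \(\bigcup_N C_w(N)\) in increasing order and observe that their \(J_{\cdot,w}\) values are i.i.d., since membership depends only on coordinates at strict prefixes. This yields \(\ell^1\) convergence of the raw frequencies \(|C_{wi}(N)|/|C_w(N)|\) to \(p_{w,i}\), together with convergence of every square tail, all on a single event obtained by intersecting over the countably many \(w\).

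These raw statements transfer to the actual blocks using \eqref{eq:record-root-l1-frequency} and \eqref{eq:record-root-square-perturbation}. Item (ii) follows directly, and the residual mass in (iii) equals \(1-\sum_{i\le L}\) of the actual frequencies, so it converges by the \(\ell^1\) statement. For the squared residual in (iii), note that the residual children of \(t_{N,w}\) are the unmatched positive blocks \(B_{wi}(N)\) together with the dust singletons \(D_w(N)\). The dust singletons contribute at most \(|D_w(N)|/(|B_w(N)|-1)^2\to0\). The unmatched blocks contribute \(\sum_{i>L}\) plus the zero-mass indices \(i\le L\). By the transferred tail limit this is asymptotically at most \(\sum_{i>L}p_{w,i}^2\), which tends to \(0\) as \(L\to\infty\).

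Finally I would choose thresholds. Let \(N_0(w)\) be the first \(N\) with \(B_w(N)\ne\varnothing\), and define \(N(w)=\max_{v\preceq w}\max\{N_0(v),\rho(v)\}\) for a fixed injection \(\rho:\Ulam_\omega\to\N\), exactly as in \cref{prop:recursive-compactness-extraction}. This makes the thresholds prefix-monotone and locally finite. The step I expect to be the main obstacle is the independence claim behind the subsequence strong law: \(C_w\) is random, and one must check that the \(J_{m,w}\) along its members remain i.i.d. I would handle this through the sigma-field generated by the coordinates at strict prefixes, applying the strong law conditionally. All other steps are bookkeeping already packaged in \cref{lem:record-root-perturbation}.
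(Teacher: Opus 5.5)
Your proposal is correct and follows essentially the same route as the paper. Both arguments use the strong law on the raw address cylinders over all words, the prefix-field conditioning that makes the selected symbols $J_{\tau_k,w}$ i.i.d., \cref{lem:empirical-square-tails-countable-paintbox} evaluated at $|C_w(N)|\to\infty$, the transfer to actual blocks through \cref{lem:record-root-perturbation}, and the thresholds $\max_{v\preceq w}\max\{N_0(v),\rho(v)\}$.

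There are two cosmetic differences. The paper first discards the null event on which some zero-mass box is chosen, so that zero-mass blocks are empty; you instead absorb such blocks into the residual through the $\ell^1$ bound. The paper also obtains the child-mass limits from ratios of the cylinder densities $q_{wi}/q_w$, whereas you read them off the same $\ell^1$ statement.
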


\begin{proof}
Use the common routing array \((J_{m,w})\) fixed at the start of this
subsection. By countability, almost surely no \(J_{m,w}\) takes a value
\(i\) with \(p_{w,i}=0\). Work on this event.
A label has positive address prefix \(w=i_1\cdots i_r\) exactly when
\(J_{m,\varnothing}=i_1, J_{m,i_1}=i_2,\ldots,
J_{m,i_1\cdots i_{r-1}}=i_r\).  For a finite word \(w\), let \(q_w\) be the
product of masses along that prefix, with \(q_\varnothing=1\), and define the
infinite raw address cylinder
\[
    C_w(\infty)
    :=\{m\ge1:m\text{ has positive address prefix }w\}.
\]
Its truncation is
\[
    C_w(N)=C_w(\infty)\cap\{1,\ldots,N-1\}.
\]
For fixed \(w\), the indicators
\(\left(\mathbf 1_{\{m\in C_w(\infty)\}}\right)_{m\ge1}\) are i.i.d.\
Bernoulli\((q_w)\).  Since the Ulam tree is countable, the strong law may be
intersected over all finite words, giving simultaneously
\[
    \frac{|C_w(N)|}{N-1}\longrightarrow q_w
    \qquad\text{a.s., simultaneously for every }w.
\]
On this probability-one event, Lemma~\ref{lem:record-root-perturbation} gives
\(|B_w(N)|/(N-1)\to q_w\) for every \(w\). For
\(w\in\Ulam_\omega\), we have \(q_w>0\), so the local denominator is
positive for all sufficiently large \(N\). Since \(q_{wi}=q_wp_{w,i}\),
\[
    \frac{|B_{wi}(N)|}{|B_w(N)|-1}
    =\frac{|B_{wi}(N)|/(N-1)}{(|B_w(N)|-1)/(N-1)}
    \longrightarrow\frac{q_{wi}}{q_w}=p_{w,i},\qquad i\ge1.
\]
For each \(N\), use \(\widetilde X_N\) with its labels forgotten as a
representative of \(X_N\).

The blocks \(B_w(N)\), constructed from one array, are nested along
prefixes; after forgetting labels, each nonempty block is an actual
descendant subtree of \(X_N\). They give the Ulam-indexed matching of
Definition~\ref{def:regular-sequence}. Since
\(|B_w(N)|/(N-1)\to q_w>0\) at each positive-mass address, every such
matched block grows without bound.
For each \(w\in\Ulam_\omega\), choose \(N_0(w)\) so that \(B_w(N)\) is
nonempty and all ancestor incidences leading to it are defined for
\(N\ge N_0(w)\).  Fix an injection
\(\rho:\Ulam_\omega\to\N\) with \(\rho(\varnothing)=1\), take \(N_0(\varnothing)=1\), and set
\[
    N(w):=\max_{v\preceq w}\max\{N_0(v),\rho(v)\}.
\]
Then \(\{w:N(w)\le N\}\subseteq\{w:\rho(w)\le N\}\) is finite for every
\(N\), and \(N(wi)\ge N(w)\).  For \(N\ge N(w)\), take \(t_{N,w}\) to
be the descendant subtree obtained by forgetting the labels in \(B_w(N)\).
These subtrees satisfy the threshold and prefix-compatibility conditions
of Definition~\ref{def:regular-sequence}.

We finally verify the residual dust condition. Fix \(w\) with \(q_w>0\)
and put \(\mathscr A_w=\sigma(J_{m,v}:m\ge1,\ v\prec w)\).
The infinite cylinder \(C_w(\infty)\) is \(\mathscr A_w\)-measurable
and almost surely infinite. List its labels as
\(\tau_1<\tau_2<\cdots\). The variables \((J_{m,w})_{m\ge1}\) are
independent of \(\mathscr A_w\), so for \(k\ge1\) and
\(i_1,\ldots,i_k\in\Nzero\),
\begin{equation}\label{eq:prefix-selected-symbol-independence}
    \Prob\bigl(J_{\tau_1,w}=i_1,\ldots,J_{\tau_k,w}=i_k
               \mid\mathscr A_w\bigr)
       =\prod_{j=1}^k p_{w,i_j}.
\end{equation}
Indeed, condition on each possible increasing tuple
\((\tau_1,\ldots,\tau_k)\); its event is \(\mathscr A_w\)-measurable,
and the conditional probability for that deterministic tuple is the
same product. Summing over the tuples proves the identity.
Thus the selected next symbols form one i.i.d.\ sequence with law
\((p_{w,0},p_{w,1},\ldots)\), whose first
\(m_N=|C_w(N)|\to\infty\) terms correspond to \(C_w(N)\).

Apply \cref{lem:empirical-square-tails-countable-paintbox} to the full
selected sequence and evaluate its limits at the random indices
\(m_N\to\infty\). Intersecting the resulting probability-one events
over the countably many positive-mass addresses gives, simultaneously
for every such \(w\) and
every \(L\in\Nzero\),
\[
    \sum_{i>L}
       \left(\frac{|C_{wi}(N)|}{|C_w(N)|}\right)^2
       \longrightarrow\sum_{i>L}p_{w,i}^2.
\]
Here \(L=0\) gives the full positive-box square sum.
By the square-sum part of \cref{lem:record-root-perturbation}, the same
tail collision limit holds for the actual positive child blocks. For fixed
\(L\) and sufficiently large \(N\), the residual children consist of
the nonempty blocks \(B_{wi}(N)\) with \(i>L\), together with one
singleton for each label in \(D_w(N)\). Thus
\begin{equation}\label{eq:sample-residual-square-decomposition}
    \sum_{a\in\mathcal R_{N,w,L}}
       \left(\frac{|a|}{|B_w(N)|-1}\right)^2
    =\sum_{i>L}
       \left(\frac{|B_{wi}(N)|}{|B_w(N)|-1}\right)^2
       +\frac{|D_w(N)|}{(|B_w(N)|-1)^2}
    \longrightarrow\sum_{i>L}p_{w,i}^2.
\end{equation}
Indeed, the final dust term is at most \((|B_w(N)|-1)^{-1}\) and
tends to zero. The same formula applies when there are fewer than \(L\)
positive children, since the zero-mass blocks are empty.
The right-hand side tends to zero as \(L\to\infty\).  This proves
\eqref{eq:regular-residual-collisions}.  The root children partition the
non-root labels of the actual block \(B_w(N)\).  Therefore, for every fixed
\(L\),
\[
    \frac{|B_w(N)|-1-\sum_{i=1}^L |B_{wi}(N)|}{|B_w(N)|-1}
    =1-\sum_{i=1}^L
      \frac{|B_{wi}(N)|}{|B_w(N)|-1}
    \longrightarrow
    1-\sum_{i=1}^L p_{w,i},
\]
by child-frequency convergence. This verifies
\eqref{eq:regular-residual-mass} and completes the proof of global
regularity in Definition~\ref{def:regular-sequence}.
\end{proof}

\begin{theorem}[Nested realization and extremality]
\label{thm:paintbox-approximable}
\label{thm:paintbox-extremal-full}
For every deterministic recursive paintbox \(\omega\), its nested sample
satisfies, simultaneously for all \(s\in\T\),
\begin{equation}\label{eq:nested-sample-martin-limit}
    K_s(X_N)\longrightarrow M_{|s|}^{\omega}(s)
       \quad\text{almost surely}.
\end{equation}
Consequently \(\mu^\omega\) is extremal. There is also a deterministic
sequence \(t_N\in\T_N\) with \(t_N\nearrow t_{N+1}\) for every \(N\)
and \(K_s(t_N)\to M_{|s|}^{\omega}(s)\) for every \(s\in\T\).
\end{theorem}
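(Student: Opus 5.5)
The plan is to combine the almost-sure global regularity of the nested sample with the convergence of globally regular sequences, and then read off extremality and a deterministic path from the resulting event. Realize all finite samples from one Ulam-indexed routing array as in \cref{prop:natural-paintbox-path-law}. By \cref{lem:paintbox-samples-regular}, there is an event \(\Omega_0\) of probability one on which the realized shape sequence \((X_N)_{N\ge1}\) is globally regular with limit \(\omega\). The matching is given by the labelled descendant blocks.

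On \(\Omega_0\), apply \cref{lem:regular-convergence} pathwise to the deterministic sequence \((X_N(\cdot))\). It gives \(K_s(X_N)\to M_{|s|}^\omega(s)\) for every \(s\in\T\) at once, since the lemma's conclusion holds for all test shapes along a single regular sequence. This proves \eqref{eq:nested-sample-martin-limit} simultaneously for all \(s\), with no need to intersect countably many events beyond \(\Omega_0\).

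For extremality, \cref{prop:natural-paintbox-path-law} identifies the law of the nested path \(\mathbf P^\omega\) with \(\mu^\omega\). Its endpoint at size \(N\) is \(X_N=T_N(\omega)\). Hence, under \(\mu^\omega\), \(K_s(X_N)\to M^\omega_{|s|}(s)=M_{|s|}^{\mu^\omega}(s)\) almost surely for every \(s\). This is exactly criterion \eqref{eq:martin-kernel-extreme-general} of \cref{prop:endpoint-martin-martingales}, after re-indexing levels by tree size, so \(\mu^\omega\) is extremal.

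For the deterministic growing sequence, fix any \(\varpi\in\Omega_0\) and set \(t_N=X_N(\varpi)\). Then \(t_N\in\T_N\). By the pathwise consistency \(\widetilde T_{N+1}\setminus\{N\}=\widetilde T_N\) from \cref{lem:paintbox-max-label-consistency}, \(t_{N+1}\) is obtained from \(t_N\) by attaching the leaf labelled \(N\), so \(t_N\nearrow t_{N+1}\). Convergence of \(K_s(t_N)\) then holds for every \(s\) because \(\varpi\in\Omega_0\).

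The only delicate point is to make sure \cref{lem:regular-convergence} may be applied to a realization rather than to a fixed sequence. This is harmless: global regularity is a property of a single deterministic sequence together with its matching, and \cref{lem:paintbox-samples-regular} provides both on \(\Omega_0\). I therefore expect no substantial obstacle beyond citing these results in order.
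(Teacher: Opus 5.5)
Your proposal is correct and matches the paper's proof step by step. Both obtain almost-sure global regularity of the nested sample, apply the regular-convergence lemma pathwise on that single event, deduce extremality from the endpoint reverse-martingale criterion, and fix one good outcome, whose maximal-label-deletion nesting gives the growing deterministic sequence \(t_N\nearrow t_{N+1}\).
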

\begin{proof}
By \cref{prop:natural-paintbox-path-law}, the endpoint process under
\(\mu^\omega\) can be realized by one nested paintbox sample.
\Cref{lem:paintbox-samples-regular,lem:regular-convergence} give
\eqref{eq:nested-sample-martin-limit} on a single event of probability one.
The limit equals \(\mu^\omega(X_{|s|}=s)\), so
\cref{prop:endpoint-martin-martingales} gives extremality.

Choose one outcome in this probability-one event and let \(t_N\) be its
endpoint at size \(N\). This is a deterministic sequence with the stated
limits. Since the labelled samples are nested by maximal-label deletion,
\(t_N\nearrow t_{N+1}\) at every step.
\end{proof}

\subsection{Boundary classification and minimality}

\begin{theorem}[Rooted-tree Martin boundary classification]
\label{thm:complete-boundary}
Let \(\partial_{\mathrm{RP}}\) be the sampling quotient of deterministic
recursive paintboxes from Definition~\ref{def:recursive-paintbox-boundary},
and let \(\Psi\) be its sampling-coordinate embedding in
\([0,1]^{\T}\). Then:
\begin{enumerate}[label=(\roman*)]
    \item for every \(\zeta\in\partial\Omega_M\), there exists
    \(\xi\in\partial_{\mathrm{RP}}\) such that
    \[
        K_s(\zeta)=M_{|s|}^{\xi}(s)
        \qquad(s\in\T);
    \]
    \item for every \(\xi\in\partial_{\mathrm{RP}}\), there are trees
    \(t_N\in\T_N\) with \(t_N\nearrow t_{N+1}\) such that
    \[
        K_s(t_N)\longrightarrow M_{|s|}^{\xi}(s)
        \qquad(s\in\T);
    \]
    \item under the coordinate realization of \(\Omega_M\) in
    \([0,1]^{\T}\), the image of \(\Psi\) is
    \(\partial\Omega_M\), and
    \[
        \Psi:\partial_{\mathrm{RP}}\xrightarrow{\ \cong\ }
        \partial\Omega_M
    \]
    is a homeomorphism satisfying
    \[
        K_s(\Psi(\xi))=M_{|s|}^{\xi}(s)
        \qquad(\xi\in\partial_{\mathrm{RP}},\ s\in\T);
    \]
    \item every point of the full Martin boundary is minimal.
\end{enumerate}
Consequently,
\[
    \partial\Omega_M=\partial_{\min}\Omega_M
    \cong\partial_{\mathrm{RP}}.
\]
\end{theorem}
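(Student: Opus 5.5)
The plan is to assemble the four items from \cref{thm:subseq-to-paintbox}, \cref{thm:paintbox-approximable}, \cref{prop:environment-sampling-continuity} and \cref{thm:abstract-rooted-tree-kv}.

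\emph{Item (i).} Fix \(\zeta\in\partial\Omega_M\). By \cref{lem:finite-vertices-isolated}, \(\zeta\) is the coordinatewise limit of finite trees \(t_j\) with \(|t_j|\to\infty\). \Cref{thm:subseq-to-paintbox} gives a subsequence and a paintbox \(\omega\) with \(K_s(t_{j_\ell})\to M^\omega_{|s|}(s)\) for every \(s\). Uniqueness of coordinatewise limits then gives \(K_s(\zeta)=M^\xi_{|s|}(s)\) with \(\xi=[\omega]_{\mathrm{samp}}\).

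\emph{Item (ii).} Choose any representative \(\omega\) of \(\xi\) and apply the deterministic growing realization in \cref{thm:paintbox-approximable}.

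\emph{Item (iii).} By (ii), \(\Psi(\xi)\) is a limit of finite trees of diverging size, so \(\Psi(\xi)\in\Omega_M\). It also satisfies all level sums equal to one, since each \(M_n^\xi\) is a probability law, so \eqref{eq:martin-boundary-level-sums} places it in \(\partial\Omega_M\). Item (i) gives surjectivity onto \(\partial\Omega_M\). Injectivity of \(\Psi\) holds by the definition of sampling equivalence. Continuity holds because the topology on \(\partial_{\mathrm{RP}}\) is by definition the one induced by \(\Psi\) into the product of simplices, and this product sits coordinatewise in \([0,1]^\T\), which carries the topology of \(\Omega_M\). So \(\Psi\) is a homeomorphism onto its image; alternatively, a continuous bijection from the compact space \(\partial_{\mathrm{RP}}\) (\cref{prop:environment-sampling-continuity}) onto the Hausdorff space \(\partial\Omega_M\) is a homeomorphism. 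The identity \(K_s(\Psi(\xi))=M^\xi_{|s|}(s)\) is just the definition of the coordinates.

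\emph{Item (iv).} Take \(\zeta\in\partial\Omega_M\) and write \(\zeta=\Psi(\xi)\) with \(\xi=[\omega]\). The central law determined by \(\zeta\) has marginals \(K_s(\zeta)=M^\omega_{|s|}(s)\), so it equals \(\mu^\omega\). By \cref{thm:paintbox-approximable}, \(\mu^\omega\) is extremal, so \(\zeta\in\partial_{\min}\Omega_M\) by the definition preceding \cref{thm:abstract-rooted-tree-kv}. Combining (iii) and (iv) yields the displayed identification.

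The substantive work is already contained in the cited results. The only delicate point is to check that the topology on \(\partial_{\mathrm{RP}}\) is literally the subspace topology of the coordinate image, so that no extra compactness argument is needed beyond confirming that the image is exactly \(\partial\Omega_M\).
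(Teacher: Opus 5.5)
Your proposal is correct and follows essentially the same route as the paper: (i) via \cref{lem:finite-vertices-isolated} and \cref{thm:subseq-to-paintbox}, (ii) and the boundary placement of \(\Psi(\xi)\) via \cref{thm:paintbox-approximable} and the level-sum characterization, (iii) from injectivity and the coordinate-induced topology, and (iv) from the extremality of \(\mu^\omega\). The only difference is cosmetic. For (iv), the paper additionally passes through \cref{prop:central-ergodic-criterion} to phrase minimality as a minimal harmonic ray, whereas you use the equivalent extremal-law definition of \(\partial_{\min}\Omega_M\) directly.
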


\begin{proof}
Let \(\zeta\in\partial\Omega_M\). By
\cref{lem:finite-vertices-isolated}, choose \(t_j\) with \(|t_j|\to\infty\)
and \(\iota_M(t_j)\to\zeta\). The subsequence supplied by
\cref{thm:subseq-to-paintbox} has a paintbox limit \(\omega\), so
\[
    K_s(\zeta)=\lim_\ell K_s(t_{j_\ell})
       =M_{|s|}^\omega(s),\qquad s\in\T.
\]
This proves (i). Conversely, \cref{thm:paintbox-approximable} gives the
growing-path realization in (ii). Its coordinate
limit belongs to the closed space \(\Omega_M\); all its level sums are
one, so \cref{lem:finite-vertices-isolated} places it on the boundary.

Thus \(\Psi\) maps \(\partial_{\mathrm{RP}}\) onto
\(\partial\Omega_M\). It is injective by sampling equivalence, and the
sampling topology is precisely the topology induced by its coordinates.
This proves (iii). Finally, if \(\omega\) represents \(\zeta\), then
\[
    \varphi_\zeta(s)=\frac{K_s(\zeta)}{u(s)}
      =\frac{M_{|s|}^\omega(s)}{u(s)}
      =\varphi_{\mu^\omega}(s).
\]
The corresponding central law is extremal by
\cref{thm:paintbox-extremal-full}, hence the harmonic ray is minimal by
\cref{prop:central-ergodic-criterion}. This proves (iv).
\end{proof}

From now on we identify \(\partial_{\mathrm{RP}}\) with
\(\partial\Omega_M\) through \(\Psi\), suppress \(\Psi\) from the notation,
and write \(K_s(\xi)=M_{|s|}^{\xi}(s)\) for
\(\xi\in\partial_{\mathrm{RP}}\).

For \(\xi=[\omega]_{\mathrm{samp}}\), put \(\mu^\xi=\mu^\omega\).
Equality of sampling laws gives equality on path cylinders, so this law is
independent of the representative. For \(s\in\T_n\) and a path
\(\gamma\) ending at \(s\),
\begin{equation}\label{eq:boundary-indexed-central-measure-definition}
    M_n^\xi(s)=K_s(\xi),\qquad
    \mu^\xi(C_\gamma)=\frac{K_s(\xi)}{u(s)}.
\end{equation}

\subsection{The canonical boundary variable and extremal decomposition}
We identify the tail-directed mixture on the full weighted path space;
for the general invariant-disintegration framework, see
\cite{kallenberg2005probabilistic}.

\begin{lemma}[The boundary kernel]\label{lem:boundary-central-kernel-borel}
The map \(\xi\mapsto\mu^\xi\) is continuous for the weak topology and is
a Borel probability kernel. For a finite path \(\gamma\) ending at \(s\),
\begin{equation}\label{eq:boundary-kernel-cylinder}
    \mu^\xi(C_\gamma)=K_s(\xi)/u(s).
\end{equation}
\end{lemma}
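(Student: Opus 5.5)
The cylinder formula \eqref{eq:boundary-kernel-cylinder} is essentially the definition, so I will record it first. By \cref{def:central-measure-associated-with-recursive-paintbox} and \eqref{eq:boundary-indexed-central-measure-definition}, for \(\xi=[\omega]_{\mathrm{samp}}\) and a path \(\gamma\) ending at \(s\in\T_n\), we have \(\mu^\xi(C_\gamma)=M_n^\omega(s)/u(s)\). Under the identification of \(\partial_{\mathrm{RP}}\) with \(\partial\Omega_M\) through \(\Psi\) (\cref{thm:complete-boundary}(iii)), this equals \(K_s(\xi)/u(s)\). The remark after \cref{thm:complete-boundary} already shows that the law does not depend on the representative, since sampling equivalence fixes all the \(M_n\), and hence all cylinder probabilities.

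For weak continuity I will use \cref{lem:path-cylinder-algebra}, which says that weak convergence of probability laws on the path space \(\mathcal X_\Gamma\) is equivalent to convergence of all cylinder probabilities. Suppose \(\xi_k\to\xi\) in \(\partial_{\mathrm{RP}}\). The topology on \(\partial_{\mathrm{RP}}\) is the sampling topology, i.e.\ coordinatewise convergence of \(M_{|s|}^{\xi_k}(s)=K_s(\xi_k)\). So for each fixed finite path \(\gamma\) ending at \(s\),
\[
    \mu^{\xi_k}(C_\gamma)=K_s(\xi_k)/u(s)\longrightarrow K_s(\xi)/u(s)=\mu^\xi(C_\gamma).
\]
Hence \(\mu^{\xi_k}\Rightarrow\mu^\xi\). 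Since \(\partial_{\mathrm{RP}}\) is metrizable (\cref{prop:environment-sampling-continuity}), sequential continuity suffices.

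For the kernel property, I need \(\xi\mapsto\mu^\xi(A)\) to be Borel for every Borel set \(A\subseteq\mathcal X_\Gamma\). For a cylinder \(A=C_\gamma\) this map is \(\xi\mapsto K_s(\xi)/u(s)\), a continuous coordinate function. The class of Borel \(A\) for which the map is measurable contains the algebra \(\mathcal A_{\mathrm{cyl}}\), since its elements are finite disjoint unions of cylinders. This class is also closed under increasing limits and proper differences, because each \(\mu^\xi\) is a probability measure. The cylinders generate the Borel field, so the monotone-class theorem gives measurability for all Borel sets.

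Alternatively, weak continuity alone already yields this: for continuous \(f\), the map \(\xi\mapsto\int f\,d\mu^\xi\) is continuous, and a standard monotone-class extension passes to bounded Borel \(f\). No step is a genuine obstacle. The only point needing care is that the formula is independent of the representative and that the topology on \(\partial_{\mathrm{RP}}\) is exactly coordinatewise convergence of the \(K_s\), and both are already established.
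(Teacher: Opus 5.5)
Your proposal is correct and follows the paper's own proof. The cylinder formula is read off from the definition of \(\mu^\xi\). Weak continuity follows from continuity of the coordinates \(K_s\) together with the cylinder criterion of \cref{lem:path-cylinder-algebra}. A monotone-class argument starting from the cylinder algebra then gives the Borel kernel property.
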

\begin{proof}
The cylinder formula is \eqref{eq:boundary-indexed-central-measure-definition}.
Its right-hand side is continuous, so \cref{lem:path-cylinder-algebra}
gives weak continuity. A monotone-class argument extends measurability from
cylinders to all Borel path events.
\end{proof}

\begin{lemma}[A common raw-tail boundary map]
\label{lem:raw-tail-borel-martin-limit}
There is a Borel map
\(\Xi^0:\mathcal X_\Gamma\to\partial_{\mathrm{RP}}\), measurable with
respect to the raw tail \(\Tcen^0=\bigcap_N\mathscr G_N\), such that under
every central law \(\mu\), simultaneously for all \(s\in\T\),
\begin{equation}\label{eq:raw-tail-martin-limit-coordinates}
    K_s(\Xi^0)=\lim_{N\to\infty}K_s(X_N)
       =\Prob_\mu(X_{|s|}=s\mid\Tcen^0)
       \quad\mu\text{-almost surely}.
\end{equation}
For every \(\xi\in\partial_{\mathrm{RP}}\),
\begin{equation}\label{eq:canonical-kernel-fibre-support}
    \mu^\xi\{\Xi^0=\xi\}=1.
\end{equation}
\end{lemma}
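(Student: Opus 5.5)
Define the map on the raw convergence set. Let \(G\subseteq\mathcal X_\Gamma\) be the set of paths for which \(\lim_{N\to\infty}K_s(X_N)\) exists for every \(s\in\T\) and the limiting vector lies in \(\partial\Omega_M\). Each \(K_s(X_N)\) is a function of \(X_N\), hence \(\mathscr G_N\)-measurable. Existence of the limit is a tail event, since it does not depend on finitely many initial coordinates. Thus \(G\) is a Borel set in \(\Tcen^0\), defined through \(\limsup\) and \(\liminf\) over the countable set \(\T\). The condition that the limit lie in the compact set \(\partial\Omega_M\) is also Borel and tail. On \(G\), set \(\Xi^0\) equal to the limit vector, read through \(\Psi^{-1}\) as a point of \(\partial_{\mathrm{RP}}\) by \cref{thm:complete-boundary}(iii). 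Off \(G\), set \(\Xi^0\) equal to a fixed boundary point \(\xi_0\). Each coordinate \(K_s(\Xi^0)\) is then a \(\Tcen^0\)-measurable limit of measurable functions. Since the topology of \(\partial_{\mathrm{RP}}\) is the coordinate topology, \(\Xi^0\) is Borel and tail-measurable.

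Next, identify the limit under an arbitrary central law \(\mu\). By \cref{prop:endpoint-martin-martingales}, for every \(s\), \(K_s(X_N)\) converges \(\mu\)-a.s. to \(\Prob_\mu(X_{|s|}=s\mid\mathscr G_\infty)\). This holds simultaneously for all \(s\) by countability. It remains to show \(\mu(G)=1\). The limit vector lies in \(\Omega_M\) because \(\Omega_M\) is closed and contains every \(\iota_M(X_N)\). The level sums \(\sum_{s\in\T_n}K_s(X_N)\) equal one once \(N\ge n\). Passing them to the limit needs care, because for fixed \(n\) the sum is finite (\(\T_n\) is finite), so the limit of the sum is the sum of the limits. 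So every level sum of the limit is one, and \cref{lem:finite-vertices-isolated} places the limit in \(\partial\Omega_M\). This step is actually deterministic: it holds on the whole convergence set, not just \(\mu\)-a.s. This gives \eqref{eq:raw-tail-martin-limit-coordinates}, with the conditional probability taken given the raw tail \(\Tcen^0\). The reverse martingale converges to the conditional expectation given \(\bigcap_N\mathscr G_N\), which is precisely \(\Tcen^0\).

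For \eqref{eq:canonical-kernel-fibre-support}, fix \(\xi=[\omega]_{\mathrm{samp}}\). By \cref{thm:paintbox-approximable}, \(\mu^\xi=\mu^\omega\) is realized by the nested sample, under which \(K_s(X_N)\to M^\omega_{|s|}(s)=K_s(\xi)\) almost surely, simultaneously for all \(s\). Hence \(\mu^\xi(G)=1\) and \(\Xi^0=\xi\) holds \(\mu^\xi\)-a.s. Alternatively, extremality of \(\mu^\xi\) together with \eqref{eq:martin-kernel-extreme-general} gives the same conclusion.

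The main obstacle is measurability bookkeeping rather than any analytic difficulty. One needs that \(G\) and \(\Xi^0\) are genuinely raw-tail-measurable, which explains the choice of \(\limsup\)/\(\liminf\) expressions and a constant value off \(G\). One also needs that a single map works for every central law simultaneously. This holds because \(G\) is defined pathwise, independently of \(\mu\).
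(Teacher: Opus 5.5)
Your proposal is correct and follows the paper's proof essentially step for step. You define \(\Xi^0\) as the coordinatewise limit of \(\iota_M(X_N)\) on the raw-tail convergence event and as a fixed boundary point elsewhere, place the limit in \(\partial\Omega_M=\partial_{\mathrm{RP}}\) by passing the finite level sums to the limit and applying \cref{lem:finite-vertices-isolated}, use the reverse-martingale theorem for every central \(\mu\), and obtain the fibre identity from extremality of \(\mu^\xi\); your nested-sample alternative via \cref{thm:paintbox-approximable} works equally well. One phrase is slightly loose: ``does not depend on finitely many initial coordinates'' is not quite the reason; the precise reason, which you also state, is that \(K_s(X_M)\) is \(\mathscr G_N\)-measurable for every \(M\ge N\), so the \(\limsup\) and \(\liminf\) are \(\mathscr G_N\)-measurable for each \(N\).
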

\begin{proof}
Let \(E\) be the event that every sequence \(K_s(X_N)\) converges. Each
limsup and liminf is measurable with respect to every \(\mathscr G_N\),
so \(E\in\Tcen^0\) and every limiting coordinate is raw-tail measurable on
\(E\). Since \(\T\) is countable, the map into \([0,1]^{\T}\) formed by
these coordinates is Borel. Its value lies in the closed space
\(\Omega_M\), and its level sums are all one. Thus it belongs to
\(\partial\Omega_M=\partial_{\mathrm{RP}}\) by \cref{lem:finite-vertices-isolated}. Set \(\Xi^0\) equal to this
limit on \(E\) and to a fixed boundary point elsewhere. The
reverse-martingale theorem gives \(\mu(E)=1\) and
\eqref{eq:raw-tail-martin-limit-coordinates} for every central \(\mu\).
Under \(\mu^\xi\), extremality and
\cref{prop:endpoint-martin-martingales} identify the coordinate limits with
\(K_s(\xi)\), proving \eqref{eq:canonical-kernel-fibre-support}.
\end{proof}

\begin{theorem}[Canonical decomposition and the central tail]
\label{thm:universal-boundary-variable}
Let \(\mu\) be central, put \(\Xi_\mu=\Xi^0\),
\(\Theta_\mu=\mathcal L_\mu(\Xi_\mu)\), and let
\(\Tcen=\overline{\Tcen^0}^{\,\mu}\). Then:
\begin{enumerate}[label=(\roman*)]
\item The endpoint Martin vectors converge almost surely to \(\Xi_\mu\),
and \(K_s(\Xi_\mu)=\Prob_\mu(X_{|s|}=s\mid\Tcen)\) for every \(s\).
\item The regular conditional law of the full weighted path given
\(\Xi_\mu\) is \(\mu^{\Xi_\mu}\).
\item \(\Tcen=\overline{\sigma(\Xi_\mu)}^{\,\mu}\).
\item The representation
\begin{equation}\label{eq:canonical-boundary-disintegration-proof}
    \mu=\int_{\partial_{\mathrm{RP}}}\mu^\xi\,\Theta_\mu(d\xi)
\end{equation}
is unique among probability mixtures of the laws \(\mu^\xi\).
\end{enumerate}
In particular, \(\mu\) is extremal if and only if \(\Xi_\mu\) is almost
surely constant.
\end{theorem}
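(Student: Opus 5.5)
The plan is to work almost entirely with the raw-tail map \(\Xi^0\) of \cref{lem:raw-tail-borel-martin-limit} and the boundary kernel of \cref{lem:boundary-central-kernel-borel}. Part (i) follows at once. Almost sure convergence and the identity \(K_s(\Xi_\mu)=\Prob_\mu(X_{|s|}=s\mid\Tcen^0)\) are given by \eqref{eq:raw-tail-martin-limit-coordinates}. Completion does not change conditional expectations modulo null sets, so the identity also holds with \(\Tcen\).

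For (ii), fix a finite path \(\gamma\) ending at \(s\in\T_n\). By \eqref{eq:path-martin-general} and the reverse martingale theorem,
\[
    \Prob_\mu(C_\gamma\mid\Tcen^0)=\lim_N K_s(X_N)/u(s)=K_s(\Xi^0)/u(s)=\mu^{\Xi^0}(C_\gamma)
\]
almost surely, using \eqref{eq:boundary-kernel-cylinder}. Since \(\sigma(\Xi^0)\subseteq\Tcen^0\) and the right-hand side is \(\sigma(\Xi^0)\)-measurable, the tower property gives \(\Prob_\mu(C_\gamma\mid\Xi^0)=\mu^{\Xi^0}(C_\gamma)\). Cylinders are countable and form a \(\pi\)-system generating the Borel field (\cref{lem:path-cylinder-algebra}). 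A monotone-class argument therefore shows that \(\xi\mapsto\mu^\xi\) is a regular conditional law of the path given \(\Xi_\mu\), and in fact also given \(\Tcen^0\).

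Part (iii) is where some care is needed. I take \(B\in\Tcen^0\) and aim to show \(\ind_B=\mu^{\Xi^0}(B)\) \(\mu\)-almost surely. From (ii) applied with \(\Tcen^0\), \(\ind_B=\Prob_\mu(B\mid\Tcen^0)=\mu^{\Xi^0}(B)\) almost surely. The right-hand side is a Borel function of \(\Xi^0\) by \cref{lem:boundary-central-kernel-borel}, so \(B\) lies in \(\overline{\sigma(\Xi_\mu)}^{\,\mu}\). The reverse inclusion holds because \(\Xi^0\) is raw-tail measurable. The only subtle point is that the conditional law given \(\Tcen^0\) really is \(\mu^{\Xi^0}\) on all Borel sets, not only on cylinders. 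That again comes from the monotone-class extension, since both sides are probability kernels agreeing on a generating \(\pi\)-system.

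For (iv), existence follows by integrating (ii). For uniqueness, suppose \(\mu=\int\mu^\xi\,\Theta(d\xi)\) for some probability measure \(\Theta\). Then
\[
    \mu(\Xi^0\in A)=\int\mu^\xi(\Xi^0\in A)\,\Theta(d\xi)=\Theta(A)
\]
by the fibre support \eqref{eq:canonical-kernel-fibre-support}, so \(\Theta=\Theta_\mu\). For the final claim, suppose \(\mu\) is extremal. Then \(\Tcen\) is trivial by \cref{prop:central-ergodic-criterion}, so \(\Xi_\mu\) is almost surely constant. Conversely, if \(\Xi_\mu\) is almost surely constant, then by (iii) \(\Tcen\) is trivial, and the same proposition gives extremality. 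Alternatively, \(\mu=\mu^\xi\), which is extremal by \cref{thm:paintbox-extremal-full}.
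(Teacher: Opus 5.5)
Your proposal is correct and follows essentially the same route as the paper: the cylinder identity \(\Prob_\mu(C_\gamma\mid\Tcen^0)=\mu^{\Xi^0}(C_\gamma)\) from the path reverse martingale, then a monotone-class extension to all Borel events, then the almost-sure identity \(\ind_B=\mu^{\Xi^0}(B)\) for raw-tail events, and finally uniqueness via the fibre-support identity \eqref{eq:canonical-kernel-fibre-support}. The only cosmetic difference is in the uniqueness step, where you compute the pushforward \(\mu(\Xi^0\in A)=\Theta(A)\) directly instead of passing through the joint law on \(\partial_{\mathrm{RP}}\times\mathcal X_\Gamma\).
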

\begin{proof}
Part (i) follows from \cref{lem:raw-tail-borel-martin-limit}. For a path
cylinder ending at \(s\), the path version of the reverse martingale gives
\begin{equation}\label{eq:tail-conditional-kernel}
    \E_\mu[\ind_{C_\gamma}\mid\Tcen^0]
       =\lim_{N\to\infty}\frac{K_s(X_N)}{u(s)}
       =\frac{K_s(\Xi_\mu)}{u(s)}
       =\mu^{\Xi_\mu}(C_\gamma).
\end{equation}
The Borel-kernel property and the monotone-class theorem extend this identity
to every Borel path event \(A\). Its right-hand side is
\(\sigma(\Xi_\mu)\)-measurable, and \(\sigma(\Xi_\mu)\subseteq\Tcen^0\).
Conditioning once more on \(\Xi_\mu\) gives
\(\Prob_\mu(A\mid\Xi_\mu)=\mu^{\Xi_\mu}(A)\), proving (ii).

For \(A\in\Tcen^0\), the extended identity reads
\(\ind_A=\mu^{\Xi_\mu}(A)\) almost surely. Thus \(A\) agrees modulo
null sets with the \(\sigma(\Xi_\mu)\)-event
\(\{\mu^{\Xi_\mu}(A)>1/2\}\). Together with the opposite inclusion,
this proves (iii). Taking expectations gives
\eqref{eq:canonical-boundary-disintegration-proof}.

For uniqueness, suppose \(\mu=\int\mu^\xi\,\rho(d\xi)\). Under the joint
law \(\rho(d\xi)\mu^\xi(dx)\), the Borel set
\(\{(\xi,x):\Xi^0(x)=\xi\}\) has probability one by
\eqref{eq:canonical-kernel-fibre-support}. The path marginal is \(\mu\),
so \(\rho=\mathcal L_\mu(\Xi^0)=\Theta_\mu\). Finally, tail triviality
and (iii) give the extremality criterion.
\end{proof}

\begin{corollary}[Bauer-simplex structure]
\label{cor:bauer-simplex}
The barycentre map
\[
    \mathcal B:\mathcal P(\partial_{\mathrm{RP}})\longrightarrow\mathscr C,
    \qquad \rho\longmapsto\int\mu^\xi\,\rho(d\xi),
\]
is an affine homeomorphism. Its restriction to point masses identifies
\(\partial_{\mathrm{RP}}\) homeomorphically with
\(\operatorname{Ext}(\mathscr C)=\{\mu^\xi:\xi\in\partial_{\mathrm{RP}}\}\).
In particular, \(\mathscr C\) is a Bauer simplex.
\end{corollary}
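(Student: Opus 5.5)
The plan has four parts: show that \(\mathcal B\) is well defined, affine, and continuous; show that it is bijective onto \(\mathscr C\); conclude that it is a homeomorphism by compactness; and then read off the statements about extreme points.

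\emph{Well-definedness and continuity.} By \cref{lem:boundary-central-kernel-borel}, \(\xi\mapsto\mu^\xi\) is a Borel probability kernel, so \(\mathcal B(\rho)\) is a Borel probability law on \(\mathcal X_\Gamma\). On each cylinder it satisfies
\[
    \mathcal B(\rho)(C_\gamma)=\frac{1}{u(s)}\int K_s(\xi)\,\rho(d\xi).
\]
The right-hand side depends only on the endpoint \(s\), so \(\mathcal B(\rho)\) is central by \eqref{eq:central-cylinder-definition}. Affinity is immediate from linearity of the integral. For continuity, note that \(\xi\mapsto K_s(\xi)\) is continuous on the compact metrizable space \(\partial_{\mathrm{RP}}\) (\cref{thm:complete-boundary}(iii)). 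Hence \(\rho_k\Rightarrow\rho\) implies that every cylinder probability of \(\mathcal B(\rho_k)\) converges to that of \(\mathcal B(\rho)\), and \cref{lem:path-cylinder-algebra} upgrades this to weak convergence.

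\emph{Bijectivity.} Surjectivity follows from \cref{thm:universal-boundary-variable}(iv): every \(\mu\in\mathscr C\) equals \(\mathcal B(\Theta_\mu)\). Injectivity follows from the uniqueness clause of the same theorem. If \(\mathcal B(\rho)=\mathcal B(\rho')=\mu\), then both \(\rho\) and \(\rho'\) equal \(\Theta_\mu\). The mechanism behind this is \eqref{eq:canonical-kernel-fibre-support}: pushing \(\mu\) forward by \(\Xi^0\) recovers the mixing measure.

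\emph{Homeomorphism.} \(\mathcal P(\partial_{\mathrm{RP}})\) is compact, because \(\partial_{\mathrm{RP}}\) is compact metrizable. \(\mathscr C\) is Hausdorff, being a subspace of the metrizable space \(\mathcal P(\mathcal X_\Gamma)\). A continuous bijection from a compact space onto a Hausdorff space is a homeomorphism, so \(\mathcal B\) is an affine homeomorphism.

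\emph{Extreme points.} An affine bijection between convex sets carries extreme points to extreme points. The extreme points of \(\mathcal P(\partial_{\mathrm{RP}})\) are exactly the Dirac masses; this is standard, since a non-Dirac measure splits along a Borel set of intermediate mass. Therefore \(\operatorname{Ext}(\mathscr C)=\{\mathcal B(\delta_\xi)\}=\{\mu^\xi:\xi\in\partial_{\mathrm{RP}}\}\). This agrees with \cref{thm:paintbox-extremal-full}, which gives extremality of each \(\mu^\xi\). The map \(\xi\mapsto\delta_\xi\) is a homeomorphism onto its closed image in \(\mathcal P(\partial_{\mathrm{RP}})\). Composing it with \(\mathcal B\) gives the homeomorphism \(\partial_{\mathrm{RP}}\cong\operatorname{Ext}(\mathscr C)\), and in particular shows that the extreme boundary is closed.

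\emph{Bauer simplex.} \(\mathscr C\) is affinely homeomorphic to the space of probability measures on a compact space. That space is a Choquet simplex with closed extreme boundary, so \(\mathscr C\) is a Bauer simplex.

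The only point needing care is the continuity of \(\mathcal B\). It hinges on \(K_s\) being continuous on \(\partial_{\mathrm{RP}}\), which holds because the topology of \(\partial_{\mathrm{RP}}\) is by definition the sampling topology; no obstacle is expected beyond that.
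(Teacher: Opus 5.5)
Your proposal is correct and follows the paper's proof essentially step for step. Bijectivity comes from the existence and uniqueness parts of \cref{thm:universal-boundary-variable}. Continuity of \(\mathcal B\) comes from continuity of \(K_s\) on \(\partial_{\mathrm{RP}}\) together with \cref{lem:path-cylinder-algebra}. The homeomorphism follows from the compact-to-Hausdorff argument, and the Bauer structure of \(\mathcal P(\partial_{\mathrm{RP}})\), with its Dirac extreme points, transfers through the affine homeomorphism. You also spell out two points the paper leaves implicit or cites to Phelps: that \(\mathcal B(\rho)\) is central, and why the Dirac masses are the extreme points.
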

\begin{proof}
The theorem gives bijectivity. For every cylinder \(C_\gamma\) ending at
\(s\),
\[
    \mathcal B(\rho)(C_\gamma)
       =\int\frac{K_s(\xi)}{u(s)}\,\rho(d\xi).
\]
The integrand is continuous, so \cref{lem:path-cylinder-algebra} gives
continuity of \(\mathcal B\). Compactness of its domain and the Hausdorff
property of its range make it a homeomorphism. The probability simplex of a
compact space is Bauer, with the point masses as its extreme points
\cite{phelps2001choquet}; the affine homeomorphism transfers this structure.
\end{proof}

\begin{remark}[Sampling functions]
\label{thm:full-rooted-tree-kv}
With pointwise multiplication, let
\(\Asamp=\operatorname{alg}_{\R}\{1,A_s:s\in\T\}\), viewed as
continuous functions on \(\Omega_M\). The sampling coordinates
separate boundary points, so Stone--Weierstrass gives
\[
    \Csampbd
       :=\overline{\Asamp|_{\partial\Omega_M}}^{\|\cdot\|_\infty}
       =C(\partial\Omega_M,\R).
\]

Every unital real character \(\chi\) of \(\Csampbd\) is an
evaluation at a unique boundary point. Indeed,
\(\chi(f)=\chi(\sqrt f)^2\ge0\) for \(f\ge0\), so
\(\lvert\chi(f)\rvert\le\|f\|_\infty\).
The Riesz representation theorem therefore gives a probability
measure \(\rho\) with \(\chi(f)=\int f\,d\rho\).
Multiplicativity yields
\(\int(f-\chi(f))^2\,d\rho=0\) for every \(f\).
Thus every continuous function is constant on the support of
\(\rho\), which must be a singleton because continuous functions
separate points.

The density and character statements hold for the full Martin boundary of
any graded graph satisfying the assumptions of
Section~\ref{subsec:central-measures-harmonic}, with
\(A_x=K_x/\dimf(x)\).
For the Hoffman graph, \cref{cor:bauer-simplex} identifies these characters
with the extremal central laws, using the equality of the full and minimal
Martin boundaries. The character associated with \(\mu^\xi\) is
evaluation at \(\xi\), and \(A_s(\xi)=M_{|s|}^\xi(s)/u(s)\).

The multiplication here is pointwise; the grafting algebra is
studied in \cite{grossman1989hopf,hoffman2003combinatorics}.
\end{remark}

\section{Intrinsic recursive root structure}\label{sec:intrinsic-root-structure}
We show that finite sampling laws determine the positive root masses and
their child boundary classes, including multiplicities. These data give a
homeomorphism between the boundary and its marked mass-partition space:
splitting at the root and assembling the marked children are inverse
operations. For trees whose sizes tend to infinity, Martin convergence is
equivalent to convergence of their marked root data.

\subsection{Marked mass partitions}
\begin{definition}[Marked mass-partition space]
\label{def:marked-mass-partition-functor}
For a nonempty compact metrizable space \(K\), let \(\mathfrak M(K)\)
consist of finite Borel measures
\[
    \nu=\sum_{i\in I}p_i^2\delta_{(p_i,z_i)},\qquad
    p_i>0,\quad z_i\in K,\quad \sum_{i\in I}p_i\le1,
\]
where \(I\) is finite or countable and the empty sum is allowed. Repeated
pairs are recorded with multiplicity: \(m\) copies of \((p,z)\) give an
atom of weight \(mp^2\). Equip \(\mathfrak M(K)\) with the weak topology
of finite measures on \([0,1]\times K\). Its dust mass is
\(p_0(\nu)=1-\sum_i p_i\).
\end{definition}

\begin{remark}[Quadratic weights]
\label{rem:dust-fixed-point-topology}
The weight \(p_i^2\) is the probability that two independent labels
choose positive box \(i\). In recursive paintbox sampling, a box
receiving only one label produces a singleton, regardless of its
child environment. Thus a discarded box can be distinguished from
dust only if it receives at least two labels.

For \(0<\varepsilon\le1\),
\[
    \nu((0,\varepsilon]\times K)
       =\sum_{i:p_i\le\varepsilon}p_i^2
       \le\varepsilon\sum_{i:p_i\le\varepsilon}p_i
       \le\varepsilon.
\]
When \(r\ge2\) labels are routed at a vertex, replacing all boxes
of mass at most \(\varepsilon\) by dust changes the coupled sample
only if two labels choose the same discarded box. By the union
bound, this event has probability at most
\(\binom r2\varepsilon\).
The same estimate makes the contribution of these boxes to the
encoding vanish, independently of their marks.

For example, \(n\) copies of \((1/n,z_*)\), with fixed \(z_*\in K\),
give \(n^{-1}\delta_{(1/n,z_*)}\Rightarrow0\), representing pure
dust. Their dust masses are zero, whereas the limit has dust mass
one, so \(p_0\) need not be continuous. More generally, for
\(\nu_j\in\mathfrak M(K)\), weak convergence to zero is equivalent
to \(\nu_j([0,1]\times K)\to0\).
\end{remark}

\begin{lemma}[Compactness and collision coordinates]
\label{lem:marked-mass-partition-compactness}
The space \(\mathfrak M(K)\) is compact metrizable, and its elements
uniquely encode unordered marked collections with multiplicity. Weak
convergence is equivalent to convergence of
\begin{equation}\label{eq:marked-collision-coordinates}
    \mathcal R_{k,F}(\nu)
       =\int x^{k-2}F(z)\,\nu(dx,dz)
       =\sum_i p_i^kF(z_i),
       \qquad k\ge2,\quad F\in C(K,\R).
\end{equation}
It suffices to use \(F\) in a countable unital \(\Q\)-subalgebra separating
points of \(K\).
\end{lemma}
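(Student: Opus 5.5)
The plan has four steps: unique encoding, compactness, metrizability, and the moment characterization of weak convergence.

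\emph{Unique encoding.} The map from a collection \(\{(p_i,z_i)\}\) to \(\nu=\sum_i p_i^2\delta_{(p_i,z_i)}\) is injective because each atom \((p,z)\) of \(\nu\) has \(p>0\). Its weight \(\nu\{(p,z)\}\) is therefore an integer multiple \(m p^2\) of \(p^2\), and \(m\) recovers the multiplicity. Dividing by \(x^2\) on \((0,1]\times K\) gives the counting measure of the collection. Conversely, a finite measure on \([0,1]\times K\) lies in \(\mathfrak M(K)\) exactly when three conditions hold: it is purely atomic with atoms in \((0,1]\times K\); each atom weight divided by \(x^2\) is a positive integer; and \(\int x^{-1}\,d\nu\le1\).

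\emph{Compactness.} First, \(\nu([0,1]\times K)=\sum p_i^2\le1\), so \(\mathfrak M(K)\) lies in the weakly compact metrizable set of subprobability measures on the compact metrizable space \([0,1]\times K\). It remains to show that \(\mathfrak M(K)\) is closed. Take \(\nu_j\to\nu\). List the atoms of \(\nu_j\) in nonincreasing order of mass, padded by \((0,z_*)\), and pass to a subsequence along which \((p_{j,i},z_{j,i})\to(p_i,z_i)\) coordinatewise in \(\prod_i([0,1]\times K)\), as in \cref{lem:marked-mass-compactness}. Fatou gives \(\sum_i p_i\le1\). The candidate limit is \(\nu'=\sum_{p_i>0}p_i^2\delta_{(p_i,z_i)}\in\mathfrak M(K)\), and we must show \(\nu=\nu'\). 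Fix \(\varepsilon>0\) with \(\varepsilon\) different from every positive \(p_i\). The atoms of \(\nu_j\) of mass \(>\varepsilon\) are eventually finitely many, namely those with index \(i\le I_\varepsilon\), where \(I_\varepsilon\) is fixed. Their contribution converges to the corresponding part of \(\nu'\). The remaining atoms carry total weight at most \(\varepsilon\), by the estimate in \cref{rem:dust-fixed-point-topology}. Testing against \(G\in C([0,1]\times K)\) and letting \(\varepsilon\to0\) gives \(\nu=\nu'\). The one delicate point is atoms with \(p_{j,i}\) converging to exactly \(\varepsilon\); choosing \(\varepsilon\) away from the countably many limit values, with a continuity argument in \(\varepsilon\), handles this. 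The main obstacle in this step is keeping the bookkeeping of multiplicities consistent. This is automatic here, since \(\nu'\) is built directly from the coordinate limits.

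\emph{Metrizability.} \(\mathfrak M(K)\) is a closed subspace of a compact metrizable space, so it is itself compact metrizable.

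\emph{Collision coordinates.} Observe that \(\mathcal R_{k,F}(\nu)=\int x^{k-2}F\,d\nu\), and that \(x^{k-2}F(z)\) is continuous on \([0,1]\times K\) for \(k\ge2\). Hence weak convergence implies convergence of every \(\mathcal R_{k,F}\). Conversely, let \(\mathcal D\) be a countable unital \(\Q\)-subalgebra of \(C(K,\R)\) that separates points. The \(\Q\)-linear span of the products \(x^{m}F(z)\) with \(m\ge0\) and \(F\in\mathcal D\) is a unital subalgebra of \(C([0,1]\times K)\) that separates points. By Stone--Weierstrass it is uniformly dense. Suppose all \(\mathcal R_{k,F}(\nu_j)\) converge for \(F\in\mathcal D\). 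By compactness, every subsequence of \((\nu_j)\) has a further subsequence converging weakly to some \(\nu\in\mathfrak M(K)\). The integrals of all functions in the dense algebra then agree with the limits of the coordinates. Since total masses are uniformly bounded by \(1\), density identifies \(\nu\) uniquely. So every subsequential limit is the same, and the whole sequence converges weakly to it. The same argument covers general \(F\in C(K,\R)\) a fortiori.
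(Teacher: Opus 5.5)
Your proof is correct and follows essentially the same route as the paper. You rank the atoms with multiplicity, extract a coordinatewise convergent subsequence, control the small atoms by \(\sum_{p\le\varepsilon}p^2\le\varepsilon\), and apply Stone--Weierstrass to the span of \(x^mF(z)\) using the uniform mass bound. The paper instead proves sequential compactness directly with a fixed index cutoff, \(\sum_{i>L}(p_i^{(j)})^2\le 1/(L+1)\). This spares it from choosing \(\varepsilon\) away from the limit masses, but the difference is only cosmetic.
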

\begin{proof}
The multiplicity of a positive pair \((p,z)\) is
\(\nu\{(p,z)\}/p^2\). To prove compactness, take a sequence
\(\nu_j=\sum_i(p_i^{(j)})^2\delta_{(p_i^{(j)},z_i^{(j)})}\), rank its
masses, and pad by zeros with a fixed mark. A diagonal subsequence has
\((p_i^{(j)},z_i^{(j)})\to(p_i,z_i)\) for every \(i\). The limiting
masses satisfy \(\sum_i p_i\le1\). For \(g\in C([0,1]\times K)\),
\begin{equation}\label{eq:marked-square-tail-uniform}
    \left|\sum_{i>L}(p_i^{(j)})^2
            g(p_i^{(j)},z_i^{(j)})\right|
       \le \|g\|_\infty p_{L+1}^{(j)}\sum_{i>L}p_i^{(j)}
       \le\frac{\|g\|_\infty}{L+1}.
\end{equation}
Set \(\nu=\sum_{i:p_i>0}p_i^2\delta_{(p_i,z_i)}\in\mathfrak M(K)\).
The same tail bound holds for \(\nu\). Convergence of the first \(L\) terms and
\eqref{eq:marked-square-tail-uniform} give, for every \(L\ge0\),
\[
    \limsup_{j\to\infty}
       \left|\int g\,d\nu_j-\int g\,d\nu\right|
       \le \frac{2\|g\|_\infty}{L+1}.
\]
Letting \(L\to\infty\) proves \(\nu_j\Rightarrow\nu\); in particular,
vanishing masses produce no atom on \(\{0\}\times K\). The weak
topology is metrizable on the set of measures of mass at most one, so
sequential compactness proves compactness.

The test functions \(x^{k-2}F(z)\) have a real linear span that is a unital
separating algebra on \([0,1]\times K\). Stone--Weierstrass and the common
mass bound therefore give the convergence criterion. Since a compact metric
space has a countable separating family of continuous functions, the same
argument applies to its generated \(\Q\)-algebra.
\end{proof}

\subsection{Recovering the root data}
Recall that \(\langle t\rangle\) is obtained by adding a new root above
\(t\). For \(s\in\T\), set \(a_s=\max\{2,|s|\}\), and define, for
integers \(k\ge a_s\),
\begin{equation}\label{eq:single-branch-observable}
    H_{k,s}(\zeta)
       :=\sum_{t\in\T_k}K_s(t)K_{\langle t\rangle}(\zeta),
       \qquad \zeta\in\Omega_M.
\end{equation}
This is a finite linear combination of sampling coordinates, hence continuous.

\begin{lemma}[Single-branch sampling identity]
\label{lem:single-branch-root-identity}
If \(\omega=\{(p_i,\omega_i)\}\) represents \(\xi\), and
\(\xi_i=[\omega_i]_{\mathrm{samp}}\), then
\begin{equation}\label{eq:single-branch-root-moments}
    H_{k,s}(\xi)=\sum_i p_i^kK_s(\xi_i),\qquad k\ge a_s.
\end{equation}
\end{lemma}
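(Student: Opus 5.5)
Since \(\xi\in\partial_{\mathrm{RP}}\) is identified with \(\Psi(\xi)\), the coordinate \(K_{\langle t\rangle}(\xi)\) equals \(M^\omega_{k+1}(\langle t\rangle)\), the probability that the sample \(T_{k+1}(\omega)\) on labels \(\{0,\ldots,k\}\) has exactly one root child, of shape \(t\). The plan is to compute this probability from Definition~\ref{def:recursive-paintbox-sampling}, then sum against \(K_s(t)\) using the composition identity \eqref{eq:sampling-kernel-composition}. The absolute convergence needed to interchange sums follows from nonnegativity.

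\emph{Step 1: one-root-child event.} The root is label \(0\), and the \(k\ge2\) remaining labels make independent box choices. The root has a single child exactly when all \(k\) labels choose the same positive box \(i\). If two or more labels choose dust, we get at least two singleton children. If exactly one label chooses dust, that label gives a singleton child and, since \(k\ge2\), the other labels give at least one further child. This is why \(k\ge a_s\ge2\) is needed. Conditional on all \(k\) labels lying in box \(i\), Lemma~\ref{lem:paintbox-order-equivariance} shows that the child subtree has law \(M^{\omega_i}_k\). Hence
\[
M^\omega_{k+1}(\langle t\rangle)=\sum_i p_i^k M^{\omega_i}_k(t)=\sum_i p_i^kK_t(\xi_i),
\]
using Theorem~\ref{thm:complete-boundary} to identify \(M^{\omega_i}_k(t)\) with \(K_t(\xi_i)\).

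\emph{Step 2: summing over \(t\).} Substituting gives
\[
H_{k,s}(\xi)=\sum_i p_i^k\sum_{t\in\T_k}K_s(t)K_t(\xi_i),
\]
where the interchange is justified by Tonelli. It remains to show that the inner sum equals \(K_s(\xi_i)\) whenever \(|s|\le k\). This is the boundary version of \eqref{eq:sampling-kernel-composition}. Take finite trees \(t_N\to\xi_i\) with \(|t_N|\to\infty\), which exist by Lemma~\ref{lem:finite-vertices-isolated}. For \(|t_N|\ge k\) we have \(K_s(t_N)=\sum_{t\in\T_k}K_s(t)K_t(t_N)\), and this is a finite sum over \(\T_k\). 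Passing to the limit coordinatewise therefore gives \(K_s(\xi_i)=\sum_{t\in\T_k}K_s(t)K_t(\xi_i)\). The same conclusion also follows directly from the consistency of nested paintbox samples in Corollary~\ref{cor:paintbox-initial-segment-consistency}. This yields \eqref{eq:single-branch-root-moments}.

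\emph{Expected obstacle.} The main point needing care is the case analysis in Step~1 showing that a single root child forces every non-root label into one positive box. This is exactly where \(k\ge2\) enters: when \(k=1\), a lone dust label would also produce a single root child, namely a singleton. Everything else is bookkeeping with finite sums. The terms with \(p_i=0\) are never occupied and contribute nothing.
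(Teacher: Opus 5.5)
Your proof is correct and follows the paper's argument: for $k\ge2$ the event that the root has a single child forces all $k$ non-root labels into one positive box, which gives $K_{\langle t\rangle}(\xi)=\sum_i p_i^kM_k^{\xi_i}(t)$, and the composition identity \eqref{eq:sampling-kernel-composition} passed to the boundary, together with nonnegativity, finishes the proof. Your aside is incomplete on its own: deriving the boundary composition from Corollary~\ref{cor:paintbox-initial-segment-consistency} would also need the uniform conditional labelling from Proposition~\ref{prop:paintbox-central}. This does not matter, because your limit argument through finite trees already suffices.
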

\begin{proof}
In a sample of size \(k+1\), where \(k\ge2\), the root has only one
child precisely when all \(k\) remaining labels enter one positive box.
Dust cannot produce such a child subtree. Conditional on box \(i\), the
child has shape law \(M_k^{\xi_i}\), so
\[
    K_{\langle t\rangle}(\xi)=\sum_i p_i^kM_k^{\xi_i}(t),
    \qquad t\in\T_k.
\]
Passing \eqref{eq:sampling-kernel-composition} to the child boundary
point gives \(\sum_{t\in\T_k}K_s(t)M_k^{\xi_i}(t)=K_s(\xi_i)\).
All summands are nonnegative, so
\[
\begin{aligned}
    H_{k,s}(\xi)
       &=\sum_{t\in\T_k}K_s(t)\sum_i p_i^kM_k^{\xi_i}(t)\\
       &=\sum_i p_i^k\sum_{t\in\T_k}K_s(t)M_k^{\xi_i}(t)
        =\sum_i p_i^kK_s(\xi_i).
\end{aligned}
\]
This proves \eqref{eq:single-branch-root-moments}.
\end{proof}

\begin{example}[Root masses and child marks]
\label{ex:root-masses-child-marks}
Since every two-vertex sample is \(C_2\), the single-branch identity gives
\begin{equation}\label{eq:chain-root-observables}
    K_{C_3}(\xi)=\sum_i p_i^2,
    \qquad
    K_{C_4}(\xi)=\sum_i p_i^3K_{C_3}(\xi_i).
\end{equation}
Thus the three-vertex chain measures root collisions, whereas the
four-vertex chain also tests the child marks. Let \(\xi_{\mathrm{ch}}\)
be the class of an environment with one mass-one child at every occupied
vertex, and let \(\xi_{\mathrm{dust}}\) be the pure-dust class. Their
samples are \(C_m\) and \(S_m\), respectively. Both
\(\xi_{\mathrm{ch}}\) and an environment with one mass-one root child
marked by \(\xi_{\mathrm{dust}}\) have \(K_{C_3}=1\), but their
\(K_{C_4}\) values are \(1\) and \(0\). For each size \(m\ge2\),
the latter sample has a single child of the root, with the remaining
\(m-2\) vertices attached to that child.
\end{example}

\begin{proposition}[Identification of marked root masses]
\label{lem:marked-root-paintbox-identifiability}
The numbers
\(\sum_i p_i^kK_s(\xi_i)\), for \(s\in\T\) and \(k\ge a_s\),
determine every marked collection encoded by a measure
\(\nu=\sum_i p_i^2\delta_{(p_i,\xi_i)}\in
\mathfrak M(\partial_{\mathrm{RP}})\).
Consequently the root masses and child boundary classes of a representative
are determined by its sampling class, with multiplicities.
\end{proposition}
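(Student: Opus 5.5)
The plan is to recover the measure \(\nu=\sum_i p_i^2\delta_{(p_i,\xi_i)}\) from the numbers \(\sum_i p_i^kK_s(\xi_i)\) in two stages. First we separate the distinct mass values by a one-dimensional moment problem. Then, at each mass value, we identify the finite mixture of child marks by the uniqueness part of \cref{thm:universal-boundary-variable}.

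\emph{Masses.} For each \(s\in\T_m\) and \(k\ge a_s\), sum over \(s\in\T_m\) using \(\sum_{s\in\T_m}K_s(\xi_i)=1\), valid by \cref{lem:finite-vertices-isolated} since \(\xi_i\) is a boundary point. Taking \(m=1\), \(s=\one\), this yields the power sums \(P_k=\sum_i p_i^k\) for all \(k\ge2\). Write \(\lambda=\sum_i p_i^2\delta_{p_i}\), a finite measure on \((0,1]\) of mass at most one. Its moments \(\int x^{j}\,\lambda(dx)=P_{j+2}\), \(j\ge0\), are known, so the Hausdorff moment problem (or Weierstrass density of polynomials on \([0,1]\)) determines \(\lambda\). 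Its atoms give the distinct values \(c_1>c_2>\cdots\) of the \(p_i\), and its weights \(\lambda\{c\}=c^2\,\#\{i:p_i=c\}\) give the multiplicities \(n_c\). Each multiplicity is finite, since \(n_c c\le 1\).

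\emph{Marks at a fixed mass.} Now fix \(s\). The signed finite measure \(\lambda_s=\sum_i p_i^2K_s(\xi_i)\delta_{p_i}\) is dominated by \(\lambda\) and has known moments \(\sum_ip_i^{j+2}K_s(\xi_i)\), but only for \(j\ge a_s-2\). To handle the missing low moments, apply the moment argument to \(x^{a_s-2}\lambda_s\). This is again a finite nonnegative measure, so polynomial density on \([0,1]\) determines it from the known moments. Because every atom lies in \((0,1]\), dividing by \(c^{a_s-2}\) recovers \(S_c(s)=\sum_{i:p_i=c}K_s(\xi_i)\) for each atom \(c\) and each \(s\in\T\).

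Dividing by \(n_c\), the vector \((S_c(s)/n_c)_{s}\) gives the sampling coordinates of the barycentre \(\frac1{n_c}\sum_{i:p_i=c}\mu^{\xi_i}\). This barycentre is a central law whose cylinder probabilities are \(S_c(s)/(n_cu(s))\) by \cref{lem:boundary-central-kernel-borel}. By \cref{thm:universal-boundary-variable}(iv), equivalently the injectivity of \(\mathcal B\) in \cref{cor:bauer-simplex}, the mixing measure \(\frac1{n_c}\sum_{i:p_i=c}\delta_{\xi_i}\) is uniquely determined. Hence the multiset \(\{\xi_i:p_i=c\}\) is determined. Assembling over all atoms \(c\) recovers \(\nu\) with multiplicities.

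\emph{Consequence.} By \cref{lem:single-branch-root-identity}, the numbers \(\sum_i p_i^kK_s(\xi_i)=H_{k,s}(\xi)\) are functions of the sampling class \(\xi\). It follows that the root masses and child classes of any representative, with multiplicities, depend only on \(\xi\).

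The main obstacle is the restriction \(k\ge a_s\): the low moments of \(\lambda_s\) are unavailable. Multiplying by \(x^{a_s-2}\) resolves this, and the division step is legitimate precisely because no atom sits at \(0\). Care is also needed that countably many atoms, possibly accumulating at \(0\), are still separated. They are, because a finite measure is determined by its integrals against continuous functions, and its atoms are read off as point masses.
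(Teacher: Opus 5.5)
Your proposal is correct and follows the paper's proof essentially step for step. The paper also determines \(\lambda_s=\sum_i p_i^{a_s}K_s(\xi_i)\delta_{p_i}\) (your \(x^{a_s-2}\lambda_s\)) by polynomial approximation on \([0,1]\), and reads off the masses and multiplicities from \(s=\one\). It then recovers the marks at each mass \(p\) from the uniqueness of the extremal decomposition of \(\frac1{m_p}\sum_{i:p_i=p}\mu^{\xi_i}\). There are two harmless slips: your \(\lambda_s\) is nonnegative rather than signed, and the atom of \(x^{a_s-2}\lambda_s\) at \(c\) must be divided by \(c^{a_s}\), not \(c^{a_s-2}\), to give \(\sum_{i:p_i=c}K_s(\xi_i)\).
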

\begin{proof}
For each \(s\), introduce the finite measure on \([0,1]\)
\begin{equation}\label{eq:root-one-dimensional-measure}
    \lambda_s=\sum_i p_i^{a_s}K_s(\xi_i)\,\delta_{p_i}.
\end{equation}
Its total mass is at most \(\sum_i p_i^2\le1\), and all its moments
are among the given quantities:
\[
    \int x^r\,\lambda_s(dx)
       =\sum_i p_i^{a_s+r}K_s(\xi_i),\qquad r\in\Nzero.
\]
Uniform polynomial approximation on \([0,1]\) shows that two finite
measures with these moments agree on every continuous function, hence
coincide. Thus \(\lambda_s\) is determined.
Since \(K_\one=1\) and \(a_\one=2\),
\(\lambda_\one=\sum_i p_i^2\delta_{p_i}\) recovers the distinct positive
masses and, at each such mass \(p\), their number
\begin{equation}\label{eq:root-equal-mass-multiplicity}
    m_p=\frac{\lambda_\one(\{p\})}{p^2}
        =\#\{i:p_i=p\}\le\frac1p<\infty.
\end{equation}

Fix a positive mass \(p\). For every \(s\), the recovered measures give
\begin{equation}\label{eq:root-equal-mass-barycentre}
    b_{p,s}:=\frac{\lambda_s(\{p\})}{p^{a_s}}
       =\sum_{i:p_i=p}K_s(\xi_i).
\end{equation}
These determine the finite central measure
\(Q_p=\sum_{i:p_i=p}\mu^{\xi_i}\): for a path cylinder \(C_\gamma\)
ending at \(s\),
\[
    Q_p(C_\gamma)=\frac{b_{p,s}}{u(s)}.
\]
The total mass of \(Q_p\) is \(m_p\), so \(Q_p/m_p\) is a central
probability measure determined by these cylinder probabilities. Put
\(\rho_p=\mathcal L_{Q_p/m_p}(\Xi^0)\), using the common boundary map
from \cref{lem:raw-tail-borel-martin-limit}. For every Borel set
\(A\subseteq\partial_{\mathrm{RP}}\), the fibre identity
\eqref{eq:canonical-kernel-fibre-support} gives
\[
    m_p\rho_p(A)
       =Q_p\{\Xi^0\in A\}
       =\sum_{i:p_i=p}\mu^{\xi_i}\{\Xi^0\in A\}
        =\sum_{i:p_i=p}\ind_A(\xi_i).
\]
Thus \(m_p\rho_p\) recovers all child marks at mass \(p\), including
their multiplicities. By \cref{thm:universal-boundary-variable},
\(\rho_p\) is also the unique extremal mixing law of \(Q_p/m_p\).
Doing this at every positive mass recovers
the collection and its dust mass \(1-\sum_i p_i\).
\end{proof}

\subsection{The recursive homeomorphism}
\label{subsec:recursive-fixed-point}
\begin{theorem}[Recursive root decomposition]
\label{thm:recursive-fixed-point-boundary}
For any representative \(\omega=\{(p_i,\omega_i)\}\) of
\(\xi\in\partial_{\mathrm{RP}}\), the measure
\[
    \mathscr R(\xi)=\sum_i p_i^2
           \delta_{(p_i,[\omega_i]_{\mathrm{samp}})}
\]
is independent of the representative. The map
\[
    \mathscr R:\partial_{\mathrm{RP}}
       \xrightarrow{\ \cong\ }\mathfrak M(\partial_{\mathrm{RP}})
\]
is a homeomorphism. Its inverse assembles representatives of the marked
children below a new root with the specified masses and residual dust.
\end{theorem}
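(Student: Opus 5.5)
Well-definedness comes first. If \(\omega\sim_{\mathrm{samp}}\omega'\), then \cref{lem:single-branch-root-identity} gives \(H_{k,s}(\xi)=\sum_i p_i^kK_s(\xi_i)\) for both representatives, because the left-hand side depends only on the sampling class. \Cref{lem:marked-root-paintbox-identifiability} then says these numbers determine the encoded measure \(\sum_i p_i^2\delta_{(p_i,\xi_i)}\). Hence \(\mathscr R(\xi)\) is independent of the representative, and it lies in \(\mathfrak M(\partial_{\mathrm{RP}})\) since \(\sum_i p_i\le1\).

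Next I would construct the inverse map \(\mathscr A\). Given \(\nu=\sum_i p_i^2\delta_{(p_i,\xi_i)}\), choose representatives \(\omega_i\) of the \(\xi_i\) and rank the \(p_i\) in nonincreasing order. Set \(\omega=\{(p_i,\omega_i)\}\) with dust mass \(1-\sum_i p_i\), and put \(\mathscr A(\nu)=[\omega]_{\mathrm{samp}}\). This is well defined: the recursion \eqref{eq:truncated-root-finite-sum} combined with the truncation bound \eqref{eq:uniform-root-truncation-continuity} expresses \(M_n^\omega\) through the masses and the child laws \(M_m^{\omega_i}\), \(m<n\). Those child laws depend only on the classes \(\xi_i\). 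Relabelling equal masses changes nothing, because the multiset of pairs is fixed. By construction \(\mathscr R\circ\mathscr A=\mathrm{id}\), since the root data of the assembled representative are exactly \((p_i,\xi_i)\). In the other direction, \(\mathscr A\circ\mathscr R=\mathrm{id}\), because reassembling the root data of any representative \(\omega\) gives a representative sampling equivalent to \(\omega\), by the same recursion. So \(\mathscr R\) is a bijection.

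For the topology, both spaces are compact metrizable: \(\partial_{\mathrm{RP}}\) by \cref{prop:environment-sampling-continuity}, and \(\mathfrak M(\partial_{\mathrm{RP}})\) by \cref{lem:marked-mass-partition-compactness}. It therefore suffices to show that one of the two maps is continuous.

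\emph{Option 1: continuity of \(\mathscr R\).} By \cref{lem:marked-mass-partition-compactness}, convergence in \(\mathfrak M\) is tested by \(\mathcal R_{k,F}\), \(k\ge2\), with \(F\) ranging over a separating unital algebra. Take \(F\) in the algebra generated by the coordinate functions \(\xi'\mapsto K_s(\xi')\). Products \(K_sK_{s'}\) are a problem, since \(H_{k,s}\) only supplies linear moments \(\sum_ip_i^kK_s(\xi_i)\). Functions linear in the coordinates do separate points, though. So I would argue by compactness instead. Suppose \(\xi^{(j)}\to\xi\). Pass to a subsequence along which \(\mathscr R(\xi^{(j)})\to\nu\). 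Then \(\mathcal R_{k,K_s}(\mathscr R(\xi^{(j)}))=H_{k,s}(\xi^{(j)})\to H_{k,s}(\xi)\), and also \(\mathcal R_{k,K_s}\to\int x^{k-2}K_s\,d\nu\), because \(x^{k-2}K_s(z)\) is continuous on \([0,1]\times\partial_{\mathrm{RP}}\) and weak convergence applies. So \(\nu\) and \(\mathscr R(\xi)\) have the same linear moments. \Cref{lem:marked-root-paintbox-identifiability} uses exactly these linear moments, through \(\lambda_s\) and the central-measure barycentre argument, so \(\nu=\mathscr R(\xi)\). Since every subsequence has a further subsequence converging to \(\mathscr R(\xi)\), continuity follows.

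\emph{Option 2: continuity of \(\mathscr A\), as a double check.} Suppose \(\nu_j\to\nu\). Then one shows \(M_n^{\mathscr A(\nu_j)}\to M_n^{\mathscr A(\nu)}\) by induction on \(n\), as in \cref{prop:environment-sampling-continuity}. Boxes of mass at most \(\varepsilon\) are replaced by dust at cost \(\binom{n-1}2\varepsilon\), following \cref{rem:dust-fixed-point-topology}. The finitely many large boxes converge in both mass and mark, because the weak convergence holds away from \(\{0\}\times K\).

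The main obstacle is the last step of Option 1. One must check that the identifiability proposition really needs only the linear moments \(\int x^{k-2}K_s\,d\nu\) of an arbitrary element of \(\mathfrak M(\partial_{\mathrm{RP}})\), and not moments taken from a sampled paintbox. This holds, since its proof starts from exactly the numbers \(\sum_ip_i^kK_s(\xi_i)\).
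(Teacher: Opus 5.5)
Your proposal is correct and follows essentially the paper's route. Well-definedness comes from \cref{lem:single-branch-root-identity} and \cref{lem:marked-root-paintbox-identifiability}, bijectivity holds because the root data determine the sampling law through a root-assembly recursion, and your Option~1 is exactly the paper's continuity argument: compactness of $\mathfrak M(\partial_{\mathrm{RP}})$, the continuous test functions $x^{k-2}K_s(z)$, and identifiability from the linear moments alone. The one small difference is that the paper's quotient-labelled formula \eqref{eq:root-assembly-finite-contribution} sums over distinct box indices and so is visibly invariant under relabelling equal masses, whereas your shape recursion \eqref{eq:truncated-root-finite-sum} shows this invariance only after letting $L\to\infty$ in the truncation bound.
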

\begin{proof}
The single-branch identity and
\cref{lem:marked-root-paintbox-identifiability} show that \(\mathscr R\)
is well defined. Suppose two boundary points have the same root data,
matched with multiplicity. Fix a quotient-labelled tree \((t,L)\) of
size \(n\ge2\). Order its root branches by their least labels, write
\(B_1,\ldots,B_r\) for their label sets and \(s_1,\ldots,s_r\) for their
shapes, and put \(S=\{j\in[r]:|B_j|=1\}\). For any representative
\(\omega\) of the first point, put
\(\xi_i=[\omega_i]_{\mathrm{samp}}\) and
\(p_0=1-\sum_i p_i\). Root allocation and the central-lift identity give
\begin{equation}\label{eq:root-assembly-finite-contribution}
\begin{split}
    \Prob_\omega\{\widehat T_n=(t,L)\}
      =\sum_{D\subseteq S}p_0^{|D|}
        \sum_{\substack{i_j\ge1\ (j\in[r]\setminus D)\\
                        p_{i_j}>0,\ i_j\ \mathrm{distinct}}}
        \prod_{j\in[r]\setminus D}
           \left[p_{i_j}^{|B_j|}
                 \frac{K_{s_j}(\xi_{i_j})}{u(s_j)}\right].
\end{split}
\end{equation}
Here \(D\) specifies the dust branches, and the distinct positive indices
assign the others to boxes. Conditional on this allocation, child histories
are independent with probabilities \(K_{s_j}(\xi_{i_j})/u(s_j)\).
The allocations are disjoint and exhaustive, so
\eqref{eq:root-assembly-finite-contribution} is a convergent nonnegative
series bounded by one. Empty products and the sum over the empty index
tuple equal one, and \(p_0^0=1\).

The formula depends only on the marked root collection. The specified label
sets distinguish isomorphic branches, so each allocation is counted once.
It gives the same probability for both boundary points, proving
injectivity. The one-vertex sample is deterministic. Conversely, choose a
representative for every mark of any collection in
\(\mathfrak M(\partial_{\mathrm{RP}})\) and assemble them below a new
root. Formula~\eqref{eq:root-assembly-finite-contribution} makes its sampling
class independent of these choices, proving surjectivity.

For continuity, let \(\xi_j\to\xi\). Compactness of
\(\mathfrak M(\partial_{\mathrm{RP}})\) gives a weakly convergent
subsequence of any subsequence of \(\mathscr R(\xi_j)\). Denote its limit
by \(\nu\). For \(s\in\T\) and \(k\ge a_s\), the test function
\((x,z)\mapsto x^{k-2}K_s(z)\) is continuous, and hence
\begin{align}
    \int x^{k-2}K_s(z)\,\nu(dx,dz)
       &=\lim_j\int x^{k-2}K_s(z)\,\mathscr R(\xi_j)(dx,dz)\\
       &=\lim_j H_{k,s}(\xi_j)
        =H_{k,s}(\xi).
\end{align}
Thus \(\nu\) and \(\mathscr R(\xi)\) have the same determining data from
\cref{lem:marked-root-paintbox-identifiability}, and are equal. Every
subsequential limit is therefore \(\mathscr R(\xi)\), proving continuity.
A continuous bijection from the compact boundary to the Hausdorff marked
space is a homeomorphism.
\end{proof}

\begin{corollary}[Convergence and sampling equivalence]
\label{cor:root-decomposition-convergence}
Boundary convergence is equivalent to weak convergence of marked root data,
or to convergence of all marked collision coordinates in
\eqref{eq:marked-collision-coordinates}. It suffices to test \(F\) in the
\(\Q\)-algebra generated by \(1\) and the boundary coordinates \(K_s\).

Let \(\omega=\{(p_i,\omega_i):i\in I\}\) and
\(\omega'=\{(p'_j,\omega'_j):j\in I'\}\), where \(I\) and \(I'\)
index the positive root masses. Then
\(\omega\sim_{\mathrm{samp}}\omega'\) if and only if there is a
bijection \(\sigma:I\to I'\) such that
\begin{equation}\label{eq:recursive-paintbox-equivalence}
    p_i=p'_{\sigma(i)}
    \quad\text{and}\quad
    \omega_i\sim_{\mathrm{samp}}\omega'_{\sigma(i)}
    \qquad\text{for every }i\in I.
\end{equation}
Their root dust masses then agree as well.
\end{corollary}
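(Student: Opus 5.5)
The first assertion follows from the homeomorphism \(\mathscr R\) of \cref{thm:recursive-fixed-point-boundary} together with \cref{lem:marked-mass-partition-compactness}; the second follows from injectivity of \(\mathscr R\) and the way \(\mathscr R\) is computed from a representative.

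\emph{Convergence criterion.} Boundary convergence \(\xi_j\to\xi\) is equivalent to \(\mathscr R(\xi_j)\Rightarrow\mathscr R(\xi)\), since \(\mathscr R\) is a homeomorphism of compact metrizable spaces. By \cref{lem:marked-mass-partition-compactness}, weak convergence in \(\mathfrak M(\partial_{\mathrm{RP}})\) is equivalent to convergence of all \(\mathcal R_{k,F}\) with \(k\ge2\) and \(F\in C(\partial_{\mathrm{RP}},\R)\). It also suffices to test \(F\) in any countable unital \(\Q\)-subalgebra separating points. The coordinates \(K_s\), \(s\in\T\), are continuous on \(\partial_{\mathrm{RP}}\) and separate its points by definition of sampling equivalence. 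Hence the \(\Q\)-algebra generated by \(1\) and the \(K_s\) is countable, unital and separating, and the lemma applies to it.

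\emph{Sampling-equivalence characterisation.} Suppose first that \(\omega\sim_{\mathrm{samp}}\omega'\), so both represent the same \(\xi\). By well-definedness of \(\mathscr R\) (\cref{lem:marked-root-paintbox-identifiability}), the two measures \(\sum_{i\in I}p_i^2\delta_{(p_i,[\omega_i])}\) and \(\sum_{j\in I'}(p'_j)^2\delta_{(p'_j,[\omega'_j])}\) coincide. The multiplicity of each pair \((p,z)\) equals \(\nu\{(p,z)\}/p^2\), which is finite because it is at most \(1/p\). So for each pair the index sets \(\{i:(p_i,[\omega_i])=(p,z)\}\) and \(\{j:(p'_j,[\omega'_j])=(p,z)\}\) have equal finite cardinalities. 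Choosing a bijection on each such fibre and taking their union, over the countably many distinct pairs, gives \(\sigma:I\to I'\) satisfying \eqref{eq:recursive-paintbox-equivalence}.

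Conversely, given such a \(\sigma\), the two encoding measures agree termwise after reindexing, so \(\mathscr R([\omega])=\mathscr R([\omega'])\). Injectivity of \(\mathscr R\) then gives \([\omega]=[\omega']\). Alternatively, formula \eqref{eq:root-assembly-finite-contribution} shows directly that the finite quotient-labelled laws agree, because that formula depends only on the marked collection.

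In either direction the positive masses agree as multisets, so \(\sum_ip_i=\sum_jp'_j\) and the root dust masses coincide.

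The only point needing care is the countable \(\Q\)-algebra reduction. It must be checked that the generated algebra consists of continuous functions on the compact boundary, which holds because each \(K_s\) is a coordinate of the embedding \(\Psi\). Everything else is a direct reading of the preceding theorem.
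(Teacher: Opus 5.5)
Your proposal is correct and follows essentially the same route as the paper. The convergence criterion comes from the homeomorphism \(\mathscr R\) and the collision-coordinate lemma, with the \(K_s\) supplying a countable unital separating \(\Q\)-algebra. The equivalence criterion comes from well-definedness and injectivity of \(\mathscr R\) together with the multiplicity encoding \(\nu\{(p,z)\}/p^2\).
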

\begin{proof}
The convergence assertions follow from
\cref{thm:recursive-fixed-point-boundary,lem:marked-mass-partition-compactness}.
The equivalence criterion follows from the injectivity of
\(\mathscr R\) and the unique encoding of marked collections with
multiplicity. For the relabelling assertion in
\cref{rem:paintbox-equivalence-relabelling}, match the roots and then
choose a child bijection preserving masses and child sampling classes
at each pair of matched addresses. These choices give a
prefix-preserving bijection of the positive-mass address trees,
preserving every local mass and dust mass. This identifies the
representatives after zero-mass edges are discarded.
\end{proof}

\subsection{Convergence of finite trees}
\label{subsec:finite-tree-root-convergence}
For \(t=\langle a_1,\ldots,a_r\rangle\) with \(|t|\ge2\), define
its marked root measure by
\begin{equation}\label{eq:finite-tree-marked-root-measure}
    \nu_t=\sum_{i=1}^r
       \left(\frac{|a_i|}{|t|-1}\right)^2
       \delta_{\left(|a_i|/(|t|-1),\,\iota_M(a_i)\right)}
       \in\mathfrak M(\Omega_M).
\end{equation}
The sum counts actual root branches, including repeated isomorphic copies;
set \(\nu_\one=0\). We view \(\mathfrak M(\partial_{\mathrm{RP}})\) as a subspace of
\(\mathfrak M(\Omega_M)\) through the boundary identification.

\begin{samepage}
\begin{corollary}[Finite-tree convergence from marked root data]
\label{cor:finite-tree-root-convergence}
Let \((t_j)\) be finite trees with \(|t_j|\to\infty\), and let
\(\xi\in\partial_{\mathrm{RP}}\). Then
\begin{equation}\label{eq:finite-tree-root-convergence}
    \iota_M(t_j)\longrightarrow\xi
    \quad\Longleftrightarrow\quad
    \nu_{t_j}\Longrightarrow\mathscr R(\xi)
       \quad\text{in }\mathfrak M(\Omega_M).
\end{equation}
Consequently, under every central law \(\mu\),
\(\nu_{X_N}\Rightarrow\mathscr R(\Xi_\mu)\) almost surely.
\end{corollary}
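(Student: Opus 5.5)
I would prove both directions by a subsequence argument, using compactness of \(\mathfrak M(\Omega_M)\) (\cref{lem:marked-mass-partition-compactness} with \(K=\Omega_M\)). The bridge between the two sides is the finite-tree analogue of the single-branch identity. For a finite tree \(t=\langle a_1,\ldots,a_r\rangle\) with \(|t|\ge k+1\), a uniform quotient labelling has a single root child in its first \(k+1\) labels exactly when the \(k\) non-root labels all fall into one root branch \(a_i\). By \cref{lem:root-decomposition-uniform} this has probability \((|a_i|)_k/(|t|-1)_k\). Conditionally, the child restriction has law \(K_\cdot(a_i)\) at size \(k\). Composing with \eqref{eq:sampling-kernel-composition} therefore gives
\[
    H_{k,s}(t)=\sum_i\frac{(|a_i|)_k}{(|t|-1)_k}\,K_s(a_i),\qquad k\ge a_s .
\]
The right-hand side is \(\int x^{k-2}K_s(z)\,\nu_t(dx,dz)\) up to an error of order \(k^2/(|t|-1)\) times \(\sum_i q_i^2\le1\). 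Indeed, \((m)_k/(M)_k\) and \((m/M)^k\) differ by at most \(\binom k2/M\) times \((m/M)^{2}\) when \(m\le M\), which one checks by writing both as products and telescoping, with the \(k\ge2\) factor absorbed into the quadratic weight. Hence, as \(|t_j|\to\infty\),
\[
    H_{k,s}(t_j)-\int x^{k-2}K_s(z)\,\nu_{t_j}(dx,dz)\longrightarrow0 .
\]

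For (\(\Rightarrow\)), assume \(\iota_M(t_j)\to\xi\). Take any subsequence of \((\nu_{t_j})\) and extract a further weakly convergent one with limit \(\nu\in\mathfrak M(\Omega_M)\). Continuity of \(H_{k,s}\) on \(\Omega_M\) and \cref{lem:single-branch-root-identity} give \(\int x^{k-2}K_s\,d\nu=H_{k,s}(\xi)=\int x^{k-2}K_s\,d\mathscr R(\xi)\) for all \(s\) and \(k\ge a_s\).

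It remains to see that \(\nu\) is supported on boundary marks, so that \cref{lem:marked-root-paintbox-identifiability} applies. Atoms of \(\nu\) have mass coordinate \(p>0\), and such atoms arise only from branches whose relative sizes converge to \(p\). Those branch sizes tend to infinity, so their marks converge into \(\partial\Omega_M\) by \cref{lem:finite-vertices-isolated}. To make this precise I would rank the branches as in \cref{lem:marked-mass-compactness}, pass to coordinatewise limits, and identify \(\nu\) with the ranked limit using the uniform tail bound \eqref{eq:marked-square-tail-uniform}. The identifiability proposition then gives \(\nu=\mathscr R(\xi)\), so every subsequential limit agrees and the whole sequence converges.

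For (\(\Leftarrow\)), assume \(\nu_{t_j}\Rightarrow\mathscr R(\xi)\). By compactness of \(\Omega_M\) it suffices to show that every convergent subsequence \(\iota_M(t_j)\to\zeta\) has \(\zeta=\xi\). Such a \(\zeta\) is a boundary point by \cref{lem:finite-vertices-isolated}. Applying the forward direction to \(\zeta\) gives \(\nu_{t_j}\Rightarrow\mathscr R(\zeta)\), so \(\mathscr R(\zeta)=\mathscr R(\xi)\), and injectivity in \cref{thm:recursive-fixed-point-boundary} yields \(\zeta=\xi\).

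The final statement about central laws then follows from \cref{thm:universal-boundary-variable}(i): almost surely \(\iota_M(X_N)\to\Xi_\mu\in\partial_{\mathrm{RP}}\), and the equivalence applies pathwise.

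The main obstacle is the falling-factorial versus power comparison in the first paragraph, together with the support issue in the forward direction. The comparison must be uniform over all branches with only the quadratic weight available, and I expect the product/telescoping bound above to be the cleanest route. The support issue needs care to rule out limit atoms with finite-tree marks; the ranked-limit identification handles it.
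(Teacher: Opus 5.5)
Your argument has the same structure as the paper's proof. It uses the finite single-branch identity \(H_{k,s}(t)=\sum_i\frac{(n_i)_k}{(N-1)_k}K_s(a_i)\), compactness of \(\mathfrak M(\Omega_M)\), identification of subsequential limits through \cref{lem:marked-root-paintbox-identifiability}, and injectivity of \(\mathscr R\) for the converse. However, one intermediate claim is false as written. The pointwise bound
\[
    \Bigl|\Bigl(\frac mM\Bigr)^k-\frac{(m)_k}{(M)_k}\Bigr|
      \le\binom k2\frac1M\Bigl(\frac mM\Bigr)^2
\]
fails at \(k=2\), a case you need because \(a_s=2\) whenever \(|s|\le2\). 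For \(m=1\) the left side is \(M^{-2}\) and the right side is \(M^{-3}\). In general the exact difference at \(k=2\) is \(m(M-m)/(M^2(M-1))\), which is of order \(q/M\), not \(q^2/M\). Your telescoping actually gives \(\binom k2M^{-1}(m/M)^{k-1}\) for \(m\ge k\), and \(x^{k-1}\) is quadratic only when \(k\ge3\). So the step ``the \(k\ge2\) factor absorbed into the quadratic weight'' does not hold.

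The repair is to keep only a linear weight. For \(m\ge k\) the telescoping bound is at most \(\binom k2M^{-1}(m/M)\). For \(m<k\) the difference is \((m/M)^k\le(m/M)(k-1)/M\). Summing against \(\sum_iq_i=1\), with \(0\le K_s\le1\), bounds the total error by \(\binom k2/(N-1)\). This is exactly the paper's bound. The paper obtains it in one step from \eqref{eq:finite-population-distinct-sampling}, applying the with/without-replacement total-variation estimate to the \([0,1]\)-valued function equal to \(K_s(a_i)\) when all \(k\) draws land in branch \(i\). With this fix the conclusion \(H_{k,s}(t_j)-\int x^{k-2}K_s\,d\nu_{t_j}\to0\), and the rest of your proof, go through.

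Your treatment of the support issue is a valid variant. You rank branches, extract coordinatewise limits as in \cref{lem:marked-mass-compactness}, and use the uniform tail bound to identify \(\nu\) with \(\sum_{q_i>0}q_i^2\delta_{(q_i,\xi_i)}\), whose marks are boundary points since those branch sizes diverge. The paper instead estimates \(\nu_t([0,1]\times\mathcal F_R)\le R/(N-1)\) on the finite clopen sets \(\mathcal F_R\) of trees of size at most \(R\). That removes finite marks directly, without a further extraction; your version reuses an existing lemma at the cost of one more subsequence.
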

\end{samepage}
\begin{proof}
We first compare finite-population root sampling with the moments of
\(\nu_t\). Write \(N=|t|\), \(n_i=|a_i|\), and \(q_i=n_i/(N-1)\).
Fix \(s\in\T\) and \(a_s\le k\le N-1\). For \(v\in\T_k\),
the event that the \((k+1)\)-vertex sample is \(\langle v\rangle\)
requires all \(k\) non-root sample labels to lie in one actual branch.
By \cref{lem:root-decomposition-uniform}, the probability that they all
lie in branch \(i\) is \((n_i)_k/(N-1)_k\); conditional on this event,
their sampled shape has law \((K_v(a_i))_{v\in\T_k}\). Consequently
\[
    K_{\langle v\rangle}(t)
       =\sum_{i:n_i\ge k}\frac{(n_i)_k}{(N-1)_k}K_v(a_i).
\]
Here \((b)_k=b(b-1)\cdots(b-k+1)\) is the falling factorial.
Multiplying by \(K_s(v)\), summing over \(v\in\T_k\), and using
\eqref{eq:sampling-kernel-composition} gives
\begin{equation}\label{eq:finite-root-single-branch-moments}
\begin{aligned}
    H_{k,s}(t)
       &=\sum_{i:n_i\ge k}\frac{(n_i)_k}{(N-1)_k}
            \sum_{v\in\T_k}K_s(v)K_v(a_i)\\
       &=\sum_{i:n_i\ge k}\frac{(n_i)_k}{(N-1)_k}K_s(a_i).
\end{aligned}
\end{equation}

For independent sampling of population elements, the all-in-branch-\(i\)
probability is \(q_i^k\), and the corresponding moment is
\(\int x^{k-2}K_s(z)\,\nu_t(dx,dz)=\sum_iq_i^kK_s(a_i)\),
with \(K_s(a_i)=0\) for \(n_i<|s|\). For \(n_i\ge k\), each factor in
\[
    \frac{(n_i)_k}{(N-1)_k}
       =\prod_{\ell=0}^{k-1}\frac{n_i-\ell}{N-1-\ell}
       \le\left(\frac{n_i}{N-1}\right)^k=q_i^k
\]
is at most \(q_i\); for \(n_i<k\), the without-replacement weight is
zero. Thus the independent-sampling moment is at least \(H_{k,s}(t)\).
To bound the difference, apply \eqref{eq:finite-population-distinct-sampling}
with population size \(N-1\) and \(k\) draws to the function equal to
\(K_s(a_i)\) when all draws lie in branch \(i\), and zero otherwise.
This function takes values in \([0,1]\), so
\begin{equation}\label{eq:finite-root-moment-error}
    0\le \int x^{k-2}K_s(z)\,\nu_t(dx,dz)-H_{k,s}(t)
       \le 1-\frac{(N-1)_k}{(N-1)^k}
       \le\frac{\binom{k}{2}}{N-1}.
\end{equation}

By \cref{lem:marked-mass-partition-compactness}, the measures
\(\nu_{t_j}\) have convergent subsequences in
\(\mathfrak M(\Omega_M)\). Every such limit has only boundary marks.
To see this, fix \(R\ge1\) and let
\(\mathcal F_R=\bigcup_{m=1}^R\iota_M(\T_m)\). This is a finite
clopen subset of \(\Omega_M\) by \cref{lem:finite-vertices-isolated}.
Since \(\sum_i n_i=N-1\),
\begin{equation}\label{eq:finite-root-bounded-mark-mass}
    \nu_t([0,1]\times\mathcal F_R)
       =\frac{\sum_{i:n_i\le R}n_i^2}{(N-1)^2}
       \le\frac{R\sum_{i:n_i\le R}n_i}{(N-1)^2}
       \le\frac{R}{N-1}.
\end{equation}
The indicator of \([0,1]\times\mathcal F_R\) is continuous, so each
subsequential limit assigns it zero mass. The union of the
\(\mathcal F_R\) is the finite-tree part of \(\Omega_M\); hence every
limit belongs to \(\mathfrak M(\partial_{\mathrm{RP}})\).

Suppose now that \(\iota_M(t_j)\to\xi\), and let \(\nu\) be a
subsequential limit of \(\nu_{t_j}\). The continuity of \(H_{k,s}\)
on \(\Omega_M\), weak convergence along that subsequence, and
\eqref{eq:finite-root-moment-error} give, for \(k\ge a_s\),
\[
    \int x^{k-2}K_s(z)\,\nu(dx,dz)
       =H_{k,s}(\xi)
       =\int x^{k-2}K_s(z)\,\mathscr R(\xi)(dx,dz).
\]
Both measures belong to \(\mathfrak M(\partial_{\mathrm{RP}})\),
so \cref{lem:marked-root-paintbox-identifiability} gives
\(\nu=\mathscr R(\xi)\). Compactness then yields convergence of the
full sequence of root measures.

Conversely, suppose \(\nu_{t_j}\Rightarrow\mathscr R(\xi)\).
Any subsequence of \(\iota_M(t_j)\) has a further convergent subsequence
in \(\Omega_M\). Since the sizes tend to infinity, its limit
\(\zeta\) lies in \(\partial_{\mathrm{RP}}\). The implication just
proved gives convergence of its root measures to \(\mathscr R(\zeta)\),
so \(\mathscr R(\zeta)=\mathscr R(\xi)\). Injectivity of
\(\mathscr R\) implies \(\zeta=\xi\), proving
\eqref{eq:finite-tree-root-convergence}. The almost-sure assertion follows
by applying this equivalence to the endpoint limit in
\cref{thm:universal-boundary-variable}.
\end{proof}

\begin{example}
For the trees in \cref{ex:ordered-boundary-nonextension}, the singleton
branches have total square mass \((n-1)/(2n-1)^2\to0\), whereas
\(C_n\to\xi_{\mathrm{ch}}\). Thus
\[
    \nu_{\pi_{\mathrm{ord}}(h_n)}
       \Longrightarrow\tfrac14\delta_{(1/2,\xi_{\mathrm{ch}})},
    \qquad
    \nu_{\pi_{\mathrm{ord}}(s_n)}\Longrightarrow0.
\]
The unordered limits are therefore the boundary point with one mass-\(1/2\)
chain child and dust mass \(1/2\), and the pure-dust point
\(\xi_{\mathrm{dust}}\), respectively.
\end{example}

\section{Decomposition of Ewens and Plancherel central measures}\label{sec:ewens-decomposition}
We determine the unique extremal decomposition of the recursive Ewens
central laws, including the uniform recursive-tree shape law and Fulman's
rooted-tree Plancherel law. The mixing law is obtained by choosing
independent Poisson--Dirichlet splits at every vertex and taking the
boundary class of the resulting recursive environment. Throughout,
\(\theta,a>0\): the root split has parameter \(\theta a\), and every
non-root split has parameter \(\theta\).

\subsection{Hook-product weights and coherent central measures}
\label{subsec:pochhammer-hook-coherence}

We retain the notation of Section~\ref{sec:rooted}. For \(x>0\) and
\(m\in\Nzero\), write
\(\rising{x}{m}:=\prod_{j=0}^{m-1}(x+j)\) for the rising factorial,
with \(\rising{x}{0}:=1\).
We consider the following two-parameter deformation of the
standard-labelling hook weight \(d(t)\):
\[
    d_{\theta,a}(t)
    =|t|!\,a^{\deg_t^+(\rootv(t))}
      \prod_{v\in V(t)}
      \frac{(h_v(t)-1)!}{\rising{\theta}{h_v(t)-1}},
      \qquad \theta>0,
      \quad a>0.
\]
At \((\theta,a)=(2,1)\),
\[
    \frac{(h-1)!}{\rising{2}{h-1}}=\frac1h,
\]
so \(d_{2,1}(t)=|t|!/\prod_{v\in V(t)}h_v(t)=d(t)\) by the rooted-tree hook
formula.  The resulting finite-level law is Fulman's Plancherel measure on
unlabelled rooted trees \cite[Section~3, Definition~1]{fulman2009mixing}.
The next proposition identifies the normalization and symmetry factor.

For a vertex \(v\in V(t)\) and a rooted-tree isomorphism type \(\tau\),
let \(m_{v,\tau}\) denote the number of children of \(v\) whose descendant
subtrees have type \(\tau\).  Fulman's symmetry factor is the order of the
product of the corresponding local permutation groups:
\begin{equation}\label{eq:fulman-symmetry-factor-definition}
    |\SG(t)|
    :=\prod_{v\in V(t)}\prod_{\tau}m_{v,\tau}!.
\end{equation}
Only finitely many factors differ from \(1\).

\begin{proposition}[Agreement with Fulman's rooted-tree Plancherel law]
\label{prop:fulman-plancherel-agreement}
For every \(n\ge1\) and \(t\in\T_n\),
\[
    \frac{u(t)d(t)}{\prod_{k=2}^n\binom{k}{2}}
    =\frac{n\,2^{n-1}}
      {|\Aut(t)|\prod_{v\in V(t)}h_v(t)^2}.
\]
The right-hand side coincides with Fulman's formula.  His symmetry factor
satisfies
\[
    |\SG(t)|=|\Aut(t)|.
\]
\end{proposition}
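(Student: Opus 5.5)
The plan has two independent parts: an algebraic identity, and the equality of the two symmetry groups.

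For the identity, compute the left-hand side directly. We have \(d(t)=n!/\prod_v h_v(t)\) by \eqref{eq:rooted-hook} and \(u(t)=d(t)/|\Aut(t)|\). Hence
\[
    u(t)d(t)=\frac{(n!)^2}{|\Aut(t)|\prod_{v}h_v(t)^2}.
\]
The denominator telescopes as
\[
    \prod_{k=2}^n\binom k2=\frac{n!\,(n-1)!}{2^{n-1}},
\]
because \(\prod_{k=2}^n k(k-1)=n!\,(n-1)!\). Dividing gives
\[
    \frac{(n!)^2\,2^{n-1}}{n!\,(n-1)!\,|\Aut(t)|\prod_v h_v^2}=\frac{n\,2^{n-1}}{|\Aut(t)|\prod_v h_v^2},
\]
as claimed. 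Comparing with Fulman's Definition~1 in \cite{fulman2009mixing}, whose formula has \(|\SG(t)|\) in the position of \(|\Aut(t)|\), the statement then reduces to the equality \(|\SG(t)|=|\Aut(t)|\).

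For the group order, I would prove the recursive factorization
\[
    |\Aut(t)|=\prod_{\tau}m_{\rootv,\tau}!\,|\Aut(\tau)|^{m_{\rootv,\tau}}
\]
for \(t=\langle t_1,\ldots,t_r\rangle\), and then induct on \(|t|\). A rooted automorphism fixes the root and permutes the root children, and it preserves subtree isomorphism types. So it induces a permutation of the children that preserves types; this gives a homomorphism onto \(\prod_\tau S_{m_{\rootv,\tau}}\).

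The homomorphism is surjective. Fix, for each type \(\tau\), identifications of all children of that type with one representative. Any type-preserving permutation then lifts by transporting subtrees along these identifications.

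Its kernel consists of the automorphisms mapping each child subtree to itself. This kernel is \(\prod_j\Aut(t_j)\), which has order \(\prod_\tau|\Aut(\tau)|^{m_{\rootv,\tau}}\).

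Induction then expands each \(|\Aut(\tau)|\) into a product over the vertices of \(\tau\). Each vertex of \(t\) outside the root lies in exactly one child subtree, and its local factor \(\prod_\tau m_{v,\tau}!\) depends only on its own descendant subtree, so it is the same in \(t_j\) as in \(t\). Hence the product over all vertices of \(t\) is recovered, which is \eqref{eq:fulman-symmetry-factor-definition}.

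The main obstacle is only bookkeeping. The multiplicities \(m_{v,\tau}\) must be read in the descendant subtree, so that they are unchanged under restriction to \(t_j\). The kernel must also be identified carefully as a direct product; this follows because an automorphism fixing each child vertex restricts to automorphisms of the disjoint child subtrees, and these restrictions determine it.
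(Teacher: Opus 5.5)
Your proposal is correct and follows the paper's proof: the same hook-formula computation with \(\prod_{k=2}^n\binom k2=n!\,(n-1)!/2^{n-1}\), and the same root factorization \(|\Aut(t)|=\prod_\tau m_{\rootv(t),\tau}!\,|\Aut(\tau)|^{m_{\rootv(t),\tau}}\), iterated down the tree to obtain \(|\SG(t)|\). Your homomorphism-and-kernel argument just spells out the factorization that the paper states in one line.
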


\begin{proof}
The hook formula, \(u(t)=d(t)/|\Aut(t)|\), and
\(\prod_{k=2}^n\binom{k}{2}=n!(n-1)!/2^{n-1}\) give
\[
    \frac{u(t)d(t)}{\prod_{k=2}^n\binom{k}{2}}
       =\frac{n2^{n-1}}{|\Aut(t)|\prod_vh_v(t)^2}.
\]
An automorphism permutes equal-type root branches and acts independently
inside each branch. Hence
\[
    |\Aut(t)|=\prod_\tau
       |\Aut(\tau)|^{m_{\rootv(t),\tau}}m_{\rootv(t),\tau}!.
\]
Iterating this identity yields
\(|\Aut(t)|=\prod_{v,\tau}m_{v,\tau}!=|\SG(t)|\).
\end{proof}

For \(\theta,a>0\) and \(n\ge1\), set
\[
    \Lambda_n(\theta,a)
    =\frac{n(n+1)}{\theta}\frac{n+\theta a-1}{n+\theta-1}.
\]

\begin{proposition}[Coherence of the hook-product weights]
\label{prop:pochhammer-hook-centrality}
Fix \(\theta,a>0\). For every \(n\ge1\) and \(t\in\T_n\),
\[
    \sum_{T:t\nearrow T}n(t;T)d_{\theta,a}(T)
    =\Lambda_n(\theta,a)d_{\theta,a}(t).
\]
Moreover,
\[
    Z_n(\theta,a)
    :=\sum_{s\in\T_n}u(s)d_{\theta,a}(s)
    =\frac{n!(n-1)!}{\theta^{n-1}}
      \frac{\rising{(\theta a)}{n-1}}{\rising{\theta}{n-1}}.
\]
Consequently, the probability laws
\[
    M_n^{(\theta,a)}(t)=\frac{u(t)d_{\theta,a}(t)}{Z_n(\theta,a)},
    \qquad t\in\T_n,
\]
form a coherent family.
\end{proposition}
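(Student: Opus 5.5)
The plan is to reduce everything to a single local identity about how \(d_{\theta,a}\) changes under leaf grafting, and then derive the normalization and coherence by summation.

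\emph{Step 1: effect of one graft.} Since \(n(t;T)\) counts the vertices \(v\in V(t)\) with \(t^{+v}\cong T\), the left-hand side of the coherence identity equals \(\sum_{v\in V(t)}d_{\theta,a}(t^{+v})\). Attaching a leaf at \(v\) changes \(d_{\theta,a}\) in three ways:
\begin{itemize}
\item \(|t|!\) becomes \((n+1)!\);
\item every ancestor \(x\preceq_t v\), including \(v\) itself, has its hook raised from \(h_x\) to \(h_x+1\), which multiplies the weight by \(h_x/(\theta+h_x-1)\);
\item the new leaf has hook \(1\) and contributes the factor \(0!/\rising{\theta}{0}=1\), while the root degree, and hence the power of \(a\), increases only when \(v=\rootv(t)\).
\end{itemize}
Hence
\[
    \frac{d_{\theta,a}(t^{+v})}{d_{\theta,a}(t)}
    =(n+1)\,a^{\ind_{\{v=\rootv(t)\}}}\prod_{x\preceq_t v}\frac{h_x(t)}{\theta+h_x(t)-1}.
\]
The claim therefore reduces to showing that \(\Sigma_a(t):=\sum_v a^{\ind_{\{v=\rootv(t)\}}}\prod_{x\preceq_t v}h_x/(\theta+h_x-1)\) equals \(\Lambda_n(\theta,a)/(n+1)=\frac{n}{\theta}\cdot\frac{n+\theta a-1}{n+\theta-1}\).

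\emph{Step 2: the hook-sum identity (main step).} First treat \(a=1\) and prove \(f(t):=\Sigma_1(t)=n/\theta\) by induction over the root decomposition \(t=\langle t_1,\ldots,t_r\rangle\). Every path product contains the root factor \(n/(\theta+n-1)\). The rest of the product splits as the root term \(1\) plus, for each branch, the corresponding sum inside that branch. This gives
\[
    f(t)=\frac{n}{\theta+n-1}\Bigl(1+\sum_i f(t_i)\Bigr).
\]
By induction \(f(t_i)=|t_i|/\theta\). Since \(\sum_i|t_i|=n-1\), this yields \(\frac{n}{\theta+n-1}\cdot\frac{\theta+n-1}{\theta}=n/\theta\); the base case \(\one\) gives \(1/\theta\).

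For general \(a\), only the root term is reweighted:
\[
    \Sigma_a(t)=\frac{n}{\theta+n-1}\Bigl(a+\frac{n-1}{\theta}\Bigr)=\frac{n(\theta a+n-1)}{\theta(\theta+n-1)}.
\]
This is the required value. I expect this telescoping induction to be the only real content; everything else is bookkeeping.

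\emph{Step 3: normalization and coherence.} By the grafting dimension recursion of \cref{prop:dimensions}, \(u(T)=\sum_t u(t)n(t;T)\). Therefore
\[
    Z_{n+1}=\sum_T u(T)d_{\theta,a}(T)=\sum_t u(t)\sum_T n(t;T)d_{\theta,a}(T)=\Lambda_n Z_n.
\]
Starting from \(Z_1=1\), the product \(\prod_{k=1}^{n-1}\Lambda_k\) telescopes to the stated closed form: the factors \(k(k+1)\) give \(n!(n-1)!\), and the remaining ratios give \(\theta^{-(n-1)}\rising{(\theta a)}{n-1}/\rising{\theta}{n-1}\).

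Finally, the coherence relation \eqref{eq:coherent-family-definition} with cotransitions \(u(t)n(t;T)/u(T)\) is
\[
    \sum_T M_{n+1}(T)\frac{u(t)n(t;T)}{u(T)}=\frac{u(t)}{Z_{n+1}}\sum_T n(t;T)d_{\theta,a}(T)=\frac{u(t)\Lambda_n d_{\theta,a}(t)}{\Lambda_n Z_n}=M_n^{(\theta,a)}(t).
\]
Nonnegativity and unit total mass of \(M_n^{(\theta,a)}\) follow from the definition of \(Z_n\).
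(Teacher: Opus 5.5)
Your proposal is correct and follows essentially the same route as the paper: the same ratio formula for \(d_{\theta,a}(t^{+v})/d_{\theta,a}(t)\), the same root-decomposition induction showing the path-product sum equals \(|t|/\theta\) (with only the root term reweighted by \(a\)), and the same derivation of \(Z_{n+1}=\Lambda_nZ_n\) from the grafting dimension recursion, followed by the coherence check. Writing the general-\(a\) sum directly as \(\frac{n}{\theta+n-1}\bigl(a+\frac{n-1}{\theta}\bigr)\) instead of adding the correction \((a-1)\frac{n}{n+\theta-1}\) is a trivial rearrangement of the same computation.
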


\begin{proof}
Put
\[
    f_\theta(k)=\frac{(k-1)!}{\rising{\theta}{k-1}},
    \qquad
    q_\theta(k)=\frac{f_\theta(k+1)}{f_\theta(k)}
    =\frac{k}{k+\theta-1}.
\]
Attach a new leaf to \(v\in V(t)\), and denote the resulting rooted tree
by \(t^{+v}\).  The factorial factor is multiplied by \(n+1\), the root
out-degree increases by one exactly when \(v=\rootv(t)\), and the hook length
of \(x\) increases exactly when \(x\preceq_t v\).  Therefore
\[
\begin{aligned}
    \frac{d_{\theta,a}(t^{+v})}{d_{\theta,a}(t)}
    &=(n+1)a^{\ind_{\{v=\rootv(t)\}}}
      \prod_{x\preceq_t v}
      \frac{f_\theta(h_x(t)+1)}{f_\theta(h_x(t))}\\
    &=(n+1)a^{\ind_{\{v=\rootv(t)\}}}
      \prod_{x\preceq_t v}q_\theta(h_x(t)).
\end{aligned}
\]
By the definition of the leaf-grafting multiplicity, grouping attachment
vertices according to the isomorphism type of the resulting tree gives
\[
    \sum_{T:t\nearrow T}n(t;T)d_{\theta,a}(T)
    =\sum_{v\in V(t)}d_{\theta,a}(t^{+v}).
\]
It therefore remains to evaluate the sum of the displayed ratios over
\(v\in V(t)\).

First take \(a=1\), and define
\[
    S_\theta(t)=\sum_{v\in V(t)}\prod_{x\preceq_t v}q_\theta(h_x(t)).
\]
We prove \(S_\theta(t)=|t|/\theta\) by induction on \(|t|\). For the
one-vertex tree,
\[
    S_\theta(t)=q_\theta(1)=\frac1\theta.
\]
Now let \(|t|=n\ge2\), and let its root subtrees
\(t_1,\ldots,t_r\) have sizes \(n_1,\ldots,n_r\).  Then
\[
    S_\theta(t)
    =q_\theta(n)\left(1+\sum_{i=1}^r S_\theta(t_i)\right).
\]
By the induction hypothesis this equals
\[
    \frac{n}{n+\theta-1}\left(1+\frac{n-1}{\theta}\right)
    =\frac n\theta.
\]
For general \(a\), only the term corresponding to attachment at the root is
multiplied by \(a\).  The root-attachment term in \(S_\theta(t)\) is
\[
    q_\theta(n)=\frac{n}{n+\theta-1}.
\]
Therefore
\[
\sum_{v\in V(t)}\frac{d_{\theta,a}(t^{+v})}{d_{\theta,a}(t)}
=(n+1)\left(\frac n\theta+(a-1)\frac{n}{n+\theta-1}\right).
\]
Simplifying the right-hand side gives the asserted \(\Lambda_n(\theta,a)\).

The normalization follows from the dimension recursion for \(u\).  Since
\[
    u(T)=\sum_{t:t\nearrow T}n(t;T)u(t),
\]
we obtain
\[
\begin{aligned}
    Z_{n+1}(\theta,a)
    =\sum_{T\in\T_{n+1}}u(T)d_{\theta,a}(T) 
    &=\sum_{t\in\T_n}u(t)
      \sum_{T:t\nearrow T}n(t;T)d_{\theta,a}(T) \\
    &=\Lambda_n(\theta,a)Z_n(\theta,a).
\end{aligned}
\]
Because \(Z_1(\theta,a)=1\),
\[
    Z_n(\theta,a)=\prod_{k=1}^{n-1}\Lambda_k(\theta,a).
\]
Using
\[
    \Lambda_k(\theta,a)
    =\frac{k(k+1)}{\theta}\,\frac{k+\theta a-1}{k+\theta-1},
\]
the product equals
\[
    \frac{n!(n-1)!}{\theta^{n-1}}
    \frac{\rising{(\theta a)}{n-1}}{\rising{\theta}{n-1}}.
\]
Finally, the eigenrelation gives
\[
\begin{aligned}
\sum_{T:t\nearrow T}M_{n+1}^{(\theta,a)}(T)
   \frac{n(t;T)u(t)}{u(T)}
&=\frac{u(t)}{Z_{n+1}(\theta,a)}
  \sum_{T:t\nearrow T}n(t;T)d_{\theta,a}(T) \\
&=\frac{u(t)d_{\theta,a}(t)}{Z_n(\theta,a)}
 =M_n^{(\theta,a)}(t),
\end{aligned}
\]
so the laws are coherent. By
\eqref{eq:finite-central-lift-consistency} and
\eqref{eq:central-cylinder-definition}, their central lifts determine a
unique central path law.
\end{proof}

We denote this central path law by \(\mu^{(\theta,a)}\).

\subsection{Ewens partitions and Poisson--Dirichlet paintboxes}
\label{subsec:ewens-pd-background}

For \(\eta>0\), let \(V_1,V_2,\ldots\) be independent with
\(\operatorname{Beta}(1,\eta)\) law, whose density on \((0,1)\) is
\(\eta(1-v)^{\eta-1}\). Set
\[
    \widetilde P_i=V_i\prod_{j<i}(1-V_j),\qquad i\ge1.
\]
The sequence \((\widetilde P_i)\) has the \(\operatorname{GEM}(\eta)\)
law; its decreasing rearrangement \(P\) has Kingman's one-parameter
Poisson--Dirichlet law \cite{kingman1975random}. We write
\(\PD(\eta)=\PD(0,\eta)\), following the two-parameter notation of
\cite{pitman1997two}. These partitions have no dust. Indeed, the residual
\(1-\sum_{i=1}^m\widetilde P_i=\prod_{i=1}^m(1-V_i)\) decreases
and has expectation \((\eta/(\eta+1))^m\to0\), so its limit is zero
almost surely. The GEM sequence is in size-biased order,
whereas the coordinates of \(P\) are ranked \cite{donnelly1989continuity}.

For a specified partition \(\pi=\{B_1,\ldots,B_r\}\) of \(m\) labels,
the Ewens law is
\begin{equation}\label{eq:ewens-partition-probability}
    \Prob_\eta(\pi)=\frac{\eta^r}{\rising{\eta}{m}}
                        \prod_{j=1}^r(|B_j|-1)!.
\end{equation}
The Chinese restaurant construction seats the next label in an existing
block \(B\) with probability \(|B|/(\eta+k)\), or in a new block with
probability \(\eta/(\eta+k)\), after \(k\) labels have been seated.
Equivalently, sample \(P\sim\PD(\eta)\) and assign labels independently
to its boxes. Integrating over \(P\) gives
\eqref{eq:ewens-partition-probability}. These are the Ewens sampling
formula and its Poisson--Dirichlet paintbox representation
\cite{ewens1972sampling,kingman1978representation}; see
\cite[Chapters~2--3]{pitman2006combinatorial} and
\cite{crane2016ewens} for further background.

\subsection{Recursive Ewens trees and the hook-product law}
\label{subsec:recursive-ewens-hook-law}

We first describe the labelled recursive construction whose shape laws are the
measures $M_n^{(\theta,a)}$ from
Proposition~\ref{prop:pochhammer-hook-centrality}.

On a finite nonempty ordered label set \(B\), place \(\min B\) at the
root. If \(|B|=1\), stop. Otherwise, partition
\(B\setminus\{\min B\}\) according to the Ewens law, with parameter
\(\theta a\) at the global root and \(\theta\) at every other vertex.
Each block is the label set of a child subtree, and the construction
continues independently inside the blocks.

For $B=\{0,1,\ldots,n-1\}$, denote the resulting shape law on \(\T_n\)
by \(M_n^{(\theta,a),\mathrm{rec}}\).
Related regenerative tree-growth models are studied in
\cite{pitman2014regenerative}, where sample labels are carried by leaves;
here the least label of each block becomes a vertex.

\begin{lemma}[Root factorization of the hook-product weights]
\label{lem:pochhammer-root-factorization}
Fix \(\theta>0\), \(a>0\), and \(n\ge1\).  Let \(t\in\T_n\) have root
subtrees \(t_1,\ldots,t_r\) of sizes \(m_1,\ldots,m_r\), with
\(m_1+\cdots+m_r=n-1\).  Then
\[
    \frac{d_{\theta,a}(t)}{Z_n(\theta,a)}
    =
    \frac{(\theta a)^r}{\rising{(\theta a)}{n-1}}
    \prod_{i=1}^r
    \left[
      \theta^{m_i-1}\frac{d_{\theta,1}(t_i)}{m_i!}
    \right].
\]
\end{lemma}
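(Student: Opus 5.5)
This is a direct computation from the definitions of \(d_{\theta,a}\) and \(Z_n(\theta,a)\) in \cref{prop:pochhammer-hook-centrality}. The plan is to split the product over \(V(t)\) into the root factor and the factors coming from each root subtree, and then simplify.

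First I isolate the root. Its hook length is \(n\), its out-degree is \(r\), and for every \(v\) in the \(i\)th root subtree \(h_v(t)=h_v(t_i)\). Hence
\[
    d_{\theta,a}(t)=n!\,a^r\,\frac{(n-1)!}{\rising{\theta}{n-1}}
      \prod_{i=1}^r\prod_{v\in V(t_i)}\frac{(h_v(t_i)-1)!}{\rising{\theta}{h_v(t_i)-1}} .
\]
The inner product over \(V(t_i)\) equals \(d_{\theta,1}(t_i)/m_i!\). This is because \(d_{\theta,1}(t_i)=m_i!\prod_{v}(\cdots)\), where the factor \(a^{\deg^+}\) is absent when \(a=1\). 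Therefore
\[
    d_{\theta,a}(t)=\frac{n!(n-1)!\,a^r}{\rising{\theta}{n-1}}\prod_{i=1}^r\frac{d_{\theta,1}(t_i)}{m_i!}.
\]

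Next I divide by \(Z_n(\theta,a)=\frac{n!(n-1)!}{\theta^{n-1}}\frac{\rising{(\theta a)}{n-1}}{\rising{\theta}{n-1}}\). The factors \(n!(n-1)!\) and \(\rising{\theta}{n-1}\) cancel, and this leaves
\[
    \frac{d_{\theta,a}(t)}{Z_n(\theta,a)}=\frac{a^r\theta^{n-1}}{\rising{(\theta a)}{n-1}}\prod_{i=1}^r\frac{d_{\theta,1}(t_i)}{m_i!}.
\]
Since \(n-1=\sum_i m_i\), I write \(\theta^{n-1}=\theta^r\prod_i\theta^{m_i-1}\). Then \(a^r\theta^r=(\theta a)^r\), which gives exactly the claimed formula.

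There is no real obstacle. The only step needing care is the boundary case \(n=1\). There \(r=0\), the empty products equal one, and both sides reduce to \(1/Z_1=1\); this agrees with \(d_{\theta,a}(\one)=1\) and \(\rising{(\theta a)}{0}=1\). I also need to check that the hook lengths inside \(t_i\) are unchanged when \(t_i\) is viewed as a subtree of \(t\). This holds because descendant sets are the same.
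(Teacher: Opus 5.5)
Your proposal is correct and follows essentially the same route as the paper. Both arguments separate the root factor, which has hook length \(n\) and out-degree \(r\). Both identify the remaining product over each root subtree with \(d_{\theta,1}(t_i)/m_i!\), divide by the closed form of \(Z_n(\theta,a)\), and finish with \(\theta^{n-1}=\theta^r\prod_i\theta^{m_i-1}\).
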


\begin{proof}
Put \(F_\theta(t)=d_{\theta,1}(t)/|t|!\), so
\(d_{\theta,a}(t)=n!a^rF_\theta(t)\).
Separating the root factor and substituting the normalizer gives
\[
\begin{aligned}
    F_\theta(t)
       &=\frac{(n-1)!}{\rising{\theta}{n-1}}
          \prod_{i=1}^r\frac{d_{\theta,1}(t_i)}{m_i!},\\
    \frac{d_{\theta,a}(t)}{Z_n(\theta,a)}
       &=\frac{a^r\theta^{n-1}}{\rising{(\theta a)}{n-1}}
          \prod_{i=1}^r\frac{d_{\theta,1}(t_i)}{m_i!}.
\end{aligned}
\]
Since \(\sum_i m_i=n-1\), the numerator factor is
\(a^r\theta^{n-1}=(\theta a)^r\prod_i\theta^{m_i-1}\), as required.
\end{proof}

The family \(a=1\) is the Ewens fragmentation-tree family of
\cite[Section~2.1]{zhang2026asymptotic}, where the identification with
Plancherel trees at \(\theta=2\) is proved. The next proposition gives
the quotient-labelled hook-product identity for the full two-parameter
family.

\begin{proposition}[Identification of the recursive Ewens law]
\label{prop:recursive-ewens-equals-hook}
For \(\theta,a>0\) and \(n\ge1\), the recursive Ewens shape law satisfies
\(M_n^{(\theta,a),\mathrm{rec}}=M_n^{(\theta,a)}\).
Under this construction, the probability of obtaining any specified pair
\((t,L)\), with \(t\in\T_n\) and \(L\in\QSL(t)\), is
\[
    \frac{d_{\theta,a}(t)}{Z_n(\theta,a)}.
\]
\end{proposition}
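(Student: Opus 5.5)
We prove the labelled statement by induction on \(n\), simultaneously for all \(a>0\) (with \(\theta\) fixed). The shape identity then follows by summing over the \(u(t)\) quotient standard labellings, since \(u(t)d_{\theta,a}(t)/Z_n(\theta,a)=M_n^{(\theta,a)}(t)\).

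For \(n=1\) both sides equal one. For \(n\ge2\), fix \(t\in\T_n\) with root subtrees \(t_1,\ldots,t_r\) of sizes \(m_1,\ldots,m_r\), and fix \(L\in\QSL(t)\). As in the proof of \cref{prop:paintbox-central}, the root branches of \((t,L)\) are distinguished by their label sets \(B_1,\ldots,B_r\), which partition \(\{1,\ldots,n-1\}\). The construction produces \((t,L)\) exactly when two things happen:
\begin{enumerate}[label=(\roman*)]
\item the Ewens partition at the root, with parameter \(\theta a\), equals \(\{B_1,\ldots,B_r\}\);
\item independently, for each \(j\), the recursive construction on \(B_j\), which uses parameter \(\theta\) at every vertex, yields the order-standardized quotient labelling \(L_j\in\QSL_{B_j}(t_j)\).
\end{enumerate}
The partition in (i) is an unordered set partition, so no symmetry factor arises even when isomorphic branches are present. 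By \eqref{eq:ewens-partition-probability}, event (i) has probability
\[
\frac{(\theta a)^r}{\rising{(\theta a)}{n-1}}\prod_{j=1}^r(m_j-1)!.
\]
The construction inside a block is invariant under increasing relabelling, since it depends only on the order of the labels. The subtree construction below the root is therefore the \((\theta,1)\) recursive law on \(m_j\) labels, with parameter \(\theta a'\) at its root where \(a'=1\). The induction hypothesis with \(a=1\) then gives probability \(d_{\theta,1}(t_j)/Z_{m_j}(\theta,1)\) for the \(j\)th branch in (ii).

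The main step is the algebraic matching with \cref{lem:pochhammer-root-factorization}. From \cref{prop:pochhammer-hook-centrality} we have \(Z_m(\theta,1)=m!(m-1)!/\theta^{m-1}\). Hence
\[
(m_j-1)!\,\frac{d_{\theta,1}(t_j)}{Z_{m_j}(\theta,1)}=\theta^{m_j-1}\frac{d_{\theta,1}(t_j)}{m_j!}.
\]
Multiplying over \(j\) together with the root factor yields exactly the right-hand side of \cref{lem:pochhammer-root-factorization}, namely \(d_{\theta,a}(t)/Z_n(\theta,a)\). This completes the induction.

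The only delicate point is that the decomposition must be exhaustive and free of double counting. The branch label sets of \((t,L)\) are determined by \(L\) and do not depend on the chosen representative, and each realization determines its root partition uniquely. This is the same bookkeeping as in \cref{prop:paintbox-central}, so we expect no obstacle beyond verifying the Pochhammer identity above.
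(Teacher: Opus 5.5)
Your proposal is correct and follows the paper's own proof: induction on \(n\) with the child blocks handled by the \(a=1\) case at the same \(\theta\), the Ewens probability \(\frac{(\theta a)^r}{\rising{(\theta a)}{n-1}}\prod_j(m_j-1)!\) for the prescribed root partition, order equivariance inside each block, and \(Z_m(\theta,1)=m!(m-1)!/\theta^{m-1}\) to cancel the factors \((m_j-1)!\). The product then matches \cref{lem:pochhammer-root-factorization}, and summing over the \(u(t)\) quotient labellings gives the shape law.
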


\begin{proof}
We prove the quotient-labelled identity by induction on \(n\), simultaneously
for all parameter pairs \((\theta,a)\in(0,\infty)^2\). For \(n=1\), the
unique quotient labelling has probability one and
\(d_{\theta,a}(\one)=Z_1(\theta,a)=1\). For \(n\ge2\), fix
\(t\in\T_n\) and a quotient standard labelling of \(t\). Order its
root subtrees \(t_1,\ldots,t_r\) by their least labels. Write
\(m_i=|t_i|\) and let \(A_i\) be the label set of \(t_i\).
The prescribed root partition \(\{A_1,\ldots,A_r\}\) has
Ewens probability
\[
    \frac{(\theta a)^r}{\rising{(\theta a)}{n-1}}
    \prod_{i=1}^r (m_i-1)!.
\]
Inside each block \(A_i\), the construction uses the non-root parameter
\(\theta\), corresponding to \(a=1\). Order standardization preserves
local minima and block sizes, hence Ewens partition probabilities. The
recursive argument of \cref{lem:paintbox-order-equivariance} therefore
applies, and the induction hypothesis gives the probability of the prescribed
quotient-labelled subtree as
\[
    \frac{d_{\theta,1}(t_i)}{Z_{m_i}(\theta,1)}
    =
    \theta^{m_i-1}
    \frac{d_{\theta,1}(t_i)}{m_i!(m_i-1)!}.
\]
Multiplying the root partition probability and the independent subtree
probabilities cancels the factors \((m_i-1)!\) and gives
\[
    \frac{(\theta a)^r}{\rising{(\theta a)}{n-1}}
    \prod_{i=1}^r
    \left[
      \theta^{m_i-1}\frac{d_{\theta,1}(t_i)}{m_i!}
    \right].
\]
By Lemma~\ref{lem:pochhammer-root-factorization}, this is
\(d_{\theta,a}(t)/Z_n(\theta,a)\). Summing over the \(u(t)\) quotient
standard labellings gives the asserted shape law.
\end{proof}

\subsection{The recursive Poisson--Dirichlet mixture}
\label{subsec:recursive-pd-mixture}
On the compact environment space \(\mathscr E\) of
Section~\ref{subsec:recursive-paintboxes}, let
\(\Omega_{\theta,a}=(P_w)_{w\in\Ulam}\) have independent coordinates
\(P_w=(P_{w,i})_{i\ge1}\), distributed as
\[
    P_\varnothing\sim\PD(\theta a),\qquad
    P_w\sim\PD(\theta)\quad(w\ne\varnothing).
\]
These mass partitions have no dust. Write
\(\Xi^{\mathrm{env}}_{\theta,a}=q(\Omega_{\theta,a})\) and
\(\nu_{\theta,a}=\mathcal L(\Xi^{\mathrm{env}}_{\theta,a})\).
Continuity of \(q\) makes this a Borel boundary-valued variable.

\begin{theorem}[Poisson--Dirichlet extremal decomposition]
\label{thm:ewens-decomposition}
Sample \(\Omega_{\theta,a}\), and conditionally sample its nested weighted
path \(\mathbf X\) as in \cref{prop:natural-paintbox-path-law}. Then
\begin{equation}\label{eq:ewens-boundary-mixture}
    \mathcal L(\mathbf X\mid\Omega_{\theta,a})
       =\mu^{\Xi^{\mathrm{env}}_{\theta,a}},\qquad
    \mathcal L(\mathbf X)=\mu^{(\theta,a)}
       =\int\mu^\xi\,\nu_{\theta,a}(d\xi).
\end{equation}
The canonical path variable
\(\Xi^{\mathrm{path}}_{\theta,a}:=\Xi^0(\mathbf X)\) satisfies
\begin{equation}\label{eq:ewens-environment-path-boundary-agreement}
    \Xi^{\mathrm{path}}_{\theta,a}
       =\Xi^{\mathrm{env}}_{\theta,a}\quad\text{almost surely}.
\end{equation}
Thus \(\nu_{\theta,a}\) is the unique extremal mixing law,
\(\mathcal L(\mathbf X\mid\Xi^{\mathrm{path}}_{\theta,a})
=\mu^{\Xi^{\mathrm{path}}_{\theta,a}}\), and the completed central tail
is generated by \(\Xi^{\mathrm{path}}_{\theta,a}\).
\end{theorem}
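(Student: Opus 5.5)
The proof has three steps. First, the conditional law given the environment. Second, the integrated law, which will be identified with the recursive Ewens construction. Third, agreement of the two boundary variables, from which uniqueness follows.

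\textbf{Conditional law.} Fix a realization \(\omega\) of \(\Omega_{\theta,a}\). By \cref{prop:natural-paintbox-path-law}, the nested path \(\mathbf X\) built from \(\omega\) and an independent routing array has law \(\mu^\omega\). By the convention after \cref{thm:complete-boundary}, \(\mu^\omega=\mu^{q(\omega)}\). Measurability of \(\omega\mapsto\mu^\omega\) comes from the continuity of \(q\) (\cref{prop:environment-sampling-continuity}) and of \(\xi\mapsto\mu^\xi\) (\cref{lem:boundary-central-kernel-borel}). This gives the first identity in \eqref{eq:ewens-boundary-mixture}. Integrating it gives \(\mathcal L(\mathbf X)=\int\mu^\xi\,\nu_{\theta,a}(d\xi)\).

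\textbf{Identifying the mixture.} By the cylinder formula \eqref{eq:boundary-kernel-cylinder} and \cref{lem:path-cylinder-algebra}, it suffices to show that, for each \(n\), the annealed quotient-labelled law of \(\widehat T_n(\Omega_{\theta,a})\) agrees with that of the recursive Ewens construction. \Cref{prop:recursive-ewens-equals-hook} then identifies this law with \(d_{\theta,a}(t)/Z_n(\theta,a)\), which is exactly the central lift of \(M_n^{(\theta,a)}\). The comparison is by induction on \(n\), using the recursive structure of the sampling in \cref{def:recursive-paintbox-sampling}:
\begin{itemize}
\item At the root, the \(n-1\) non-root labels are allocated independently to the boxes of \(P_\varnothing\sim\PD(\theta a)\), and there is no dust. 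Integrating over \(P_\varnothing\) gives the Ewens\((\theta a)\) partition \eqref{eq:ewens-partition-probability}.
\item Each occupied block \(i\) is then sampled from the subenvironment \((P_{iw})_w\). These are i.i.d.\ across \(i\), independent of \(P_\varnothing\), and each has the law of \(\Omega_{\theta,1}\).
\item The annealed subtree laws on distinct blocks are therefore independent given the block partition. By the induction hypothesis, each is the recursive Ewens law with parameter \(\theta\), after order standardization via \cref{lem:paintbox-order-equivariance}.
\end{itemize}

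One point needs care in the last step. Distinct root boxes carry independent subenvironments, so conditioning on which boxes are occupied does not correlate the child environments. The factorization can then be done with Fubini over the product structure of \(\Omega_{\theta,a}\).

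\textbf{Agreement of boundary variables.} Conditionally on \(\Omega_{\theta,a}=\omega\), \cref{thm:paintbox-approximable} shows that \(K_s(X_N)\to M^\omega_{|s|}(s)\) almost surely for all \(s\). Equivalently, \(\mu^{q(\omega)}\{\Xi^0=q(\omega)\}=1\), which is the fibre identity \eqref{eq:canonical-kernel-fibre-support}. The set \(\{(\omega,x):\Xi^0(x)=q(\omega)\}\) is Borel, so integrating over \(\omega\) gives \eqref{eq:ewens-environment-path-boundary-agreement}.

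\textbf{Uniqueness and the tail.} Uniqueness of the mixing law, the conditional law given \(\Xi^{\mathrm{path}}_{\theta,a}\), and generation of the completed tail then follow directly from \cref{thm:universal-boundary-variable} applied to \(\mu=\mu^{(\theta,a)}\). Indeed, \(\Theta_\mu=\mathcal L(\Xi^0)=\nu_{\theta,a}\).

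The main obstacle is the annealed induction in the second step. There one must keep track of three things: the quotient labelling, the bookkeeping of admissible assignments from \eqref{eq:paintbox-labelled-prob-recursion} in the no-dust case, and the exchange of the infinite sum over box indices with the integral over \(\PD\). I would handle these by working with the ordered GEM representation, where the Chinese restaurant computation is explicit, and by noting that reordering boxes does not change sampling laws.
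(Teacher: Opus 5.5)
Your proposal is correct and follows the paper's proof essentially step for step. The steps are the same: the conditional law comes from the nested realization; the integrated quotient-labelled law is identified with the recursive Ewens construction by induction, using the Ewens root partition and the i.i.d.\ $\PD(\theta)$ subenvironments; agreement then follows from integrating the fibre identity \eqref{eq:canonical-kernel-fibre-support}, and uniqueness and the tail from \cref{thm:universal-boundary-variable}.

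The differences are minor. The paper gets the child factorization by conditioning on the root field and using a fixed-tuple argument; your termwise Tonelli integration of \eqref{eq:paintbox-labelled-prob-recursion} does the same job. The paper also checks explicitly that the nested path is jointly Borel in the environment and routing array, which your measurability remark only gestures at. Finally, the GEM detour is unnecessary, since the $\PD$ paintbox representation of the Ewens formula is already available.
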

\begin{proof}
Let \(\mathbf P^{\mathrm{env}}_{\theta,a}\) be the environment law.
The Borel kernel from \cref{lem:boundary-central-kernel-borel} defines the
joint law
\[
    \mathbf Q_{\theta,a}(d\omega,dx)
       =\mathbf P^{\mathrm{env}}_{\theta,a}(d\omega)\,
          \mu^{q(\omega)}(dx).
\]
Realize the routing by \eqref{eq:uniform-routing-realization}, with the
uniform array independent of the environment. Each box choice is a Borel
function of these variables, since \(p_{w,0}=1-\sum_{i\ge1}p_{w,i}\) is
Borel. Induction on sample size and the fixed path bijections make the
nested path a jointly Borel function of the environment and uniform array.
By \cref{prop:natural-paintbox-path-law}, its joint law is
\(\mathbf Q_{\theta,a}\), proving the conditional-law assertion.

To identify the path marginal, first consider an environment with
\(\PD(\theta)\) splits at every address. By order equivariance it suffices
to work with finite nonempty label sets \(B\subseteq\Nzero\), using the
same uniform array. The case \(|B|=1\) is immediate. Otherwise, integrating
its root
split makes the partition \(\pi\) of \(B\setminus\{\min B\}\) Ewens
with parameter \(\theta\). Order the blocks by their least labels, and let
\(I_1,\ldots,I_{|\pi|}\) be their distinct occupied box indices. Set
\(\Omega^{(j)}=(P_{I_jw})_{w\in\Ulam}\) for \(j\le|\pi|\), and set
\(\Omega^{(j)}=\omega_*\) otherwise, for a fixed
\(\omega_*\in\mathscr E\). Let \(\mathscr A\) be the field generated
by the root split and complete root allocation. It is contained in
\(\sigma(P_\varnothing,U_{m,\varnothing}:m\in B\setminus\{\min B\})\),
so selecting the occupied indices uses no descendant variables.
For each fixed \(r\ge1\), on
\(\{|\pi|=r\}\) these arrays satisfy
\begin{equation}\label{eq:pd-descendant-independence}
    \mathcal L\bigl((\Omega^{(1)},\ldots,\Omega^{(r)})
                   \mid\mathscr A\bigr)
       =\bigotimes_{j=1}^r\mathbf P^{\mathrm{env}}_{\theta,1}.
\end{equation}
To see this for the random indices, first fix a distinct tuple
\((i_1,\ldots,i_r)\). Its descendant arrays use disjoint environment
coordinates independent of \(\mathscr A\), and its selection event
\(\{|\pi|=r,\ I_1=i_1,\ldots,I_r=i_r\}\) is
\(\mathscr A\)-measurable. Summing over these disjoint events gives, for
bounded Borel functions \(f_j\) on \(\mathscr E\),
\[
    \E\!\left[\ind_{\{|\pi|=r\}}\prod_{j=1}^r f_j(\Omega^{(j)})
               \,\middle|\,\mathscr A\right]
       =\ind_{\{|\pi|=r\}}
          \prod_{j=1}^r\int f_j\,d\mathbf P^{\mathrm{env}}_{\theta,1}.
\]
Taking conditional expectations given \(\pi\) preserves this factorization,
since \(\sigma(\pi)\subseteq\mathscr A\).
The same fixed-tuple argument applies jointly to each descendant
environment and the independent uniform subarray below it. Thus these
pairs are conditionally independent given \(\mathscr A\), and, given
a root partition
\(\pi=\{B_1,\ldots,B_r\}\), for prescribed child shapes
\(s_j\in\T_{|B_j|}\) and quotient histories
\(L_j\in\QSL_{B_j}(s_j)\),
\begin{equation}\label{eq:pd-integrated-child-histories}
    \Prob\bigl(\widehat T_{B_j}=(s_j,L_j),\ 1\le j\le r\mid\pi\bigr)
       =\prod_{j=1}^r
          \int_{\mathscr E}\frac{M_{|B_j|}^{\eta}(s_j)}{u(s_j)}\,
                 \mathbf P^{\mathrm{env}}_{\theta,1}(d\eta).
\end{equation}
Here \(\widehat T_{B_j}\) denotes the quotient-labelled child
sample on \(B_j\); the integrand is its conditional probability by
\cref{lem:paintbox-order-equivariance,prop:paintbox-central}.
Every block has size below \(|B|\), so induction identifies each factor
with the corresponding recursive Ewens child law. Together with the Ewens
root partition, \eqref{eq:pd-integrated-child-histories} proves the
recursive Ewens law for the integrated sample.

For \(\Omega_{\theta,a}\), the same argument uses the Ewens parameter
\(\theta a\) at the global root and parameter \(\theta\) below
it. By \cref{prop:recursive-ewens-equals-hook}, the integrated
quotient-labelled sample assigns probability
\(d_{\theta,a}(t)/Z_n(\theta,a)\) to each quotient labelling of
\(t\in\T_n\). Thus for every weighted path \(\gamma\) ending at \(t\),
\[
    \mathbf Q_{\theta,a}\{\mathbf X\in C_\gamma\}
       =\frac{d_{\theta,a}(t)}{Z_n(\theta,a)}
       =\mu^{(\theta,a)}(C_\gamma).
\]
Cylinder uniqueness proves \eqref{eq:ewens-boundary-mixture}.

Finally, \eqref{eq:canonical-kernel-fibre-support} gives
\(\mu^{q(\omega)}\{\Xi^0=q(\omega)\}=1\) for every environment
\(\omega\). Integrating proves
\eqref{eq:ewens-environment-path-boundary-agreement}. The conditional path
law, tail identity, and uniqueness follow from
\cref{thm:universal-boundary-variable}.
\end{proof}

\subsection{Non-extremality and principal specializations}
\label{subsec:ewens-specializations}

\begin{proposition}[Non-extremality]
\label{prop:ewens-not-extreme}
For every $\theta>0$ and $a>0$, the recursive Ewens central measure
$\mu^{(\theta,a)}$ is not extremal.
\end{proposition}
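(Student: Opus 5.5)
By \cref{thm:universal-boundary-variable} and \cref{thm:ewens-decomposition}, \(\mu^{(\theta,a)}\) is extremal if and only if \(\Xi^{\mathrm{path}}_{\theta,a}\) is almost surely constant. The decomposition theorem gives \(\Xi^{\mathrm{path}}_{\theta,a}=\Xi^{\mathrm{env}}_{\theta,a}=q(\Omega_{\theta,a})\) almost surely. It therefore suffices to exhibit one continuous boundary coordinate whose value at \(q(\Omega_{\theta,a})\) is a nondegenerate random variable. I would use the root-collision coordinate
\[
    K_{C_3}(\xi)=\sum_i p_i^2
\]
from \cref{ex:root-masses-child-marks}.

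For the environment \(\Omega_{\theta,a}\), this gives \(K_{C_3}(\Xi^{\mathrm{env}}_{\theta,a})=\sum_i P_{\varnothing,i}^2\) with \(P_\varnothing\sim\PD(\theta a)\). I would show that this variable is not almost surely constant by computing its first two moments. The first moment is the probability that two labels placed independently in the \(\PD(\eta)\) boxes share a box. By the Ewens formula \eqref{eq:ewens-partition-probability}, with \(\eta=\theta a\), this is
\[
    \E\sum_i P_i^2=\frac1{1+\eta}.
\]
The second moment \(\E(\sum_i P_i^2)^2\) is the probability that labels \(1,2\) share a box and labels \(3,4\) share a box, with the two boxes independent. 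It splits into two cases according to whether all four labels form one block or form the partition \(\{1,2\},\{3,4\}\):
\[
    \frac{3!\,\eta}{\rising{\eta}{4}}+\frac{\eta^2}{\rising{\eta}{4}}
       =\frac{\eta+6}{(\eta+1)(\eta+2)(\eta+3)}.
\]

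The main step is then checking that the variance is positive, which should be routine algebra:
\[
    \frac{\eta+6}{(\eta+1)(\eta+2)(\eta+3)}-\frac1{(\eta+1)^2}
      =\frac{(\eta+6)(\eta+1)-(\eta+2)(\eta+3)}{(\eta+1)^2(\eta+2)(\eta+3)}
      =\frac{2\eta}{(\eta+1)^2(\eta+2)(\eta+3)}>0 .
\]
Hence \(\sum_iP_{\varnothing,i}^2\) is not almost surely constant, so \(\Xi^{\mathrm{path}}_{\theta,a}\) is not almost surely constant either, and the extremality criterion gives non-extremality.

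As an alternative that avoids the moment computation, one could note that \(\PD(\eta)\) has a continuous largest atom. However, a coordinate that is visibly nonconstant is the simplest route, and \(K_{C_3}\) is continuous on the boundary by construction, so no measurability issue arises.
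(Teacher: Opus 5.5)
Your proposal is correct and follows the paper's proof essentially verbatim. You identify the canonical boundary variable with the environment class via \cref{thm:ewens-decomposition}, use $K_{C_3}=\sum_i P_{\varnothing,i}^2$ with $P_\varnothing\sim\PD(\theta a)$, and show that its variance $2\eta/((\eta+1)^2(\eta+2)(\eta+3))$ is positive. The only cosmetic difference is that you obtain the second moment directly from the Ewens sampling formula rather than from the Chinese restaurant rule, which gives the same value.
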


\begin{proof}
By \cref{thm:ewens-decomposition}, the canonical boundary variable is
the environment's sampling class. Formula~\eqref{eq:chain-root-observables}
therefore gives
\[
    \mathsf S_2:=K_{C_3}(\Xi^{\mathrm{path}}_{\theta,a})
       =\sum_i P_{\varnothing,i}^2.
\]
Put \(\eta=\theta a\). In an Ewens partition with parameter \(\eta\),
write \(i\sim j\) when the labels are in the same block. Conditional on
the paintbox, the pair events \(1\sim2\) and \(3\sim4\) are independent,
so the restaurant rule gives
\[
\begin{aligned}
    \E[\mathsf S_2]&=\frac1{\eta+1},\\
    \E[\mathsf S_2^2]
      &=\Prob(1\sim2,\ 3\sim4)\\
      &=\frac1{\eta+1}
        \left(\frac2{\eta+2}\frac3{\eta+3}
              +\frac{\eta}{\eta+2}\frac1{\eta+3}\right)
       =\frac{\eta+6}{(\eta+1)(\eta+2)(\eta+3)}.
\end{aligned}
\]
The two terms correspond to \(3,4\) both joining the block of \(1,2\),
or forming a new block. Consequently
\[
    \operatorname{Var}(\mathsf S_2)
       =\frac{2\eta}{(\eta+1)^2(\eta+2)(\eta+3)}>0.
\]
The boundary variable is not almost surely constant, so
\cref{thm:universal-boundary-variable} implies that \(\mu^{(\theta,a)}\)
is not extremal.
\end{proof}

\begin{example}[Root-weighted recursive trees]
\label{ex:uniform-recursive-pd-specialization}
At \(\theta=1\), for \(t\in\T_n\) and \(r=\deg_t^+(\rootv(t))\),
\[
    d_{1,a}(t)=n!a^r,\qquad Z_n(1,a)=n!\rising{a}{n-1},\qquad
    M_n^{(1,a)}(t)=\frac{u(t)a^r}{\rising{a}{n-1}}.
\]
These are the shapes obtained by attaching each new label to the root with
weight \(a\), and to each other vertex with weight one. Indeed, the
probability of each prescribed increasing labelled tree is
\(a^r/\prod_{k=1}^{n-1}(a+k-1)\). This is the \((0,a)\)-recursive
tree of \cite[Section~5 and Lemma~6.2]{dong2006coagulation}, whose
root partition is Ewens with parameter \(a\) and whose descendant
partitions have parameter one.

At \(a=1\), \cref{ex:uniform-recursive-chain} gives the uniform
recursive-tree shape transition \(P(s,t)=n(s;t)/|s|\). Its representation
by independent \(\PD(1)\) splits is also given by Janson
\cite[Corollary~1.6]{janson2019split}. For every \(a>0\),
\cref{thm:ewens-decomposition} identifies the canonical boundary variable
with the sampling class of an environment whose root split has law
\(\PD(a)\) and whose descendant splits are independent with law \(\PD(1)\).
The law of this class is the unique extremal mixing measure.
\end{example}

\begin{example}[Plancherel rooted trees]
When \((\theta,a)=(2,1)\),
Propositions~\ref{prop:fulman-plancherel-agreement}
and~\ref{prop:recursive-ewens-equals-hook} identify the finite-level marginals with
the Plancherel rooted-tree measures
\[
    M_n^{\mathrm{Pl}}(t)
    =
    \frac{u(t)d(t)}{\prod_{k=2}^n\binom{k}{2}}.
\]
Writing \(\mu^{\mathrm{Pl}}:=\mu^{(2,1)}\), their central measure
decomposition is
\[
    \mu^{\mathrm{Pl}}
    =
    \int_{\partial_{\mathrm{RP}}}
      \mu^\xi\,\nu_{2,1}(d\xi),
\]
where \(\nu_{2,1}\) is the law of the sampling class of an environment
with independent \(\PD(2)\) splits at every node.
Thus the Plancherel central measure is a mixture of the extremal laws
directed by deterministic recursive paintboxes and is not extremal.
\end{example}

\begin{example}[Root-degree deformation]
For general $a>0$, the parameter $a$ changes only the root environment:
\(P_\varnothing\sim\PD(\theta a)\), \(P_w\sim\PD(\theta)\), \(w\neq\varnothing\).
At $\theta=2$, this gives the root-degree deformation
\[
    M_n^{(2,a)}(t)
    =
    \frac{u(t)d(t)a^{\deg_t^+(\rootv(t))}}
    {\frac{(n-1)!}{2^{n-1}}\rising{(2a)}{n-1}}.
\]
Here \(d_{2,a}(t)=a^{\deg_t^+(\rootv(t))}d(t)\), and the denominator is
\(Z_n(2,a)\).  The boundary decomposition of this root-degree deformation
uses \(\PD(2a)\) at the root and independent \(\PD(2)\) splits below
the root.
\end{example}

\appendix
\section{Principal notation}\label{app:principal-notation}

\begingroup
\small
\renewcommand{\arraystretch}{1.15}

\begin{longtable}{@{}
    >{\raggedright\arraybackslash}p{0.29\textwidth}
    >{\raggedright\arraybackslash}p{0.65\textwidth}
    @{}}

\toprule
Notation & Meaning\\
\midrule
\endfirsthead

\toprule
Notation & Meaning\\
\midrule
\endhead

\midrule
\endfoot

\bottomrule
\endlastfoot

\multicolumn{2}{@{}l@{}}{\textbf{Trees and Martin kernels}}\\*

\(\T_n,\T\)
& Unlabelled non-plane rooted trees with \(n\) vertices,
  and their disjoint union.\\

\(|t|,h_v(t)\)
& Number of vertices of \(t\), and size of the subtree rooted at \(v\).\\

\(\langle t_1,\ldots,t_r\rangle\)
& Tree obtained by attaching the listed trees below a new root.\\

\(d(t),u(t)\)
& Numbers of standard labellings and quotient standard labellings;
  \(u(t)=d(t)/|\Aut(t)|\).\\

\(\SL_B(t),\QSL_B(t)\)
& Increasing labellings by \(B\), and their quotient by rooted
  automorphisms. Without a subscript, the labels are
  \(0,\ldots,|t|-1\).\\

\(n(s;t),m(s;t)\)
& Leaf-grafting and leaf-removal multiplicities, extended to
  weighted interval path counts.\\

\(K_s(t),A_s(t)\)
& Sampling-normalized Martin kernel
  \(K_s(t)=u(s)n(s;t)/u(t)\) for \(|t|\ge|s|\), and \(0\) otherwise;
  \(A_s(t)=K_s(t)/u(s)\).\\

\(\Omega_M\), \(\partial\Omega_M\),
\(\partial_{\min}\Omega_M\)
& Martin compactification, full Martin boundary,
  and minimal Martin boundary.\\

\addlinespace
\multicolumn{2}{@{}l@{}}{\textbf{Central measures and the tail}}\\*

\(\mathcal X_\Gamma\), \(X_n,\mathsf E_n\)
& Infinite weighted path space, its endpoint coordinates,
  and its edge-copy coordinates.\\

\(C_\gamma\)
& Prefix cylinder determined by the finite weighted path \(\gamma\).\\

\(\mathscr C(\Gamma),\mathscr C\)
& Central probability measures on \(\mathcal X_\Gamma\);
  \(\mathscr C\) refers to the Hoffman graph.\\

\(\Tcen^0,\Tcen\)
& Raw central tail and its completion under the central law \(\mu\).\\

\(\Xi^0,\Xi_\mu,\Theta_\mu\)
& Common raw-tail boundary map, the same map viewed under \(\mu\),
  and its mixing law \(\Theta_\mu=\mathcal L_\mu(\Xi_\mu)\);
  see \cref{thm:universal-boundary-variable}.\\

\addlinespace
\multicolumn{2}{@{}l@{}}{\textbf{Recursive paintboxes and root data}}\\*

\(\Ulam,\omega_v\)
& Ulam address tree and the descendant representative
  at address \(v\).\\

\(p_{w,i},q_w\)
& Local child masses and global address masses:
  \(q_\varnothing=1\) and \(q_{wi}=q_wp_{w,i}\).\\

\(\omega,p_0\)
& Root decomposition
  \(\omega=\{(p_i,\omega_i):p_i>0\}\) and residual dust mass
  \(p_0=1-\sum_i p_i\);
  see \cref{def:ulam-recursive-paintbox-representative}.\\

\(\widetilde T_B,\widehat T_B,T_B\)
& Labelled recursive sample, its quotient increasing-labelled class,
  and its unlabelled rooted-tree shape.\\

\(\partial_{\mathrm{RP}}\),
\(\xi=[\omega]_{\mathrm{samp}}\)
& Recursive-paintbox boundary and the sampling class of \(\omega\);
  see \cref{def:recursive-paintbox-boundary}.\\

\(M_n^\omega,\mu^\omega\),
\(M_n^\xi,\mu^\xi\)
& Finite sampling law and central path measure, indexed by a
  representative \(\omega\) or its sampling class \(\xi\).\\

\(\iota_M,\Psi\)
& Coordinate embeddings of finite trees and recursive-paintbox
  classes into \([0,1]^{\T}\).\\

\(\mathfrak M(K)\)
& Compact space of unordered \(K\)-marked mass partitions with
  multiplicity, encoded by \(\sum_i p_i^2\delta_{(p_i,z_i)}\);
  see \cref{def:marked-mass-partition-functor}.\\

\(\mathscr R\)
& Root-decomposition homeomorphism from \(\partial_{\mathrm{RP}}\)
  to \(\mathfrak M(\partial_{\mathrm{RP}})\);
  see \cref{thm:recursive-fixed-point-boundary}.\\

\(\nu_t\)
& Marked root measure of a finite tree,
  defined in \eqref{eq:finite-tree-marked-root-measure}.\\

\(\mathcal R_{k,F}\)
& Marked collision coordinate \(\sum_i p_i^kF(z_i)\), for \(k\ge2\);
  see \eqref{eq:marked-collision-coordinates}.\\

\addlinespace
\multicolumn{2}{@{}l@{}}{\textbf{Recursive Ewens measures}}\\*

\(d_{\theta,a}(t),Z_n(\theta,a)\)
& Hook-product weight and its level-\(n\) normalizing constant.\\

\(\mu^{(\theta,a)}\)
& Recursive Ewens central measure with parameter \(\theta a\)
  at the root and parameter \(\theta\) at all other vertices.\\

\(\PD(\eta),\operatorname{GEM}(\eta)\)
& Poisson--Dirichlet law \(\PD(0,\eta)\) in ranked order,
  and its size-biased stick-breaking law.\\

\(\Omega_{\theta,a},\nu_{\theta,a}\)
& Recursive Poisson--Dirichlet environment and its sampling-class
  law on \(\partial_{\mathrm{RP}}\).\\

\end{longtable}
\endgroup

\bibliographystyle{alpha}
\bibliography{references}

\begin{thebibliography}{EGW17}

\bibitem[BFS92]{bergeron1992varieties}
Fran{\c{c}}ois Bergeron, Philippe Flajolet, and Bruno Salvy.
\newblock Varieties of increasing trees.
\newblock In J.-C. Raoult, editor, {\em Trees in Algebra and Programming---CAAP
  '92}, volume 581 of {\em Lecture Notes in Computer Science}, pages 24--48.
  Springer, 1992.

\bibitem[Bra72]{bratteli1972inductive}
Ola Bratteli.
\newblock Inductive limits of finite dimensional {$C^*$}-algebras.
\newblock {\em Transactions of the American Mathematical Society},
  171:195--234, 1972.

\bibitem[Cra16]{crane2016ewens}
Harry Crane.
\newblock The ubiquitous {Ewens} sampling formula.
\newblock {\em Statistical Science}, 31(1):1--19, 2016.

\bibitem[CX21]{crane2021history}
Harry Crane and Min Xu.
\newblock Inference on the history of a randomly growing tree.
\newblock {\em Journal of the Royal Statistical Society Series B: Statistical
  Methodology}, 83(4):639--668, 2021.

\bibitem[Dev98]{devroye1998universal}
Luc Devroye.
\newblock Universal limit laws for depths in random trees.
\newblock {\em SIAM Journal on Computing}, 28(2):409--432, 1998.

\bibitem[DF80]{diaconis1980finite}
Persi Diaconis and David Freedman.
\newblock Finite exchangeable sequences.
\newblock {\em Annals of Probability}, 8(4):745--764, 1980.

\bibitem[DGM06]{dong2006coagulation}
Rui Dong, Christina Goldschmidt, and James~B. Martin.
\newblock Coagulation--fragmentation duality, {Poisson--Dirichlet}
  distributions and random recursive trees.
\newblock {\em The Annals of Applied Probability}, 16(4):1733--1750, 2006.

\bibitem[DJ89]{donnelly1989continuity}
Peter Donnelly and Paul Joyce.
\newblock Continuity and weak convergence of ranked and size-biased
  permutations on the infinite simplex.
\newblock {\em Stochastic Processes and their Applications}, 31(1):89--103,
  1989.

\bibitem[Dyn78]{dynkin1978sufficient}
E.~B. Dynkin.
\newblock Sufficient statistics and extreme points.
\newblock {\em Annals of Probability}, 6(5):705--730, 1978.

\bibitem[Eff81]{effros1981dimensions}
Edward~G. Effros.
\newblock {\em Dimensions and {$C^*$}-Algebras}, volume~46 of {\em CBMS
  Regional Conference Series in Mathematics}.
\newblock American Mathematical Society, Providence, RI, 1981.

\bibitem[EGW12]{evans2012trickle}
Steven~N. Evans, Rudolf Gr{\"u}bel, and Anton Wakolbinger.
\newblock Trickle-down processes and their boundaries.
\newblock {\em Electronic Journal of Probability}, 17:1--58, 2012.
\newblock Paper No. 1.

\bibitem[EGW17]{evans2017remy}
Steven~N. Evans, Rudolf Gr{\"u}bel, and Anton Wakolbinger.
\newblock {Doob--Martin} boundary of {R}{\'e}my's tree growth chain.
\newblock {\em Annals of Probability}, 45(1):225--277, 2017.

\bibitem[Ewe72]{ewens1972sampling}
Warren~J. Ewens.
\newblock The sampling theory of selectively neutral alleles.
\newblock {\em Theoretical Population Biology}, 3(1):87--112, 1972.

\bibitem[FHP18]{forman2018representation}
Noah Forman, Chris Haulk, and Jim Pitman.
\newblock A representation of exchangeable hierarchies by sampling from random
  real trees.
\newblock {\em Probability Theory and Related Fields}, 172(1--2):1--29, 2018.

\bibitem[For20]{forman2020mass}
Noah Forman.
\newblock Exchangeable hierarchies and mass-structure of weighted real trees.
\newblock {\em Electronic Journal of Probability}, 25:1--28, 2020.
\newblock Paper No. 131.

\bibitem[Ful09]{fulman2009mixing}
Jason Fulman.
\newblock Mixing time for a random walk on rooted trees.
\newblock {\em Electronic Journal of Combinatorics}, 16(1):Article R139, 13
  pp., 2009.

\bibitem[Gel26]{geldbach2026continuum}
David Geldbach.
\newblock Continuum asymptotics for tree growth models with uniform backward
  dynamics.
\newblock {\em ALEA. Latin American Journal of Probability and Mathematical
  Statistics}, 23:475--516, 2026.

\bibitem[Ger20a]{gerstenberg2020interval}
Julian Gerstenberg.
\newblock Exchangeable interval hypergraphs and limits of ordered discrete
  structures.
\newblock {\em Annals of Probability}, 48(3):1128--1167, 2020.

\bibitem[Ger20b]{gerstenberg2020words}
Julian Gerstenberg.
\newblock General erased-word processes: Product-type filtrations, ergodic laws
  and {Martin} boundaries.
\newblock {\em Stochastic Processes and their Applications}, 130(6):3540--3573,
  2020.

\bibitem[GI20]{gnedin2020nested}
Alexander Gnedin and Alexander Iksanov.
\newblock On nested infinite occupancy scheme in random environment.
\newblock {\em Probability Theory and Related Fields}, 177(3--4):855--890,
  2020.

\bibitem[GL89]{grossman1989hopf}
Robert Grossman and Richard~G. Larson.
\newblock {Hopf}-algebraic structure of families of trees.
\newblock {\em Journal of Algebra}, 126(1):184--210, 1989.

\bibitem[GM15]{grubel2015recursive}
Rudolf Gr{\"u}bel and Igor Michailow.
\newblock Random recursive trees: A boundary theory approach.
\newblock {\em Electronic Journal of Probability}, 20:1--22, 2015.
\newblock Paper No. 37.

\bibitem[GO06]{gnedin2006zigzag}
Alexander Gnedin and Grigori Olshanski.
\newblock Coherent permutations with descent statistic and the boundary problem
  for the graph of zigzag diagrams.
\newblock {\em International Mathematics Research Notices}, 2006:Article 51968,
  39 pp., 2006.

\bibitem[GY06]{gnedin2006recursive}
Alexander~V. Gnedin and Yuri Yakubovich.
\newblock Recursive partition structures.
\newblock {\em Annals of Probability}, 34(6):2203--2218, 2006.

\bibitem[Hof03]{hoffman2003combinatorics}
Michael~E. Hoffman.
\newblock Combinatorics of rooted trees and {Hopf} algebras.
\newblock {\em Transactions of the American Mathematical Society},
  355(9):3795--3811, 2003.

\bibitem[Jan19]{janson2019split}
Svante Janson.
\newblock Random recursive trees and preferential attachment trees are random
  split trees.
\newblock {\em Combinatorics, Probability and Computing}, 28(1):81--99, 2019.

\bibitem[Kal05]{kallenberg2005probabilistic}
Olav Kallenberg.
\newblock {\em Probabilistic Symmetries and Invariance Principles}.
\newblock Probability and Its Applications. Springer, New York, 2005.

\bibitem[Kin75]{kingman1975random}
John F.~C. Kingman.
\newblock Random discrete distributions.
\newblock {\em Journal of the Royal Statistical Society: Series B
  (Methodological)}, 37(1):1--15, 1975.

\bibitem[Kin78]{kingman1978representation}
John F.~C. Kingman.
\newblock The representation of partition structures.
\newblock {\em Journal of the London Mathematical Society. Second Series},
  18(2):374--380, 1978.

\bibitem[Phe01]{phelps2001choquet}
Robert~R. Phelps.
\newblock {\em Lectures on {Choquet's} Theorem}, volume 1757 of {\em Lecture
  Notes in Mathematics}.
\newblock Springer, Berlin, second edition, 2001.

\bibitem[Pit06]{pitman2006combinatorial}
Jim Pitman.
\newblock {\em Combinatorial Stochastic Processes}, volume 1875 of {\em Lecture
  Notes in Mathematics}.
\newblock Springer, Berlin, 2006.
\newblock Lectures from the 32nd Summer School on Probability Theory held in
  Saint-Flour, 2002.

\bibitem[PRW14]{pitman2014regenerative}
Jim Pitman, Douglas Rizzolo, and Matthias Winkel.
\newblock Regenerative tree growth: structural results and convergence.
\newblock {\em Electronic Journal of Probability}, 19:1--27, 2014.
\newblock Paper No. 70.

\bibitem[PY97]{pitman1997two}
Jim Pitman and Marc Yor.
\newblock The two-parameter {Poisson--Dirichlet} distribution derived from a
  stable subordinator.
\newblock {\em Annals of Probability}, 25(2):855--900, 1997.

\bibitem[Sag09]{sagan2008probabilistic}
Bruce~E. Sagan.
\newblock Probabilistic proofs of hook length formulas involving trees.
\newblock {\em S{\'e}minaire Lotharingien de Combinatoire}, 61A:Art. B61Ab, 10
  pp., 2009.

\bibitem[Tar18]{tarrago2018zigzag}
Pierre Tarrago.
\newblock Zigzag diagrams and {Martin} boundary.
\newblock {\em Annals of Probability}, 46(5):2562--2620, 2018.

\bibitem[Ver14]{vershik2014central}
A.~M. Vershik.
\newblock The problem of describing central measures on the path spaces of
  graded graphs.
\newblock {\em Functional Analysis and Its Applications}, 48(4):256--271, 2014.

\bibitem[Woe09]{woess2009denumerable}
Wolfgang Woess.
\newblock {\em Denumerable Markov Chains: Generating Functions, Boundary
  Theory, Random Walks on Trees}.
\newblock EMS Textbooks in Mathematics. European Mathematical Society,
  Z{\"u}rich, 2009.

\bibitem[Zha26]{zhang2026asymptotic}
Shengjun Zhang.
\newblock Asymptotic height of {Plancherel} random trees, 2026.
\newblock Preprint, arXiv:2604.25877 [math.PR].

\end{thebibliography}
\end{document}